\documentclass{article}
\usepackage[utf8]{inputenc}

\usepackage{amsmath,amssymb}
\usepackage{amsthm}
\usepackage{bm}
\usepackage{graphicx}
\usepackage{ascmac}
\newtheorem{definition}{Definition}[section]
  \newtheorem{theorem}[definition]{Theorem}
   \newtheorem{lemma}[definition]{Lemma}
  
  \newtheorem{proposition}[definition]{Proposition}
  \newtheorem{assertion}[definition]{Assertion}
  \newtheorem{claim}[definition]{Claim}
  \newtheorem{case}{Case}
  \newtheorem{subcase}{Case 1.\hspace{-0.4em}} 
  \newtheorem{subcase2}{Case 2.\hspace{-0.4em}}
  \newtheorem{subccase}{Case 1.\hspace{-0.4em}} 
  \newtheorem{subccase2}{Case 2.\hspace{-0.4em}}
  \newtheorem{subccase3}{Case 3.\hspace{-0.4em}}
   
  \newtheorem{subcccase2}{Case 2.\hspace{-0.4em}}

\newtheorem{remark}[definition]{Remark}


\title{On keen bridge splittings of links}


\author{Ayako Ido, Yeonhee Jang and Tsuyoshi Kobayashi}











\date{\empty}

\begin{document}

\maketitle

\begin{abstract}
In this paper, we extend the concept of {\it (strongly) keenness} for Heegaard splittings to bridge splittings, and show that, for any integers $g$, $b$ and $n$ with $g\ge 0$, $b\ge 1$, $n\ge 1$ except for $(g,b)=(0,1)$ 
and $(g,b,n)=(0,3,1)$, there exists a strongly keen $(g,b)$-splitting of a link with distance $n$. 
We also show that any $(0,3)$-splitting of a link with distance $1$ cannot be keen.
\end{abstract}

\tableofcontents

\part{Introduction and background materials}\label{part1}

\section{Introduction}

Hempel \cite{He} introduced the notion of {\it distance} for Heegaard splittings by using the curve complexes of the Heegaard surfaces, as a measure of the complexity of Heegaard splittings.
There have been many works concerning the Hempel distance, some of which asserts that the distance of a Heegaard splitting is closely related with the topology or the geometric structure of the ambient manifold.

The notion of the Hempel distance can be extended to the distance for bridge splittings of links in $3$-manifolds as follows: 
For a bridge splitting $(V_1,t_1)\cup_{(F,P)}(V_2,t_2)$ of a link in a closed orientable $3$-manifold, the {\it distance} of the splitting is defined to be $d_{F\setminus P}(\mathcal{D}^0(V_1\setminus t_1), \mathcal{D}^0(V_2\setminus t_2))={\rm min}\{d_{F\setminus P}(x,y)\mid x\in \mathcal{D}^0(V_1\setminus t_1), y\in\mathcal{D}^0(V_2\setminus t_2)\}$, where $d_{F\setminus P}$ is the simplicial distance in $\mathcal{C}(F\setminus P)$ and $\mathcal{D}(V_i\setminus t_i)$ is the subcomplex of $\mathcal{C}(F\setminus P)$ spanned by the vertices with representatives bounding disks in $V_i\setminus t_i$ for $i=1,2$. 
(See Section \ref{sec-pre} for details.) 

In \cite{IJK3}, the authors introduced the concept of {\it keen} and {\it strongly keen} Heegaard splittings, and showed the existence of strongly keen Heegaard splittings, that is, Heegaard splittings \lq\lq with unique geodesics\rq\rq\ realizing the Hempel distance.
It is shown in \cite{IK} that keenness and strongly keenness of Heegaard splittings imply some finiteness properties of the Goeritz groups.

The purpose of this paper is to extend the concept of the {\it keenness} to the bridge splittings of links, and to show the existence of strongly keen bridge splittings.
We say that a bridge splitting $(V_1,t_1)\cup_{(F,P)}(V_2,t_2)$ is {\it keen} if its distance is realized by a unique pair of elements of $\mathcal{D}^0(V_1\setminus t_1)$ and $\mathcal{D}^0(V_2\setminus t_2)$, that is, 
\begin{center}
If $d_{F\setminus P}(a,b)=d_{F\setminus P}(a',b')=d_{F\setminus P}(\mathcal{D}^0(V_1\setminus t_1), \mathcal{D}^0(V_2\setminus t_2))$ for $a,a'\in \mathcal{D}^0(V_1\setminus t_1)$ and $b,b'\in\mathcal{D}^0(V_2\setminus t_2)$, then $a=a'$ and $b=b'$.
\end{center}
For a keen bridge splitting $(V_1,t_1)\cup_{(F,P)}(V_2,t_2)$, the geodesics joining the unique pair of the elements of $\mathcal{D}^0(V_1\setminus t_1)$ and $\mathcal{D}^0(V_2\setminus t_2)$ may not be unique (see Remark \ref{rem}). 
We say that $(V_1,t_1)\cup_{(F,P)}(V_2,t_2)$ is {\it strongly keen} 
if the set of the geodesics joining the pair of the elements of $\mathcal{D}^0(V_1\setminus t_1)$ and $\mathcal{D}^0(V_2\setminus t_2)$ realizing the distance consists of one element.

\begin{theorem}\label{thm-1}
For any integers $g$, $b$ and $n$ with $g\ge 0$, $b\ge 1$, $n\ge 1$ except for $(g,b)=(0,1)$, $(0,2)$ and $(g,b,n)=(0,3,1)$, there exists a strongly keen $(g,b)$-splitting of a link with distance $n$. 
\end{theorem}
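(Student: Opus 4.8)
\emph{Proof strategy.}
The plan is to keep the two handlebody-tangles $(V_1,t_1)$ and $(V_2,t_2)$ in a fixed \lq\lq standard\rq\rq\ form --- a genus-$g$ handlebody in which $t_i$ is a trivial $b$-string tangle, so that the disk set $\mathcal{D}^0(V_i\setminus t_i)\subset\mathcal{C}(F\setminus P)$ is a definite, well-understood subcomplex, described by a complete meridian-and-tunnel disk system together with its handle slides --- and to realize the splitting by regluing $\partial V_1$ to $\partial V_2$ along a carefully chosen element $\varphi$ of the mapping class group of $(F,P)$. The task then becomes to choose $\varphi$ so that
\[
 d_{F\setminus P}\bigl(\mathcal{D}^0(V_1\setminus t_1),\,\varphi(\mathcal{D}^0(V_2\setminus t_2))\bigr)=n,
\]
and, moreover, so that this value is attained by exactly one pair of vertices, and those are joined by exactly one geodesic.

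First I would construct, by the train-track and lamination techniques that underlie the known constructions of high-distance splittings and the keen Heegaard splittings of \cite{IJK3}, a \emph{locally rigid} geodesic segment $c_0,c_1,\dots,c_n$ in $\mathcal{C}(F\setminus P)$: consecutive curves are disjoint; for $1\le i\le n-1$ the curve $c_i$ is the unique essential curve of $F\setminus P$ disjoint from both $c_{i-1}$ and $c_{i+1}$; and the endpoint $c_0$ together with $c_1$ already determines the entire segment, symmetrically from the $c_n$ end. At the same time I would arrange $c_0\in\mathcal{D}^0(V_1\setminus t_1)$ and $c_n\in\varphi(\mathcal{D}^0(V_2\setminus t_2))$ by taking $\varphi$ to be a composition of high powers of Dehn twists along a chain of curves that fills $F\setminus P$ in successive stages. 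Producing such a segment needs $F\setminus P$ to carry enough topology, and this is why the surfaces of lowest complexity --- those with $(g,b)=(0,1)$ or $(0,2)$ --- together with the case $n=1$ for $(g,b)=(0,3)$ (genuinely impossible by the companion statement on $(0,3)$-splittings of distance $1$) fall outside the conclusion.

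The upper bound $\le n$ for the distance is then immediate from the path $c_0,c_1,\dots,c_n$ joining the two disk sets. For the lower bound, and simultaneously for keenness, I would take arbitrary $a\in\mathcal{D}^0(V_1\setminus t_1)$ and $b\in\varphi(\mathcal{D}^0(V_2\setminus t_2))$ with $d_{F\setminus P}(a,b)\le n$ and examine a geodesic between them through the annular and non-annular subsurface projections determined by the twisting curves used to build $\varphi$: since those powers are large, the bounded geodesic image theorem, in its form for surfaces with marked points, forces such a geodesic to use exactly one step to cross each successive stage, so its length is at least $n$; and when the length equals $n$ the geodesic must be $c_0,c_1,\dots,c_n$, after which local rigidity of the segment pins $a=c_0$ and $b=c_n$. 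Finally, strong keenness is nothing but the local rigidity of $c_0,\dots,c_n$ recorded above, each interior vertex being forced by its two neighbours and each endpoint forcing its neighbour.

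The step I expect to be the main obstacle is the simultaneous control of the two disk sets $\mathcal{D}^0(V_i\setminus t_i)$: each must be rich enough to contain the prescribed endpoint, yet rigid enough that no other disk-bounding curve comes as close to the opposite side, and this has to be arranged uniformly over all admissible $(g,b,n)$, in particular for the low-complexity surfaces and the small values of $n$ near the exceptional triples. I expect it is cleanest to organize the whole construction as an induction on $n$, passing from a strongly keen $(g,b)$-splitting of distance $n$ to one of distance $n+1$ by precomposing the $V_1$-side gluing with a suitable high power of a Dehn twist supported away from $c_1$, which appends one new rigid vertex to the front of the geodesic; then only the base cases $n=1$ --- and $n=2$ when $(g,b)=(0,3)$ --- require the explicit train-track construction.
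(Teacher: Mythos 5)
Your overall framework (a rigid geodesic certified by subsurface projections and a bounded-geodesic-image type argument, with the splitting obtained by a sufficiently complicated gluing) is in the same spirit as the paper's, but the step you yourself flag as ``the main obstacle'' is precisely the mathematical content of the theorem, and your proposal supplies no mechanism for it. Keenness is not a statement about the geodesic $c_0,\dots,c_n$ alone: you must show that $c_1$ intersects \emph{every} curve of $\mathcal{D}^0(V_1\setminus t_1)$ other than $c_0$ (and symmetrically at the other end), and moreover that the whole disk sets have uniformly bounded projections to the subsurfaces associated with $c_1$ and $c_{n-1}$ (in the paper these are Claims~\ref{claim-1}, \ref{claim-1-2} and \ref{claim-4}, the last of which needs control of the image of a disk complex as in Appendix~\ref{appendix-disk-complex} together with \cite[Lemma 2.2]{MM2}). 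With a fixed ``standard'' $(V_i,t_i)$ and a single gluing map $\varphi$ built from high powers of Dehn twists, there is no reason that only one disk-bounding curve is disjoint from $c_1$: since $c_0\cap c_1=\emptyset$, the union $c_0\cup c_1$ never fills $F\setminus P$, and whether some other compressing curve lies in its complement is a property of the handlebody-tangle side, which twisting on the opposite side cannot influence. The paper resolves exactly this by the two-layer construction $(V_i^{\ast},t_i^{\ast})=(W_i,s_i)\cup_{\overline h_i}(V_i,t_i)$, in which the collar piece $W_i$ carries a single distinguished disk $D_i$ with $\partial D_i$ an endpoint of the geodesic, and the interior gluing $h_i$ is chosen by Propositions~\ref{prop-b>1-hi} and \ref{prop-g>1-hi} to push every other disk far away in $\mathcal{C}(\partial_-W_i\setminus s_i)$; Propositions~\ref{prop-b>1}, \ref{prop-b>1-1}, \ref{prop-g>1} and \ref{prop-g>1-1} are what turn ``a disk disjoint from (or nearly disjoint from) $\alpha_1$'' into a contradiction. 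Nothing in your proposal plays this role, and your suggested induction on $n$ (precomposing the $V_1$-side gluing with a high power of a Dehn twist to ``append one rigid vertex'') is unsupported: the twist moves the entire disk set, and no cited result guarantees that the distance grows by exactly one, let alone that uniqueness of the minimizing pair and of the geodesic survives the modification.

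A second, independent gap is the case $n=1$, which your scheme defers to a ``base case by train tracks'': for distance $1$ there is no geodesic to rigidify, and the entire assertion is the intersection statement that every essential disk of $V_1^{\ast}\setminus t_1^{\ast}$ other than $D_1$ meets every essential disk of $V_2^{\ast}\setminus t_2^{\ast}$ other than $D_2$ (Assertions~\ref{e1e2}, \ref{e1e2-2}, \ref{e1e2-3}), which the paper proves by a case analysis on disk types that again relies on the layered structure. Likewise the low-complexity cases need arguments of a different nature: $(1,1)$-splittings use Saito's description of the disk complex (and distance $1$ there is automatically strongly keen, Proposition~\ref{prop-111}), and $(0,2)$-splittings use the Farey graph. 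As it stands, your proposal reduces the theorem to its hardest steps without proving them.
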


In the proof of Theorem~\ref{thm-1}, we see that any $(1,1)$-splitting of distance $1$ must be strongly keen, and we note the proof of the fact is due to Saito \cite{Sai} (see Proposition~\ref{prop-111}).

\begin{remark}
{\rm
Part \ref{part2} of this paper, which consists of 4 sections, is devoted to the proof of Theorem~\ref{thm-1} for the case $n\ge 2$.
The sectioning looks like a case-by-case analysis, that is, they are exhaustive and mutually exclusive.
But a careful reader will realize that the essences of the arguments are not mutually exclusive.
For example, the case when $(g,b,n)=(2,2,3)$ is treated in Section~\ref{sec-proof-1} formally, and we would like to note that the arguments in Case 1 of Section~\ref{sec-proof-4} also work for this case.
}
\end{remark}

We note that the case when $(g,b)=(0,1)$ is absurd, that is, $\mathcal{D}^0(V_i\setminus t_i)$ is empty.
For the case when $(g,b)=(0,2)$, we have the following (see Section \ref{app-2}).

\begin{theorem}\label{thm-3}
There exist strongly keen $(0,2)$-splittings of links with distance $n$ for any given positive integer $n$.
\end{theorem}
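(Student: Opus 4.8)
The plan is to reduce Theorem~\ref{thm-3} to a combinatorial fact about the Farey graph. For a $(0,2)$-splitting $(V_1,t_1)\cup_{(F,P)}(V_2,t_2)$ the surface $F\setminus P$ is a four-punctured sphere, so (with the usual convention for low-complexity surfaces) $\mathcal C(F\setminus P)$ is the Farey graph $\mathcal F$: vertices are the slopes $\mathbb Q\cup\{\infty\}$, and two slopes span an edge exactly when the corresponding curves can be realized meeting in two points. The first step is to observe that $\mathcal D^0(V_i\setminus t_i)$ is a single vertex $v_i$: a curve on $F\setminus P$ bounding a disk in $V_i\setminus t_i$ must separate the two endpoints of one strand of $t_i$ from those of the other, and among such curves exactly one slope bounds a disk in the trivial tangle $(V_i,t_i)$ --- cleanest to see by passing to the double branched cover of $(V_i,t_i)$, a solid torus, in which only the meridian slope bounds a disk. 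Consequently \emph{every} $(0,2)$-splitting is automatically keen, its distance equals $d_{\mathcal F}(v_1,v_2)$, and it is strongly keen if and only if there is a \emph{unique} $\mathcal F$-geodesic from $v_1$ to $v_2$. (Equivalently: a strongly keen $(0,2)$-splitting of distance $n$ is a two-bridge link whose double branched cover carries a genus-one Heegaard splitting of distance $n$ with a unique geodesic.)

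Next I would dispose of realizability. Gluing two standard trivial tangles $(V_1,t_1),(V_2,t_2)$ along $F\setminus P$ by a homeomorphism $\psi$ produces the pair $(v_1,v_2)=(\psi(c),c)$, where $c$ is the fixed disk-slope of the standard tangle. Since the extended mapping class group of the four-punctured sphere realizes the full automorphism group of $\mathcal F$, which acts transitively on vertices, the vertex $\psi(c)$ can be taken to be an arbitrary vertex $w$ of $\mathcal F$. Hence Theorem~\ref{thm-3} is equivalent to: for every $n\ge 1$ there exist two vertices of $\mathcal F$ at distance $n$ joined by a unique geodesic.

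For the combinatorial core I would use the hyperbolic matrix $M=\begin{pmatrix}3&-1\\1&0\end{pmatrix}\in SL(2,\mathbb Z)$, acting on $\mathcal F$ as an automorphism by $z\mapsto 3-\tfrac1z$, and set $w_k=M^k(\infty)$, so $w_0=\infty$, $w_1=3$, $w_2=8/3$, $w_3=21/8,\dots$; equivalently $w_k$ is the $k$-th convergent of the negative continued fraction $[3,3,\dots,3]$ with $k$ entries. Since $w_{k+1}=3-1/w_k$, consecutive $w_k$ are Farey neighbours, so $\infty=w_0,w_1,\dots,w_n$ is a path in $\mathcal F$, and the claim is that it is the unique geodesic from $\infty$ to $w_n$ (in particular $d_{\mathcal F}(\infty,w_n)=n$). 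To prove this I would analyse the chain of Farey triangles crossed by the hyperbolic geodesic from $\infty$ to $w_n$: because every continued-fraction entry equals $3$, this chain is a concatenation of $n-1$ triangle-fans, each consisting of three triangles, with successive apexes $w_1,\dots,w_{n-1}$; as any edge-path geodesic between two ideal vertices stays inside this chain, and traversing a fan of three triangles forces a geodesic through the fan's apex, the geodesic is pinned down to the single path $\infty,w_1,\dots,w_n$.

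\textbf{Main obstacle.} The hard part is exactly this last verification --- that the convergent path is genuinely a geodesic (no shortcut of length $<n$) and that it is the only one. Everything else (the identification $\mathcal C(S_{0,4})\cong\mathcal F$, the computation of $\mathcal D^0$, realizability via mapping classes) is standard. I expect the verification to go either through the triangle-chain/fan picture above, or, more hands-on, by an induction on $n$ in which one uses $M$-invariance to reduce to showing that every geodesic from $\infty$ to $w_n$ begins with the edge joining $\infty$ to $w_1=3$; this amounts to careful bookkeeping of the Farey neighbours of the $w_k$'s, a point that is elementary but genuinely uses the entries being $3$ (taking entries equal to $2$ instead would produce size-two fans and destroy uniqueness).
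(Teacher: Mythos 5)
Your reduction is exactly the one the paper uses: $\mathcal{C}(F\setminus P)$ for the four-punctured sphere is the Farey graph $\mathcal{F}$, each trivial two-string tangle contributes a single disk slope, so every $(0,2)$-splitting is keen, its distance is the Farey distance between the two slopes, strong keenness is equivalent to uniqueness of the connecting Farey geodesic, and any pair of slopes (normalized to $\frac{1}{0}$ and $\frac{p}{q}$) is realized by a $2$-bridge link. The paper finishes by choosing $\frac{p}{q}=[a_1,\dots,a_{n-1}]$ (plus continued fraction) with all $a_i\ge 3$ and quoting the ladder results of Baik--Kim--Kwak--Shin and Series (Theorem~\ref{thm-b}): for such slopes the spine of the ladder $\mathcal{L}\left(\frac{1}{0},\frac{p}{q}\right)$ is the unique geodesic and has length $n$.

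The gap is in your combinatorial core, and your particular choice of family makes it genuinely harder, not just "routine to verify". The convergents of $z\mapsto 3-\frac{1}{z}$ have plus continued fractions with entries $1$ and $2$ after removing the integer part (e.g.\ $w_3=\frac{21}{8}$, and $\frac{21}{8}-2=\frac{5}{8}=[1,1,1,2]$), so the criterion "all entries $\ge 3$" that the paper invokes does not apply to them; indeed for this family the unique geodesic is \emph{not} the spine of the ladder (for $n=3$ the pivot points are $3,\frac{5}{2},\frac{8}{3}$, the spine $\infty,3,\frac{5}{2},\frac{8}{3},\frac{21}{8}$ has length $4$, while the geodesic $\infty,3,\frac{8}{3},\frac{21}{8}$ skips $\frac{5}{2}$). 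Hence you cannot simply cite the ladder/spine theorem, and your stated principle "traversing a fan of three triangles forces a geodesic through the fan's apex" is only a local observation: since consecutive fans overlap in a triangle, it does not by itself exclude geodesics that weave along the bottoms of the fans (in the $n=3$ ladder the bottom path $\infty,2,\frac{5}{2},\frac{13}{5},\frac{21}{8}$ exists and must be ruled out by an actual length count), and in general one needs the induction with careful bookkeeping of Farey neighbours that you yourself flag as the unproven main obstacle. The family does appear to work for small $n$ (I checked $n\le 4$ directly), but as written the decisive step--distance exactly $n$ and uniqueness of the geodesic--is missing; the painless repair is to do what the paper does, namely take $\frac{p}{q}$ with all plus-continued-fraction entries at least $3$, for which uniqueness of the geodesic (the spine) is precisely the quoted statement. (Also, your aside that entries equal to $2$ would "destroy uniqueness" is slightly off: with $2$'s the map is parabolic and the convergents all stay within distance $2$ of $\infty$, so it is the distance growth, not uniqueness, that fails.)
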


In fact, the $(0,2)$-splitting of any $2$-bridge link corresponding to the continued fraction $[a_1,a_2,\dots,a_{n-1}]$ with $a_i\ge 3$ for every $i\in\{1,2,\dots,n\}$ is a strongly keen bridge splitting of distance $n$. 
We note that any $(0,2)$-splitting of a link is keen since the $2$-string trivial tangle admits a unique essential disk.

On contrast, we show that any $(0,3)$-splitting of a link with distance $1$ cannot be keen.
In fact, we prove: 

\begin{theorem}\label{thm-2}
A link $L$ admits a $(0,3)$-splitting with distance $1$ if and only if 
$L$ is either {\rm (i)} a trivial knot, {\rm (ii)} a $2$-bridge link which is not a $2$-component trivial link, or {\rm (iii)} the connected sum of two $2$-bridge links neither of which is a $2$-component trivial link.
Further, any of such $(0,3)$-splitting admits at least two distinct pairs of essential disks realizing the distance $1$, and hence, it is not keen.
\end{theorem}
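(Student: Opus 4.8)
The plan is to locate, inside a distance-$1$ $(0,3)$-splitting, a pair of disjoint $2$-spheres each meeting $L$ in two points, and to read off $L$ from the resulting decomposition of $(S^{3},L)$.

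I would first record the form of the essential disks of a trivial $3$-string tangle $(V,t)$ in a ball with $\partial V=F$, $|F\cap L|=6$: a compressing disk $D$ of $F\setminus P$ in $V\setminus t$ is disjoint from $t$, so the $2$-sphere $D\cup E$ (with $E\subset F$, $\partial E=\partial D$) separates $V$ and no string of $t$ crosses it; hence $\partial D$ separates $P$ into two unions of endpoint-pairs, and essentiality forces the split $2\mid 4$, i.e.\ $\partial D$ cuts off exactly the two endpoints of one string of $t$. So a distance-$1$ pair $(D_{1},D_{2})$ ($D_{i}\subset V_{i}\setminus t_{i}$, $\partial D_{1}\cap\partial D_{2}=\emptyset$) has $\partial D_{1}$ cutting off $\partial s_{1}=\{a,b\}$ for a string $s_{1}$ of $t_{1}$ and $\partial D_{2}$ cutting off $\partial s_{2}=\{c,d\}$ for a string $s_{2}$ of $t_{2}$. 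Since disjoint curves on $F\setminus P$ cannot cut off pairs sharing exactly one point, either $\{a,b\}=\{c,d\}$ or $\{a,b\}\cap\{c,d\}=\emptyset$; in the first case the annulus between $\partial D_{1}$ and $\partial D_{2}$ is unpunctured, so after an isotopy $\partial D_{1}=\partial D_{2}$ bounds essential disks on both sides and the distance is $0$, a contradiction. Hence $\{a,b\},\{c,d\}$ are disjoint pairs, leaving a third pair $\{e,f\}\subset P$.

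Letting $E_{i}\subset F$ be the disk with $\partial E_{i}=\partial D_{i}$ containing exactly $\{a,b\}$, resp.\ $\{c,d\}$, the disks $E_{1},E_{2}$ are disjoint, so the spheres $\Sigma_{i}=D_{i}\cup E_{i}$ are disjoint; $\Sigma_{1}$ bounds a ball $B_{1}$ meeting $L$ only in the trivial arc $s_{1}$, similarly $B_{2}\supset s_{2}$, and $W:=S^{3}\setminus\mathrm{int}(B_{1}\cup B_{2})\cong S^{2}\times I$ meets $L$ in a $4$-endpoint tangle. The bridge surface restricts on $W$ to a spanning annulus $A=F\cap W$ carrying the two punctures $e,f$, splitting $W$ into balls $W_{i}=V_{i}\cap W$ in which $L$ meets $W_{i}$ in the trivial $2$-string tangle $t_{i}\setminus s_{i}$. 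Tracing $L\cap W=(t_{1}\setminus s_{1})\cup(t_{2}\setminus s_{2})$ glued along $A$ at $e,f$ and recapping $s_{1},s_{2}$, the number of components of $L$ is governed by whether $\{c,d\}$ is a string of $t_{1}$ and $\{a,b\}$ a string of $t_{2}$: if neither holds, $L$ is a knot; if exactly one holds, $L$ has two components; the ``both hold'' case will be shown to force distance $0$. For the backward direction I would exhibit explicit splittings: connect-summing the $(0,2)$-splitting of a $2$-bridge link with a perturbed (distance-$0$) $(0,2)$-splitting of the trivial knot, resp.\ connect-summing the $(0,2)$-splittings of two $2$-bridge links, yields a reducible $(0,3)$-splitting of the appropriate link, of distance $\le 1$, and of distance exactly $1$ provided none of the pieces is a $2$-component trivial link (using the classification of links with a distance-$0$ $(0,\le 3)$-splitting); for the trivial knot I would write down one $(0,3)$-splitting directly and check it is irreducible yet admits a disjoint pair. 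For the forward direction, from the decomposition above one shows $(W,L\cap W,A)$ is rational-tangle-like: isotoping the strings of $L\cap W_{i}$ into a collar of $A$ presents $L\cap W$ as carried by a neighbourhood of $A$, so $L$ is a capping of a rational tangle by the trivial arcs $s_{1},s_{2}$; reading off the possibilities and using the component count and distance $1$ to discard the degenerate and over-reducible ones yields precisely a trivial knot, a $2$-bridge link that is not a $2$-component trivial link, or a connected sum of two $2$-bridge links neither of which is a $2$-component trivial link.

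Finally, for non-keenness I would use the two punctures $e,f$ on the spanning annulus $A$: unlike a genuine $(0,2)$-splitting (where each side carries a unique meridian disk), here $(W,L\cap W)$ admits a symmetry and/or the reducing sphere supplies essential disks on both ``sides'' of its trace on $F$, producing a second pair $(D_{1}',D_{2}')$ with $\partial D_{1}'\cap\partial D_{2}'=\emptyset$ and $\{\partial D_{1}',\partial D_{2}'\}\ne\{\partial D_{1},\partial D_{2}\}$ in $\mathcal{C}(F\setminus P)$, so the splitting is not keen. I expect the main obstacle to be the forward classification --- pinning down exactly which links the annular two-string datum $(W,L\cap W,A)$ can produce, in particular checking that the ``both $\{a,b\}$ a $t_{2}$-string and $\{c,d\}$ a $t_{1}$-string'' configuration is incompatible with distance $1$ and that no link outside the stated trichotomy occurs --- together with making the last step rigorous, namely that the second distance-realizing pair is genuinely non-isotopic to the first, which is exactly where the presence of $e,f$ on $A$ (so that $W$ is not a plain $(0,2)$-splitting) provides the needed room.
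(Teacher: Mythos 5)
Your opening analysis is correct and parallels the paper's setup: in a trivial $3$-string tangle every essential disk cuts off exactly the two endpoints of one string, so a distance-$1$ pair $(D_1,D_2)$ cuts off disjoint endpoint pairs and $\partial D_1\cup\partial D_2$ bounds a twice-punctured annulus in $S$. But from there the proposal does not prove either half of the theorem. For the classification, you pass to the tangle $(W,L\cap W,A)$ in $S^2\times I$ and defer the conclusion to ``reading off the possibilities'', a component count, and the unproved assertion that the ``both hold'' configuration forces distance $0$. A component count cannot decide the trichotomy (case (ii) already contains both knots and $2$-component links), the ``rational-tangle-like'' structure of $L\cap W$ is asserted rather than established, and the key exclusion is postponed. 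The paper's route is shorter and complete: the single $2$-sphere $D_1\cup A^S\cup D_2$ (with $A^S$ the twice-punctured annulus) meets $L$ in two points and exhibits $L$ as a connected sum $S(q_1,p_1)\,\sharp\, S(q_2,p_2)$ of two $2$-bridge links (each possibly trivial); then Theorem~\ref{thm-n0} (distance $0$ iff $E(L)$ is reducible or the splitting is stabilized, and stabilization is impossible on a genus-$0$ bridge surface) converts ``distance $1$'' into irreducibility of $E(L)$, i.e.\ neither summand is the $2$-component trivial link. That yields exactly (i)--(iii), and the same reasoning gives the converse for the obvious splitting of such an $L$. Your backward direction similarly leans on an unspecified ``classification of links with a distance-$0$ $(0,\le 3)$-splitting''; you would need to state and use precisely such a result (this is Theorem~\ref{thm-n0} in the paper).

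The more serious gap is the non-keenness half, which is the actual point of the second sentence of the theorem: you only gesture at it (``a symmetry and/or the reducing sphere supplies essential disks on both sides'') and you acknowledge yourself that producing a second, genuinely distinct distance-realizing pair is an unresolved obstacle. The paper constructs it explicitly: choose an essential curve $\gamma$ in the twice-punctured annulus separating its two punctures, with $S_1,S_2$ the subdisks of $S$ bounded by $\gamma$ containing $\partial D_1,\partial D_2$ respectively; the component $B_1'$ of $B_1^3$ cut along $D_1$ that contains two strings is a rational tangle, and $\gamma$ splits $\partial B_1'$ into a disk containing $D_1$ and one endpoint and a disk containing the other three endpoints, whence there is an essential disk $E_1\subset B_1'\setminus t_1$ with $\partial E_1\subset\mathrm{int}(S_2)$; symmetrically one finds $E_2\subset B_2^3\setminus t_2$ with $\partial E_2\subset\mathrm{int}(S_1)$. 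Then $\partial E_1\cap\partial E_2=\emptyset$ and $E_i$ is not isotopic to $D_i$, giving the required second pair. Without an argument of this kind, and without completing the classification step above, your proposal is an outline of a (partly more roundabout) strategy rather than a proof.
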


\begin{remark}
{\rm 
%

Note that any keen bridge splittings of distance $1$ is strongly keen by the definition.
For $n\ge 4$, by modifying the construction of strongly keen bridge splittings in this paper slightly, it can be seen that there exist bridge splittings of distance $n$ which are keen but not strongly keen. 
See Remark~\ref{rem} for example.
}
\end{remark}

\begin{remark}
{\rm
We had given a proof of the existence of bridge splittings of links with distance $n$ in \cite{IJK2}, but found out there is a gap in the proof.
More precisely, \lq\lq $P_i(a)\ne \emptyset$ for any $a\in\mathcal{C}^0(F_i)$\rq\rq\ in Line 13 of Page 613 of the paper does not necessarily hold, and hence
the inequality \lq\lq ${\rm diam}_{\partial_- W_i}(P_i(A)) \le {\rm diam}_{F_i}(A)$\rq\rq, which is used in the last line of Page 613 and in Line 20 of Page 614, may not be correct.
We are not able to fix the gap at the moment, but Theorem~\ref{thm-1} above covers the result in \cite{IJK2}.
}
\end{remark}

\section{Preliminaries}\label{sec-pre}

Throughout this paper, for a submanifold $Y$ of a manifold $X$, $N_X(Y)$ denotes a regular neighborhood of $Y$ in $X$.
When $X$ is clear from the context, we denote $N_X(Y)$ by $N(Y)$ in brief.
We denote ${\rm cl}_X(Y)$ (or ${\rm cl}(Y)$ in brief) the closure of $Y$ in $X$.

\subsection{Curve complexes}\label{sec-cc}

Let $S$ be a genus-$g$ orientable surface with $e$ boundary components and $p$ punctures. 
A simple closed curve in $S$ is {\it essential} if it does not bound a disk or a once-punctured disk in $S$ and is not parallel to a component of $\partial{S}$. 
We say that $S$ is {\it non-simple} if there exists an essential simple closed curve in $S$, and $S$ is {\it simple} otherwise.
By an {\it arc properly embedded} in $S$, we mean an arc intersecting $\partial S$ only in its endpoints.
An arc properly embedded in $S$ is {\it essential} if it does not co-bound a disk with no puncture in $S$ together with an arc on $\partial{S}$. 
Two simple closed curves or two arcs in $S$ are {\it isotopic} if there is an ambient isotopy of $S$ which sends one to the other.
We say that $S$ is {\it sporadic} if either \lq\lq $g=0$ and $e+p\leq 4$\rq\rq\ or \lq\lq $g=1$ and $e+p\leq1$\rq\rq.

For a non-sporadic surface $S$, the {\it curve complex} $\mathcal{C}(S)$ is defined as follows: Each vertex of $\mathcal{C}(S)$ is the isotopy class of an essential simple closed curve in $S$, and a collection of $k+1$ vertices forms a $k$-simplex of $\mathcal{C}(S)$ if they can be realized by disjoint curves in $S$. 
For sporadic surfaces, we need to modify the definition of the curve complex slightly. 
We assume that either $g=1$ and $e+p\leq 1$ or $g=0$ and $e+p=4$ since, otherwise, $S$ is simple.
When $g=1$ and $e+p\leq 1$ (resp. $g=0$ and $e+p=4$), a collection of $k+1$ vertices forms a $k$-simplex of $\mathcal{C}(S)$ if they can be realized by curves in $S$ which mutually intersect transversely exactly once (resp. twice). 
The {\it arc-and-curve complex} $\mathcal{AC}(S)$ is defined similarly: Each vertex of $\mathcal{AC}(S)$ is the isotopy class of an essential properly embedded arc or an essential simple closed curve in $S$, and a collection of $k+1$ vertices forms a $k$-simplex of $\mathcal{AC}(S)$ if they can be realized by disjoint arcs or simple closed curves in $S$. 
The symbols $\mathcal{C}^0(S)$ and $\mathcal{AC}^0(S)$ denote the 0-skeletons of the curve complexes $\mathcal{C}(S)$ and $\mathcal{AC}(S)$, respectively.
Throughout this paper, for a vertex $x\in\mathcal{C}^0(S)$ or $x\in\mathcal{AC}^0(S)$ we often abuse notation and use $x$ to represent (the isotopy class of) a geometric representative of $x$.

We can define the {\it distance} between two vertices in the curve complex $\mathcal{C}(S)$ to be the minimal number of 1-simplices of a simplicial path in $\mathcal{C}(S)$ joining the two vertices.
We denote by $d_{S}(a, b)$ the distance in $\mathcal{C}(S)$ between the vertices $a$ and $b$. 
For subsets $A$ and $B$ of the vertices of $\mathcal{C}(S)$, we define ${\rm diam}_{S}(A, B)={\rm diam}_{S}(A\cup B)$. 
Similarly, we can define the distance $d_{\mathcal{AC}(S)}(a, b)$ and ${\rm diam}_{\mathcal{AC}(S)}(A, B)$.
Let $a_0,a_1,\dots, a_n$ be a sequence of vertices in $\mathcal{C}(S)$ such that $a_{i-1}\cap a_i=\emptyset$ $(i=1,2,\dots,n)$.
Then $[a_0,a_1,\dots,a_n]$ denotes the path in $\mathcal{C}(S)$ with vertices $a_0,a_1,\dots,a_n$ in this order.
We call a path $[a_0,a_1,\dots,a_n]$ a {\it geodesic} if $n=d_S(a_0,a_n)$.

\subsection{Subsurface projections}\label{sec-subsurface}

Throughout this paper, $\mathcal{P}(Y)$ denotes the power set of a set $Y$.
Let $S$ be a genus-$g$ orientable surface with $e$ boundary components and $p$ punctures.
We say that a subsurface $X(\subset S)$ is {\it essential} if each component of $\partial X$ is an essential simple closed curve in $S$.
Suppose that $X$ is a non-simple essential subsurface of $S$.
We call the composition $\pi_X:=\pi_0\circ\pi_{AC}$ of maps $\pi_{AC}:\mathcal{C}^{0}(S)\rightarrow \mathcal{P}(\mathcal{AC}^{0}(X))$ and $\pi_0:\mathcal{P}(\mathcal{AC}^{0}(X))\rightarrow\mathcal{P}(\mathcal{C}^{0}(X))$ a {\it subsurface projection}, where $\pi_{AC}$ and $\pi_0$ are defined as follows: 
For a vertex $\alpha$, take a representative $\alpha$ 
so that $|\alpha\cap X|$ is minimal, where $|\cdot|$ is the number of connected components. Then 

\begin{itemize}
\item $\pi_{AC}(\alpha)$ is the set of all isotopy classes of the components of $\alpha\cap X$,
\item $\pi_0(\{\alpha_1,\dots,\alpha_n\})$ is the union for all $i=1,\dots,n$ of the set of all isotopy classes of the components of $\partial N_X(\alpha_{i}\cup\partial X)$ which are essential in $X$.
\end{itemize}

We say that $\alpha$ {\it misses} $X$ if $\alpha$ can be isotoped on $S$ so that $\alpha\cap X=\emptyset$. 
Otherwise, we say that $\alpha$ {\it cuts} $X$.

The next lemma is due to \cite[Lemma 2.2]{MM2}.
\begin{lemma}\label{lem_mm}
Let $X$ be a non-simple surface.
If $d_{\mathcal{AC}(X)}(\alpha,\beta)\le 1$, then ${\rm diam}_{X}(\pi_0(\{\alpha\}),\pi_0(\{\beta\}))\le 2$.
\end{lemma}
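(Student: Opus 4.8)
The plan is to reduce the statement to the two genuinely different configurations that a pair of arcs or curves in $X$ at distance $\le 1$ in $\mathcal{AC}(X)$ can exhibit, and then to bound the subsurface-projection diameter in each case by direct surgery estimates. First I would recall that, after isotoping $\alpha$ and $\beta$ to realize the minimal number of components of $\alpha\cap X$ and $\beta\cap X$, each of $\pi_{AC}(\{\alpha\})$ and $\pi_{AC}(\{\beta\})$ is a collection of essential arcs and simple closed curves in $X$, and that $\pi_0$ of a single such element $\gamma$ consists of the essential components of $\partial N_X(\gamma\cup\partial X)$. The key elementary observation is that if $\gamma$ and $\gamma'$ are disjoint essential arcs/curves in $X$, then $\gamma\cup\gamma'\cup\partial X$ has a regular neighborhood whose essential boundary components are, up to isotopy, disjoint from those of $N_X(\gamma\cup\partial X)$ and of $N_X(\gamma'\cup\partial X)$; hence $d_X\bigl(\pi_0(\{\gamma\}),\pi_0(\{\gamma'\})\bigr)\le 2$, with the intermediate vertex supplied by a boundary curve of $N_X(\gamma\cup\gamma'\cup\partial X)$ (this uses non-simplicity of $X$ to guarantee such an essential curve exists).

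Next I would handle the case $d_{\mathcal{AC}(X)}(\alpha,\beta)=0$, i.e.\ $\alpha$ and $\beta$ have isotopic representatives in $X$: then in fact some component of $\pi_{AC}(\{\alpha\})$ is isotopic to a component of $\pi_{AC}(\{\beta\})$ once both are taken with minimal intersection with $X$ — more precisely, the minimal position representatives of $\alpha$ and $\beta$ can be taken to coincide — so $\pi_0(\{\alpha\})$ and $\pi_0(\{\beta\})$ share a vertex and the diameter is $0$. For the case $d_{\mathcal{AC}(X)}(\alpha,\beta)=1$, fix components $a\in\pi_{AC}(\{\alpha\})$ and $b\in\pi_{AC}(\{\beta\})$ that are disjoint and essential in $X$ (their existence is exactly the hypothesis $d_{\mathcal{AC}(X)}(\alpha,\beta)\le 1$). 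Apply the elementary observation above to $\gamma=a$, $\gamma'=b$: every vertex of $\pi_0(\{a\})$ is within distance $1$ in $\mathcal{C}(X)$ of a boundary curve $c$ of $N_X(a\cup b\cup\partial X)$, and every vertex of $\pi_0(\{b\})$ is within distance $1$ of the same $c$. Since $\pi_0(\{\alpha\})\subseteq\pi_0(\{a\})$ — here one uses that adding further disjoint components of $\alpha\cap X$ only enlarges the neighborhood in a way that keeps its essential boundary curves disjoint from those of $N_X(a\cup\partial X)$, so they project within distance $1$ — and similarly $\pi_0(\{\beta\})\subseteq$ (distance-$1$ neighborhood of) $\pi_0(\{b\})$, the triangle inequality gives $\mathrm{diam}_X\bigl(\pi_0(\{\alpha\}),\pi_0(\{\beta\})\bigr)\le 2$.

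The step I expect to be the main obstacle is the last one: controlling $\pi_0(\{\alpha\})$ by a single component $a$ of $\alpha\cap X$, since $\alpha\cap X$ may have many components and $\pi_0$ takes boundary curves of a neighborhood of \emph{all} of them together with $\partial X$. The resolution is that all these components are mutually disjoint and disjoint from $\partial X$, so $\partial N_X\bigl((\alpha\cap X)\cup\partial X\bigr)$ and $\partial N_X(a\cup\partial X)$ can be isotoped to be disjoint in $X$; therefore any essential curve of the former is within $\mathcal{C}(X)$-distance $1$ of an essential curve of the latter, and it suffices to bound the latter. (One must also dispatch the degenerate possibility that $N_X\bigl((\alpha\cap X)\cup\partial X\bigr)$ has \emph{no} essential boundary curve, in which case $\pi_0(\{\alpha\})=\emptyset$ and the diameter bound is vacuous; this is why the statement is phrased with $\mathrm{diam}$, which is $-\infty$ or undefined on the empty set by convention, and in applications one works only with $\alpha$ cutting $X$.) With these reductions in place, the two cases above complete the proof, and I would simply cite \cite[Lemma 2.2]{MM2} for the precise bookkeeping rather than reproduce it in full.
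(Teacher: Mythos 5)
The paper itself does not prove this statement --- it quotes it from \cite[Lemma 2.2]{MM2} --- so you are attempting to supply an argument where the paper only cites; unfortunately the argument has a genuine gap at its central step. Your ``key elementary observation'' needs an essential component of $\partial N_X(\gamma\cup\gamma'\cup\partial X)$ to serve as the intermediate vertex, and you assert that its existence follows from non-simplicity of $X$. That is false: two disjoint essential arcs together with $\partial X$ can fill $X$. For example, in a one-holed torus the arcs of slopes $0$ and $\infty$ are disjoint and their union with $\partial X$ has a single complementary disk; adding a puncture in that disk gives a non-sporadic surface (genus $1$, $e+p=2$) in which the frontier of $N_X(\gamma\cup\gamma'\cup\partial X)$ is a single curve bounding a once-punctured disk, hence inessential, while $\pi_0(\{\gamma\})$ and $\pi_0(\{\gamma'\})$ are nonempty and consist of curves that genuinely intersect. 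In this filling situation your intermediate vertex simply does not exist, and the bound $\le 2$ (which is still true) has to be obtained differently, e.g.\ by bounding the geometric intersection number between curves of $\pi_0(\{\gamma\})$ and of $\pi_0(\{\gamma'\})$ and converting that into a distance estimate, which is essentially what Masur--Minsky do.

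A second problem is that your mechanism ``disjoint essential curves are adjacent in $\mathcal{C}(X)$'' is used throughout, but under the conventions of Subsection~\ref{sec-cc} it fails when $X$ is sporadic: for the one-holed torus and for genus-$0$ surfaces with $e+p=4$, edges are defined by intersection number $1$, resp.\ $2$, not by disjointness (indeed in those surfaces disjoint essential curves are isotopic, so your chain of disjointness arguments degenerates). Such $X$ are non-simple, are allowed by the hypothesis, and actually occur where the lemma is applied: in Section~\ref{sec-proof-5} the subsurfaces $X_0$ and $X_1$ are a one-holed torus and a twice-punctured annulus. These cases need separate Farey-graph estimates. Two smaller points: when $d_{\mathcal{AC}(X)}(\alpha,\beta)=0$ the diameter of $\pi_0(\{\alpha\})$ need not be $0$, since $\partial N_X(\alpha\cup\partial X)$ can have several non-isotopic essential components; and since $\alpha,\beta$ are already vertices of $\mathcal{AC}(X)$, your reduction from $\alpha\cap X$ to a single component belongs to the deduction of Lemma~\ref{subsurface distance}, not to this lemma. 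As it stands the proposal does not prove the statement; the cleanest repair is to follow the intersection-counting proof of \cite[Lemma 2.2]{MM2}, treating the sporadic surfaces separately.
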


The next lemma can be easily proved by using the above lemma.
%
\begin{lemma}[{\rm cf. \cite[Lemma 2.1]{IJK1}}]\label{subsurface distance}
Let $X$ be a non-simple essential subsurface of $S$ as above. 
Let $[\alpha_{0}, \alpha_{1}, . . . , \alpha_{n}]$ be a path in $\mathcal{C}(S)$ such that every $\alpha_i$ cuts $X$. 
Then ${\rm diam}_{X}(\pi_{X}(\alpha_{0}), \pi_{X}(\alpha_{n}))\leq 2n$.

Furthermore, we have ${\rm diam}_X(\pi_X(\alpha))\le 2$ for any $\alpha\in\mathcal{C}^0(S)$ which cuts $X$.
\end{lemma}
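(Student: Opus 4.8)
\section*{Proof proposal for Lemma~\ref{subsurface distance}}

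My plan is to deduce the statement from Lemma~\ref{lem_mm} together with the triangle inequality: I would first prove the ``furthermore'' clause, then the single-step estimate for a pair of disjoint curves, and finally chain these along the path. For the ``furthermore'' clause, let $\alpha\in\mathcal{C}^0(S)$ cut $X$ and fix a representative with $|\alpha\cap X|$ minimal, so that the components of $\alpha\cap X$ represent the vertices of $\pi_{AC}(\alpha)$ and are pairwise disjoint in $X$; note $\pi_X(\alpha)=\bigcup_{a\in\pi_{AC}(\alpha)}\pi_0(\{a\})$ is nonempty since $\alpha$ cuts $X$. If $x,y\in\pi_X(\alpha)$ with $x\in\pi_0(\{a\})$ and $y\in\pi_0(\{a'\})$ for components $a,a'$ of $\alpha\cap X$, then $d_{\mathcal{AC}(X)}(a,a')\le 1$, so Lemma~\ref{lem_mm} gives $d_X(x,y)\le{\rm diam}_X(\pi_0(\{a\})\cup\pi_0(\{a'\}))\le 2$; hence ${\rm diam}_X(\pi_X(\alpha))\le 2$.

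Next I would show that if $\alpha,\beta\in\mathcal{C}^0(S)$ are disjoint and both cut $X$, then ${\rm diam}_X(\pi_X(\alpha)\cup\pi_X(\beta))\le 2$. Take disjoint representatives of $\alpha$ and $\beta$ and isotope the multicurve $\alpha\cup\beta$ into minimal position with respect to $\partial X$, keeping $\alpha$ and $\beta$ disjoint; a standard innermost-bigon argument shows that this puts each of $\alpha$ and $\beta$ individually in minimal position with respect to $\partial X$, so the components of $\alpha\cap X$ and of $\beta\cap X$ simultaneously represent $\pi_{AC}(\alpha)$ and $\pi_{AC}(\beta)$ and form two disjoint families in $X$. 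Thus every component $a$ of $\alpha\cap X$ is disjoint from every component $b$ of $\beta\cap X$, that is, $d_{\mathcal{AC}(X)}(a,b)\le 1$. Given $x\in\pi_X(\alpha)$ and $y\in\pi_X(\beta)$, write $x\in\pi_0(\{a\})$ and $y\in\pi_0(\{b\})$; then Lemma~\ref{lem_mm} yields $d_X(x,y)\le{\rm diam}_X(\pi_0(\{a\})\cup\pi_0(\{b\}))\le 2$. Combined with the ``furthermore'' bound applied within $\pi_X(\alpha)$ and within $\pi_X(\beta)$, this gives ${\rm diam}_X(\pi_X(\alpha)\cup\pi_X(\beta))\le 2$.

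Finally, for the path $[\alpha_0,\ldots,\alpha_n]$: each consecutive pair $\alpha_{i-1},\alpha_i$ is disjoint in $S$ and both cut $X$, so the previous step applies to it. Given $x\in\pi_X(\alpha_0)$ and $y\in\pi_X(\alpha_n)$, choose $z_i\in\pi_X(\alpha_i)$ for $1\le i\le n-1$ (nonempty, as $\alpha_i$ cuts $X$) and set $z_0=x$, $z_n=y$; then $d_X(x,y)\le\sum_{i=1}^{n}d_X(z_{i-1},z_i)\le\sum_{i=1}^{n}{\rm diam}_X(\pi_X(\alpha_{i-1})\cup\pi_X(\alpha_i))\le 2n$. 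Since any two vertices lying both in $\pi_X(\alpha_0)$ or both in $\pi_X(\alpha_n)$ are at distance $\le 2\le 2n$, we obtain ${\rm diam}_X(\pi_X(\alpha_0),\pi_X(\alpha_n))\le 2n$, as desired. The one point I expect to require care is the claim in the second step that disjoint curves in $S$ admit simultaneous minimal position with respect to $\partial X$, so that the arc/curve components of $\alpha\cap X$ and $\beta\cap X$ are realized disjointly in $X$; this is where the hypothesis $\alpha_{i-1}\cap\alpha_i=\emptyset$ is genuinely used, and it is the only place the argument goes beyond Lemma~\ref{lem_mm} and the triangle inequality.
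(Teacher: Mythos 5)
Your proof is correct and follows exactly the route the paper intends: the paper offers no details beyond ``easily proved by using the above lemma'' (Lemma~\ref{lem_mm}, citing \cite[Lemma 2.1]{IJK1}), and your argument—realizing disjoint curves simultaneously in minimal position with respect to $\partial X$, applying Lemma~\ref{lem_mm} to disjoint arc/curve components, and chaining with the triangle inequality—is precisely that standard argument, with the simultaneous minimal-position point you flag handled, e.g., by geodesic representatives.
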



Let $Y, Z$ be non-simple surfaces. Suppose that there exists an embedding $\varphi:Y\rightarrow Z$ such that $\varphi(Y)$ is an essential subsurface of $Z$. 
Note that $\varphi$ naturally induces maps $\mathcal{C}^0(Y)\rightarrow \mathcal{C}^0(Z)$ and $\mathcal{P}(\mathcal{C}^0(Y))\rightarrow \mathcal{P}(\mathcal{C}^0(Z))$. Throughout this paper, under this setting, we abuse notation and use $\varphi$ to denote these maps.

The next lemma can be proved by using arguments in the proof of \cite[Lemma 2.3]{IJK3} and Appendix~\ref{app-b}. 


\begin{lemma}\label{lem-2-3-1}
Let $S$ be a non-simple surface, and let $X$ be a non-simple essential subsurface of $S$. 
Let $\alpha,\beta\in\mathcal{C}^0(S)$ such that $\alpha, \beta$ cut $X$.
For any $k\in\mathbb{N}$, there exists a homeomorphism $h:S\rightarrow S$ such that $h|_{S\setminus X}={\rm id}_{S\setminus X}$ and that
$d_X(\pi_X(\alpha), \pi_X(h(\beta)))>k$.
In particular, ${\rm diam}_X(\pi_X(\alpha), \pi_X(h(\beta)))>k$ also holds.
\end{lemma}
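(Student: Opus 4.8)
The plan is to reduce Lemma~\ref{lem-2-3-1} to the known displacement statement in \cite[Lemma 2.3]{IJK3}. Recall that $\alpha$ and $\beta$ cut $X$, so $\pi_X(\alpha)$ and $\pi_X(\beta)$ are nonempty subsets of $\mathcal{C}^0(X)$. Since $X$ is a non-simple essential subsurface of $S$, the mapping class group $\mathrm{Mod}(X)$ acts on $\mathcal{C}(X)$ with unbounded orbits; concretely, there is a pseudo-Anosov homeomorphism $\psi:X\to X$ whose translation length on $\mathcal{C}(X)$ is positive, so that $d_X(\gamma,\psi^m(\gamma))\to\infty$ as $m\to\infty$ for any fixed vertex $\gamma$. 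First I would fix a vertex $a\in\pi_X(\alpha)$ and a vertex $b\in\pi_X(\beta)$, realized by essential simple closed curves in $X$, and note that it suffices to make $d_X(a, h(b))$ large, since $\mathrm{diam}_X(\pi_X(\alpha),\pi_X(\beta))\ge d_X(a,b)$ for any such choice of representatives.

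Next, since $\psi$ restricts to the identity on a collar of $\partial X$, I can extend $\psi$ by the identity on $S\setminus X$ to obtain a homeomorphism of $S$, still denoted $\psi$, with $\psi|_{S\setminus X}=\mathrm{id}_{S\setminus X}$. Setting $h=\psi^m$ for suitably large $m$, the curve $h(\beta)$ again cuts $X$, and its $X$-projection satisfies $\pi_X(h(\beta))\supseteq\{\psi^m(b')\}$ for an appropriate representative; more care is needed here because $\pi_X$ is defined via the arc-and-curve complex and minimal position, so $\pi_X(h(\beta))$ is not literally $h(\pi_X(\beta))$. This is where Appendix~\ref{app-b} enters: one uses the Lipschitz-type control provided by Lemma~\ref{subsurface distance} (namely $\mathrm{diam}_X(\pi_X(\gamma))\le 2$ for any $\gamma$ cutting $X$) together with the equivariance of the construction to conclude that $\pi_X(h(\beta))$ lies within bounded Hausdorff distance of $h$ applied to (a representative coming from) $\pi_X(\beta)$. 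Concretely, one shows $d_X\big(\pi_X(h(\beta)),\, \psi^m(\pi_X(\beta))\big)$ is bounded by an absolute constant independent of $m$.

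With that in hand the estimate is a triangle inequality:
\begin{equation*}
d_X\big(\pi_X(\alpha),\pi_X(h(\beta))\big)\ \ge\ d_X\big(a,\psi^m(b)\big)\ -\ C,
\end{equation*}
where $C$ is the absolute constant bounding the discrepancy between $\pi_X(h(\beta))$ and $\psi^m(\pi_X(\beta))$ plus the diameter bound from Lemma~\ref{subsurface distance}. Since $d_X(a,\psi^m(b))\to\infty$ with $m$, choosing $m$ large enough yields $d_X(\pi_X(\alpha),\pi_X(h(\beta)))>k$, and the final assertion $\mathrm{diam}_X(\pi_X(\alpha),\pi_X(h(\beta)))>k$ follows immediately since the diameter of a set containing two points at distance $>k$ is itself $>k$.

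The main obstacle I anticipate is precisely the bookkeeping in the second paragraph: the subsurface projection $\pi_X$ is not literally natural with respect to homeomorphisms of $S$ that fix $S\setminus X$, because it is defined through minimal-position representatives and the auxiliary map $\pi_0$, so one must verify that extending $\psi$ by the identity genuinely conjugates the projection up to bounded error. This is the content deferred to Appendix~\ref{app-b}, and it rests on the same mechanism as \cite[Lemma 2.3]{IJK3}: the representative of $\beta$ in minimal position with respect to $X$ can be chosen so that its intersection with $X$ is carried to the intersection of $h(\beta)$ with $X$ by $h$ itself, after which $\pi_0$ commutes with $h|_X=\psi^m$ on the nose. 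Once that naturality-up-to-bounded-error is established, the rest of the argument is the routine application of pseudo-Anosov unboundedness and the triangle inequality sketched above.
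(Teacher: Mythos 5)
Your proposal is correct and follows essentially the same route as the paper's intended proof (which is delegated to the argument of \cite[Lemma 2.3]{IJK3}): take a pseudo-Anosov of $X$, isotope it to fix $\partial X$, extend by the identity over $S\setminus X$, and use unboundedness of its orbit in $\mathcal{C}(X)$ together with the naturality of $\pi_X$ and the diameter bound ${\rm diam}_X(\pi_X(\cdot))\le 2$. The only inaccuracy is your description of the role of Appendix~\ref{app-b}: the naturality bookkeeping is in fact exact (since $h$ preserves $X$ and $\partial X$, minimal-position representatives and regular neighborhoods are carried over by $h$, so $\pi_X(h(\beta))=\left(h|_X\right)\bigl(\pi_X(\beta)\bigr)$), and the appendix is instead what guarantees infinite diameter and positive translation length in the sporadic cases where $\mathcal{C}(X)$ is the Farey graph (four-punctured sphere or one-holed/punctured torus), which your appeal to pseudo-Anosov unboundedness implicitly uses.
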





\subsection{$(g,b)$-splittings}\label{sec-11}

It is well known that every closed orientable 3-manifold $M$ has a genus-$g$ {\it Heegaard splitting} for some $g(\geq0)$, i.e., $M =V_{1}\cup_{F}V_{2}$, where $V_{1}$ and $V_{2}$ are genus-$g$ handlebodies such that $M=V_{1}\cup V_{2}$ and $V_{1}\cap V_{2}=\partial{V_{1}}=\partial{V_{2}}=F$. 
The surface $F$ is called a {\it Heegaard surface}.
Let $L$ be a link in $M$ which intersects $F$ transversely. 
We say that $(V_{1},t_1)\cup_{(F,P)}(V_{2},t_2)$ is a {\it $(g, b)$-splitting} (or {\it bridge splitting}) of the link $L$ if $F\cap L=P$ and $F$ separates $(M,L)$ into two components $(V_{1},t_1)$ and $(V_{2},t_2)$, where $t_{i}=L \cap V_{i}$ is a union of $b$ arcs properly embedded in $V_i$ which is parallel to $\partial V_i$ ($i=1,2$). 
It is known that every $(M,L)$ has a $(g,b)$-splitting for some $g$ and $b$. (For a detailed discussion, see \cite[Lemma 2.1]{HS}).

For a 3-manifold $V$ and a 1-dimensional submanifold $t$ of $V$, 
the {\it disk complex} of $V\setminus t$, denoted by $\mathcal{D}(V\setminus t)$, is the subcomplex of $\mathcal{C}(\partial V\setminus t)$ spanned by the vertices with representatives bounding disks in $V\setminus t$.
%
%
Then the {\it (Hempel) distance} of a bridge splitting $(V_{1}, t_{1})\cup_{(F,P)}(V_{2}, t_{2})$ is defined to be  $d_{F\setminus P}(\mathcal{D}^0(V_{1}\setminus t_1), \mathcal{D}^0(V_{2}\setminus t_2))$.
We note that it is elementary to show that $\mathcal{D}(V_{i}\setminus t_i)$ is connected by using so-called outermost disk arguments.

\section{Unique geodesics}\label{sec-geod}

Throughout this section, let $S$ be a non-sporadic genus-$g\,(\ge 0)$ orientable surface with no boundary components and $p\,(\ge 2)$ punctures. For a technical reason, we assume that $p\ge 6$ when $g=0$. 
In this section, for any integer $n\,(\ge 2)$, we construct a geodesic of length $n$ in $\mathcal{C}(S)$, by using the idea  in \cite[Section 3]{IJK2} slightly modified so that the geodesic has the uniqueness property.

For a simple closed curve $l$ on $S$ which cuts off a twice-punctured disk from $S$ or is non-separating in $S$, we call the surface $X$ defined as follows the {\it subsurface of $S$ associated with $l$} throughout this paper.
\begin{itemize}
\item When $l$ cuts off a twice-punctured disk from $S$, $X$ is the closure of the component of $S\setminus N(l)$ which is not the twice-punctured disk.
\item When $l$ is non-separating in $S$, $X={\rm cl}(S\setminus N(l))$.
\end{itemize}

\begin{remark}\label{rmk_miss}
{\rm
Under the above notations, we note that if $m$ is an essential simple closed curve in $S$ which misses $X$, then $m=l$.
}
\end{remark}

The next proposition follows from \cite[Proposition 3.1]{IJK2} and its proof.

\begin{proposition}\label{prop-extending-geodesic}
Let $[l_0,l_1,\dots,l_n]$ be a path in $\mathcal{C}(S)$ $(n\ge 2)$. Assume that, for some $i$ $(1\le i\le n-1)$, the following conditions are satisfied.
\begin{itemize}
\item[{\rm (H1)}] $[l_0,l_1,\dots,l_i]$ and $[l_i,l_{i+1},\dots,l_n]$ are geodesics in $\mathcal{C}(S)$,
\item[{\rm (H2)}] $l_i$ cuts off a twice-punctured disk from $S$ or is non-separating in $S$,
\item[{\rm (H3)}] ${\rm diam}_{X_i}(\pi_{X_i}(l_0),\pi_{X_i}(l_n))>2n$, where $X_i$ is the subsurface of $S$ associated with $l_i$. 
\end{itemize} 
Then $[l_0,l_1,\dots,l_n]$ is a geodesic in $\mathcal{C}(S)$.
Moreover, any geodesic connecting $l_0$ and $l_n$ passes through $l_i$.
\end{proposition}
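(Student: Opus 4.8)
The plan is to combine the two hypotheses (H1) that the pieces are geodesics with the subsurface-projection hypothesis (H3) to first rule out any shortcut, and then to upgrade the same argument so that it forces every geodesic to pass through $l_i$. First I would show $[l_0,\dots,l_n]$ is a geodesic, i.e.\ $d_S(l_0,l_n)=n$. Since $[l_0,\dots,l_n]$ is a path, $d_S(l_0,l_n)\le n$ automatically, so the content is the lower bound $d_S(l_0,l_n)\ge n$. Suppose for contradiction that there is a path $[m_0=l_0,m_1,\dots,m_k=l_n]$ with $k<n$. By Remark~\ref{rmk_miss}, the only essential simple closed curve missing $X_i$ is $l_i$ itself; since $l_i$ appears nowhere on this path (its endpoints are $l_0$ and $l_n$, which are at distance $\ge 2$ from $l_i$ by (H1) as $n\ge 2$, hence $l_i\notin\{m_0,m_k\}$, and if some interior $m_j=l_i$ we would get $d_S(l_0,l_i)+d_S(l_i,l_n)\le k<n$, contradicting that (H1) makes $d_S(l_0,l_i)+d_S(l_i,l_n)=n$), every $m_j$ cuts $X_i$. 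Then Lemma~\ref{subsurface distance} applied to this path in $\mathcal C(S)$ gives ${\rm diam}_{X_i}(\pi_{X_i}(l_0),\pi_{X_i}(l_n))\le 2k< 2n$, contradicting (H3). Hence $d_S(l_0,l_n)=n$ and the path is a geodesic.

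Next I would prove the "moreover" clause: any geodesic $[m_0=l_0,m_1,\dots,m_n=l_n]$ from $l_0$ to $l_n$ passes through $l_i$. The argument is essentially the same dichotomy. If no vertex of this geodesic equals $l_i$, then by Remark~\ref{rmk_miss} every $m_j$ cuts $X_i$, so Lemma~\ref{subsurface distance} gives ${\rm diam}_{X_i}(\pi_{X_i}(l_0),\pi_{X_i}(l_n))\le 2n$, again contradicting the strict inequality in (H3). Therefore some $m_j=l_i$; and since $d_S(l_0,l_i)+d_S(l_i,l_n)=n$ by (H1), such $j$ is forced to be exactly the index $i$, so in fact $m_i=l_i$. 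This is really just one application of Lemma~\ref{subsurface distance} once the "every vertex cuts $X_i$" reduction is in place, so both parts of the proposition follow from the same mechanism.

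The only genuinely delicate point — the main obstacle — is the bookkeeping around whether a hypothetical short path or geodesic can sneak through $l_i$ at an intermediate vertex, and confirming that Lemma~\ref{subsurface distance} is applicable, i.e.\ that $X_i$ is a non-simple essential subsurface of $S$. Essentiality of $X_i$ is immediate from its definition (its boundary is $l_i$, or a parallel copy, which is essential since $l_i$ is a vertex of $\mathcal C(S)$); non-simplicity is where the standing hypotheses on $S$ in this section (genus $g$, $p\ge 2$ punctures, and $p\ge 6$ when $g=0$) are used: after removing a neighborhood of a curve cutting off a twice-punctured disk one is left with a surface of genus $g$ with at least $p-2$ punctures plus one boundary circle, and after removing a non-separating curve one has genus $g-1$ with $p$ punctures and two boundary circles; in either case the numerical constraints guarantee this surface carries an essential simple closed curve. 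I would state this non-simplicity check explicitly (or cite that it is built into the setup), since Lemma~\ref{subsurface distance} and Remark~\ref{rmk_miss} both presuppose it. Aside from that, the proof is a clean two-line contradiction argument once the correct path is chosen.
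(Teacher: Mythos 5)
Your argument is correct and is essentially the paper's own: the paper simply defers to \cite[Proposition 3.1]{IJK2} and its proof, which is exactly this mechanism --- Remark~\ref{rmk_miss} shows any competing path or geodesic avoiding $l_i$ has all vertices cutting $X_i$, and Lemma~\ref{subsurface distance} then bounds ${\rm diam}_{X_i}(\pi_{X_i}(l_0),\pi_{X_i}(l_n))$ by at most $2n$, contradicting (H3). One trivial slip: (H1) only gives $d_S(l_0,l_i)=i\ge 1$ and $d_S(l_i,l_n)=n-i\ge 1$, not distance $\ge 2$, but that already suffices for your conclusion that $l_i\ne l_0,l_n$.
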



\begin{remark}
{\rm
We note that Remark~\ref{rmk_miss} implies: $\pi_{X_i}(l_0)$ and $\pi_{X_i}(l_n)$ in (H3) are not empty set.
}
\end{remark}

Next we prove the next proposition which asserts the existence of unique geodesics with a certain condition.

\begin{proposition}\label{prop-unique-geodesic}
Assume that $p\ge 4$.
For any $n\ge 2$, there is a geodesic $[l_0,l_1,\dots,l_n]$ in $\mathcal{C}(S)$ such that every $l_i$ cuts off a twice-punctured disk from $S$ and that $[l_0,l_1,\dots,l_n]$ is the unique geodesic connecting $l_0$ and $l_n$, 
i.e., the set of the geodesics in $\mathcal{C}(S)$ connecting $l_0$ and $l_n$ consists of one element $[l_0,l_1,\dots,l_n]$.
\end{proposition}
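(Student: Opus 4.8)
The plan is to build the geodesic inductively on $n$, using Proposition~\ref{prop-extending-geodesic} at each step to guarantee both geodesicity and passage through the newly inserted vertex, which together will force uniqueness. First I would set up the base case $n=2$: choose simple closed curves $l_0,l_1,l_2$ on $S$, each cutting off a twice-punctured disk, with $l_0\cap l_1=l_1\cap l_2=\emptyset$, such that $l_0$ and $l_2$ cut the subsurface $X_1$ associated with $l_1$ and, invoking Lemma~\ref{lem-2-3-1}, arrange (by composing with a homeomorphism of $S$ supported away from $X_1$) that $\mathrm{diam}_{X_1}(\pi_{X_1}(l_0),\pi_{X_1}(l_2))>4=2n$. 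Since $p\ge 4$, $X_1$ is a genus-$g$ surface with $p-2\ge 2$ punctures plus one boundary curve, which is non-simple and non-sporadic, so the lemma applies. Then Proposition~\ref{prop-extending-geodesic} (with $i=1$) shows $[l_0,l_1,l_2]$ is a geodesic and every geodesic from $l_0$ to $l_2$ passes through $l_1$; but a geodesic of length $2$ through $l_1$ is determined by its endpoints and $l_1$, so $[l_0,l_1,l_2]$ is the unique one.

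For the inductive step, suppose we have a unique geodesic $[l_0,\dots,l_{n-1}]$ of the desired type. I would prepend a new vertex: find $l_{-1}$ cutting off a twice-punctured disk, disjoint from $l_0$, with $l_{-1}$ and $l_{n-1}$ both cutting $X_0$ (the subsurface associated with $l_0$), and — again by Lemma~\ref{lem-2-3-1} applied to a homeomorphism $h$ of $S$ fixing $S\setminus X_0$ pointwise — such that $\mathrm{diam}_{X_0}(\pi_{X_0}(l_{-1}),\pi_{X_0}(h(l_{n-1})))>2n$. Here I must be careful: applying $h$ modifies $l_1,\dots,l_{n-1}$, so I would instead phrase the construction as building the whole path at once, or equivalently apply the homeomorphism to the already-constructed tail and check that the new tail $[h(l_0),\dots,h(l_{n-1})]=[l_0,h(l_1),\dots,h(l_{n-1})]$ (note $h$ fixes $l_0$ since $l_0\subset\partial X_0$... actually $l_0$ bounds $X_0$, so $h(l_0)=l_0$) remains a unique geodesic — this holds because $h$ is a homeomorphism of $\mathcal{C}(S)$ and carries the unique-geodesic property of $[l_0,\dots,l_{n-1}]$ over verbatim. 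Now relabel $l_{-1},l_0,h(l_1),\dots,h(l_{n-1})$ as $l_0,\dots,l_n$. Condition (H1) holds: $[l_0,l_1]$ is trivially a geodesic and $[l_1,\dots,l_n]$ is a geodesic by induction; (H2) holds for the new $l_1$ (old $l_0$) since it cuts off a twice-punctured disk; (H3) is exactly the diameter bound just arranged, with $X_1$ the subsurface associated with the new $l_1$. Proposition~\ref{prop-extending-geodesic} then gives that $[l_0,\dots,l_n]$ is a geodesic and every geodesic from $l_0$ to $l_n$ passes through $l_1$. Any such geodesic, restricted to the segment from $l_1$ to $l_n$, is a geodesic of length $n-1$ between those endpoints, hence by the inductive uniqueness equals $[l_1,\dots,l_n]$; combined with the length-$1$ segment $[l_0,l_1]$ this forces the whole geodesic to be $[l_0,\dots,l_n]$, giving uniqueness.

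The main obstacle I expect is the bookkeeping around applying the homeomorphism from Lemma~\ref{lem-2-3-1} without destroying the inductive structure: one needs to make sure the homeomorphism used to boost the subsurface projection distance at the new initial vertex is supported in the subsurface $X_0$ associated with that vertex, so that it fixes $l_0$ and the adjacency $l_0\cap l_1=\emptyset$ is preserved, while simultaneously it moves the far endpoint $l_{n-1}$ enough inside $X_0$. A secondary point to verify carefully is that all the subsurfaces $X_i$ arising are genuinely non-simple essential subsurfaces of $S$ (so that the subsurface projections and Proposition~\ref{prop-extending-geodesic} apply), which uses the hypothesis $p\ge 4$ — each $X_i$ is $S$ cut along a curve bounding a twice-punctured disk, hence has genus $g$, one boundary component, and $p-2\ge 2$ punctures, and is non-simple since $p-2\ge 2$ guarantees an essential curve unless $g=0,p-2=2$, in which case $X_i$ is a sporadic four-holed sphere, still non-simple. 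Everything else is a routine verification that the chosen curves can be realized disjointly and cut the relevant subsurfaces, which Remark~\ref{rmk_miss} makes transparent.
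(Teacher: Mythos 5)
Your proposal is correct and takes essentially the same route as the paper: induction on $n$, with the same base case and with Lemma~\ref{lem-2-3-1} plus Proposition~\ref{prop-extending-geodesic} driving the inductive step. The only differences are cosmetic — the paper appends $l_n:=h_{n-1}(l_{n-2})$ with $h_{n-1}$ supported in $X_{n-1}$ (so it fixes $l_{n-1}$ and leaves the already-built geodesic untouched), which sidesteps the bookkeeping you resolve by pushing the old geodesic forward by $h$ and transporting its uniqueness via the induced automorphism of $\mathcal{C}(S)$ — and note that in your base case the homeomorphism of Lemma~\ref{lem-2-3-1} is the identity on $S\setminus X_1$, i.e.\ supported inside $X_1$, not ``away from $X_1$''.
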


\begin{proof}
We construct the geodesics inductively using Proposition \ref{prop-extending-geodesic}.

\setcounter{case}{0}

\begin{case}\label{subsec-geod-length2}
$n=2$.
\end{case}

Let $l_0$ and $l_1$ be mutually disjoint simple closed curves in $S$ each of which cuts off a twice-punctured disk from $S$.
Let $X_1$ be the subsurface of $S$ associated with $l_1$.
Note that $l_0$ cuts $X_1$. 
By Lemma \ref{lem-2-3-1}, there exists a homeomorphism $h_1:S\rightarrow S$ such that $h_1(l_1)=l_1$ and ${\rm diam}_{X_1}(\pi_{X_1}(l_0), \pi_{X_1}(h_1(l_0)))>4$.
Let $l_2:=h_1(l_0)$.
Then $l_2$ also cuts off a twice-punctured disk from $S$, and $[l_0,l_1,l_2]$ is the unique geodesic connecting $l_0$ and $l_2$ by Proposition \ref{prop-extending-geodesic}.

\begin{case}
$n\ge 3$.
\end{case}

Suppose we have constructed a geodesic $[l_0,l_1,\dots,l_{n-1}]$ such that every $l_i$ cuts off a twice-punctured disk from $S$ and that $[l_0,l_1,\dots,l_{n-1}]$ is the unique geodesic connecting $l_0$ and $l_{n-1}$.
Let $X_{n-1}$ be the subsurface of $S$ associated with $l_{n-1}$.
By Lemma \ref{lem-2-3-1}, there exists a homeomorphism $h_{n-1}:S\rightarrow S$ such that 
$h_{n-1}(l_{n-1})=l_{n-1}$ and ${\rm diam}_{X_{n-1}}(\pi_{X_{n-1}}(l_{0}),\pi_{X_{n-1}}(h_{n-1}(l_{n-2})))>2n$.
Let $l_{n}:=h_{n-1}(l_{n-2})$.
Then $l_n$ also cuts off a twice-punctured disk from $S$, and $[l_0,l_1,\dots,l_n]$ is the unique geodesic connecting $l_0$ and $l_n$ by Proposition \ref{prop-extending-geodesic} and the uniqueness of $[l_0,l_1,\dots,l_{n-1}]$.
\end{proof}

The next proposition can be proved similarly.

\begin{proposition}\label{prop-unique-geodesic2}
Assume that $g\ge 1$.
For any $n\ge 2$, there is a geodesic $[l_0,l_1,\dots,l_n]$ in $\mathcal{C}(S)$ such that every $l_i$ is non-separating in $S$ and that $[l_0,l_1,\dots,l_n]$ is the unique geodesic connecting $l_0$ and $l_n$.
\end{proposition}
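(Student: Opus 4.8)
The plan is to mirror the proof of Proposition~\ref{prop-unique-geodesic} verbatim, substituting the role of ``cuts off a twice-punctured disk'' by ``non-separating in $S$'', which is legitimate because the hypothesis $g\ge 1$ guarantees the existence of non-separating simple closed curves and Proposition~\ref{prop-extending-geodesic} was stated with hypothesis (H2) covering \emph{both} possibilities (``$l_i$ cuts off a twice-punctured disk from $S$ or is non-separating in $S$''). So the same inductive machinery applies, and the only thing to check is that the specific curves produced at each step really are non-separating and that the subsurface associated with a non-separating curve behaves as needed.

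First I would handle the base case $n=2$. Choose two disjoint non-separating simple closed curves $l_0,l_1$ in $S$; such a pair exists since $g\ge 1$ (e.g. take a genus-$1$ subsurface and two curves in it meeting once, then band one off, or more simply use that the nonseparating curves of a standard genus-$g$ handle give disjoint nonseparating curves when $g\ge 1$, together with the extra punctures). Let $X_1=\mathrm{cl}(S\setminus N(l_1))$ be the subsurface of $S$ associated with $l_1$; it is non-simple and essential. Note $l_0$ cuts $X_1$ (it cannot miss $X_1$, since by Remark~\ref{rmk_miss} the only essential curve missing $X_1$ is $l_1$ itself, and $l_0\ne l_1$). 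Apply Lemma~\ref{lem-2-3-1} with $\alpha=\beta=l_0$, $X=X_1$, and $k=2n=4$: there is a homeomorphism $h_1:S\to S$ with $h_1|_{S\setminus X_1}=\mathrm{id}$ — hence $h_1(l_1)=l_1$ — and $\mathrm{diam}_{X_1}(\pi_{X_1}(l_0),\pi_{X_1}(h_1(l_0)))>4$. Set $l_2:=h_1(l_0)$; since $h_1$ is a homeomorphism of $S$, $l_2$ is non-separating. Now $[l_0,l_1,l_2]$ is a path in $\mathcal{C}(S)$ in which $[l_0,l_1]$ and $[l_1,l_2]$ are trivially geodesics, (H2) holds for $i=1$, and (H3) holds by the displayed diameter bound; so Proposition~\ref{prop-extending-geodesic} gives that $[l_0,l_1,l_2]$ is a geodesic and every geodesic from $l_0$ to $l_2$ passes through $l_1$ — but a geodesic of length $2$ through $l_1$ is determined by its endpoints and $l_1$, so it is unique.

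For the inductive step $n\ge 3$, suppose $[l_0,l_1,\dots,l_{n-1}]$ has been constructed as the unique geodesic between its endpoints with every $l_i$ non-separating. Let $X_{n-1}=\mathrm{cl}(S\setminus N(l_{n-1}))$ be the associated subsurface. Both $l_0$ and $l_{n-2}$ cut $X_{n-1}$: again by Remark~\ref{rmk_miss} the only essential curve missing $X_{n-1}$ is $l_{n-1}$, and in a geodesic consecutive vertices are distinct and $l_0\ne l_{n-1}$ as $n-1\ge 2$, while $l_{n-2}\ne l_{n-1}$. Apply Lemma~\ref{lem-2-3-1} with $\alpha=l_0$, $\beta=l_{n-2}$, $X=X_{n-1}$, $k=2n$: there is $h_{n-1}:S\to S$ with $h_{n-1}|_{S\setminus X_{n-1}}=\mathrm{id}$, so $h_{n-1}(l_{n-1})=l_{n-1}$, and $\mathrm{diam}_{X_{n-1}}(\pi_{X_{n-1}}(l_0),\pi_{X_{n-1}}(h_{n-1}(l_{n-2})))>2n$. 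Put $l_n:=h_{n-1}(l_{n-2})$, which is non-separating since $h_{n-1}$ is a homeomorphism. Because $h_{n-1}$ fixes $S\setminus X_{n-1}$ and hence fixes $N(l_{n-1})$, it carries the path $[l_0,\dots,l_{n-1}]$ to $[l_0,\dots,l_{n-2}',l_{n-1}]$ with $l_{n-2}'=l_n$ disjoint from $l_{n-1}$; so $[l_0,l_1,\dots,l_{n-1},l_n]$ is a path in $\mathcal{C}(S)$, $[l_{n-1},l_n]$ is a geodesic, and $[l_0,\dots,l_{n-1}]$ is a geodesic by the inductive hypothesis. Thus (H1)–(H3) hold for $i=n-1$, and Proposition~\ref{prop-extending-geodesic} yields that $[l_0,\dots,l_n]$ is a geodesic and every geodesic from $l_0$ to $l_n$ passes through $l_{n-1}$. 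Restricting such a geodesic to its sub-path from $l_0$ to $l_{n-1}$ gives a geodesic with those endpoints, which by the inductive uniqueness is $[l_0,\dots,l_{n-1}]$; the remaining edge from $l_{n-1}$ to $l_n$ is forced, so the whole geodesic is $[l_0,\dots,l_n]$, proving uniqueness.

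The only real subtlety — the step I would be most careful about — is the very first one: producing two \emph{disjoint} non-separating curves $l_0,l_1$ in $S$ and, more importantly, making sure $X_{n-1}$ is always a non-simple essential subsurface so that Lemma~\ref{lem-2-3-1} applies. Since $g\ge 1$ and $p\ge 2$ (and $p\ge 6$ when $g=0$, but here $g\ge 1$), cutting along a single non-separating curve leaves a connected surface of genus $g-1$ with two extra boundary circles and $p$ punctures, which has enough topology (its complexity $3g-3+\text{(boundary+punctures)}$ is positive given $g\ge 1$, $p\ge 2$) to be non-simple; I would spell this out once. Everything else is a faithful transcription of the proof of Proposition~\ref{prop-unique-geodesic}, so I would in fact just write ``The proof is identical to that of Proposition~\ref{prop-unique-geodesic}, replacing every occurrence of `cuts off a twice-punctured disk from $S$' by `is non-separating in $S$', using that $g\ge 1$ guarantees the existence of the requisite non-separating curves and that hypothesis (H2) of Proposition~\ref{prop-extending-geodesic} allows this alternative,'' after verifying the base-case existence claim above.
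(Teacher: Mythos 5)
Your proposal is correct and is exactly the argument the paper intends: the paper proves Proposition~\ref{prop-unique-geodesic2} only by the remark that it ``can be proved similarly'' to Proposition~\ref{prop-unique-geodesic}, and your transcription—replacing curves cutting off twice-punctured disks by non-separating curves, invoking (H2) of Proposition~\ref{prop-extending-geodesic} in its non-separating form, and checking that disjoint non-isotopic non-separating curves exist and that the associated subsurface is non-simple so Lemma~\ref{lem-2-3-1} applies—is precisely that argument.
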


\begin{remark}\label{rmk-unique-geodesic2}
{\rm
For the geodesic $[l_0,l_1,\dots,l_n]$ in Proposition~\ref{prop-unique-geodesic2}, we may further suppose that $l_0\cup l_1$ is separating in $S$, and $l_{n-1}\cup l_n$ is separating in $S$ by the construction. This fact will be used in Section~\ref{sec-proof-4}.}
\end{remark}

\begin{remark}\label{rem-sec3}
{\rm 
We may assume that the geodesic constructed in the above propositions 
satisfies the inequality
$$
{\rm diam}_{X_{n-1}}(\pi_{X_{n-1}}(l_{0}),\pi_{X_{n-1}}(l_{n}))>M
$$
for any given number $M>0$ if needed, or the inequality 
$$
{\rm diam}_{X_{1}}(\pi_{X_{1}}(l_{0}),\pi_{X_{1}}(l_{n}))>M
$$
by reversing the order of the vertices in the geodesic.
}
\end{remark}

\section{$(3,1)$-manifold pairs for the proof of Theorem~\ref{thm-1}}\label{sec-mfd-pairs}

In this section, we give a description of certain $(3,1)$-manifold pairs for the proof of Theorem~\ref{thm-1} and show some facts.

For $i=1,2$, let $V_i^{\ast,0}$ be a genus-$g$ handlebody and $t_i^{\ast,0}$ be the union of $b$ arcs $t_i^1,t_i^2,\dots,t_i^b$ properly embedded in $V_i^{\ast,0}$ which is parallel to $\partial V_i^{\ast,0}$.

\subsection{When $b\ge 2$}\label{sec-b>1}

Assume that $g\ge 0$, $b\ge 2$, and $(g,b)\ne (0,2)$.
For $i=1,2$, let $V_i\,(\subset V_i^{\ast,0})$ be a genus-$g$ handlebody such that 
\begin{itemize}
\item $t_i:=t_i^{\ast,0}\cap V_i$ is the union of $(b-1)$ arcs which is parallel to $\partial V_i$,
\item $W_i:={\rm cl}(V_i^{\ast,0}\setminus V_i)\cong \Sigma\times I$, where $\Sigma$ is a genus-$g$ closed orientable surface and $I=[0,1]$, and
\item $s_i:=t_i^{\ast,0}\cap W_i$ is the union of $2(b-1)$ $I$-fibers ($\subset \Sigma\times I$) and $t_i^b$.
\end{itemize}
Let $D_i$ be the disk properly embedded in $W_i$ as in Figure \ref{fig-wi-si}.
Then the closures of the components of $W_i\setminus D_i$ consists of two components $W_i^1$, $W_i^2$ such that 
$W_i^1\cong \Sigma\times I$, where $s_i^1(:=t_i^{\ast,0}\cap W_i^1)$ is the union of $2(b-1)$ $I$-fibers, $W_i^2$ is a $3$-ball and $s_i^2(:=t_i^b)$ is an arc parallel to $\partial W_i^2$.
\begin{figure}[tb]
 \begin{center}
 \includegraphics[width=46mm]{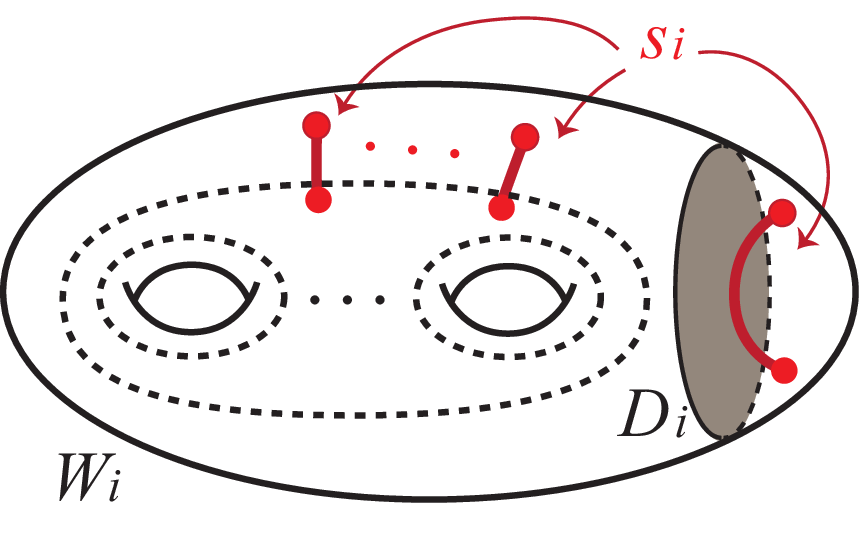}
 \end{center}
 \caption{$(W_i, s_i)$ and $D_i$.}
\label{fig-wi-si}
\end{figure}
Let $\partial_- W_i$ be the component of $\partial W_i$ disjoint from $D_i$, and let $\partial_+ W_i=\partial W_i\setminus \partial_- W_i$.
Note that $s_i\cap\,\partial_- W_i$ consists of $(2b-2)$ points, and $s_i\cap\,\partial_+ W_i$ consists of $2b$ points. 
Let $F_i$ be the subsurface $\partial_+ W_i\cap W_i^1$ of $\partial_+ W_i$.
Let $\pi_{F_i\setminus s_i}:\mathcal{C}^0(\partial_+ W_i\setminus s_i)\rightarrow \mathcal{P}(\mathcal{C}^0(F_i\setminus s_i))$ be the subsurface projection, and let $P_i:F_i\setminus s_i\rightarrow (F_i\setminus s_i)\cup D_i\rightarrow \partial_- W_i\setminus s_i$ be the natural projection.
Let $\Phi_i:\mathcal{C}^0(\partial_+ W_i\setminus s_i)\rightarrow \mathcal{P}(\mathcal{C}^0(\partial_- W_i\setminus s_i))$ be the composition $P_i\circ \pi_{F_i\setminus s_i}$.
Let $h_i:\partial V_i\setminus t_i\rightarrow \partial_- W_i\setminus s_i$ be a homeomorphism, and let $\overline{h}_i:(\partial V_i,\partial t_i)\rightarrow (\partial_- W_i,s_i\cap \partial_- W_i)$ be the homeomorphism of pairs naturally induced from $h_i$.
Let $(V_i^{\ast}, t_i^{\ast}):=(W_i,s_i)\cup_{\overline{h}_i}(V_i,t_i)$.
Then $V_i^{\ast}$ is a genus-$g$ handlebody and $t_i^{\ast}$ is the union of $b$ arcs parallel to $\partial V_i^{\ast}$.

Recall that $\mathcal{D}^0(V_i\setminus t_i)$ denotes the $0$-skeleton of the disk complex of $V_i\setminus t_i$.
The next proposition can be proved by \cite[Claim 2]{IS} and Lemma \ref{lem-2-3-1}.
\begin{proposition}\label{prop-b>1-hi}
For $\alpha\in\mathcal{C}^0(\partial_+ W_i\setminus s_i)$ such that $\Phi_i(\alpha)\ne \emptyset$ and any positive integer $k$, there exists a homeomorphism $h_i:\partial V_i\setminus t_i\rightarrow \partial_- W_i\setminus s_i$ such that 
$$d_{\partial_- W_i\setminus s_i}(\Phi_i(\alpha),h_i(\mathcal{D}^0(V_i\setminus t_i)))>k.$$
\end{proposition}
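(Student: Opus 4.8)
The plan is to deduce Proposition~\ref{prop-b>1-hi} from the combination of \cite[Claim 2]{IS}, which controls the subsurface projection of the disk complex $\mathcal D^0(V_i\setminus t_i)$ under a change of gluing homeomorphism, and Lemma~\ref{lem-2-3-1}, which lets us push one fixed curve far away from another in a subsurface projection by a homeomorphism supported off the complementary piece. The point is that $\Phi_i=P_i\circ\pi_{F_i\setminus s_i}$ factors through the subsurface $F_i\setminus s_i$ of $\partial_+W_i\setminus s_i$, and $P_i$ is essentially a homeomorphism from (a quotient of) $F_i\setminus s_i$ onto $\partial_-W_i\setminus s_i$ obtained by capping off $D_i$; so diameters and distances measured in $\partial_-W_i\setminus s_i$ can be transported back to $F_i\setminus s_i$ up to a bounded error. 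Thus it suffices to make $d_{F_i\setminus s_i}$ (equivalently $d_{\partial_-W_i\setminus s_i}$ after applying $P_i$) between $\pi_{F_i\setminus s_i}(\alpha)$ and the image of $\mathcal D^0(V_i\setminus t_i)$ large.

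First I would fix $\alpha$ with $\Phi_i(\alpha)\neq\emptyset$, which by definition of $\Phi_i$ means $\alpha$ cuts $F_i$ and $\pi_{F_i\setminus s_i}(\alpha)\neq\emptyset$; in particular $\mathrm{diam}_{F_i\setminus s_i}(\pi_{F_i\setminus s_i}(\alpha))\le 2$ by Lemma~\ref{subsurface distance}, so $\pi_{F_i\setminus s_i}(\alpha)$ is a bounded set and it makes sense to push the disk complex away from it. Next I would recall the structure of $\mathcal D^0(V_i\setminus t_i)$ under the gluing map: $V_i^{\ast}=(W_i,s_i)\cup_{\overline h_i}(V_i,t_i)$, and changing $h_i$ changes how $\partial V_i\setminus t_i$ is identified with $\partial_-W_i\setminus s_i$, hence changes $h_i(\mathcal D^0(V_i\setminus t_i))$ inside $\mathcal C^0(\partial_-W_i\setminus s_i)$ by an arbitrary mapping class of $\partial_-W_i\setminus s_i$. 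Then the mechanism of Lemma~\ref{lem-2-3-1}, applied with $S=\partial_-W_i\setminus s_i$ (or its closed-surface avatar) and $X$ a suitable essential non-simple subsurface through which $\mathcal D^0$ is "thin", produces a homeomorphism $h_i$ realizing $d_{\partial_-W_i\setminus s_i}(\Phi_i(\alpha),h_i(\mathcal D^0(V_i\setminus t_i)))>k$; the role of \cite[Claim 2]{IS} is to guarantee that $h_i(\mathcal D^0(V_i\setminus t_i))$ really does behave like a bounded-diameter set relative to the appropriate subsurface (so that moving it away from the fixed set $\Phi_i(\alpha)$ is achievable), exactly as in the disk-complex distance arguments there.

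The main obstacle I expect is the bookkeeping needed to identify the correct target for the pushing argument. The subsurface $\partial_-W_i\setminus s_i$ is a punctured surface, and $\mathcal D^0(V_i\setminus t_i)$ need not have bounded diameter in $\mathcal C(\partial_-W_i\setminus s_i)$ itself; what \cite[Claim 2]{IS} provides is control after passing to a suitable subsurface projection (the disk complex of a handlebody with trivial arcs is "quasi-convex / bounded" only relative to such data). So the careful step is: choose the essential non-simple subsurface $X\subset\partial_-W_i\setminus s_i$ on which both $\Phi_i(\alpha)$ (via Lemma~\ref{subsurface distance}, diameter $\le 2$) and $h_i(\mathcal D^0(V_i\setminus t_i))$ (via \cite[Claim 2]{IS}, bounded diameter) project to bounded sets, then invoke Lemma~\ref{lem-2-3-1} to force $d_X$ between these two bounded sets to exceed $k+4$, and finally use the Lipschitz property of $\pi_X$ together with $\mathrm{diam}$-estimates to conclude $d_{\partial_-W_i\setminus s_i}(\Phi_i(\alpha),h_i(\mathcal D^0(V_i\setminus t_i)))>k$, since a short path in $\mathcal C(\partial_-W_i\setminus s_i)$ would project to a short path in $\mathcal C(X)$, a contradiction. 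Everything else is the routine verification that $P_i$ and $\pi_{F_i\setminus s_i}$ change distances by bounded amounts, which follows from Lemmas~\ref{lem_mm} and~\ref{subsurface distance}.
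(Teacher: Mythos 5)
Your skeleton matches the route the paper intends (the paper disposes of Proposition~\ref{prop-b>1-hi} by citing \cite[Claim 2]{IS} together with Lemma~\ref{lem-2-3-1}): reduce to composing a fixed identification with an arbitrary mapping class of $\partial_-W_i\setminus s_i$, use that $\Phi_i(\alpha)$ is a nonempty set of bounded diameter, control the disk set through a suitable subsurface $X$, and push with Lemma~\ref{lem-2-3-1}. Two points of execution should be made explicit but are routine: Lemma~\ref{lem-2-3-1} moves a single curve, so you must apply it to one disk boundary cutting $X$ and then observe that the homeomorphism it supplies is the identity off $X$, hence preserves $X$ and therefore the diameter bound on $\pi_X$ of the re-glued disk set; and the margin must be of order $2k$ plus the diameter constants rather than $k+4$.

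The genuine gap is in your final step. The assertion that a short path in $\mathcal{C}(\partial_-W_i\setminus s_i)$ "projects to a short path in $\mathcal{C}(X)$" rests on Lemma~\ref{subsurface distance}, whose hypothesis is that \emph{every} vertex of the path cuts $X$; a vertex missing $X$ has empty projection and the estimate breaks across it. Since $X$ is a proper essential subsurface, there is always at least one essential curve $\ell$ missing $X$ (cf.\ Remark~\ref{rmk_miss}), and your construction gives no control on $d_{\partial_-W_i\setminus s_i}(\Phi_i(\alpha),\ell)$: the homeomorphism from Lemma~\ref{lem-2-3-1} fixes $\ell$, and disk boundaries disjoint from $\ell$ remain disjoint from $\ell$ after applying it, so the re-glued disk set still contains vertices adjacent to $\ell$. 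If some element of $\Phi_i(\alpha)$ happens to lie within distance $k-1$ of $\ell$ --- which you cannot exclude, since $\alpha$ is arbitrary and $X$ was chosen from the disk-set data alone --- then $d_{\partial_-W_i\setminus s_i}(\Phi_i(\alpha),h_i(\mathcal{D}^0(V_i\setminus t_i)))$ stays small no matter how large you make the $X$-projection distance. One needs an additional adjustment of the gluing (for instance, first arrange that the unique curve missing $X$ lies at distance greater than $k$ from $\Phi_i(\alpha)$, choosing $X$ associated with a non-separating curve or a curve bounding a twice-punctured disk so that Remark~\ref{rmk_miss} applies, and only then apply Lemma~\ref{lem-2-3-1} inside $X$), after which the conclusion follows from the dichotomy: either all vertices of a putative short path cut $X$, which the projection bound forbids, or some vertex is the missing curve, which the preliminary arrangement forbids. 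This is exactly the case analysis the paper carries out explicitly in the analogous arguments (e.g.\ Claims~\ref{claim-1-3} and~\ref{claim-4-1}), and it is absent from your proposal; relatedly, you never verify that your chosen element of $\Phi_i(\alpha)$ (or the chosen disk boundary) cuts $X$, which is needed even to invoke Lemma~\ref{lem-2-3-1} and Lemma~\ref{subsurface distance} in the first place.
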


%

The next proposition will be used in Sections~\ref{sec-proof-1}, \ref{sec-proof-7} and \ref{sec-proof-8}.
\begin{proposition}\label{prop-b>1}
Let $\alpha$ be an element of $\mathcal{C}^0(\partial_+ W_i\setminus s_i)$ such that $\alpha\cap \partial D_i=\emptyset$ and $\Phi_i(\alpha)\ne \emptyset$ (hence, $\alpha\subset F_i$).
Then the following hold.
\begin{itemize}
\item[{\rm (1)}] If there is an element $\beta$ of $\mathcal{D}^0(V_i^{\ast}\setminus t_i^{\ast})$ such that $\alpha\cap\beta=\emptyset$ and $\beta\ne \partial D_i$, then $d_{\partial_- W_i\setminus s_i}(\Phi_i(\alpha),h_i(\mathcal{D}^0(V_i\setminus t_i)))\le 1$. 
Moreover, if $\alpha\in \mathcal{D}^0(V_i^{\ast}\setminus t_i^{\ast})$ (that is, there is an element $\beta$ of $\mathcal{D}^0(V_i^{\ast}\setminus t_i^{\ast})$ such that $\alpha=\beta$) then $d_{\partial_- W_i\setminus s_i}(\Phi_i(\alpha),h_i(\mathcal{D}^0(V_i\setminus t_i)))=0$.
\item[{\rm (2)}] Suppose that $\alpha$ bounds a twice-punctured disk in $\partial_+ W_i\setminus s_i$ and that there is an element $\beta$ of $\mathcal{D}^0(V_i^{\ast}\setminus t_i^{\ast})$ such that $|\alpha\cap\beta|\le 2$. 
Then $d_{\partial_- W_i\setminus s_i}(\Phi_i(\alpha),h_i(\mathcal{D}^0(V_i\setminus t_i)))\le 2$. 
\end{itemize}
\end{proposition}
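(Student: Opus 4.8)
The plan is to analyze how the curve $\alpha \subset F_i$ behaves under the projection $\Phi_i = P_i \circ \pi_{F_i \setminus s_i}$ when there is a disk $\beta$ of $V_i^\ast \setminus t_i^\ast$ in "good position" with respect to $\alpha$. The key structural fact to exploit is that $V_i^\ast = W_i \cup_{\overline h_i} V_i$, so any essential disk $\beta$ of $V_i^\ast \setminus t_i^\ast$ can be isotoped to meet the gluing surface $\partial_- W_i \cong \partial V_i$ in a (possibly empty) collection of essential arcs and curves; cutting $\beta$ along $\partial_- W_i$ produces pieces in $W_i \setminus s_i$ and pieces in $V_i \setminus t_i$. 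An outermost-disk argument (of the kind already invoked for connectivity of $\mathcal{D}(V_i \setminus t_i)$) on the $W_i$-side, using the product structure $W_i^1 \cong \Sigma \times I$ and the disk $D_i$, will be the engine: an outermost subdisk of $\beta$ in $W_i \setminus s_i$ whose boundary arc lies on $\partial_+ W_i$ gives, via the projection through $D_i$, a disk (or a controlled curve) in $V_i \setminus t_i$ bounded by a curve whose $\partial_- W_i$-image is $\pi_{F_i \setminus s_i}$-close to that of $\alpha$.

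For part (1), I would first dispose of the sub-case $\alpha \in \mathcal{D}^0(V_i^\ast \setminus t_i^\ast)$: here $\alpha$ itself bounds a disk $E$ in $V_i^\ast \setminus t_i^\ast$, and since $\alpha \subset F_i$ and $\alpha \cap \partial D_i = \emptyset$, one can push $E$ through $D_i$ to obtain a disk in $V_i \setminus t_i$ whose boundary is exactly $P_i(\alpha)$ up to isotopy; this gives $h_i(\mathcal{D}^0(V_i\setminus t_i)) \ni$ (a curve isotopic to) $\Phi_i(\alpha)$, hence distance $0$. More precisely, one must check $\pi_{F_i\setminus s_i}(\alpha) = \{\alpha\}$ since $\alpha$ already lies in $F_i$, so $\Phi_i(\alpha) = \{P_i(\alpha)\}$, and then the pushed-off disk realizes it. For the general sub-case, $\beta \ne \partial D_i$ is disjoint from $\alpha$; I would isotope $\beta$ to minimize intersection with $\partial_- W_i$, take an outermost disk of $\beta \cap W_i$ (if $\beta$ meets $\partial_- W_i$) or handle $\beta \subset W_i$ or $\beta \subset V_i$ directly, and produce from it an element $\gamma \in \mathcal{D}^0(V_i \setminus t_i)$ with $d_{\partial_- W_i \setminus s_i}(\Phi_i(\alpha), h_i(\gamma)) \le 1$, using Lemma~\ref{lem_mm} to control the extra $+1$ coming from the subsurface projection $\pi_{F_i \setminus s_i}$ and from $P_i$. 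The cited \cite[Claim 2]{IS} presumably packages exactly this outermost-disk-through-$D_i$ mechanism, so much of this reduces to invoking it together with Lemma~\ref{lem-2-3-1}-type estimates.

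For part (2), the hypothesis is weaker — $\beta$ is allowed to meet $\alpha$ in up to two points and $\alpha$ bounds a twice-punctured disk — so instead of disjointness one has $d_{\mathcal{AC}(F_i \setminus s_i)}$-control: resolving $\beta$ near $\alpha$ (or taking $\partial N(\alpha)$ and pushing one of the two resulting curves, exploiting that $\alpha$ cuts off a twice-punctured disk so that $\partial N(\alpha \cup (\text{arc}))$ stays essential), one gets a curve at $\mathcal{AC}$-distance at most $1$ from $\alpha$ lying in $F_i$ and bounding a disk in $V_i^\ast \setminus t_i^\ast$ of the type controlled in part (1); then the $\le 1$ from part~(1) plus the $\le 1$ (via Lemma~\ref{lem_mm} / Lemma~\ref{subsurface distance}) from the $\mathcal{AC}$-move combine to $\le 2$. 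The main obstacle I anticipate is the careful bookkeeping of isotopy/position: making $\beta$ meet $\partial_- W_i$ minimally, ensuring the outermost subdisk's boundary arc lands on the $\partial_+ W_i$–side (not the $\partial_- W_i$–side, which would be vacuous), and verifying that pushing through $D_i$ does not destroy essentiality of the resulting curve in $F_i \setminus s_i$ or make it meet $s_i$ — together with confirming that the punctures (the points of $s_i$) are tracked correctly through every projection, since that is exactly where the analogous argument in \cite{IJK2} had its gap. Everything else should follow mechanically from Lemmas~\ref{lem_mm}, \ref{subsurface distance}, and \ref{lem-2-3-1} once the topological picture is pinned down.
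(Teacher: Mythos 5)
Your overall strategy has a genuine gap at its core: you propose to cut the disk $D_\beta$ along the gluing surface $\partial_- W_i$ and run the outermost-disk argument there, but this decomposition does not interact with the hypothesis. The condition $\alpha\cap\beta=\emptyset$ lives entirely on $\partial_+ W_i$; it gives no control whatsoever over the circles $D_\beta\cap\partial_- W_i$, since the interior of $D_\beta$ may cross the vertical annulus over $\alpha$ arbitrarily. Consequently the innermost pieces of $D_\beta$ on the $V_i$-side do give elements of $h_i(\mathcal{D}^0(V_i\setminus t_i))$, but with boundaries having no controlled relation to $\Phi_i(\alpha)=P_i(\alpha)$; and the outermost pieces on the $W_i$-side have boundary consisting of a subarc of $\beta$ on $\partial_+ W_i$ together with an \emph{uncontrolled} arc on $\partial_- W_i$, and they are not disks in $V_i\setminus t_i$ at all, so ``projecting through $D_i$'' does not produce the claimed disk bounded by a curve close to $\Phi_i(\alpha)$. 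The paper's proof cuts $D_\beta$ along the disk $D_i$ instead (since $\alpha\cap\partial D_i=\emptyset$, this keeps everything disjoint from $\alpha$): because $W_i^2\setminus s_i^2$ is a $3$-ball with a boundary-parallel arc and hence contains no essential disk, the outermost piece $\Delta$ is essential in $(W_i^1\cup_{\overline h_i}V_i)\setminus t_i^{\ast}$, and after adjoining half of $D_i$ one gets a disk $\Delta'$ with $\partial\Delta'\subset F_i$ disjoint from $\alpha$; the verticality of $s_i^1$ in $W_i^1\cong\Sigma\times I$ then lets one isotope $\Delta'$ so that $\Delta'\cap W_i^1$ is a vertical annulus and $\Delta'':=\Delta'\cap V_i$ is an essential disk of $V_i\setminus t_i$ with $h_i(\partial\Delta'')=P_i(\partial\Delta')$ disjoint from $P_i(\alpha)$. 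It is exactly this ``boundary in $F_i$, then verticalize over the product'' step that transfers disjointness on $\partial_+ W_i$ to disjointness of projections on $\partial_- W_i$, and your $\partial_- W_i$-decomposition bypasses it. (Your treatment of the easy subcase $\alpha=\beta$ is fine, since disjoint boundaries force $D_\beta\cap D_i=\emptyset$ after minimization.)

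For part (2) there is a second gap: you reduce to part (1) by surgering near $\alpha$ to get a curve ``bounding a disk in $V_i^{\ast}\setminus t_i^{\ast}$ of the type controlled in part (1)'', but a curve obtained by resolving $\beta$ along $\alpha$ need not bound any disk in $V_i^{\ast}\setminus t_i^{\ast}$, so part (1) cannot be invoked. The paper avoids this: with $\Delta'$ as above (now $|\alpha\cap\partial\Delta'|=2$), it surgers $\partial\Delta'$ along $\alpha$ to get an essential curve $\beta''$ disjoint from both, and uses the explicit path $[P_i(\alpha),P_i(\beta''),h_i(\partial\Delta'')]$ in $\mathcal{C}(\partial_- W_i\setminus s_i)$ to get the bound $2$; only essentiality of $\beta''$ is needed, not that it bounds a disk.
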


\begin{proof}
Note that $D_i$ cuts $(V_i^{\ast},t_i^{\ast})$ into $(W_i^1,s_i^1)\cup_{\overline{h}_i}(V_i,t_i)$ and $(W_i^2,s_i^2)$, where $W_i^2$ is a 3-ball and $s_i^2$ is an arc parallel to $\partial W_i^2$.
Let $\beta$ be an element of $\mathcal{D}^0(V_i^{\ast}\setminus t_i^{\ast})$.
Let $D_{\beta}$ be a disk in $V_i^{\ast}\setminus t_i^{\ast}$ bounded by $\beta$.
We may assume that $|D_{\beta}\cap D_i|$ is minimal.

If $|D_{\beta}\cap D_i|=0$, then let $\Delta'=D_{\beta}$.
Note that $\Delta'$ is an essential disk in $(W_i^1\cup_{\overline{h}_i} V_i)\setminus t_i^{\ast}$, because $\beta\ne \partial D_i$, $W_i^2$ is a 3-ball and $s_i^2$ is an arc parallel to $\partial W_i^2$

If $|D_{\beta}\cap D_i|>0$, we see that $D_{\beta}\cap D_i$ has no loop components by using innermost disk arguments. 
In this case, let $\Delta$ be the closure of a component of $D_{\beta}\setminus D_i$ that is outermost in $D_{\beta}$.
Note that there is no essential disk in $W_i^2\setminus s_i^2$ since $W_i^2$ is a $3$-ball and $s_i^2$ is an arc parallel to $\partial W_i^2$.
By the minimality of $|D_{\beta}\cap D_i|$, we see that $\Delta$ must be an essential disk in $(W_i^1\cup_{\overline{h}_i} V_i)\setminus t_i^{\ast}$. 
Let $\Delta'$ be a disk properly embedded in $(W_i^1\cup_{\overline{h}_i} V_i)\setminus t_i^{\ast}$ with $\partial\Delta'\subset F_i$, such that $\Delta'$ is parallel to the union of $\Delta$ and one of the two components of $D_i\setminus\Delta$ (see Figure~\ref{fig-delta1}).
\begin{figure}[tb]
 \begin{center}
 \includegraphics[width=90mm]{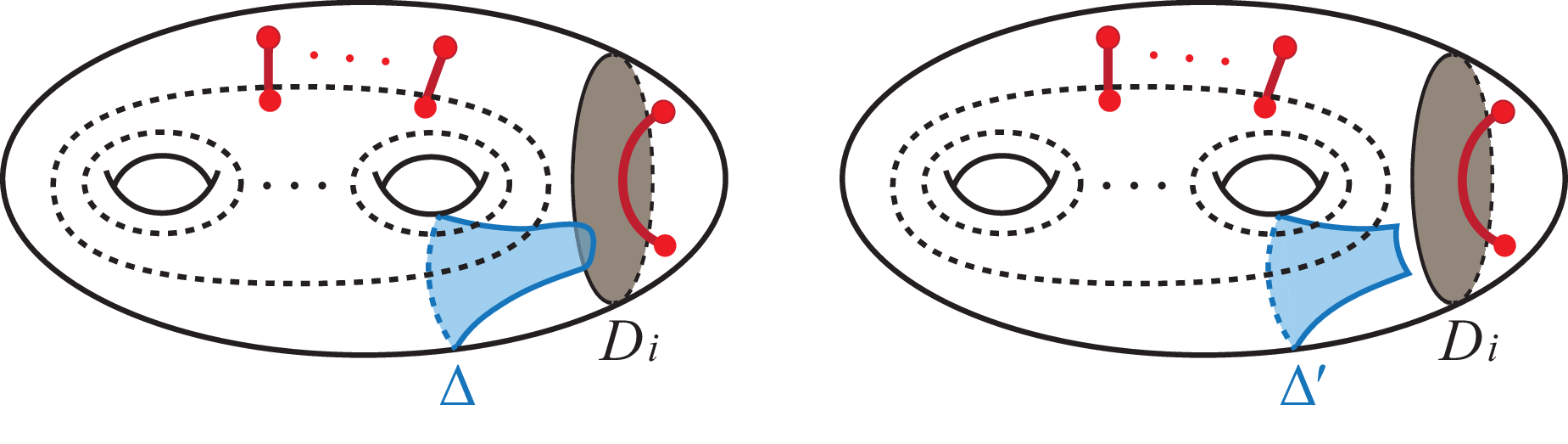}
 \end{center}
 \caption{$\Delta$ and $\Delta'$.}
\label{fig-delta1}
\end{figure}

Since $s_i^1$ is the union of $I$-fibers in $W_i^1(\cong \Sigma_g\times I)$, 
by applying ambient isotopy,
we may assume that $\Delta'':=\Delta'\cap V_i$ consists of a single disk and $\Delta'\cap W_i^1$ is a vertical annulus.
Then we have $h_i(\partial\Delta'')\in h_i(\mathcal{D}^0(V_i\setminus t_i))$ since $\Delta''$ is an essential disk in $V_i\setminus t_i$. 
\vspace{2mm}

(1) If $\alpha\cap\beta=\emptyset$, then $\partial \Delta'\cap \alpha=\emptyset$, and hence $\Phi_i(\alpha)\cap h_i(\partial \Delta'')(=\Phi_i(\alpha)\cap \Phi_i(\partial \Delta'))=\emptyset$.
Hence we have 
$$
d_{\partial_- W_i\setminus s_i}(\Phi_i(\alpha),h_i(\mathcal{D}^0(V_i\setminus t_i)))\le 
d_{\partial_- W_i\setminus s_i}(\Phi_i(\alpha),h_i(\partial\Delta''))\le 1.
$$

Moreover, if $\alpha=\beta$,
then $h_i(\partial\Delta'')=\Phi_i(\partial\Delta')=\Phi_i(\partial D_{\beta})=\Phi_i(\beta)=\Phi_i(\alpha)\in \mathcal{D}^0(V_i\setminus t_i)$, and hence we have
$$
d_{\partial_- W_i\setminus s_i}(\Phi_i(\alpha),h_i(\mathcal{D}^0(V_i\setminus t_i)))\le 
d_{\partial_- W_i\setminus s_i}(\Phi_i(\alpha),h_i(\partial\Delta''))=0.
$$

(2) Assume that $\alpha$ bounds a twice-punctured disk in $\partial_+ W_i\setminus s_i$ and  that $|\alpha\cap\beta|\le 2$.
Since $\alpha$ is separating, either $|\alpha\cap\beta|=0$ or $|\alpha\cap\beta|=2$ holds.
If $|\alpha\cap\beta|=0$, then $d_{\partial_- W_i\setminus s_i}(\Phi_i(\alpha),h_i(\partial\Delta''))\le 1<2$ holds by the arguments in the above (1).
Hence, we assume that $|\alpha\cap\beta|=2$ in the rest of the proof.

Let $\beta'$ be the closure of the component of $\partial \Delta'\setminus \alpha$ that is not contained in the twice-punctured disk bounded by $\alpha$ (see Figure~\ref{fig-delta2}).
It is easy to see that $\beta'$ together with at least one of the two components of $\alpha\setminus \beta'$ forms an element of $\mathcal{C}^0(\partial_+W_i\setminus s_i)$.
Let $\beta''$ be the element of $\mathcal{C}^0(\partial_+W_i\setminus s_i)$.
\begin{figure}[tb]
 \begin{center}
 \includegraphics[width=90mm]{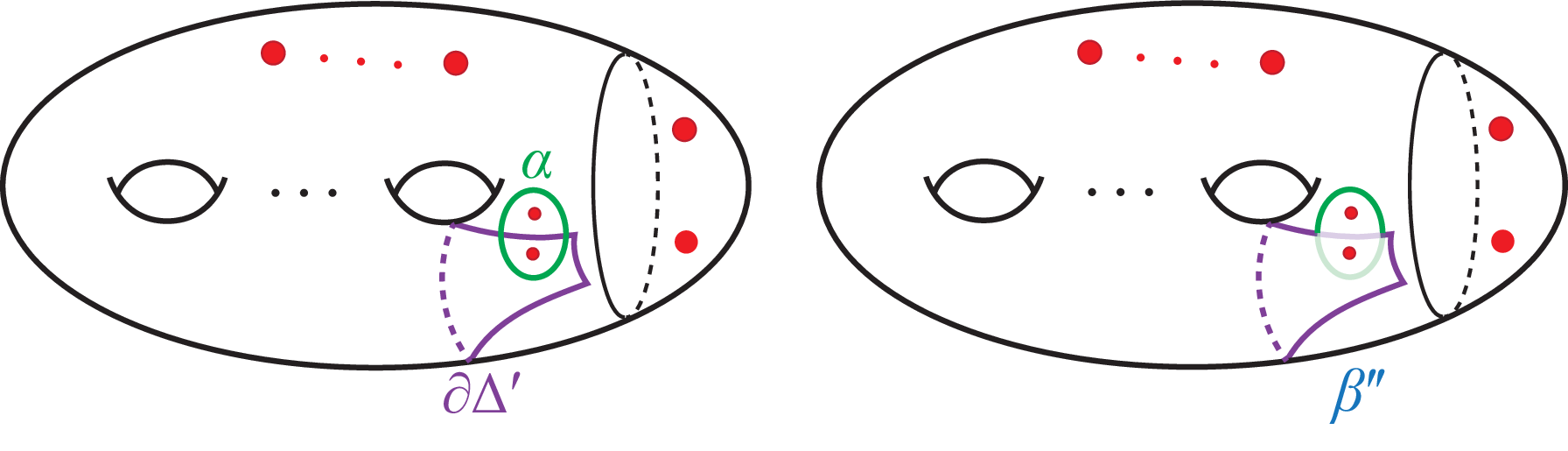}
 \end{center}
 \caption{$\beta''$.}
\label{fig-delta2}
\end{figure}
%
%
We note that $[\Phi_i(\alpha)(=P_i(\alpha)),P_i(\beta''), h_i(\partial \Delta'')]$ is a path in $\mathcal{C}(\partial_-W_i\setminus s_i)$.
Since $\partial\Delta''\in\mathcal{D}^0(V_i\setminus t_i)$, we have
$$
d_{\partial_- W_i\setminus s_i}(\Phi_i(\alpha),h_i(\mathcal{D}^0(V_i\setminus t_i)))\le 
d_{\partial_- W_i\setminus s_i}(\Phi_i(\alpha),h_i(\partial\Delta''))\le 2.
$$
\end{proof}


The next proposition will be used in Sections~\ref{sec-proof-7} and \ref{sec-proof-8}.

\begin{proposition}\label{prop-b>1-1}
Let $D$ be an essential disk in $V_i^{\ast}\setminus t_i^{\ast}$ such that $D\ne D_i$ and $|D\cap D_i|$ is minimal (hence, no component of $D\cap D_i$ is a loop), and let $\Delta$ be a disk defined as follows:
\begin{itemize}
\item If $D\cap D_i=\emptyset$, let $\Delta:=D$. 
\item If $D\cap D_i\ne\emptyset$, let $\Delta$ be the closure of a component of $D\setminus D_i$ that is outermost in $D$. 
\end{itemize}
Then the following hold.
\begin{itemize}
\item[{\rm (1)}] If there is an element $\alpha$ of $\mathcal{C}^0(\partial_+ W_i\setminus s_i)$ such that $\alpha\cap \partial D_i=\emptyset$, $\Phi_i(\alpha)\ne \emptyset$ and $\alpha\cap\Delta=\emptyset$,
then $d_{\partial_- W_i\setminus s_i}(\Phi_i(\alpha),h_i(\mathcal{D}^0(V_i\setminus t_i)))\le 1$.
\item[{\rm (2)}] If there is an element $\alpha=\gamma_1\cup \gamma_2$ of $\mathcal{C}^0(\partial_+ W_i\setminus s_i)$ such that $\gamma_1$ is an essential arc in $F_i$ and $\gamma_2$ is a subarc of $\partial D_i$ and that $|\alpha\cap\Delta|=|\gamma_2\cap\Delta|=1$,
then $d_{\partial_- W_i\setminus s_i}(\Phi_i(\alpha),h_i(\mathcal{D}^0(V_i\setminus t_i)))\le 2$. 
\end{itemize}
\end{proposition}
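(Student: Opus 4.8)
The plan is to follow the proof of Proposition~\ref{prop-b>1} almost verbatim, with the given disk $D$ (and its outermost piece $\Delta$) playing the role of $D_\beta$ (and its outermost piece) there. First I would produce from $\Delta$ a disk $\Delta'$ properly embedded in $(W_i^1\cup_{\overline{h}_i}V_i)\setminus t_i^{\ast}$ with $\partial\Delta'\subset F_i$: if $D\cap D_i=\emptyset$, then using the minimality of $|D\cap D_i|=0$, the hypothesis $D\ne D_i$, and the fact that $W_i^2\setminus s_i^2$ contains no essential disk, one sees that $\partial D$ may be isotoped into $F_i$ (otherwise $D$ would be parallel to $D_i$), so we take $\Delta'=\Delta=D$; if $D\cap D_i\ne\emptyset$, then exactly as in Proposition~\ref{prop-b>1} the arc $c:=\partial\Delta\cap D_i$ cuts $D_i$ into two sub-disks $E_1,E_2$, $\Delta$ is essential in $(W_i^1\cup_{\overline{h}_i}V_i)\setminus t_i^{\ast}$, and we take $\Delta'$ parallel to $\Delta\cup E_j$ (for $j\in\{1,2\}$), pushed so that $\partial\Delta'\subset F_i$; thus $\partial\Delta'$ lies in a regular neighborhood in $F_i$ of $a\cup b_j$, where $a:=\partial\Delta\cap\partial D$ and $b_j:=E_j\cap\partial D_i$. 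As $s_i^1$ consists of $I$-fibers of $W_i^1\cong\Sigma\times I$, we may further isotope $\Delta'$ so that $\Delta'':=\Delta'\cap V_i$ is a single essential disk of $V_i\setminus t_i$ and $\Delta'\cap W_i^1$ is a vertical annulus; then $h_i(\partial\Delta'')=\Phi_i(\partial\Delta')\in h_i(\mathcal{D}^0(V_i\setminus t_i))$, so it suffices to bound $d_{\partial_-W_i\setminus s_i}(\Phi_i(\alpha),\Phi_i(\partial\Delta'))$.

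For (1): since $a\subset\Delta$ we have $\alpha\cap a=\emptyset$, and $\alpha\cap\partial D_i=\emptyset$ by hypothesis, so $\partial\Delta'$ can be chosen disjoint from $\alpha$; as $\alpha$ and $\partial\Delta'$ both lie in $F_i$, their $\Phi_i$-images are disjoint (or coincide), and hence $d_{\partial_-W_i\setminus s_i}(\Phi_i(\alpha),h_i(\mathcal{D}^0(V_i\setminus t_i)))\le d_{\partial_-W_i\setminus s_i}(\Phi_i(\alpha),\Phi_i(\partial\Delta'))\le 1$.

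For (2): here $|\gamma_2\cap\Delta|=1$ with $\gamma_2\subset\partial D_i$ forces $D\cap D_i\ne\emptyset$, so the above construction applies with $c$ and $E_1,E_2$ available, and since $\gamma_1$ is an essential arc of $F_i$ and $(g,b)\ne(0,2)$ we have $\Phi_i(\alpha)\ne\emptyset$. Writing $\{x_1,x_2\}=\partial c$, one checks from $|\alpha\cap\Delta|=|\gamma_2\cap\Delta|=1$ that $\gamma_1\cap\Delta=\emptyset$ and that the single point of $\alpha\cap\Delta$ is one of $x_1,x_2$, lying in $\gamma_2\cap a$; then $\partial\Delta'$ meets $\alpha$ in at most one point. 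If $\alpha\cap\partial\Delta'=\emptyset$ we conclude as in (1); otherwise $\alpha$ and $\partial\Delta'$ meet once, and $\beta'':=\partial N_{F_i}(\alpha\cup\partial\Delta')$ is a simple closed curve disjoint from both, serving, in this one-intersection situation, as the analogue of the interpolating curve $\beta''$ in the proof of Proposition~\ref{prop-b>1}(2). One verifies that $\beta''$ is essential in $\partial_+W_i\setminus s_i$ and that $P_i(\beta'')$ is essential in $\partial_-W_i\setminus s_i$ — this is where $(g,b)\ne(0,2)$ is used again, since it guarantees $2(b-1)\ge 2$ punctures on the relevant side — so that $[\Phi_i(\alpha),P_i(\beta''),h_i(\partial\Delta'')]$ is a path of length $2$ in $\mathcal{C}(\partial_-W_i\setminus s_i)$; as $\partial\Delta''\in\mathcal{D}^0(V_i\setminus t_i)$, this gives $d_{\partial_-W_i\setminus s_i}(\Phi_i(\alpha),h_i(\mathcal{D}^0(V_i\setminus t_i)))\le 2$.

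The main obstacle is the bookkeeping in (2). One has to check that the hypothesis $|\gamma_2\cap\Delta|=1$ — which replaces the condition $|\alpha\cap\beta|\le 2$ of Proposition~\ref{prop-b>1}(2) — really does force $\partial\Delta'$ to meet $\alpha$ in at most one point: the case analysis according to which of $x_1,x_2$ lies on $\gamma_2$ and on which of $b_1,b_2$ the corresponding endpoint of $\gamma_1$ lies enters here, together with a parity argument showing that the case of exactly one intersection point can occur only when $g\ge 1$ (for $g=0$ the two curves are necessarily isotopic to disjoint ones, so (1) applies). One then has to verify that the interpolating curve $\beta''$ is essential, both in $\partial_+W_i\setminus s_i$ and after the projection $P_i$ that caps off $\partial D_i$. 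The remaining routine-but-necessary point is the legitimacy of the reduction $\partial D\subset F_i$ in the case $D\cap D_i=\emptyset$, i.e., that $\partial D$ lying essentially in the twice-punctured disk bounded by $\partial D_i$ would force $D=D_i$, which uses the triviality of $(W_i^2,s_i^2)$.
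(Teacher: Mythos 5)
Your proposal is correct and follows essentially the same route as the paper: reduce to the disk $\Delta$ being essential in $(W_i^1\cup_{\overline{h}_i}V_i)\setminus t_i^{\ast}$, form $\Delta'$ and $\Delta''$ as in the proof of Proposition~\ref{prop-b>1}, get (1) from disjointness, and get (2) by taking the boundary of a regular neighborhood of $\alpha\cup\partial\Delta'$ (the paper's curve $\delta$, your $\beta''$), which is essential because $b\ge 2$, and projecting by $P_i$ to obtain a length-two path. Your extra case split allowing $\alpha\cap\partial\Delta'=\emptyset$ is harmless (the paper asserts the intersection is exactly one point, as in Figure~\ref{fig-delta3}), and your remaining remarks only make explicit bookkeeping the paper leaves to the figures.
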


\begin{proof}
By the proof of Proposition~\ref{prop-b>1}, we see that $\Delta$ is an essential disk in $(W_i^1\cup_{\overline{h}_i} V_i)\setminus t_i^{\ast}$. Then:

(1) follows from the proof of Proposition~\ref{prop-b>1} (1).

(2) 
Let $\alpha=\gamma_1\cup \gamma_2$ be as in the proposition. 
Define the disks $\Delta'$ and $\Delta''$ as in the proof of Proposition~\ref{prop-b>1}.
Note that $|\alpha\cap\Delta'|=|\alpha\cap\Delta|=1$ (see Figure~\ref{fig-delta3}).
\begin{figure}[tb]
 \begin{center}
 \includegraphics[width=90mm]{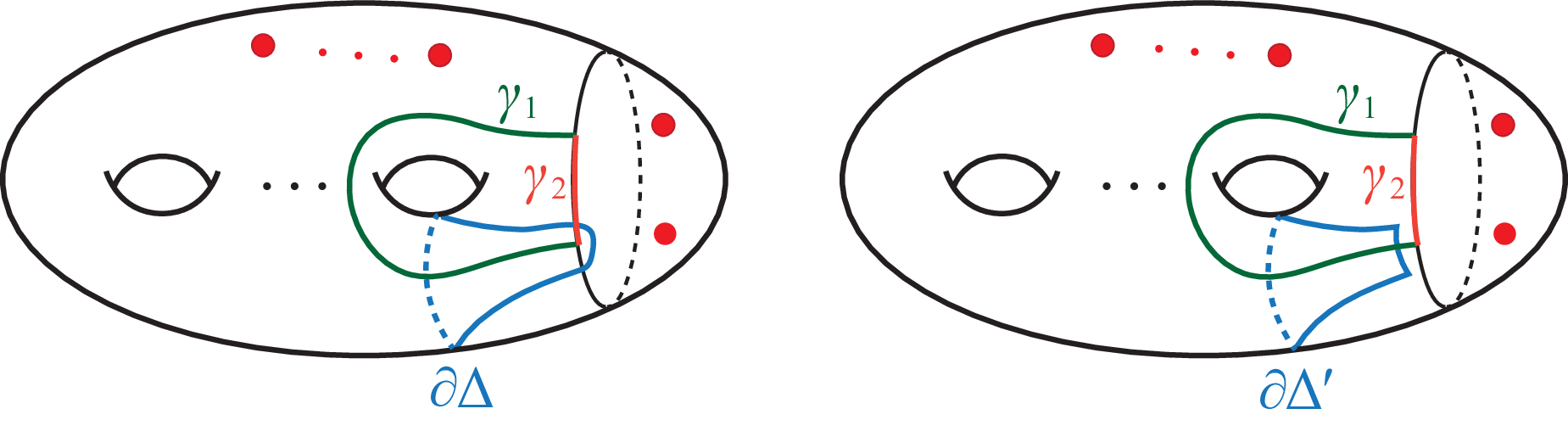}
 \end{center}
 \caption{$\alpha=\gamma_1\cup\gamma_2$, $\partial\Delta$ and $\partial\Delta'$.}
\label{fig-delta3}
\end{figure}
Thus, both $\alpha$ and $\partial \Delta'$ are non-separating in $F_i$.
Let $\delta$ be the boundary of a regular neighborhood $N_{F_i}(\alpha\cup \partial \Delta')$ (see Figure~\ref{fig-delta4}).
\begin{figure}[tb]
 \begin{center}
 \includegraphics[width=48mm]{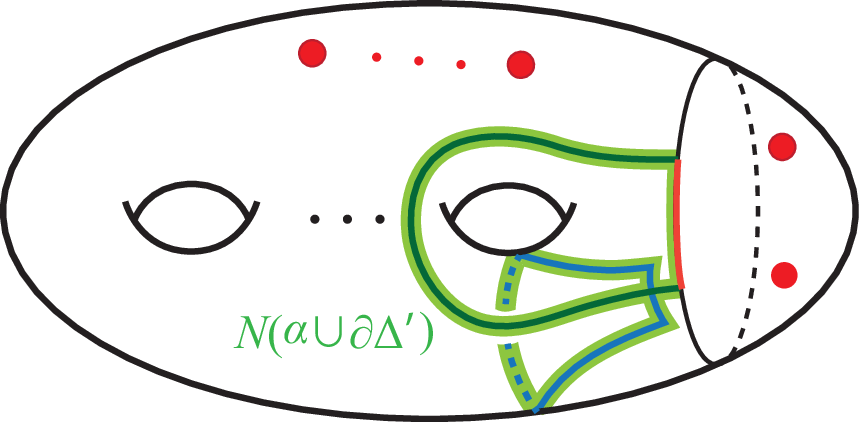}
 \end{center}
 \caption{$N(\alpha\cup \partial \Delta')$.}
\label{fig-delta4}
\end{figure}
Note that $\delta$ is essential in $F_i$ since it cuts off a one-holed torus $N_{F_i}(\alpha\cup \partial \Delta')$ with no punctures and $b\ge 2$ (by the assumption of Subsection~\ref{sec-b>1}).
Hence, $[\alpha,\delta,\partial\Delta']$ is a path in $\mathcal{C}(F_i\setminus s_i)$.
Since $W_i^1\cong \Sigma_g\times I$ and $s_i\cap W_i^1$ is the union of $I$-fibers, $[\Phi_i(\alpha)(=P_i(\alpha)), P_i(\delta), h_i(\partial\Delta'')(=P_i(\partial\Delta'))]$ is a path of length 2 in $\mathcal{C}(\partial _-W_1\setminus s_i)$.
Hence, we have 
$d_{\partial_- W_i\setminus s_i}(\Phi_i(\alpha),h_i(\mathcal{D}^0(V_i\setminus t_i)))\le 2$. 
\end{proof}

\subsection{When $g\ge 2$}\label{sec-g>1}

Assume that $g\ge 2$ and $b\ge 1$.
For $i=1,2$, let $V_i (\subset V_i^{\ast,0})$ be a genus-$(g-1)$ handlebody such that
\begin{itemize}
\item $t_i:=t_i^{\ast,0}\cap V_i$ is the union of $b$ arcs which is parallel to $\partial V_i$,
\item $W_i:={\rm cl} (V_i^{\ast,0}\setminus V_i)\cong (\Sigma\times I)\cup (1\text{-handle})$, where $\Sigma$ is a genus-$(g-1)$ closed orientable surface, and the $1$-handle is attached to $\Sigma\times \{1\}$, and
\item $s_i:=t_i^{\ast, 0}\cap W_i$ is the union of $I$-fibers in $\Sigma\times I$.
\end{itemize}
Let $D_i$ be the co-core of the $1$-handle ($\cong D_i\times I$), hence $(D_i\times\{0\}) \cup (D_i\times\{1\})\,(\subset\Sigma\times\{1\})$ is the attaching disks of the $1$-handle.
See Figure \ref{fig-wi-si3-1}.
\begin{figure}[tb]
 \begin{center}
 \includegraphics[width=50mm]{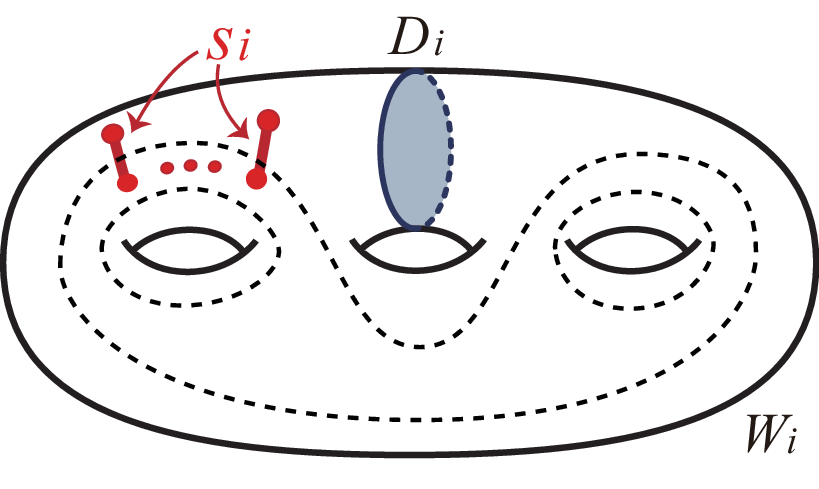}
 \end{center}
 \caption{$(W_i, s_i)$ and $D_i$.}
\label{fig-wi-si3-1}
\end{figure}
In the rest of this subsection, $W_i'$ denotes the submanifold of $W_i$ corresponding to $\Sigma\times I$.
Further, $\partial_-W_i'$ (resp. $\partial_+ W_i'$) denotes the component of $\partial W_i'$ corresponding to $\Sigma\times\{0\}$ (resp. $\Sigma\times\{1\}$). 
In the remainder of this paper, $\partial_- W_i$ denotes the surface corresponding to $\partial_- W_i'$ if it is regarded as a boundary component of $W_i$.
Then let $\partial_+ W_i:=\partial W_i\setminus \partial_-W_i$.
Let $F_i:= \partial_+W_i'\cap \partial_+W_i$. 
(Note that $F_i$ is the closure of $\partial_+W_i'\setminus (D_i\times \{0,1\})$.)
Let $\pi_{F_i\setminus s_i}:\mathcal{C}^0(\partial_+ W_i\setminus s_i)\rightarrow \mathcal{P}(\mathcal{C}^0(F_i\setminus s_i))$ be the subsurface projection, and let $P_i:F_i\setminus s_i\rightarrow (F_i\setminus s_i)\cup (D_i\times\{0,1\})\rightarrow \partial_- W_i\setminus s_i$ be the natural projection.
Let $\Phi_i:\mathcal{C}^0(\partial_+ W_i\setminus s_i)\rightarrow \mathcal{P}(\mathcal{C}^0(\partial_- W_i\setminus s_i))$ be the composition $P_i\circ \pi_{F_i\setminus s_i}$.
Let $h_i:\partial V_i\setminus t_i\rightarrow \partial_- W_i\setminus s_i$ be a homeomorphism, and let $\overline{h}_i:(\partial V_i,\partial t_i)\rightarrow (\partial_- W_i,s_i\cap \partial_- W_i)$ be the homeomorphism of pairs naturally induced from $h_i$, and let $(V_i^{\ast}, t_i^{\ast}):=(W_i,s_i)\cup_{\overline{h}_i}(V_i,t_i)$.
Then $V_i^{\ast}$ is a genus-$g$ handlebody and $t_i^{\ast}$ is the union of $b$ arcs parallel to $\partial V_i^{\ast}$.

Recall that $\mathcal{D}(V_i^{\ast}\setminus t_i^{\ast})$ is the disk complex of $V_i^{\ast}\setminus t_i^{\ast}$.
The following can be proved by \cite[Claim 2]{IS} and Lemma \ref{lem-2-3-1}.

\begin{proposition}\label{prop-g>1-hi}
For $\alpha\in\mathcal{C}^0(\partial_+ W_i\setminus s_i)$ such that $\Phi_i(\alpha)\ne \emptyset$ and any positive integer $k$, there exists a homeomorphism $h_i:\partial V_i\setminus t_i\rightarrow \partial_- W_i\setminus s_i$ such that
$$
d_{\partial_- W_i\setminus s_i}(\Phi_i(\alpha),h_i(\mathcal{D}^0(V_i\setminus t_i)))>k.
$$
\end{proposition}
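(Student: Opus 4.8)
The plan is to mimic verbatim the argument that established Proposition~\ref{prop-b>1-hi} in Subsection~\ref{sec-b>1}, since the only structural difference in the $g\ge 2$ setting is that $W_i$ is obtained from $\Sigma\times I$ by attaching a $1$-handle rather than by removing a product along a separating disk; the essential mechanism — freely varying the gluing homeomorphism $h_i$ so as to push $h_i(\mathcal{D}^0(V_i\setminus t_i))$ far from a fixed subsurface projection — is identical. First I would fix $\alpha\in\mathcal{C}^0(\partial_+W_i\setminus s_i)$ with $\Phi_i(\alpha)\ne\emptyset$, and note that by definition $\Phi_i(\alpha)=P_i(\pi_{F_i\setminus s_i}(\alpha))$ is a non-empty finite subset of $\mathcal{C}^0(\partial_-W_i\setminus s_i)$. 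The surface $\partial_-W_i\setminus s_i$ is non-simple (here is where $g\ge 2$, hence $g-1\ge 1$, is used: $\partial_-W_i$ has genus $g-1\ge 1$ and $s_i\cap\partial_-W_i$ is a non-empty set of points, so $\partial_-W_i\setminus s_i$ carries essential curves and indeed has infinite-diameter curve complex).

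The heart of the argument is \cite[Claim 2]{IS}, which (in the form used for Proposition~\ref{prop-b>1-hi}) provides, for the handlebody $V_i$ with the trivial tangle $t_i$, and for any prescribed finite set $A\subset\mathcal{C}^0(\partial V_i\setminus t_i)$ and any $N>0$, a homeomorphism $g_i$ of $\partial V_i\setminus t_i$ such that $d_{\partial V_i\setminus t_i}(A, g_i(\mathcal{D}^0(V_i\setminus t_i)))>N$; equivalently, one can realize $\mathcal{D}^0(V_i\setminus t_i)$ sitting in the curve complex arbitrarily far from any fixed finite target. Transporting this across a fixed reference homeomorphism and composing, I obtain a homeomorphism $h_i:\partial V_i\setminus t_i\to\partial_-W_i\setminus s_i$ with $d_{\partial_-W_i\setminus s_i}(\Phi_i(\alpha),\,h_i(\mathcal{D}^0(V_i\setminus t_i)))>k$. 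The role of Lemma~\ref{lem-2-3-1} is exactly as in the $b\ge 2$ case: it guarantees that a suitable homeomorphism of $\partial_-W_i\setminus s_i$ supported off a chosen subsurface can be pulled back to a homeomorphism of $\partial V_i\setminus t_i$ (via $h_i$) fixing the relevant curves, so that the "far-apart" homeomorphism can be chosen without disturbing whatever auxiliary data one wants preserved; in the bare statement above there is no such auxiliary constraint, so one may simply invoke \cite[Claim 2]{IS} and Lemma~\ref{lem-2-3-1} in combination exactly as written for Proposition~\ref{prop-b>1-hi}.

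The step I expect to be the only real point requiring care is checking that \cite[Claim 2]{IS} applies in this configuration: one must verify that $(V_i, t_i)$ is a genuine $b$-string trivial tangle in a genus-$(g-1)$ handlebody (immediate from the bulleted construction), that $\partial_-W_i\setminus s_i$ is non-simple (handled above by $g\ge 2$), and that the natural projection $P_i$ and the subsurface projection $\pi_{F_i\setminus s_i}$ interact with homeomorphisms of $\partial_-W_i\setminus s_i$ in the way the cited results presuppose — i.e. that pre- or post-composing $h_i$ with a mapping class of $\partial_-W_i\setminus s_i$ moves $h_i(\mathcal{D}^0(V_i\setminus t_i))$ by that mapping class while leaving $\Phi_i(\alpha)$ fixed. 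All of this is routine and parallels Subsection~\ref{sec-b>1} line for line; no new idea is needed, so I would state the proof as: "This follows from \cite[Claim 2]{IS} and Lemma~\ref{lem-2-3-1}, by the same argument as in the proof of Proposition~\ref{prop-b>1-hi}."
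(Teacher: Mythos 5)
Your proposal matches the paper, which itself offers no further argument for Proposition~\ref{prop-g>1-hi} beyond the citation ``this can be proved by \cite[Claim 2]{IS} and Lemma~\ref{lem-2-3-1}'', exactly parallel to Proposition~\ref{prop-b>1-hi}. Your added checks (non-simplicity of $\partial_-W_i\setminus s_i$ from $g\ge 2$, equivariance of the projections under change of the gluing map) are the right routine verifications, so the proposal is correct and takes essentially the same approach.
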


The following propositions 
will be used in Section~\ref{sec-proof-6}.

\begin{proposition}\label{prop-g>1}
Let $\alpha$ be an element of $\mathcal{C}^0(\partial_+ W_i\setminus s_i)\setminus\{\partial D_i\}$ such that $\alpha$ is non-separating, $\alpha\cap D_i=\emptyset$, and $\alpha\cup \partial D_i$ is separating in $\partial_+W_i\setminus s_i$. 
If there is an element $\beta$ of $\mathcal{D}^0(V_i^{\ast}\setminus t_i^{\ast})$ such that $\alpha\cap \beta=\emptyset$ and $\beta\ne \partial D_i$,
then $d_{\partial_- W_i\setminus s_i}(\Phi_i(\alpha),h_i(\mathcal{D}^0(V_i\setminus t_i)))\le 1$. 
\end{proposition}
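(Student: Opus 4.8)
The plan is to mimic the proof of Proposition~\ref{prop-b>1}~(1), adapting the outermost-disk argument to the present situation, where $W_i=(\Sigma\times I)\cup(1\text{-handle})$ and $D_i$ is the co-core of the $1$-handle. First I would record what the hypotheses on $\alpha$ give: since $\alpha\cap D_i=\emptyset$ and $\alpha\ne\partial D_i$, the curve $\alpha$ may be isotoped so that $\alpha\subset F_i$, where it is essential in $F_i\setminus s_i$; since moreover $\alpha$ is non-separating in $\partial_+W_i\setminus s_i$ while $\alpha\cup\partial D_i$ is separating there, after capping the two attaching disks $D_i\times\{0,1\}$ the curve $\alpha$ becomes an essential separating curve, so $\pi_{F_i\setminus s_i}(\alpha)=\{\alpha\}$ and $\Phi_i(\alpha)=\{P_i(\alpha)\}$ consists of a single essential curve (in particular $\Phi_i(\alpha)\ne\emptyset$). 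I would also note that $D_i$ cuts $(V_i^*,t_i^*)$ into the single pair $(W_i',s_i)\cup_{\overline h_i}(V_i,t_i)$, a genus-$(g-1)$ handlebody in which $s_i$ is a union of $I$-fibers of $W_i'\cong\Sigma\times I$; in contrast with the $b\ge2$ case, no $3$-ball summand appears.

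Now let $\beta\in\mathcal{D}^0(V_i^*\setminus t_i^*)$ with $\alpha\cap\beta=\emptyset$ and $\beta\ne\partial D_i$, and let $D_\beta$ be a disk with $\partial D_\beta=\beta$ chosen so that $|D_\beta\cap D_i|$ is minimal. Innermost-loop arguments remove the loop components of $D_\beta\cap D_i$, and, exactly as in the proof of Proposition~\ref{prop-b>1}~(1), the closure $\Delta$ of an outermost component of $D_\beta\setminus D_i$ (or $\Delta:=D_\beta$ when $D_\beta\cap D_i=\emptyset$) is an essential disk in $(W_i'\cup_{\overline h_i}V_i)\setminus t_i^*$; here the minimality and $\beta\ne\partial D_i$ are used to rule out $\partial$-parallel or inessential $\Delta$. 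As there, I would then take a disk $\Delta'$ properly embedded in $(W_i'\cup_{\overline h_i}V_i)\setminus t_i^*$ with $\partial\Delta'\subset F_i$, parallel to the union of $\Delta$ and one of the two disks into which $\Delta\cap D_i$ cuts $D_i$ (when $D_\beta\cap D_i=\emptyset$ set $\Delta'=\Delta=D_\beta$, noting $\beta\subset F_i$ in that case), so that $\partial\Delta'$ is made up of arcs of $\beta$ and of $\partial D_i$. Since $s_i\cap W_i'$ is a union of $I$-fibers, $\Delta'$ can be isotoped so that $\Delta'\cap W_i'$ is a vertical annulus and $\Delta'':=\Delta'\cap V_i$ is a single essential disk in $V_i\setminus t_i$; hence $h_i(\partial\Delta'')\in h_i(\mathcal{D}^0(V_i\setminus t_i))$ and $h_i(\partial\Delta'')=P_i(\partial\Delta')$.

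To conclude, since $\alpha$ is disjoint from $\beta$ and from $\partial D_i$, it is disjoint from $\partial\Delta'$; as $P_i$ carries disjoint essential curves of $F_i$ to disjoint curves of $\partial_-W_i\setminus s_i$, the curves $\Phi_i(\alpha)=\{P_i(\alpha)\}$ and $h_i(\partial\Delta'')=P_i(\partial\Delta')$ can be realized disjointly, so $d_{\partial_-W_i\setminus s_i}(\Phi_i(\alpha),h_i(\mathcal{D}^0(V_i\setminus t_i)))\le d_{\partial_-W_i\setminus s_i}(\Phi_i(\alpha),h_i(\partial\Delta''))\le1$. I expect the main obstacle to be checking that the outermost subdisk $\Delta$ is genuinely essential, and not $\partial$-parallel, in the cut-open handlebody, together with the verification that $\partial\Delta'$ can be kept inside $F_i$ and disjoint from $\alpha$ after the surgery; these are exactly the delicate points in the proof of Proposition~\ref{prop-b>1}, and they have to be re-examined here because $W_i$ contains a $1$-handle rather than being a product, even though the absence of a $3$-ball summand lightens the bookkeeping.
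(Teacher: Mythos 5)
Your skeleton (outermost disk $\Delta$, completed disk $\Delta'$ with $\partial\Delta'\subset F_i$, vertical annulus, $\Delta''\in\mathcal{D}^0(V_i\setminus t_i)$, disjointness of the projected curves) is the same as the paper's, but there is a genuine gap at the one step where this proposition actually differs from Proposition~\ref{prop-b>1}: your claim that minimality of $|D_\beta\cap D_i|$ together with $\beta\ne\partial D_i$ already forces $\Delta$ to be essential in $(W_i'\cup_{\overline{h}_i}V_i)\setminus t_i^{\ast}$. In the $b\ge 2$ setting that works because $D_i$ is separating and the other side is a $3$-ball containing a boundary-parallel arc, which carries no essential disks; here $D_i$ is non-separating and that argument has no analogue. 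The paper says this explicitly in the proof of Proposition~\ref{prop-g>1-1}: if $D_\beta$ is a band sum of two parallel copies of $D_i$, then $D_\beta$ is disjoint from $D_i$, not isotopic to $D_i$, and essential in $V_i^{\ast}\setminus t_i^{\ast}$, yet $\Delta=D_\beta$ is \emph{inessential} in the cut-open manifold (its boundary bounds a disk in $(F_i\setminus s_i)\cup(D_i\times\{0,1\})$ containing both attaching disks), so no $\Delta''$ can be produced. Your argument never excludes this configuration; you flag the worry in your last paragraph but do not resolve it, and indeed you never use the hypotheses that $\alpha$ is non-separating and $\alpha\cup\partial D_i$ is separating, which are exactly what is needed (your aside that the absence of the $3$-ball summand ``lightens the bookkeeping'' has it backwards).

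The paper closes this gap as follows. The hypotheses on $\alpha$ mean that, viewed in $\partial_+W_i'$, the curve $\alpha$ separates $D_i\times\{0\}$ from $D_i\times\{1\}$. Since $\alpha\cap\beta=\emptyset$ and $\alpha\cap\partial D_i=\emptyset$, in the band-sum configuration $\alpha$ would have to lie inside the unpunctured disk bounded by $\partial\Delta$, and hence be isotopic to $\partial D_i$, contradicting $\alpha\ne\partial D_i$; the case $D_\beta\cap D_i\ne\emptyset$ is handled by arranging $D_\beta\cap(D_i\times I)$ to be product disks and using minimality, as in Claim~\ref{claim_D_D_i}. Only after $\Delta$ is known to be essential does the $\Delta'$, $\Delta''$ argument of Proposition~\ref{prop-b>1} apply verbatim, giving $d_{\partial_- W_i\setminus s_i}(\Phi_i(\alpha),h_i(\mathcal{D}^0(V_i\setminus t_i)))\le 1$. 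So your proposal needs this additional argument; as written, the new content of Proposition~\ref{prop-g>1} relative to Proposition~\ref{prop-b>1} is missing.
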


\begin{proof}
Let $D_{\beta}$ be a disk in $V_i^{\ast}\setminus t_i^{\ast}$ bounded by $\beta$.
We may assume that $|D_{\beta}\cap D_i|$ is minimal (hence, no component of $D_{\beta}\cap D_i$ is an loop).
We may suppose that each component of $D_{\beta}\cap \text{\rm (1-handle)}(=D_{\beta}\cap (D_i\times I))$ is a product disk in $D_i\times I$.
Let $\Delta$ be the closure of a component of $D_{\beta}\setminus\text{\rm (1-handle)}$ that is outermost in $D_{\beta}$.
Then $\Delta$ is a disk properly embedded in $(W_i'\cup_{\overline{h}_i} V_i)\setminus t_i^{\ast}$.
Note that $\alpha$ is a simple closed curve in $\partial_+W_i'$ which separates $D_i\times\{0\}$ and $D_i\times\{1\}$.
Since $\Delta\cap\alpha=\emptyset$ by the assumption $\alpha\cap\beta=\emptyset$, it is easy to see that $\Delta$ is an essential disk in $(W_i'\cup_{\overline{h}_i} V_i)\setminus t_i^{\ast}$.
Then the arguments in the proof of Proposition~\ref{prop-b>1} using $\Delta'$, $\Delta''$ work in this setting to show $d_{\partial_- W_i\setminus s_i}(\Phi_i(\alpha),h_i(\mathcal{D}^0(V_i\setminus t_i)))\le 1$.
\end{proof}

\begin{proposition}\label{prop-g>1-1}
Let $D$ be an essential disk in $V_i^{\ast}\setminus t_i^{\ast}$ such that $D\ne D_i$ and $|D\cap D_i|$ is minimal (hence, no component of $D\cap D_i$ is a loop), and let
$\Delta$ be a disk defined as follows:
\begin{itemize}
\item If $D\cap D_i=\emptyset$, let $\Delta:=D$. 
\item If $D\cap D_i\ne\emptyset$, let $\Delta$ be the closure of a component of $D\setminus N(D_i)$ that is outermost in $D$. 
\end{itemize}
Then one of the following {\rm (A)} or {\rm (B)} holds.
\begin{itemize}
\item[{\rm (A)}] $D\cap D_i=\emptyset$ and $D$ is a band sum of two copies of $D_i$,
\item[{\rm (B)}] $\Delta$ is essential in $(W_i'\cup_{\overline{h}_i} V_i)\setminus t_i^{\ast}$, and the following {\rm (B1)} and {\rm (B2)} hold.
\begin{itemize}
\item[{\rm (B1)}] If there is an element $\alpha$ of $\mathcal{C}^0(\partial_+ W_i\setminus s_i)$ such that $\alpha\cap \partial D_i=\emptyset$, $\Phi_i(\alpha)\ne \emptyset$ and $\alpha\cap\Delta=\emptyset$,
then $d_{\partial_- W_i\setminus s_i}(\Phi_i(\alpha),h_i(\mathcal{D}^0(V_i\setminus t_i)))\le 1$.
\item[{\rm (B2)}] If there is an element $\alpha=\gamma_1\cup \gamma_2$ of $\mathcal{C}^0(\partial_+ W_i\setminus s_i)$ such that $\gamma_1$ is an essential arc in $F_i$ and $\gamma_2$ is a subarc of $\partial (D_i\times\{0,1\})$ and that $|\alpha\cap\Delta|=|\gamma_2\cap\Delta|=1$,
then  $d_{\partial_- W_i\setminus s_i}(\Phi_i(\alpha),h_i(\mathcal{D}^0(V_i\setminus t_i)))\le 2$. 
\end{itemize}
\end{itemize}
\end{proposition}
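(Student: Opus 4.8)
The plan is to closely follow the structure of the proof of Proposition~\ref{prop-b>1-1}, adapting the ``outermost disk'' analysis from the $W_i=\Sigma\times I$ setting to the present setting where $W_i=(\Sigma\times I)\cup(1\text{-handle})$ and $D_i$ is the co-core of the $1$-handle. First I would put $D$ in minimal position with respect to $D_i$, so that (by innermost-disk arguments) no component of $D\cap D_i$ is a loop and each component of $D\cap(D_i\times I)$ is a product disk in the $1$-handle; cutting $D$ along $D_i$ then yields a planar piece $\Delta$ that is outermost in $D$, lying in $(W_i'\cup_{\overline h_i}V_i)\setminus t_i^\ast$. The key dichotomy is whether $\Delta$ is essential in $(W_i'\cup_{\overline h_i}V_i)\setminus t_i^\ast$ or not. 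If $D\cap D_i=\emptyset$, then either $D=\Delta$ is essential in $(W_i'\cup_{\overline h_i}V_i)\setminus t_i^\ast$ (a subcase of (B)), or $D$ is isotopic into $W_i$ and, since in $W_i$ the only essential disks are band sums of copies of $D_i$ (because $(W_i,s_i)$ is $(\Sigma\times I,\text{$I$-fibers})$ with a $1$-handle whose co-core is $D_i$), we land in conclusion (A). If $D\cap D_i\neq\emptyset$, I must rule out the possibility that $\Delta$ is $\partial$-parallel or bounds a once-punctured disk; this is where minimality of $|D\cap D_i|$ is used exactly as in Proposition~\ref{prop-b>1}, and it forces $\Delta$ to be essential in $(W_i'\cup_{\overline h_i}V_i)\setminus t_i^\ast$, putting us in case (B).

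Once we are in case (B), I would construct the auxiliary disks $\Delta'$ and $\Delta''$ exactly as in the proof of Proposition~\ref{prop-b>1}: $\Delta'$ is a disk properly embedded in $(W_i'\cup_{\overline h_i}V_i)\setminus t_i^\ast$ with $\partial\Delta'\subset F_i$, parallel to the union of $\Delta$ with one of the two components of $(D_i\times\{0,1\})$-region that $\Delta$ cuts off (here one uses that the attaching disks of the $1$-handle are $D_i\times\{0\}$ and $D_i\times\{1\}$ sitting in $\partial_+W_i'$). Since $s_i\cap W_i'$ is a union of $I$-fibers in $\Sigma\times I$, one can isotope $\Delta'$ so that $\Delta'':=\Delta'\cap V_i$ is a single disk and $\Delta'\cap W_i'$ is a vertical annulus; then $h_i(\partial\Delta'')\in h_i(\mathcal{D}^0(V_i\setminus t_i))$. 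For (B1), the hypothesis $\alpha\cap\Delta=\emptyset$ together with $\alpha\cap\partial D_i=\emptyset$ gives $\alpha\cap\partial\Delta'=\emptyset$, hence $\Phi_i(\alpha)\cap h_i(\partial\Delta'')=\emptyset$ and $d_{\partial_-W_i\setminus s_i}(\Phi_i(\alpha),h_i(\mathcal{D}^0(V_i\setminus t_i)))\le 1$. For (B2), with $\alpha=\gamma_1\cup\gamma_2$ meeting $\partial\Delta'$ in exactly one point (since $|\gamma_2\cap\Delta|=1$ and $\gamma_1$ is disjoint from $\Delta$), both $\alpha$ and $\partial\Delta'$ are non-separating in $F_i$; taking $\delta:=\partial N_{F_i}(\alpha\cup\partial\Delta')$ gives an essential curve in $F_i$ because it bounds a one-holed torus with no punctures and $g\ge 2$ guarantees $F_i$ is large enough for $\delta$ to be essential and non-peripheral. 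Then $[\alpha,\delta,\partial\Delta']$ is a path in $\mathcal{C}(F_i\setminus s_i)$, and applying $\Phi_i=P_i\circ\pi_{F_i\setminus s_i}$ (using that $W_i'=\Sigma\times I$ with $s_i$ a union of $I$-fibers, so $P_i$ carries this to a path) yields $d_{\partial_-W_i\setminus s_i}(\Phi_i(\alpha),h_i(\mathcal{D}^0(V_i\setminus t_i)))\le 2$.

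I expect the main obstacle to be the case analysis establishing the dichotomy (A) versus (B) — specifically, verifying that when $D\cap D_i=\emptyset$ and $D$ is not essential in $(W_i'\cup_{\overline h_i}V_i)\setminus t_i^\ast$, it must be a band sum of two parallel copies of $D_i$ rather than something more complicated. This requires a careful description of the essential disks of $W_i\setminus s_i$: since $W_i$ deformation retracts to $\Sigma$ wedge a circle and $s_i$ consists of vertical $I$-fibers, any essential disk in $W_i\setminus s_i$ disjoint from $D_i$ would be inessential, and one disjoint from the fibers but meeting the $1$-handle in product disks is, after the outermost reduction, controlled by $D_i$. The rest is a faithful transcription of the $\Delta$, $\Delta'$, $\Delta''$, $\delta$ machinery from Propositions~\ref{prop-b>1} and~\ref{prop-b>1-1}, with ``$D_i$'' replaced by ``$D_i\times\{0,1\}$'' where the $1$-handle structure matters, so I would present (B1) and (B2) briefly by referring to those proofs.
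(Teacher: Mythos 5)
Your overall architecture matches the paper's: reduce case (B) to the machinery of Propositions~\ref{prop-b>1} and~\ref{prop-b>1-1} (which is exactly what the paper does for (B1) and (B2)), and treat the disks disjoint from $D_i$ that are inessential in $(W_i'\cup_{\overline h_i}V_i)\setminus t_i^{\ast}$ as the source of conclusion (A). However, there is a genuine gap at the step you dispose of in one sentence: you claim that when $D\cap D_i\ne\emptyset$, the minimality of $|D\cap D_i|$ forces $\Delta$ to be essential ``exactly as in Proposition~\ref{prop-b>1}.'' That argument does not transfer, and the paper says so explicitly: in Proposition~\ref{prop-b>1} the disk $D_i$ is separating, the outermost piece lands either in the trivial pair $(W_i^2,s_i^2)$ (impossible) or in $(W_i^1\cup_{\overline h_i}V_i)\setminus t_i^{\ast}$, and an inessential outermost piece can be isotoped away to reduce $|D\cap D_i|$. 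Here $D_i$ is non-separating, so cutting along it leaves a single piece whose boundary contains \emph{both} attaching disks $D_i^0$ and $D_i^1$; an outermost $\Delta$ with its arc on $\partial D_i^0$ could a priori be inessential while the disk of parallelism contains $D_i^1$, and then the naive isotopy does not reduce $|D\cap D_i|$. The paper excludes this by a new counting argument (its Claim~\ref{claim_D_D_i}): assuming $\Delta$ inessential, the arc $\partial\Delta\cap F_i$ cuts off an annulus $A$; parity of the punctures of $s_i\cap F_i$ (each complementary region meets an even number, since $D\cap t_i^{\ast}=\emptyset$) shows $A$ is puncture-free; minimality then forces every component of $\partial D\cap F_i$ meeting $\partial D_i^1$ to be essential in $A$ and hence to run to $\partial D_i^0$, giving $|\partial D\cap\partial D_i^0|\ge|\partial D\cap\partial D_i^1|+2$, which contradicts $|\partial D\cap\partial D_i^0|=|\partial D\cap\partial D_i^1|$ coming from the product-disk structure in $D_i\times I$. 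Nothing in your proposal supplies this (or any equivalent) argument, and it is precisely the point where the non-separating case differs from Proposition~\ref{prop-b>1-1} --- it is the reason alternative (A) appears in the statement at all.

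The same counting is also what the paper uses to finish the remaining case $D\cap D_i=\emptyset$ with $D$ inessential in $(W_i'\cup_{\overline h_i}V_i)\setminus t_i^{\ast}$: the disk $D^{\ast}\subset (F_i\setminus s_i)\cup D_i^0\cup D_i^1$ bounded by $\partial D$ is puncture-free by parity, must contain at least one of $D_i^0,D_i^1$ because $D$ is essential in $V_i^{\ast}\setminus t_i^{\ast}$, and must contain both because $D\ne D_i$; this is what yields the band-sum description in (A). You correctly flag this as the main obstacle, but your justification (``$W_i$ deformation retracts to $\Sigma$ wedge a circle,'' ``the only essential disks in $W_i\setminus s_i$ are band sums of copies of $D_i$'') is an assertion of essentially the statement to be proved, not a proof; a homotopy-type argument does not by itself control embedded disks relative to the punctures $s_i$. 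To complete your proposal you need to carry out the parity/outermost-arc count in both places. The (B1)/(B2) portion of your write-up is fine and coincides with the paper's treatment, modulo the minor remark that the essentiality of $\delta$ in $F_i\setminus s_i$ follows because the side of $\delta$ not containing $N_{F_i}(\alpha\cup\partial\Delta')$ contains all the punctures and both boundary circles $\partial D_i^0,\partial D_i^1$ (so $b\ge 1$ already suffices here; no appeal to $g\ge 2$ beyond the standing hypothesis is needed).
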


\begin{proof}
We note that Proposition~\ref{prop-g>1-1} corresponds to Proposition~\ref{prop-b>1-1}.
In Proposition~\ref{prop-b>1-1}, the minimality of $|D\cap D_i|$ implies that $\Delta$ is essential in $(W_i'\cup_{\overline{h}_i} V_i)\setminus t_i^{\ast}$.
However, when $D_i$ is non-separating, the minimality of $|D\cap D_i|$ does not necessarily imply the fact that $\Delta$ is essential in $(W_i'\cup_{\overline{h}_i} V_i)\setminus t_i^{\ast}$.
In case when $\Delta$ is essential in $(W_i'\cup_{\overline{h}_i} V_i)\setminus t_i^{\ast}$, the arguments in the proof of Proposition~\ref{prop-b>1-1} completely work and we can see that (B1) and (B2) hold.
Hence, in the remainder of this proof, we assume that $\Delta$ is inessential in $(W_i'\cup_{\overline{h}_i} V_i)\setminus t_i^{\ast}$.
%
Let $D_i^0:=D_i\times\{0\}$ and $D_i^1:=D_i\times\{1\}$.

\begin{claim}\label{claim_D_D_i}
$D\cap D_i=\emptyset$.
\end{claim}

\begin{proof}
Assume, for a contradiction, that $D\cap D_i\ne\emptyset$.
We may assume that $D\cap (D_i\times I)$ consists of product disks in $D_i\times I$.
We may also assume that $\Delta\cap D_i^0\ne \emptyset$ and $\Delta\cap D_i^1=\emptyset$ without loss of generality.
Since $\Delta$ is inessential in $(W_i'\cup_{\overline{h}_i} V_i)\setminus t_i^{\ast}$, $\partial\Delta$ is inessential in $(F_i\setminus s_i)\cup D_i^0\cup D_i^1$.
Note that the arc $\partial \Delta\cap F_i$ cuts off an annulus $A$ with at most one point of $s_i\cap F_i$.
However, since $\Delta\cap t_i^{\ast}=\emptyset$, each component of $((F_i\setminus s_i)\cup D_i^0\cup D_i^1)\setminus \partial\Delta$ contains even number of points of $s_i\cap F_i$, and this shows that $A$ does not contain a point of $s_i\cap F_i$.
Then by the minimality of $|D\cap D_i|$, we see that each component of $\partial D\cap F_i$ intersecting $\partial D_i^1$ is an essential arc in $A$, hence the other endpoint is on $\partial D_i^0$.
This implies that $|\partial D\cap \partial D_i^0|\ge |\partial D\cap \partial D_i^1|+2$ since the both endpoints of the arc $\partial\Delta\cap F_i$ are contained in $\partial D_i^0$.
However, by the assumption that $D\cap (D_i\times I)$ consists of product disks, we have $|\partial D\cap \partial D_i^0|=|\partial D\cap \partial D_i^1|$, a contradiction.
This completes the proof of Claim~\ref{claim_D_D_i}.
%
\end{proof}

By Claim~\ref{claim_D_D_i}, we have $D=\Delta$.
Recall that $D$ is inessential in $(W_i'\cup_{\overline{h}_i} V_i)\setminus t_i^{\ast}$.
This implies that $\partial D$ bounds a disk in $(F_i\setminus s_i)\cup D_i^0\cup D_i^1$ with at most one point of $s_i\cap F_i$.
However, since $D\cap t_i^{\ast}=\emptyset$, each component $((F_i\setminus s_i)\cup D_i^0\cup D_i^1)\setminus \partial D$ contains even number of points of $s_i\cap F_i$, and this shows that the disk $D^{\ast}$ in $(F_i\setminus s_i)\cup D_i^0\cup D_i^1$ bounded by $\partial D$ does not contain a point of $s_i\cap F_i$.
Then $D^{\ast}$ must contain at least one of $D_i^0$, $D_i^1$, since, otherwise, $\partial D$ bounds the disk $D^{\ast} (\subset F_i\setminus s_i\subset \partial_+ W_i)$, which contradicts the assumption that $D$ is essential in $V^{\ast}\setminus t^{\ast}$.
However, since $D\ne D_i$ by the assumption of Proposition~\ref{prop-g>1-1}, we see that both $D_i^0$ and $D_i^1$ are contained in $D^{\ast}$, and this shows that (A) holds.

This completes the proof of Proposition~\ref{prop-g>1-1}.
\end{proof}


%
%
%

\part{Proof of Theorem \ref{thm-1} when $n\ge 2$}\label{part2}

\section{Proof of Theorem \ref{thm-1} when $n\ge 3$ and $b\ge 2$}\label{sec-proof-1}

In this section, we give a proof of Theorem~\ref{thm-1} for the case when $n\ge 3$ and $b\ge 2$. 
(Note that $(g,b)\ne (0,2)$ by the assumption of Theorem~\ref{thm-1}.)

%
Let $F$ be a closed orientable surface of genus $g$ and let $P$ be the union of $2b$ points on $F$.
Let $[\alpha_0,\alpha_1,\dots,\alpha_{n-1}]$ be a geodesic in $\mathcal{C}(F\setminus P)$ constructed as in Proposition \ref{prop-unique-geodesic}.
By Remark \ref{rem-sec3}, we may assume that 
\begin{equation}\label{eqn-0ton-2}
{\rm diam}_{X_1}(\pi_{X_1}(\alpha_0),\pi_{X_1}(\alpha_{n-1}))>2n+6
\end{equation} 
holds, where $X_1$ is the subsurface of $F\setminus P$ associated with $\alpha_1$.
Let $\alpha_n'$ be a simple closed curve in $X_{n-1}$ that cuts off a twice-punctured disk from $X_{n-1}$, where $X_{n-1}$ is the subsurface of $F\setminus P$ associated with $\alpha_{n-1}$.
By Lemma \ref{lem-2-3-1}, there exists a homeomorphism $h:F\setminus P\rightarrow F\setminus P$ such that 
$h(\alpha_{n-1})=\alpha_{n-1}$ and ${\rm diam}_{X_{n-1}}(\pi_{X_{n-1}}(\alpha_0),\pi_{X_{n-1}}(h(\alpha_{n}')))>2n+16$.
Let $\alpha_{n}=h(\alpha_n')$.
Then, by Propositions \ref{prop-extending-geodesic} and \ref{prop-unique-geodesic}, $[\alpha_0,\alpha_1,\dots,\alpha_n]$ is the unique geodesic in $\mathcal{C}(F\setminus P)$ connecting $\alpha_0$ and $\alpha_n$.
Moreover, every $\alpha_i$ cuts off a twice-punctured disk from $F\setminus P$, and 
\begin{equation}\label{eqn-n-2ton}
{\rm diam}_{X_{n-1}}(\pi_{X_{n-1}}(\alpha_{0}),\pi_{X_{n-1}}(\alpha_{n}))>2n+16
\end{equation} 
holds.

For $i=1,2$, let $V_i^{\ast,0}$, $t_i^{\ast,0}$, $V_i$, $t_i$, $W_i$, $s_i$, $D_i$, 
$F_i$, $\Phi_i$ be as in Subsection~\ref{sec-b>1}.
Identify $(\partial_+W_1, s_1\cap\,\partial_+W_1)$ and $(\partial_+W_2, s_2\cap\,\partial_+W_2)$ with $(F,P)$ so that $\partial D_1=\alpha_0$ and $\partial D_2=\alpha_n$. 
%
By Proposition~\ref{prop-b>1-hi}, there is a homeomorphism $h_1:\partial V_1\setminus t_1\rightarrow \partial_- W_1\setminus s_1$ such that
\begin{equation}\label{eqn-v1-2}
d_{\partial_- W_1\setminus s_1}(\Phi_1(\alpha_1),h_1(\mathcal{D}^0(V_1\setminus t_1)))>2.
\end{equation}
Let $\overline{h}_1:(\partial V_1,\partial t_1)\rightarrow (\partial_- W_1,s_1\cap \partial_- W_1)$ be the homeomorphism of pairs naturally induced from $h_1$. 
Let $(V_1^{\ast}, t_1^{\ast}):=(W_1,s_1)\cup_{\overline{h}_1}(V_1,t_1)$.

\begin{claim}\label{claim-1}
$\alpha_1$ intersects every element of $\mathcal{D}^0(V_1^{\ast}\setminus t_1^{\ast})\setminus \{\alpha_0\,(=\partial D_1)\}$, that is, $[\alpha_0,\alpha_1]$ is the unique geodesic realizing the distance $d_{F\setminus P}(\mathcal{D}^0(V_1^{\ast}\setminus t_1^{\ast}), \{\alpha_1\})=1$. 
\end{claim}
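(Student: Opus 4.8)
The plan is to argue by contradiction: suppose $\beta \in \mathcal{D}^0(V_1^\ast\setminus t_1^\ast)$ satisfies $\beta \ne \alpha_0 = \partial D_1$ and $\alpha_1\cap\beta=\emptyset$ (i.e.\ $d_{F\setminus P}(\beta,\alpha_1)\le 1$), and derive a contradiction with inequality \eqref{eqn-v1-2}. First I would observe that $\alpha_1$ misses the disk $D_1$ (since $\alpha_1\cap\alpha_0=\emptyset$ by the geodesic), so after an isotopy $\alpha_1$ lies in $F_1$ and $\alpha_1\cap\partial D_1=\emptyset$; moreover $\Phi_1(\alpha_1)\ne\emptyset$ because $\alpha_1\ne\alpha_0=\partial D_1$ and $\alpha_1$ is essential in $F_1$ (it cuts off a twice-punctured disk from $F\setminus P$ and this disk cannot swallow the puncture-free $D_1$-side), so the hypotheses of Proposition~\ref{prop-b>1} are in force for $\alpha=\alpha_1$.

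Next I would split into the two cases covered by Proposition~\ref{prop-b>1}(1) and (2). If $\beta$ is such that $\alpha_1\cap\beta=\emptyset$ with $\beta\ne\partial D_1$, then Proposition~\ref{prop-b>1}(1) gives directly
\[
d_{\partial_- W_1\setminus s_1}(\Phi_1(\alpha_1),h_1(\mathcal{D}^0(V_1\setminus t_1)))\le 1,
\]
contradicting \eqref{eqn-v1-2}. That already handles the case $d_{F\setminus P}(\beta,\alpha_1)\le 1$ with $\beta$ a curve disjoint from $\alpha_1$; and since $\alpha_1$ is separating (it bounds a twice-punctured disk), ``$\alpha_1\cap\beta=\emptyset$'' is exactly the statement $d_{F\setminus P}(\beta,\alpha_1)\le 1$ together with $\beta\ne\alpha_1$ — and $\beta=\alpha_1$ is impossible since $\alpha_1$ bounds no disk in $V_1^\ast\setminus t_1^\ast$ (it bounds a disk only in $W_1^2$, which contains the arc $s_1^2$, so $\alpha_1\notin\mathcal{D}^0(V_1^\ast\setminus t_1^\ast)$; alternatively, if $\alpha_1=\beta\in\mathcal{D}^0$ then Proposition~\ref{prop-b>1}(1) gives distance $0$, again contradicting \eqref{eqn-v1-2}). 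Thus no such $\beta$ exists, which is precisely the claim that $\alpha_1$ intersects every element of $\mathcal{D}^0(V_1^\ast\setminus t_1^\ast)\setminus\{\alpha_0\}$, equivalently $d_{F\setminus P}(\mathcal{D}^0(V_1^\ast\setminus t_1^\ast),\{\alpha_1\})=1$ with $\alpha_0$ the unique realizing disk.

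The main obstacle I anticipate is the bookkeeping that ``$\alpha_1\cap\beta=\emptyset$ and $\beta\ne\alpha_0$'' genuinely exhausts the possibilities for a putative distance-$\le 1$ disk: one must be careful that $\Phi_1(\alpha_1)\ne\emptyset$ holds (so that Proposition~\ref{prop-b>1} applies at all), and that $\alpha_1$ cannot itself be a disk of $V_1^\ast\setminus t_1^\ast$ other than handling it via the ``moreover'' clause of Proposition~\ref{prop-b>1}(1). The value $>2$ in \eqref{eqn-v1-2} is chosen with room to spare precisely so that both the ``$\le 1$'' and ``$=0$'' conclusions of Proposition~\ref{prop-b>1}(1) yield contradictions; once those two facts are pinned down, the argument is immediate, and the distance $d_{F\setminus P}(\mathcal{D}^0(V_1^\ast\setminus t_1^\ast),\{\alpha_1\})=1$ is attained by the pair $(\alpha_0,\alpha_1)$ since $\alpha_0\in\mathcal{D}^0(V_1^\ast\setminus t_1^\ast)$ bounds $D_1$ and $\alpha_0\cap\alpha_1=\emptyset$, with $\alpha_0$ the unique realizing disk by what was just shown.
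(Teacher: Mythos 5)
Your proposal is correct and follows essentially the same route as the paper: the paper's proof is exactly the contradiction argument via Proposition~\ref{prop-b>1}\,(1) against inequality~(\ref{eqn-v1-2}), and your extra checks (that $\Phi_1(\alpha_1)\ne\emptyset$, and that $\beta=\alpha_1$ is excluded via the ``moreover'' clause) are just details the paper leaves implicit.
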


\begin{proof}
Assume on the contrary that there exists an element $\beta$ of $\mathcal{D}^0(V_1^{\ast}\setminus t_1^{\ast})\setminus \{\alpha_0\}$ such that $\beta\cap \alpha_1=\emptyset$.
Then, by Proposition~\ref{prop-b>1} (1), we have 
$$
d_{\partial_- W_1\setminus s_1}(\Phi_1(\alpha_1),h_1(\mathcal{D}^0(V_1\setminus t_1)))\le 1,
$$
a contradiction to the inequality (\ref{eqn-v1-2}).
\end{proof}

\begin{claim}\label{claim-1-1}
For any element $a\in\mathcal{D}^0(V_1^{\ast}\setminus t_1^{\ast})\setminus \{\alpha_0\}$, $a\cap \alpha_1$ consists of at least $4$ points.
\end{claim}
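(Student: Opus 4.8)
The plan is to argue by contradiction, combining a parity observation with Proposition~\ref{prop-b>1}~(2). First I would record the elementary fact that, since $\alpha_1$ cuts off a twice-punctured disk from $F\setminus P$, it is a separating simple closed curve, so for every $a\in\mathcal{C}^0(F\setminus P)$ the geometric intersection number $|a\cap\alpha_1|$ is even; in particular $|a\cap\alpha_1|\in\{0,2,4,\dots\}$ for each $a\in\mathcal{D}^0(V_1^{\ast}\setminus t_1^{\ast})\setminus\{\alpha_0\}$. By Claim~\ref{claim-1} we already know $|a\cap\alpha_1|\neq 0$ for such $a$, so it suffices to rule out the case $|a\cap\alpha_1|=2$.

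Suppose, for contradiction, that $|a\cap\alpha_1|=2$ for some $a\in\mathcal{D}^0(V_1^{\ast}\setminus t_1^{\ast})\setminus\{\alpha_0\}$. I would apply Proposition~\ref{prop-b>1}~(2) with $i=1$, taking the curve $\alpha$ of that proposition to be $\alpha_1$ and the essential-disk boundary $\beta$ to be $a$. The hypotheses are checked as follows: $\alpha_1\cap\partial D_1=\emptyset$ holds because $\partial D_1=\alpha_0$ and $\alpha_0,\alpha_1$ are consecutive vertices of a geodesic, hence disjoint; $\Phi_1(\alpha_1)\neq\emptyset$ was already known when $h_1$ was chosen, since the right-hand side of (\ref{eqn-v1-2}) is the distance from $\Phi_1(\alpha_1)$ to a nonempty set, and this also allows $\alpha_1$ to be isotoped into $F_1$; $\alpha_1$ bounds a twice-punctured disk in $\partial_+W_1\setminus s_1=F\setminus P$ by construction; the element $\beta=a$ satisfies $\beta\neq\alpha_0=\partial D_1$ (as required in the proof of Proposition~\ref{prop-b>1}); and $|\alpha_1\cap a|=2\le 2$. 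Proposition~\ref{prop-b>1}~(2) then yields
$$d_{\partial_- W_1\setminus s_1}(\Phi_1(\alpha_1),h_1(\mathcal{D}^0(V_1\setminus t_1)))\le 2,$$
which directly contradicts inequality (\ref{eqn-v1-2}). Hence $|a\cap\alpha_1|\neq 2$, and together with the parity observation and Claim~\ref{claim-1} we conclude $|a\cap\alpha_1|\ge 4$.

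I do not expect a real obstacle here: the argument is immediate once Proposition~\ref{prop-b>1}~(2) and inequality (\ref{eqn-v1-2}) are in hand. The only points requiring a little care are the bookkeeping that $\alpha_1$ genuinely satisfies every hypothesis of Proposition~\ref{prop-b>1}~(2) (in particular $\Phi_1(\alpha_1)\neq\emptyset$, which is already built into the choice of $h_1$, and the fact that $\beta=a$ is not $\partial D_1$, which is part of the hypothesis of the claim), and the standard fact that a separating curve meets every simple closed curve an even number of times.
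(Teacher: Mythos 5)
Your proposal is correct and follows essentially the same route as the paper: assume $|a\cap\alpha_1|<4$, use the fact that $\alpha_1$ is separating to reduce to $|a\cap\alpha_1|\le 2$, and then apply Proposition~\ref{prop-b>1}~(2) to contradict inequality~(\ref{eqn-v1-2}). The extra invocation of Claim~\ref{claim-1} to exclude $|a\cap\alpha_1|=0$ is harmless but not needed, since Proposition~\ref{prop-b>1}~(2) already covers that case.
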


\begin{proof}
Assume on the contrary that $|a\cap \alpha_1|<4$.
Then we have $|a\cap \alpha_1|\le 2$, since $\alpha_1$ cuts off a twice-punctured disk from $F\setminus P$ and hence is separating in $F$.
By Proposition~\ref{prop-b>1} (2), we have
$$
d_{\partial_- W_1\setminus s_1}(\Phi_1(\alpha_1),h_1(\mathcal{D}^0(V_1\setminus t_1)))\le 2,
$$
a contradiction to the inequality (\ref{eqn-v1-2}).
\end{proof}

Then we have:

\begin{claim}\label{claim-1-2}
For any element $a\in\mathcal{D}^0(V_1^{\ast}\setminus t_1^{\ast})$, we have $\pi_{X_1}(a)\ne\emptyset$ and 
${\rm diam}_{X_1}(\{\alpha_0\}, \pi_{X_1}(a))\le 4$.
\end{claim}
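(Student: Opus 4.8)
The plan is to split into the two cases $\alpha_0\in\pi_{X_1}(a)$-irrelevant versus relevant, but really the cleanest route is to first dispose of $a=\alpha_0$ trivially: for $a=\alpha_0=\partial D_1$ we have $\alpha_0$ misses $X_1$ by Remark~\ref{rmk_miss}, so $\pi_{X_1}(\alpha_0)$ is empty in the naive sense; however $\alpha_0$ is exactly the boundary curve of $X_1$, so we interpret $\pi_{X_1}(\alpha_0)$ as $\{\alpha_0\}$ itself (or note that the statement is only needed for $a\ne\alpha_0$ in the later argument). So I would state and prove the claim for $a\in\mathcal{D}^0(V_1^\ast\setminus t_1^\ast)\setminus\{\alpha_0\}$, and handle $a=\alpha_0$ by the convention that ${\rm diam}_{X_1}(\{\alpha_0\},\pi_{X_1}(\alpha_0))=0$.

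For $a\ne\alpha_0$: first I would show $\pi_{X_1}(a)\ne\emptyset$, i.e.\ $a$ cuts $X_1$. If $a$ missed $X_1$ then by Remark~\ref{rmk_miss} $a=\alpha_1$ (the curve associated with $X_1$ is $\alpha_1$), which is impossible since $a$ bounds a disk in $V_1^\ast\setminus t_1^\ast$ while $\alpha_1$ intersects every such disk except $\alpha_0$ by Claim~\ref{claim-1}. Hence $a$ cuts $X_1$ and $\pi_{X_1}(a)\ne\emptyset$. Next, for the diameter bound: I want to compare $a$ with $\alpha_1$ inside $X_1$. By Claim~\ref{claim-1-1}, $|a\cap\alpha_1|\ge 4$, which at first glance looks like the wrong direction — but the point is that $\alpha_1$ is the curve \emph{bounding} $X_1$ (up to the twice-punctured disk), so $\alpha_1$ and $\alpha_0$ are disjoint and both are (isotopic into) $\partial X_1$; thus $\pi_{X_1}(\alpha_0)$ and the contribution of $\partial X_1$ coincide. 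The standard fact (Lemma~\ref{subsurface distance}, last sentence) gives ${\rm diam}_{X_1}(\pi_{X_1}(a))\le 2$ for any $a$ cutting $X_1$. So it remains to bound $d_{X_1}$ from a single point of $\pi_{X_1}(a)$ to $\alpha_0$.

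The key observation is that $\partial D_1=\alpha_0$, so $a$ is disjoint from $\alpha_0$ if and only if $a=\alpha_0$ or $a$ can be isotoped off $\partial D_1$; but more usefully, since $a$ bounds a disk $D_a$ in $V_1^\ast\setminus t_1^\ast$ and $D_1$ is an essential disk there, after minimizing $|D_a\cap D_1|$ we get by the proof of Proposition~\ref{prop-b>1} an essential disk $\Delta'$ in $(W_1^1\cup_{\bar h_1}V_1)\setminus t_1^\ast$ with $\partial\Delta'\subset F_1$ and with $\partial\Delta'$ obtained from $a$ and $\partial D_1$ by the outermost-disk swap; in particular $\partial\Delta'$ is either disjoint from $\alpha_0$ or meets it in fewer points, and iterating, one sees $a$ is connected to $\alpha_0$ through curves each disjoint from $\alpha_0$... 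Actually the cleaner way: since $D_1$ and $D_a$ can be isotoped so that $\partial D_a\cap\partial D_1$ has a point whose two sides in $\partial D_a$ give subarcs, doing the outermost surgery produces a curve $a'$ with $|a'\cap\alpha_0|<|a\cap\alpha_0|$ and $a'$ still cutting $X_1$ with $d_{\mathcal{AC}(X_1)}(a,a')\le 1$; so by Lemma~\ref{lem_mm}, ${\rm diam}_{X_1}(\pi_{X_1}(a),\pi_{X_1}(a'))\le 2$, and after at most one surgery we reach $a''$ disjoint from $\alpha_0$, for which $\pi_{X_1}(a'')$ is within distance... hmm, this bookkeeping is exactly what I'd expect to be the main obstacle: tracking the number of surgeries and the accumulated diameter to land on the bound $4$.

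\textbf{Main obstacle.} The hard part will be the diameter bookkeeping: showing that the surgery relating $a$ to a curve disjoint from $\partial D_1=\alpha_0$ costs at most the stated amount in $\mathcal{C}(X_1)$. I expect the honest argument to go: let $\Delta'$ be the disk from the proof of Proposition~\ref{prop-b>1}; then $\partial\Delta'\subset F_1$, $\partial\Delta'$ is disjoint from $\alpha_0=\partial D_1$ up to isotopy (since $\partial\Delta'$ lies in $F_1=\partial_+W_1\cap W_1^1$ and $\alpha_0$ is the curve cut off by $D_1$ on the $W_1^2$ side), and $d_{\mathcal{AC}(X_1)}(a,\partial\Delta')\le 1$ because $\partial\Delta'$ is built from $a$ by a single band move along $D_1$, so they can be realized with $|a\cap\partial\Delta'|$ small — concretely disjoint after the outermost cut. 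Hence by Lemma~\ref{lem_mm}, ${\rm diam}_{X_1}(\pi_{X_1}(a),\pi_{X_1}(\partial\Delta'))\le 2$; and $\partial\Delta'$ is disjoint from $\alpha_0$, so $\pi_{X_1}(\partial\Delta')$ together with $\pi_{X_1}(\alpha_0)$-slash-$\partial X_1$ has diameter $\le 2$ again by Lemma~\ref{subsurface distance}. Adding these and the diameter $\le 2$ of $\pi_{X_1}(a)$ gives the bound $4$ (being slightly careful, the triangle inequality gives ${\rm diam}_{X_1}(\{\alpha_0\},\pi_{X_1}(a))\le 2+2=4$). I would write it in exactly that order: (i) $a$ cuts $X_1$, so $\pi_{X_1}(a)\ne\emptyset$; (ii) produce $\partial\Delta'$ disjoint from $\alpha_0$ with $d_{\mathcal{AC}(X_1)}(a,\partial\Delta')\le 1$; (iii) apply Lemma~\ref{lem_mm} and the last sentence of Lemma~\ref{subsurface distance}; (iv) conclude by the triangle inequality.
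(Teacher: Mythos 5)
Your skeleton (outermost-disk surgery along $D_1$, then Lemma~\ref{lem_mm} or Lemma~\ref{subsurface distance}, then a triangle inequality to reach $4$) has the right shape, but the step you yourself flag as the main obstacle is exactly where the argument breaks, and your proposed fix does not work. You assert $d_{\mathcal{AC}(X_1)}(a,\partial\Delta')\le 1$, i.e.\ that $a$ and $\partial\Delta'$ can be realized disjointly, ``because $\partial\Delta'$ is built from $a$ by a single band move along $D_1$''. This is false in general: $\partial\Delta'$ is a push-off of $c\cup e$, where $c$ is the outermost subarc of $a$ cut off by $D_1$ and $e$ is one of the two subarcs of $\partial D_1=\alpha_0$; whenever $|a\cap\alpha_0|>2$, the curve $a$ has further strands crossing $e$, and those strands cross the push-off as well, so $|a\cap\partial\Delta'|$ need not be small and cannot be isotoped to zero. (Strictly speaking $a$ is not even a vertex of $\mathcal{AC}(X_1)$, since $a\cap\alpha_1\ne\emptyset$.) Your fallback -- iterating surgeries until the curve becomes disjoint from $\alpha_0$ -- costs an unbounded number of steps, each contributing to the diameter, so it cannot yield the fixed constant $4$. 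If the disjointness did hold, your $2+2$ accounting would be fine, but it is precisely the disjointness that fails.

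The paper's proof avoids comparing $a$ with $\partial\Delta'$ altogether and instead uses Claim~\ref{claim-1-1} quantitatively, which your write-up mentions and then abandons: since $\partial\Delta'\in\mathcal{D}^0(V_1^{\ast}\setminus t_1^{\ast})$ and $\Delta'\cap\alpha_1=\Delta\cap\alpha_1$, the outermost subarc $c\subset a$ meets $\alpha_1$ in at least $4$ points; hence there is a component $\gamma$ of $a\cap X_1$ lying entirely in the interior of $c$, i.e.\ a subarc with $\gamma\cap N(\alpha_1)=\partial\gamma$ properly embedded in $X_1$. Since every point of $a\cap\alpha_0$ is an endpoint of an arc of $D_a\cap D_1$ and $\Delta$ is outermost, the interior of $c$ is disjoint from $\alpha_0$, so $\gamma\cap\alpha_0=\emptyset$ and $d_{\mathcal{AC}(X_1)}(\alpha_0,\gamma)=1$ with $\gamma\in\pi_{AC}(a)$; this gives ${\rm diam}_{\mathcal{AC}(X_1)}(\{\alpha_0\},\pi_{AC}(a))\le 2$, and Lemma~\ref{lem_mm} then gives the bound $4$. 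The case $a=\alpha_0$ or $a\cap\alpha_0=\emptyset$ is handled directly by Lemma~\ref{subsurface distance} (bound $2$). Two smaller slips: $\alpha_0$ does not miss $X_1$ -- by Remark~\ref{rmk_miss} the only curve missing $X_1$ is $\alpha_1$, and $\alpha_1$, not $\alpha_0$, is the boundary curve of $X_1$ (you are conflating $X_1$ with $F_1$) -- so no special convention is needed for $a=\alpha_0$; your argument via Claim~\ref{claim-1} that $\pi_{X_1}(a)\ne\emptyset$ is, however, correct and matches the paper.
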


\begin{proof}
Note that by Claim \ref{claim-1}, we have $\pi_{X_1}(a)\ne\emptyset$.

If $a=\alpha_0$ or $a\cap\alpha_0=\emptyset$, that is, $d_{F\setminus P}(\alpha_0,a)\le 1$, then we have ${\rm diam}_{X_1}(\{\alpha_0\},\pi_{X_1}(a))\le 2$ by Lemma \ref{subsurface distance}.

Next, we suppose that $a\ne \alpha_0$ and $a\cap \alpha_0\ne\emptyset$.
Let $D_a$ be a disk in $V_1^{\ast}\setminus t_1^{\ast}$ bounded by $a$. 
Recall that $\alpha_0$ bounds the disk $D_1$.
We may assume that $|D_a\cap D_1|$ is minimal (hence, each component of $D_a\cap D_1$ is an arc).
Let $\Delta$ be the closure of a component of $D_a\setminus D_1$ that is outermost in $D_a$.
Let $\Delta'$ be the disk obtained from $\Delta$ as in the proof of Proposition~\ref{prop-b>1}.
Then we see by the proof that $\partial \Delta'\in \mathcal{D}^0(V_1^{\ast}\setminus t_1^{\ast})$.
Further we may suppose that $\Delta'\cap \alpha_1=\Delta\cap \alpha_1$.
By Claim~\ref{claim-1-1}, we see that $\Delta\cap \alpha_1(=\Delta'\cap \alpha_1)$ consists of at least 4 points.
Note that $\Delta\cap F_1$ is an arc properly embedded in $F_1$.
Hence, there is a subarc $\gamma$ of $\partial \Delta\setminus D_1$ such that $\gamma\cap N(\alpha_1)=\partial\gamma$ and $\gamma$ is an arc properly embedded in $X_1$.
These imply that $d_{\mathcal{AC}(X_1)}(\alpha_0, \gamma)=1$.
%
Note that $\gamma\in \pi_{AC}(a)$, where $\pi_{AC}$ is the map from $\mathcal{C}^{0}(F\setminus P)$ to $\mathcal{P}(\mathcal{AC}^{0}(X_1))$ defined as in Subsection~\ref{sec-subsurface}. 
Hence, we have
\begin{eqnarray*}
\begin{array}{rcl}
{\rm diam}_{\mathcal{AC}(X_1)}(\{\alpha_0\}, \pi_{AC}(a))&\le& d_{\mathcal{AC}(X_1)}(\alpha_0, \gamma)+{\rm diam}_{\mathcal{AC}(X_1)}(\pi_{AC}(a))\\
&\le& 1+1=2.
\end{array}
\end{eqnarray*}
Hence, by Lemma \ref{lem_mm},
we have ${\rm diam}_{X_1}(\{\alpha_0\}, \pi_{X_1}(a))\le 4$.
\end{proof}

\begin{claim}\label{claim-1-3}
$d_{F\setminus P}(\mathcal{D}^0(V_1^{\ast}\setminus t_1^{\ast}),\{\alpha_n\})=n$. Moreover, $[\alpha_0, \alpha_1,\dots,\alpha_n]$ is the unique geodesic realizing the distance.
\end{claim}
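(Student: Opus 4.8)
The plan is to show that $[\alpha_0,\alpha_1,\dots,\alpha_n]$, which we already know to be the unique geodesic in $\mathcal{C}(F\setminus P)$ connecting $\alpha_0$ and $\alpha_n$, remains a geodesic (and the unique one) when we replace the endpoint $\alpha_0$ by the subcomplex $\mathcal{D}^0(V_1^{\ast}\setminus t_1^{\ast})$. The key point is that $\alpha_0=\partial D_1\in\mathcal{D}^0(V_1^{\ast}\setminus t_1^{\ast})$, so $d_{F\setminus P}(\mathcal{D}^0(V_1^{\ast}\setminus t_1^{\ast}),\{\alpha_n\})\le d_{F\setminus P}(\alpha_0,\alpha_n)=n$. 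Thus everything reduces to proving the reverse inequality, together with the uniqueness statement.

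First I would suppose for contradiction that there is a vertex $a\in\mathcal{D}^0(V_1^{\ast}\setminus t_1^{\ast})$ with $d_{F\setminus P}(a,\alpha_n)\le n-1$, or that there is a path of length $n$ from some $a\in\mathcal{D}^0(V_1^{\ast}\setminus t_1^{\ast})$ to $\alpha_n$ other than $[\alpha_0,\dots,\alpha_n]$. Let $[a=\beta_0,\beta_1,\dots,\beta_m]$ be such a path, with $m\le n$, so $[\alpha_0,\beta_1,\dots,\beta_m]$ (after possibly prepending $\alpha_0$ if $a\neq\alpha_0$, using $d_{F\setminus P}(\alpha_0,a)\le\ \ldots$; more carefully one argues directly on $[\beta_0,\dots,\beta_m]$) is a path of length $\le n+1$ from a disk-bounding curve to $\alpha_n$. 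The strategy is to use the subsurface $X_1$ associated with $\alpha_1$ as a ``detector'': by Claim~\ref{claim-1-2} every vertex of $\mathcal{D}^0(V_1^{\ast}\setminus t_1^{\ast})$ has nonempty projection to $X_1$ lying within distance $4$ of $\alpha_0$ in $\mathcal{C}(X_1)$, while by the choice of the geodesic (inequality~(\ref{eqn-0ton-2})) and the fact that $\mathrm{diam}_{X_1}(\pi_{X_1}(\alpha_n))\le 2$, the projection $\pi_{X_1}(\alpha_n)$ is far (more than $2n+6$, hence more than $2n+2$) from $\pi_{X_1}(\alpha_0)$. If the path $[\beta_0,\dots,\beta_m]$ had all its vertices cutting $X_1$, Lemma~\ref{subsurface distance} would give $\mathrm{diam}_{X_1}(\pi_{X_1}(\beta_0),\pi_{X_1}(\beta_m))\le 2m\le 2n$, and combined with the Claim~\ref{claim-1-2} bound and the triangle inequality this contradicts the largeness of $\mathrm{diam}_{X_1}(\pi_{X_1}(\alpha_0),\pi_{X_1}(\alpha_n))$. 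So some $\beta_j$ must miss $X_1$, whence $\beta_j=\alpha_1$ by Remark~\ref{rmk_miss}.

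Once we know the path passes through $\alpha_1$, it splits as a path $[\beta_0,\dots,\alpha_1]$ from $\mathcal{D}^0(V_1^{\ast}\setminus t_1^{\ast})$ to $\alpha_1$ followed by a path $[\alpha_1,\dots,\beta_m=\alpha_n]$. Claim~\ref{claim-1} tells us the first piece has length $\ge 1$ and, if of length exactly $1$, forces $\beta_0=\alpha_0$; for the second piece I would invoke Proposition~\ref{prop-extending-geodesic} applied to $[\alpha_0,\alpha_1,\dots,\alpha_n]$ at the index $n-1$ — using hypotheses (H1), (H2) ($\alpha_{n-1}$ cuts off a twice-punctured disk) and (H3), which is exactly inequality~(\ref{eqn-n-2ton}) — to conclude that any geodesic from $\alpha_1$ to $\alpha_n$ has length $n-1$ and passes through $\alpha_{n-1}$; combined with uniqueness of $[\alpha_1,\dots,\alpha_n]$ from Proposition~\ref{prop-unique-geodesic} (applied to the subgeodesic, or re-derived the same way) this pins the second piece down to $[\alpha_1,\dots,\alpha_n]$. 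Adding the lengths, $m\ge 1+(n-1)=n$, so $m=n$, giving $d_{F\setminus P}(\mathcal{D}^0(V_1^{\ast}\setminus t_1^{\ast}),\{\alpha_n\})=n$; and tracing back the equality cases forces $\beta_0=\alpha_0$ and the whole path to be $[\alpha_0,\alpha_1,\dots,\alpha_n]$, which is the uniqueness assertion.

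The main obstacle I anticipate is the bookkeeping in the first step: one must be careful that the path realizing the distance may start at a vertex $a\neq\alpha_0$, so one cannot literally prepend $\alpha_0$ without increasing the length, and instead must apply the $X_1$-projection argument directly to $[\beta_0,\dots,\beta_m]$ using the Claim~\ref{claim-1-2} estimate at the $\beta_0$ end rather than an estimate at $\alpha_0$. Getting the constants to line up — the $2n+6$ in~(\ref{eqn-0ton-2}) must beat $4+2m+2\le 4+2n+2=2n+6$, so the inequality needs to be strict and the ``$+6$'' margin is exactly what makes it work — is the only genuinely delicate accounting; everything else is a routine concatenation of the already-established claims and Proposition~\ref{prop-extending-geodesic}.
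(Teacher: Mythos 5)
Your proposal is correct and takes essentially the same route as the paper's proof: bound the distance above by $n$ via the geodesic $[\alpha_0,\dots,\alpha_n]$, take a competing geodesic $[\beta_0,\dots,\beta_m]$ with $m\le n$, project to $X_1$ and use Claim~\ref{claim-1-2} at the $\beta_0$ end together with Lemma~\ref{subsurface distance} against inequality~(\ref{eqn-0ton-2}) to force the path through $\alpha_1$, then use Claim~\ref{claim-1} and the uniqueness of $[\alpha_1,\dots,\alpha_n]$ (which follows from the uniqueness of $[\alpha_0,\dots,\alpha_n]$ by prepending $\alpha_0$) to conclude $\beta_0=\alpha_0$, $m=n$, and that the path is $[\alpha_0,\alpha_1,\dots,\alpha_n]$. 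The only blemishes are cosmetic: the detour through Proposition~\ref{prop-extending-geodesic} and~(\ref{eqn-n-2ton}) for the second piece is unnecessary ($d_{F\setminus P}(\alpha_1,\alpha_n)=n-1$ automatically, as a subpath of a geodesic, and~(\ref{eqn-n-2ton}) involves $\alpha_0$ rather than $\alpha_1$, so it is not ``exactly'' hypothesis (H3)), and the parenthetical ``more than $2n+6$'' for ${\rm diam}_{X_1}(\pi_{X_1}(\alpha_0),\pi_{X_1}(\alpha_n))$ should read ``more than $2n+4$'', but your final accounting $4+2m+2\le 2n+6$ against the strict inequality~(\ref{eqn-0ton-2}) is exactly the paper's and is what carries the contradiction.
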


\begin{proof}
Since there is a geodesic $[\alpha_0,\alpha_1,\dots,\alpha_n]$, we see that 
$d_{F\setminus P}(\mathcal{D}^0(V_1^{\ast}\setminus t_1^{\ast}),\{\alpha_n\})\le n$.
Let $[\beta_0,\beta_1,\dots,\beta_m]$ be a geodesic in $\mathcal{C}(F\setminus P)$ such that $\beta_0\in\mathcal{D}^0(V_1^{\ast}\setminus t_1^{\ast})$, $\beta_m=\alpha_n$ and $m\le n$.


We claim that there exists $i\in\{0,1,\dots,m\}$ such that $\beta_i=\alpha_1$.
In fact, if $\beta_i\ne \alpha_1$ for every $i\in\{0,1,\dots,m\}$, then every $\beta_i$ cuts $X_1$. 
Then, by Lemma~\ref{subsurface distance}, we have
\begin{equation*}
{\rm diam}_{X_1}(\pi_{X_1}(\beta_0), \pi_{X_1}(\beta_m))\le 2m\le 2n.
\end{equation*}
Similarly we have
$$
{\rm diam}_{X_1}(\pi_{X_1}(\alpha_n), \pi_{X_1}(\alpha_{n-1}))\le 2.
$$
On the other hand, by Claim~\ref{claim-1-2}, we have 
\begin{equation*}
{\rm diam}_{X_1}(\{\alpha_0\},\pi_{X_1}(\beta_0))\le 4.
\end{equation*}
These show
\begin{eqnarray*}
\begin{array}{rcl}
{\rm diam}_{X_1}(\pi_{X_1}(\alpha_0), \pi_{X_1}(\alpha_{n-1}))
&\le&{\rm diam}_{X_1}(\{\alpha_0\}, \pi_{X_1}(\beta_0))+{\rm diam}_{X_1}(\pi_{X_1}(\beta_0), \pi_{X_1}(\beta_m))\\&&+{\rm diam}_{X_1}(\pi_{X_1}(\alpha_n), \pi_{X_1}(\alpha_{n-1}))\\
&\le& 4+2n+2=2n+6,
\end{array}
\end{eqnarray*}
a contradiction to the inequality (\ref{eqn-0ton-2}).
Hence, there exists $i\in\{0,1,\dots,m\}$ such that $\beta_i=\alpha_1$.

We have $i\ne 0$ by Claim~\ref{claim-1}.
If $i\ge 2$, then 
$$
n=d_{F\setminus P}(\alpha_0, \alpha_{n})\le d_{F\setminus P}(\alpha_0, \alpha_{1})+d_{F\setminus P}(\beta_i, \beta_m)\le 1+(m-i)\le 1+(n-2),
$$
a contradiction.
Hence, $i=1$, that is, $\beta_1=\alpha_1$.
Then, by Claim~\ref{claim-1}, we have $\beta_0=\alpha_0$.
Since $[\alpha_1,\dots,\alpha_n]$ is the unique geodesic connecting $\alpha_1$ and $\alpha_n$ (because $[\alpha_0, \alpha_1,\dots,\alpha_n]$ is a unique geodesic), this imiplies $m=n$ and $[\beta_1,\dots,\beta_m]=[\alpha_1,\dots,\alpha_n]$.
%
\end{proof}



By Proposition~\ref{prop-b>1-hi}, there is a homeomorphism $h_2:\partial V_2\setminus t_2\rightarrow \partial_- W_2\setminus s_2$ such that
\begin{equation}\label{eqn-v2-2}
d_{\partial_- W_2\setminus s_2}(\Phi_2(\alpha_{n-1}),h_2(\mathcal{D}^0(V_2\setminus t_2)))>2.
\end{equation}
Let $\overline{h}_2:(\partial V_2,\partial t_2)\rightarrow (\partial_- W_2,s_2\cap \partial_- W_2)$ be the homeomorphism of pairs naturally induced from $h_2$.
Let $(V_2^{\ast}, t_2^{\ast}):=(W_2,s_2)\cup_{\overline{h}_2}(V_2,t_2)$.
Then $(V_1^{\ast}, t_1^{\ast})\cup_{(F,P)} (V_2^{\ast}, t_2^{\ast})$ is a $(g,b)$-splitting of a link.

The following two claims can be proved by arguments similar to those for Claims \ref{claim-1} and \ref{claim-1-2}, and the proofs are left to the reader.

\begin{claim}\label{claim-2}
$\alpha_{n-1}$ intersects every element of $\mathcal{D}^0(V_2^{\ast}\setminus t_2^{\ast})\setminus \{\alpha_n\,(=\partial D_2)\}$, that is, $[\alpha_{n-1},\alpha_n]$ is the unique geodesic realizing the distance $d_{F\setminus P}(\{\alpha_{n-1}\},\mathcal{D}^0(V_2^{\ast}\setminus t_2^{\ast}))=1$.
\end{claim}


\begin{claim}\label{claim-3}
For any element $b\in\mathcal{D}^0(V_2^{\ast}\setminus t_2^{\ast})$, we have $\pi_{X_{n-1}}(b)\ne\emptyset$ and ${\rm diam}_{X_{n-1}}(\{\alpha_n\}, \pi_{X_{n-1}}(b))\le 4$.
\end{claim}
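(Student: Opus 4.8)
\textbf{Plan for Claim~\ref{claim-3}.}
The plan is to mirror the proof of Claim~\ref{claim-1-2} (and the preliminary Claims~\ref{claim-1} and \ref{claim-1-1}) with the roles of the two sides interchanged, i.e.\ with $V_1^\ast$ replaced by $V_2^\ast$, with $\alpha_0$ replaced by $\alpha_n$, with $\alpha_1$ replaced by $\alpha_{n-1}$, and with the subsurface $X_1$ replaced by the subsurface $X_{n-1}$ of $F\setminus P$ associated with $\alpha_{n-1}$. First I would record the analogue of Claim~\ref{claim-1-1}: any element $b\in\mathcal D^0(V_2^\ast\setminus t_2^\ast)\setminus\{\alpha_n\}$ meets $\alpha_{n-1}$ in at least $4$ points. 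This follows exactly as before: since $\alpha_{n-1}$ cuts off a twice-punctured disk it is separating in $F$, so $|b\cap\alpha_{n-1}|$ is even; if it were $\le 2$ then Proposition~\ref{prop-b>1}(2) (applied with $i=2$, using that $\partial D_2=\alpha_n$) would give $d_{\partial_-W_2\setminus s_2}(\Phi_2(\alpha_{n-1}),h_2(\mathcal D^0(V_2\setminus t_2)))\le 2$, contradicting the inequality~(\ref{eqn-v2-2}). Also note that Claim~\ref{claim-2} already gives $\pi_{X_{n-1}}(b)\ne\emptyset$, since $b$ meets $\alpha_{n-1}$ (or equals $\alpha_n$, which also cuts $X_{n-1}$ because $\alpha_n$ was chosen to lie in $X_{n-1}$ and cut off a twice-punctured disk from it).

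The bound ${\rm diam}_{X_{n-1}}(\{\alpha_n\},\pi_{X_{n-1}}(b))\le 4$ is then obtained by the same case split as in Claim~\ref{claim-1-2}. If $b=\alpha_n$ or $b\cap\alpha_n=\emptyset$, then $d_{F\setminus P}(\alpha_n,b)\le 1$ and Lemma~\ref{subsurface distance} gives ${\rm diam}_{X_{n-1}}(\{\alpha_n\},\pi_{X_{n-1}}(b))\le 2$. Otherwise $b\ne\alpha_n$ and $b\cap\alpha_n\ne\emptyset$; take a disk $D_b$ in $V_2^\ast\setminus t_2^\ast$ bounded by $b$ with $|D_b\cap D_2|$ minimal, let $\Delta$ be an outermost component of $D_b\setminus D_2$ and $\Delta'$ the disk obtained from $\Delta$ as in the proof of Proposition~\ref{prop-b>1}, arranged so that $\Delta'\cap\alpha_{n-1}=\Delta\cap\alpha_{n-1}$. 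By the analogue of Claim~\ref{claim-1-1} this intersection consists of at least $4$ points, so $\partial\Delta\cap F_2$, being an arc properly embedded in $F_2$, contains a subarc $\gamma$ with $\gamma\cap N(\alpha_{n-1})=\partial\gamma$ that is properly embedded in $X_{n-1}$; hence $d_{\mathcal{AC}(X_{n-1})}(\alpha_n,\gamma)=1$. Since $\gamma\in\pi_{AC}(b)$, the triangle inequality in $\mathcal{AC}(X_{n-1})$ together with ${\rm diam}_{\mathcal{AC}(X_{n-1})}(\pi_{AC}(b))\le 1$ gives ${\rm diam}_{\mathcal{AC}(X_{n-1})}(\{\alpha_n\},\pi_{AC}(b))\le 2$, and Lemma~\ref{lem_mm} converts this to ${\rm diam}_{X_{n-1}}(\{\alpha_n\},\pi_{X_{n-1}}(b))\le 4$.

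I do not expect a genuine obstacle here: every ingredient is already in place from the $V_1^\ast$ side, and the construction of $(V_2^\ast,t_2^\ast)$ via Proposition~\ref{prop-b>1-hi} was arranged precisely so that~(\ref{eqn-v2-2}) holds. The only point requiring a moment's care is the bookkeeping of which subsurface projection and which ``distinguished disk'' $D_i$ appears: on the $V_2^\ast$ side one uses $\Phi_2$, $D_2$, $F_2$ and the identification $\partial D_2=\alpha_n$, and one must check that $\alpha_{n-1}$ (and $\alpha_n$) genuinely cut $X_{n-1}$ so that all projections in sight are nonempty --- the former because $\alpha_{n-1}=l_{n-1}$ is, by Remark~\ref{rmk_miss}, the only essential curve missing $X_{n-1}$ and $\alpha_{n-1}\ne b$, the latter by the choice of $\alpha_n$. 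Granting these, the argument is a verbatim transcription of the proof of Claim~\ref{claim-1-2}, which is why the authors leave it to the reader.
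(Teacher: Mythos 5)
Your proposal is correct and is exactly the argument the paper intends: the paper explicitly leaves Claim~\ref{claim-3} to the reader as a mirror of Claims~\ref{claim-1}, \ref{claim-1-1} and \ref{claim-1-2}, with $V_2^{\ast}$, $D_2$, $\alpha_n=\partial D_2$, $\alpha_{n-1}$, $X_{n-1}$, $\Phi_2$ and inequality~(\ref{eqn-v2-2}) playing the roles of $V_1^{\ast}$, $D_1$, $\alpha_0$, $\alpha_1$, $X_1$, $\Phi_1$ and (\ref{eqn-v1-2}). Your bookkeeping of these substitutions (including why $\alpha_{n-1}$ and $\alpha_n$ cut the relevant subsurfaces so all projections are nonempty) matches the paper's argument, so there is nothing to add.
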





Since $\alpha_{n-1}$ cuts off a twice-punctured disk, we have the next claim by \cite[Proposition 5.1]{IJK2}.

\begin{claim}\label{claim-4}
${\rm diam}_{X_{n-1}}(\pi_{X_{n-1}}(\mathcal{D}^0(V_1^{\ast}\setminus t_1^{\ast})))\le 12$.
\end{claim}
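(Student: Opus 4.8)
The claim to prove is that $\mathrm{diam}_{X_{n-1}}(\pi_{X_{n-1}}(\mathcal{D}^0(V_1^{\ast}\setminus t_1^{\ast})))\le 12$, where $X_{n-1}$ is the subsurface of $F\setminus P$ associated with $\alpha_{n-1}$. The reference given is \cite[Proposition 5.1]{IJK2}; the role of that proposition is to bound the diameter of the subsurface projection of a disk set into a subsurface whose boundary curve is ``close to'' (or disjoint from) the disk set. So the first thing I would do is verify that the hypotheses of \cite[Proposition 5.1]{IJK2} are met in the present setting: concretely, $\alpha_{n-1}$ cuts off a twice-punctured disk from $F\setminus P$ (this was arranged in the construction of the geodesic), so $X_{n-1}$ is a genuine essential non-simple subsurface, and $\alpha_{n-1}$ is disjoint from $\alpha_{n-2}$, which lies on a geodesic from $\mathcal{D}^0(V_1^{\ast}\setminus t_1^{\ast})$ (via $\alpha_0\in\mathcal{D}^0(V_1^{\ast}\setminus t_1^{\ast})$ and the geodesic $[\alpha_0,\dots,\alpha_{n-1}]$).

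\smallskip

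Next I would set up the actual estimate. The key structural fact, established in Claims~\ref{claim-1} and~\ref{claim-1-2}, is that every element $a$ of $\mathcal{D}^0(V_1^{\ast}\setminus t_1^{\ast})$ satisfies $\pi_{X_1}(a)\ne\emptyset$ and stays within bounded distance ($\le 4$) of $\alpha_0$ in $X_1$; but for the current claim the relevant subsurface is $X_{n-1}$, at the far end of the geodesic. So the argument must go the other way: I would show that every $a\in\mathcal{D}^0(V_1^{\ast}\setminus t_1^{\ast})$ cuts $X_{n-1}$, i.e.\ cannot be isotoped off $X_{n-1}$, for otherwise (by Remark~\ref{rmk_miss}) $a$ would equal $\alpha_{n-1}$, contradicting $\alpha_{n-1}\notin\mathcal{D}^0(V_1^{\ast}\setminus t_1^{\ast})$ — indeed $d_{F\setminus P}(\alpha_0,\alpha_{n-1})=n-1\ge 2$, so $\alpha_{n-1}$ is not even disjoint from $\alpha_0$, hence not in $\mathcal{D}^0(V_1^\ast\setminus t_1^\ast)\setminus\{\alpha_0\}$, and of course $\alpha_0\ne\alpha_{n-1}$. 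Therefore every vertex of $\mathcal{D}^0(V_1^{\ast}\setminus t_1^{\ast})$ cuts $X_{n-1}$, and by the ``furthermore'' part of Lemma~\ref{subsurface distance}, $\mathrm{diam}_{X_{n-1}}(\pi_{X_{n-1}}(a))\le 2$ for each such $a$.

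\smallskip

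The remaining and main point is to bound the distance in $X_{n-1}$ between the projections of two \emph{different} disks $a,a'\in\mathcal{D}^0(V_1^{\ast}\setminus t_1^{\ast})$. Here I would invoke connectivity of the disk complex $\mathcal{D}(V_1^{\ast}\setminus t_1^{\ast})$ (noted in Section~\ref{sec-11} via outermost-disk arguments) together with the fact that $\alpha_0=\partial D_1$ lies in this disk complex: it suffices to bound $\mathrm{diam}_{X_{n-1}}(\{\alpha_0\},\pi_{X_{n-1}}(a))$ uniformly over $a\in\mathcal{D}^0(V_1^{\ast}\setminus t_1^{\ast})$, since then the full diameter is at most twice that plus the $2$ from self-projections. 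To bound $\mathrm{diam}_{X_{n-1}}(\{\alpha_0\},\pi_{X_{n-1}}(a))$: $\alpha_0$ and $\alpha_{n-1}=\partial X_{n-1}$ are joined by the geodesic $[\alpha_0,\alpha_1,\dots,\alpha_{n-1}]$, but every intermediate $\alpha_i$ with $i\le n-2$ is disjoint from $\alpha_{n-1}$? No — only $\alpha_{n-2}$ is. Instead, the clean route is exactly \cite[Proposition 5.1]{IJK2}, which (when $\partial X_{n-1}=\alpha_{n-1}$ is disjoint from $\alpha_{n-2}$, and $\alpha_{n-2}$ is within distance $n-2$ of every disk of $V_1^\ast\setminus t_1^\ast$ along a geodesic all of whose vertices except possibly $\alpha_{n-1}$ itself cut $X_{n-1}$) gives the bound $12$ directly. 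So my proof would be: cite \cite[Proposition 5.1]{IJK2} with $\alpha_{n-1}$ in the role of the distinguished curve, check its hypothesis that $\alpha_{n-1}$ cuts off a twice-punctured disk (true by construction) and that $\alpha_{n-1}$ is disjoint from a vertex $\alpha_{n-2}$ of a geodesic emanating from $\mathcal{D}^0(V_1^{\ast}\setminus t_1^{\ast})$ (true since $\alpha_0\in\mathcal{D}^0(V_1^{\ast}\setminus t_1^{\ast})$ and $[\alpha_0,\ldots,\alpha_{n-1}]$ is a geodesic), and conclude the diameter bound $12$.

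\smallskip

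\textbf{Main obstacle.} The delicate point is making sure the cited proposition \cite[Proposition 5.1]{IJK2} applies verbatim, since its statement is phrased for a disk set of a handlebody-with-arcs of a specific type; one must check that $V_1^{\ast}\setminus t_1^{\ast}$ with the curve $\alpha_{n-1}$ (which bounds a twice-punctured disk on the \emph{boundary} $F$, not a disk in $V_1^\ast$) fits the hypotheses, and in particular that the constant produced is indeed $12$ rather than some other explicit bound — the paper's downstream estimates use the value $12$, so any discrepancy would have to be tracked. If \cite[Proposition 5.1]{IJK2} is not literally applicable, the fallback is the hands-on argument sketched above: every disk cuts $X_{n-1}$ (Remark~\ref{rmk_miss}), self-projections have diameter $\le 2$ (Lemma~\ref{subsurface distance}), and distances between projections of distinct disks are controlled by walking along a path in the connected complex $\mathcal{D}(V_1^{\ast}\setminus t_1^{\ast})$ and applying Lemma~\ref{lem_mm} at each step, anchored at $\alpha_0=\partial D_1$ whose $X_{n-1}$-projection is controlled because $\alpha_0$ and $\alpha_{n-1}$ are the endpoints of a length-$(n-1)$ geodesic, combined with the far-projection control analogous to inequality~(\ref{eqn-n-2ton}).
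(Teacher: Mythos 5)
Your main route coincides with the paper's: the paper's entire proof of Claim~\ref{claim-4} is the one-line appeal to \cite[Proposition 5.1]{IJK2}, the only hypothesis being checked being that $\alpha_{n-1}$ cuts off a twice-punctured disk from $F\setminus P$, which holds by construction. But be clear about what that proposition is: it is a bounded-projection theorem for the disk set of a trivial tangle (the separating analogue of Proposition~\ref{prop-image-d} in Appendix~\ref{appendix-disk-complex}, in the spirit of \cite{Li}), and it needs no hypothesis that $\alpha_{n-1}$ be disjoint from $\alpha_{n-2}$ or otherwise ``close to'' $\mathcal{D}^0(V_1^{\ast}\setminus t_1^{\ast})$. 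Indeed it could not: any bound obtained by chaining distance estimates along a geodesic from the disk set out to $\alpha_{n-1}$ would depend on $n$, whereas the constant $12$ is uniform.

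Two steps of your scaffolding are genuinely wrong, even though they are dispensable for the citation route. First, the inference ``$d_{F\setminus P}(\alpha_0,\alpha_{n-1})\ge 2$, so $\alpha_{n-1}$ is not disjoint from $\alpha_0$, hence $\alpha_{n-1}\notin\mathcal{D}^0(V_1^{\ast}\setminus t_1^{\ast})$'' is a non sequitur: membership in the disk set only requires bounding a disk in $V_1^{\ast}\setminus t_1^{\ast}$, and such curves may well intersect $\alpha_0$. The fact itself is true, but the correct reasons are, e.g., Claim~\ref{claim-1-3} (if $\alpha_{n-1}$ bounded such a disk, the distance to $\alpha_n$ would be $1<n$) or Claim~\ref{claim-1-2} combined with (\ref{eqn-0ton-2}); in any case curves with empty projection simply do not contribute to the diameter, so this point is not needed for Claim~\ref{claim-4}. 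Second, and more seriously, your fallback argument would fail: walking along a path in the connected complex $\mathcal{D}(V_1^{\ast}\setminus t_1^{\ast})$ and applying Lemma~\ref{lem_mm} or Lemma~\ref{subsurface distance} at each step bounds the diameter only by twice the length of that path, which is unbounded over pairs of disks; and there is no analogue of Claim~\ref{claim-1-2} for $X_{n-1}$, since the proof of Claim~\ref{claim-1-2} uses the specific position of $\alpha_1\subset F_1$ relative to $D_1$ and the inequality (\ref{eqn-v1-2}), none of which is available at the $\alpha_{n-1}$ end. The uniform bound $12$ really does rest on the outermost-disk/surgery analysis behind \cite[Proposition 5.1]{IJK2} (compare the proof of Proposition~\ref{prop-image-d}); it cannot be recovered from the projection lemmas quoted in Section~\ref{sec-proof-1} alone.
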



Then we have:

\begin{claim}\label{claim-4-1}
$d_{F\setminus P}(\mathcal{D}^0(V_1^{\ast}\setminus t_1^{\ast}),\mathcal{D}^0(V_2^{\ast}\setminus t_2^{\ast}))=n$, and the bridge splitting $(V_1^{\ast}, t_1^{\ast})\cup_{(F,P)} (V_2^{\ast}, t_2^{\ast})$ is strongly keen.
\end{claim}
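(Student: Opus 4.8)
The plan is to combine the two "one-sided uniqueness" results already obtained -- Claim~\ref{claim-1-3} on the $V_1^\ast$-side and Claim~\ref{claim-2} on the $V_2^\ast$-side -- together with the subsurface-projection bounds of Claims~\ref{claim-1-2}, \ref{claim-3}, \ref{claim-4}. First I would observe that $[\alpha_0,\alpha_1,\dots,\alpha_n]$ is a path with $\alpha_0\in\mathcal{D}^0(V_1^\ast\setminus t_1^\ast)$ and $\alpha_n\in\mathcal{D}^0(V_2^\ast\setminus t_2^\ast)$, so $d_{F\setminus P}(\mathcal{D}^0(V_1^\ast\setminus t_1^\ast),\mathcal{D}^0(V_2^\ast\setminus t_2^\ast))\le n$. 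For the reverse inequality and the uniqueness, let $[\gamma_0,\gamma_1,\dots,\gamma_m]$ be any geodesic realizing the distance, with $\gamma_0\in\mathcal{D}^0(V_1^\ast\setminus t_1^\ast)$, $\gamma_m\in\mathcal{D}^0(V_2^\ast\setminus t_2^\ast)$ and $m\le n$. The key step is to force this geodesic to pass through $\alpha_{n-1}$: if no $\gamma_j$ equals $\alpha_{n-1}$, then every $\gamma_j$ cuts $X_{n-1}$, and Lemma~\ref{subsurface distance} gives ${\rm diam}_{X_{n-1}}(\pi_{X_{n-1}}(\gamma_0),\pi_{X_{n-1}}(\gamma_m))\le 2m\le 2n$; combining this with ${\rm diam}_{X_{n-1}}(\pi_{X_{n-1}}(\mathcal{D}^0(V_1^\ast\setminus t_1^\ast)))\le 12$ (Claim~\ref{claim-4}) -- which in particular bounds the distance from $\pi_{X_{n-1}}(\gamma_0)$ back toward the projection of $\alpha_0$ -- and with ${\rm diam}_{X_{n-1}}(\{\alpha_n\},\pi_{X_{n-1}}(\gamma_m))\le 4$ (Claim~\ref{claim-3}), I get a bound on ${\rm diam}_{X_{n-1}}(\pi_{X_{n-1}}(\alpha_0),\pi_{X_{n-1}}(\alpha_n))$ of the form $12+2n+4<2n+16$, contradicting inequality (\ref{eqn-n-2ton}). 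Hence some $\gamma_j=\alpha_{n-1}$.

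Next I would pin down $j$ and $m$. Since $\gamma_m\in\mathcal{D}^0(V_2^\ast\setminus t_2^\ast)$ and $d_{F\setminus P}(\alpha_{n-1},\mathcal{D}^0(V_2^\ast\setminus t_2^\ast))=1$ with the unique realizing pair $(\alpha_{n-1},\alpha_n)$ by Claim~\ref{claim-2}, we get $m-j\le 1$; if $m-j=0$ then $\gamma_m=\alpha_{n-1}$, but $\alpha_{n-1}\notin\mathcal{D}^0(V_2^\ast\setminus t_2^\ast)$ (again Claim~\ref{claim-2}, since $\alpha_{n-1}\ne\alpha_n$ as $n\ge3$... actually $n\ge 2$ suffices and $\alpha_{n-1}$ is not $\partial D_2$), so $m-j=1$ and $\gamma_m=\alpha_n$. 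Then $[\gamma_0,\dots,\gamma_j]=[\gamma_0,\dots,\gamma_{m-1}]$ is a path from $\mathcal{D}^0(V_1^\ast\setminus t_1^\ast)$ to $\alpha_{n-1}$ of length $m-1\le n-1$. Now I invoke Claim~\ref{claim-1-3}, which states precisely that $d_{F\setminus P}(\mathcal{D}^0(V_1^\ast\setminus t_1^\ast),\{\alpha_{n-1}\})=n-1$ -- wait, Claim~\ref{claim-1-3} is stated for $\alpha_n$ at distance $n$; I should instead apply the analogue for $\alpha_{n-1}$, which follows from the same argument since $[\alpha_0,\dots,\alpha_{n-1}]$ is the unique geodesic connecting $\alpha_0$ and $\alpha_{n-1}$ and the projection estimates (\ref{eqn-0ton-2}) apply. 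So $m-1\ge n-1$, forcing $m=n$, and uniqueness of the geodesic $[\alpha_0,\dots,\alpha_{n-1}]$ gives $[\gamma_0,\dots,\gamma_{n-1}]=[\alpha_0,\dots,\alpha_{n-1}]$; appending $\gamma_n=\alpha_n$ yields $[\gamma_0,\dots,\gamma_n]=[\alpha_0,\dots,\alpha_n]$. This simultaneously shows $d_{F\setminus P}(\mathcal{D}^0(V_1^\ast\setminus t_1^\ast),\mathcal{D}^0(V_2^\ast\setminus t_2^\ast))=n$, that the realizing pair $(\alpha_0,\alpha_n)\in\mathcal{D}^0(V_1^\ast\setminus t_1^\ast)\times\mathcal{D}^0(V_2^\ast\setminus t_2^\ast)$ is unique (keenness), and that the realizing geodesic is unique (strong keenness).

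The main obstacle I anticipate is bookkeeping the three subsurface-projection estimates so that their sum comes out strictly below the threshold $2n+16$ of (\ref{eqn-n-2ton}) -- in particular making sure that the bound from Claim~\ref{claim-4} is applied to the correct pair of projections (relating $\pi_{X_{n-1}}(\gamma_0)$ to $\pi_{X_{n-1}}(\alpha_0)$ via the diameter bound on $\pi_{X_{n-1}}(\mathcal{D}^0(V_1^\ast\setminus t_1^\ast))$, which contains both) and that $\alpha_0\in\mathcal{D}^0(V_1^\ast\setminus t_1^\ast)$ really cuts $X_{n-1}$ so that $\pi_{X_{n-1}}(\alpha_0)\ne\emptyset$; this last point is where one uses that $\alpha_0$ and $\alpha_{n-1}$ are distinct vertices a geodesic-distance $n-1\ge 1$ apart, so $\alpha_0\ne\alpha_{n-1}$ and $\alpha_0$ is not the curve $l$ associated with $X_{n-1}$, hence by Remark~\ref{rmk_miss} it cuts $X_{n-1}$. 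Once these are in place the contradiction with (\ref{eqn-n-2ton}) is immediate and the rest is the short combinatorial argument above.
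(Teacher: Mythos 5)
Your proposal is correct and takes essentially the same route as the paper: bound the distance above by the constructed geodesic, force any realizing geodesic through $\alpha_{n-1}$ via Lemma~\ref{subsurface distance} and Claims~\ref{claim-3}, \ref{claim-4} against the strict inequality (\ref{eqn-n-2ton}), and then pin the geodesic down to $[\alpha_0,\dots,\alpha_n]$ using Claims~\ref{claim-2} and \ref{claim-1-3}. The only cosmetic differences are that $12+2n+4$ equals $2n+16$ rather than being less than it (the contradiction with the strict inequality still stands), and that the paper applies Claim~\ref{claim-1-3} directly to the full geodesic ending at $\alpha_n$ rather than invoking your (valid, same-argument) analogue for $\alpha_{n-1}$.
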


\begin{proof}
%
%
Since there is a geodesic $[\alpha_0,\alpha_1,\dots,\alpha_n]$, we have $d_{F\setminus P}(\mathcal{D}^0(V_1^{\ast}\setminus t_1^{\ast}),\mathcal{D}^0(V_2^{\ast}\setminus t_2^{\ast}))\le n$.
Let $[\beta_0,\beta_1,\dots,\beta_m]$ be a geodesic in $\mathcal{C}(F\setminus P)$ such that $\beta_0\in\mathcal{D}^0(V_1^{\ast}\setminus t_1^{\ast})$, $\beta_m\in\mathcal{D}^0(V_2^{\ast}\setminus t_2^{\ast})$ and $m\le n$.

We claim that there exists $i\in\{0,1,\dots,m\}$ such that $\beta_i=\alpha_{n-1}$. 
In fact, if $\beta_i\ne \alpha_{n-1}$ for every $i\in\{0,1,\dots,m\}$, then every $\beta_i$ cuts $X_{n-1}$.
Then by Lemma \ref{subsurface distance}, we have ${\rm diam}_{X_{n-1}}(\pi_{X_{n-1}}(\beta_0),\pi_{X_{n-1}}(\beta_m))\le 2m\le 2n$.
This together with Claims~\ref{claim-3} and \ref{claim-4} implies that
\begin{eqnarray*}
\begin{array}{rcl}
{\rm diam}_{X_{n-1}}(\pi_{X_{n-1}}(\alpha_{0}),\pi_{X_{n-1}}(\alpha_{n}))&\le& 
{\rm diam}_{X_{n-1}}(\pi_{X_{n-1}}(\alpha_0),\pi_{X_{n-1}}(\beta_0))\\
&&+{\rm diam}_{X_{n-1}}(\pi_{X_{n-1}}(\beta_0),\pi_{X_{n-1}}(\beta_m))\\
&&+{\rm diam}_{X_{n-1}}(\pi_{X_{n-1}}(\beta_m),\pi_{X_{n-1}}(\alpha_{n}))\\
&\le&12+2n+4\\&=&2n+16,
\end{array}
\end{eqnarray*}
which contradicts the inequality (\ref{eqn-n-2ton}).

By Claim~\ref{claim-2}, the fact $\beta_i=\alpha_{n-1}$ implies that $[\beta_i,\dots,\beta_m]=[\alpha_{n-1},\alpha_n]$, and hence, both $[\beta_0,\beta_1,\dots,\beta_m]$ and $[\alpha_0,\alpha_1,\dots,\alpha_n]$ are geodesics connecting $\mathcal{D}^0(V_1^{\ast}\setminus t_1^{\ast})$ and $\alpha_n$.
Then, by Claim~\ref{claim-1-3}, we see that $m=n$, that is, $d_{F\setminus P}(\mathcal{D}^0(V_1^{\ast}\setminus t_1^{\ast}),\mathcal{D}^0(V_2^{\ast}\setminus t_2^{\ast}))=n$, 
and that $[\beta_0,\beta_1,\dots,\beta_n]=[\alpha_0,\alpha_1,\dots,\alpha_n]$.
%
\end{proof}



%

This completes the proof of Theorem~\ref{thm-1} for the case when $n\ge 3$ and $b\ge 2$.

\begin{remark}\label{rem}
{\rm 
We remark that there exist keen bridge splittings with distance $n(\ge 4)$ each of which is not strongly keen.

We can construct such examples for the case when $n\ge 5$ as follows.

Firstly, let $[\alpha_3,\alpha_4,\dots,\alpha_{n-1}]$ be a geodesic constructed as in Proposition \ref{prop-unique-geodesic}, and let $[\gamma_0,\gamma_1,\gamma_2]$ be a geodesic as illustrated in Figure \ref{fig-rem}.
That is, $\gamma_0$ and $\gamma_2$ are the boundaries of twice-punctured disks in $F\setminus P$ such that they intersect in $4$ points and that one of the components of $(F\setminus P)\setminus (\gamma_0\cup \gamma_2)$ is a twice-punctured disk whose boundary is parallel to $\gamma_1$.
\begin{figure}[tb]
 \begin{center}
 \includegraphics[width=46mm]{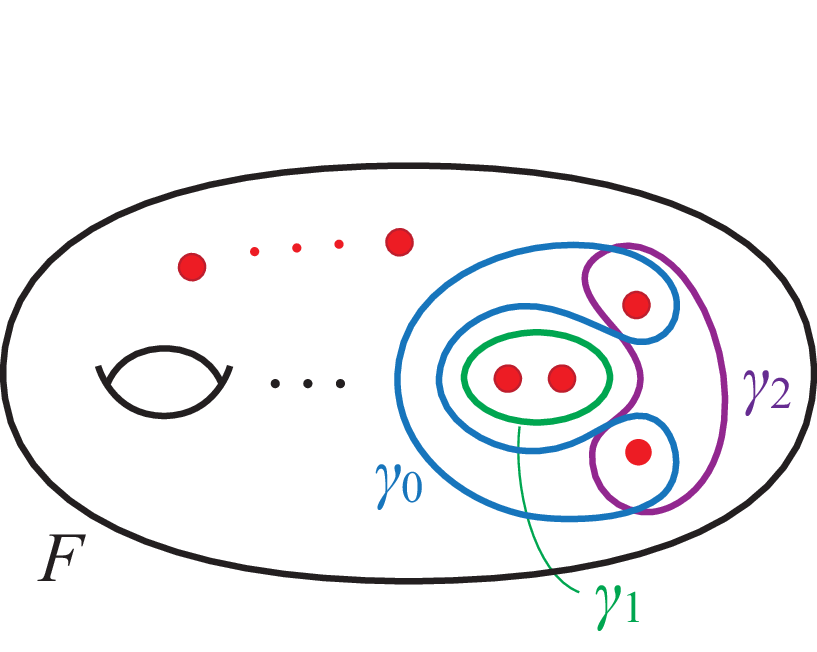}
 \end{center}
 \caption{$\gamma_0$, $\gamma_1$ and $\gamma_2$.}
\label{fig-rem}
\end{figure}
Note that there exists an essential simple closed curve $\gamma_1'(\ne \gamma_1)$ disjoint from $\gamma_0\cup \gamma_2$ since $(g,b)\ne (0,2)$. 
By Lemma \ref{lem-2-3-1}, there exists a homeomorphism $h:F\setminus P\rightarrow F\setminus P$ such that $h(\gamma_2)=\alpha_{3}$ and ${\rm diam}_{X_{3}}(\pi_{X_3}(\alpha_{n-1}),\pi_{X_3}(h(\gamma_0)))>2(n-2)$, where $X_{3}$ is the subsurface of $F\setminus P$ associated with $\alpha_{3}$.
Let $\alpha_{2}:=h(\gamma_1)$ and $\alpha_{1}:=h(\gamma_0)$.
Note that the above inequality ${\rm diam}_{X_{3}}(\pi_{X_3}(\alpha_{n-1}),\pi_{X_3}(h(\gamma_0)))>2(n-2)$ implies that every geodesic connecting $\alpha_1$ and $\alpha_{n-1}$ passes $\alpha_3$.

Secondly, let $\alpha_0'$ be a simple closed curve in $F\setminus P$ that is disjoint from $\alpha_{1}$ and that cuts off a twice-punctured disk from $F\setminus P$.
By Lemma \ref{lem-2-3-1}, there exists a homeomorphism $h':F\setminus P\rightarrow F\setminus P$ such that 
$h'(\alpha_{1})=\alpha_{1}$ and ${\rm diam}_{X_{1}}(\pi_{X_{1}}(\alpha_{n-1}),\pi_{X_{1}}(h'(\alpha_{0}')))>2n+2$,
where $X_{1}$ is the subsurface of $F\setminus P$ associated with $\alpha_{1}$.
Let $\alpha_{0}:=h'(\alpha_0')$.
Then, by using Lemma~\ref{subsurface distance}, it can be seen that $[\alpha_0,\alpha_1,\dots,\alpha_{n-1}]$ is a geodesic, and every geodesic connecting $\alpha_0$ and $\alpha_{n-1}$ passes through $\alpha_{1}$.

Similarly, we can take $\alpha_n$ such that ${\rm diam}_{X_{n-1}}(\pi_{X_{n-1}}(\alpha_0), \pi_{X_{n-1}}(\alpha_n))>2n+16$, where $X_{n-1}$ is the subsurface of $F\setminus P$ associated with $\alpha_{n-1}$. Then it can be seen that $[\alpha_0,\alpha_1,\dots,\alpha_{n-1},\alpha_n]$ is a geodesic, and every geodesic connecting $\alpha_0$ and $\alpha_n$ passes through $\alpha_{n-1}$.
We apply the construction of a $(g,b)$-splitting $(V_1^{\ast},t_1^{\ast})\cup_{(F,P)} (V_2^{\ast},t_2^{\ast})$ in this section, with the above geodesic $[\alpha_0,\alpha_1,\dots,\alpha_{n-1},\alpha_n]$.

Then we claim that the $(g,b)$-splitting $(V_1^{\ast},t_1^{\ast})\cup_{(F,P)} (V_2^{\ast},t_2^{\ast})$ is keen. 
Let $[\beta_0,\beta_1,\dots,\beta_m]$ be a shortest geodesic in $\mathcal{C}(F\setminus P)$ such that $\beta_0\in\mathcal{D}^0(V_1^{\ast}\setminus t_1^{\ast})$ and $\beta_m\in\mathcal{D}^0(V_2^{\ast}\setminus t_2^{\ast})$.
Note that $m\le n$.
By the arguments in the second paragraph of the proof of Claim~\ref{claim-4-1}, we can see that there exists $i\in\{0,1,\dots,m\}$ such that $\beta_i=\alpha_{n-1}$.
Note that $i\ne m$ since $\beta_m\in\mathcal{D}^0(V_2^{\ast}\setminus t_2^{\ast})$ and $\alpha_{n-1}\not\in\mathcal{D}^0(V_2^{\ast}\setminus t_2^{\ast})$ by Claim~\ref{claim-2}.
In fact, we have $i=m-1$ since, otherwise, $[\beta_0,\beta_1,\dots,\beta_i=\alpha_{n-1},\alpha_n]$ is a geodesic in $\mathcal{C}(F\setminus P)$ connecting $\mathcal{D}^0(V_1^{\ast}\setminus t_1^{\ast})$ and $\mathcal{D}^0(V_2^{\ast}\setminus t_2^{\ast})$, which is shorter than $[\beta_0,\beta_1,\dots,\beta_m]$.
Similarly, we can apply arguments similar to those in the second and the third paragraphs of the proof of Claim~\ref{claim-1-3} to see that there exists $j\in\{0,1,\dots,m-2\}$ such that $\beta_j=\alpha_1$, since otherwise we have
\begin{eqnarray*}
\begin{array}{rcl}
{\rm diam}_{X_1}(\pi_{X_1}(\alpha_0), \pi_{X_1}(\alpha_{n-1}))
&\le&{\rm diam}_{X_1}(\{\alpha_0\}, \pi_{X_1}(\beta_0))\\&&+{\rm diam}_{X_1}(\pi_{X_1}(\beta_0), \pi_{X_1}(\beta_{m-1}))\\
&\le& 4+2(n-1)=2n+2,
\end{array}
\end{eqnarray*}
a contradiction.
In fact, we have $j=1$ by Claim~\ref{claim-1} together with the assumption that $[\beta_0,\beta_1,\dots,\beta_m]$ is a shortest geodesic in $\mathcal{C}(F\setminus P)$ connecting $\mathcal{D}^0(V_1^{\ast}\setminus t_1^{\ast})$ and $\mathcal{D}^0(V_2^{\ast}\setminus t_2^{\ast})$.
Then, we have 
$$(m-1)-1=d_{F\setminus P}(\beta_1,\beta_{m-1})=d_{F\setminus P}(\alpha_1,\alpha_{n-1})=n-2.$$
Hence, we have $m=n$, and $\beta_1=\alpha_1$ and $\beta_{n-1}=\alpha_{n-1}$. 
By Claims \ref{claim-1} and \ref{claim-2}, we have $\beta_0=\alpha_0$ and $\beta_n=\alpha_n$, and hence the $(g,b)$-splitting is keen.

However, we have another geodesic $[\alpha_0,\alpha_1,h(\gamma_1'),\alpha_{3},\dots,\alpha_n]$ connecting $\alpha_0$ and $\alpha_{n}$, where $\gamma_1'(\ne \gamma_1)\in \mathcal{C}^0(F\setminus P)$ with $\gamma_1'\cap (\gamma_0\cup \gamma_2)=\emptyset$, and hence the $(g,b)$-splitting is not strongly keen.

Examples for the case when $n=4$ are constructed similarly.
The difference in the construction is as follows.
We start with the geodesic $[\alpha_1, \alpha_2,\alpha_3]:=[\gamma_0,\gamma_1,\gamma_2]$.
Then we apply the above argument to obtain $\alpha_0(:=h'(\alpha_0'))$ and $\alpha_4$ with 
${\rm diam}_{X_{1}}(\pi_{X_{1}}(\alpha_{3}),\pi_{X_{1}}(\alpha_{0}))>6$ and ${\rm diam}_{X_{3}}(\pi_{X_{3}}(\alpha_{0}),\pi_{X_{3}}(\alpha_{4}))>24$.
Then $[\alpha_0,\alpha_1,\alpha_2,\alpha_3,\alpha_4]$ is a geodesic, and every geodesic connecting $\alpha_0$ and $\alpha_4$ passes through $\alpha_1$ and $\alpha_3$.
We apply the construction of a $(g,b)$-splitting in this section, with the above geodesic $[\alpha_0,\alpha_1,\alpha_2,\alpha_3,\alpha_4]$.
Then, by using the same arguments as above, we can show that the $(g,b)$-splitting is keen, but not strongly keen.
}
\end{remark}

\section{Proof of Theorem \ref{thm-1} when $n=2$ and $b\ge 2$}\label{sec-proof-2}

In this section, we give a proof of Theorem \ref{thm-1} for the case when $n=2$ and $b\ge 2$.
(Note that $(g,b)\ne (0,2)$ by the assumption of Theorem~\ref{thm-1}.)

Let $F$ be a closed orientable surface of genus $g$ and let $P$ be the union of $2b$ points on $F$.
Let $[\alpha_0,\alpha_1,\alpha_2]$ be a geodesic in $\mathcal{C}(F\setminus P)$ constructed as in Proposition \ref{prop-unique-geodesic}.
By Remark~\ref{rem-sec3}, we may assume 
\begin{equation}\label{eqn-0to2}
d_{X_1}(\alpha_0,\alpha_2)>12
\end{equation}
holds, where $X_1$ is the subsurface of $F\setminus P$ associated with $\alpha_1$.

For $i=1,2$, let $V_i^{\ast,0}$, $t_i^{\ast,0}$, $V_i$, $t_i$, $W_i$, $s_i$, 
$D_i$, 
$F_i$, 
$\Phi_i$ be as in Subsection~\ref{sec-b>1}.
Identify $(\partial_+W_1, s_1\cap\,\partial_+W_1)$ and $(\partial_+W_2, s_2\cap\,\partial_+W_2)$ with $(F,P)$ so that $\partial D_1=\alpha_0$ and $\partial D_2=\alpha_2$. 
By Proposition~\ref{prop-b>1-hi}, there is a homeomorphism $h_i:\partial V_i\setminus t_i\rightarrow \partial_- W_i\setminus s_i$ such that $d_{\partial_-W_i\setminus s_i}(P_i(\alpha_1), h_i(\mathcal{D}(V_i\setminus t_i)))> 2$.
Let $\overline{h}_i:(\partial V_i,\partial t_i)\rightarrow (\partial_- W_i,s_i\cap \partial_- W_i)$ be the homeomorphism of pairs naturally induced from $h_i$.
Let $(V_i^{\ast},t_i^{\ast}):=(W_i,s_i)\cup_{\overline{h}_i}(V_i,t_i)$.
Then $(V_1^{\ast},t_1^{\ast})\cup_{(F,P)}(V_2^{\ast},t_2^{\ast})$ is a $(g,b)$-splitting of a link.
By arguments similar to those for Claims \ref{claim-1}, \ref{claim-1-2}, \ref{claim-2} and \ref{claim-3}, we have the following.
\begin{claim}\label{claim-6}
{\rm (1)} $\alpha_1$ intersects every element of $\mathcal{D}^0(V_1^{\ast}\setminus t_1^{\ast})\setminus\{\alpha_0\}$ and every element of $\mathcal{D}^0(V_2^{\ast}\setminus t_2^{\ast})\setminus\{\alpha_2\}$.

{\rm (2)} For any $a\in\mathcal{D}^0(V_1^{\ast}\setminus t_1^{\ast})$, we have $\pi_{X_1}(a)\ne\emptyset$ and ${\rm diam}_{X_1}(\{\alpha_0\},\pi_{X_1}(a))\le 4$.

{\rm (3)} For any $b\in\mathcal{D}^0(V_2^{\ast}\setminus t_2^{\ast})$, we have $\pi_{X_1}(b)\ne\emptyset$ and ${\rm diam}_{X_1}(\{\alpha_2\},\pi_{X_1}(b))\le 4$.
\end{claim}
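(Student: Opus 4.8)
The plan is to mimic the proofs of Claims~\ref{claim-1}, \ref{claim-1-2}, \ref{claim-2} and \ref{claim-3} almost verbatim, with $n=2$, $X_1$ the subsurface associated with $\alpha_1$, and noting that now \emph{both} handlebodies are attached so that their boundary disks $\partial D_1=\alpha_0$ and $\partial D_2=\alpha_2$ lie on opposite sides of $\alpha_1$. The only structural novelty is that one must run the argument on both sides simultaneously; the symmetry of the construction (each $(V_i^\ast,t_i^\ast)$ is built from $(W_i,s_i)\cup_{\overline h_i}(V_i,t_i)$ via Proposition~\ref{prop-b>1-hi}) makes this purely a matter of writing the same estimate twice.

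For part~(1), I would argue by contradiction exactly as in Claim~\ref{claim-1}: if some $\beta\in\mathcal{D}^0(V_1^\ast\setminus t_1^\ast)\setminus\{\alpha_0\}$ had $\beta\cap\alpha_1=\emptyset$, then Proposition~\ref{prop-b>1}(1) gives $d_{\partial_-W_1\setminus s_1}(\Phi_1(\alpha_1),h_1(\mathcal{D}^0(V_1\setminus t_1)))\le 1$, contradicting the choice of $h_1$ with $d_{\partial_-W_1\setminus s_1}(P_1(\alpha_1),h_1(\mathcal{D}(V_1\setminus t_1)))>2$ (here $P_1(\alpha_1)=\Phi_1(\alpha_1)$ since $\alpha_1$ misses $\partial D_1$, so $\alpha_1\subset F_1$). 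The identical argument with $W_2$, $D_2$, $h_2$ and $\alpha_2$ in place of $\alpha_0$ handles the second half of~(1). Part~(2) reproduces the proof of Claim~\ref{claim-1-2}: first, the analogue of Claim~\ref{claim-1-1} (via Proposition~\ref{prop-b>1}(2) and the inequality $d_{\partial_-W_1\setminus s_1}(P_1(\alpha_1),h_1(\mathcal{D}(V_1\setminus t_1)))>2$) shows any $a\in\mathcal{D}^0(V_1^\ast\setminus t_1^\ast)\setminus\{\alpha_0\}$ meets $\alpha_1$ in at least $4$ points; then for $a$ with $a\ne\alpha_0$ and $a\cap\alpha_0\ne\emptyset$ one takes the outermost disk $\Delta$ of a minimal-position disk $D_a$ relative to $D_1$, extracts a subarc $\gamma$ properly embedded in $X_1$ with $d_{\mathcal{AC}(X_1)}(\alpha_0,\gamma)=1$ and $\gamma\in\pi_{AC}(a)$, and concludes ${\rm diam}_{\mathcal{AC}(X_1)}(\{\alpha_0\},\pi_{AC}(a))\le 2$, hence ${\rm diam}_{X_1}(\{\alpha_0\},\pi_{X_1}(a))\le 4$ by Lemma~\ref{lem_mm}; the cases $a=\alpha_0$ or $a\cap\alpha_0=\emptyset$ follow from Lemma~\ref{subsurface distance}. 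Part~(3) is part~(2) with the indices $1$ and $0$ replaced by $2$ and $2$ throughout.

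The one point requiring genuine care — and the place I would expect to have to be careful rather than merely mechanical — is the verification that the hypotheses of Propositions~\ref{prop-b>1}(1),(2) are actually met: namely that $\alpha_1$ satisfies $\alpha_1\cap\partial D_i=\emptyset$ (so that $\alpha_1\subset F_i$) and $\Phi_i(\alpha_1)\ne\emptyset$, and that the disk $\beta$ produced by contradiction satisfies $\beta\ne\partial D_i$. The first two follow because $\partial D_1=\alpha_0$ and $\partial D_2=\alpha_2$ are disjoint from $\alpha_1$ (they are consecutive vertices of the geodesic $[\alpha_0,\alpha_1,\alpha_2]$), and because $\alpha_1$, bounding a twice-punctured disk in $F\setminus P$ on one side, projects nontrivially into each $F_i$. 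The condition $\beta\ne\partial D_i$ is built into the statement of~(1) and into the contrapositive setup of~(2), since we excluded $\alpha_0$ (resp.\ $\alpha_2$) from the disk sets under consideration. Once these bookkeeping checks are in place, the estimates are word-for-word those of Section~\ref{sec-proof-1} specialized to $n=2$, so I would simply say ``by arguments similar to those for Claims~\ref{claim-1}, \ref{claim-1-1}, \ref{claim-1-2}, \ref{claim-2} and \ref{claim-3}'' and leave the routine repetition to the reader, which is exactly what the text already does.
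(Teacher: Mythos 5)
Your proposal is correct and follows the paper's own route: the paper simply states that Claim~\ref{claim-6} is obtained ``by arguments similar to those for Claims~\ref{claim-1}, \ref{claim-1-2}, \ref{claim-2} and \ref{claim-3},'' which is exactly the specialization to $n=2$ (with the analogue of Claim~\ref{claim-1-1}) that you carry out, including the correct bookkeeping checks that $\alpha_1$ is disjoint from $\partial D_i$, that $\Phi_i(\alpha_1)\ne\emptyset$, and that the hypotheses of Proposition~\ref{prop-b>1} are met on each side.
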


\begin{lemma}
$(V_1^{\ast},t_1^{\ast})\cup_{(F,P)}(V_2^{\ast},t_2^{\ast})$ is a strongly keen bridge splitting whose distance is $2$.
\end{lemma}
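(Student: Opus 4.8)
The plan is to show that the only geodesic realizing $d_{F\setminus P}(\mathcal{D}^0(V_1^{\ast}\setminus t_1^{\ast}),\mathcal{D}^0(V_2^{\ast}\setminus t_2^{\ast}))$ is $[\alpha_0,\alpha_1,\alpha_2]$. First I would note that since $[\alpha_0,\alpha_1,\alpha_2]$ is a geodesic with $\alpha_i\in\mathcal{D}^0(V_i^{\ast}\setminus t_i^{\ast})$ for $i=1,2$ (as $\alpha_0=\partial D_1$ and $\alpha_2=\partial D_2$), the distance of the splitting is at most $2$. It cannot be $0$: that would force $\mathcal{D}^0(V_1^{\ast}\setminus t_1^{\ast})\cap\mathcal{D}^0(V_2^{\ast}\setminus t_2^{\ast})\neq\emptyset$, and one checks this contradicts $d_{X_1}(\alpha_0,\alpha_2)>12$ via Claim~\ref{claim-6}(2),(3) and Lemma~\ref{subsurface distance} (a common disk $a$ would give ${\rm diam}_{X_1}(\{\alpha_0\},\{\alpha_2\})\le {\rm diam}_{X_1}(\{\alpha_0\},\pi_{X_1}(a))+{\rm diam}_{X_1}(\pi_{X_1}(a),\{\alpha_2\})\le 8$, contradicting \eqref{eqn-0to2}). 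It cannot be $1$ either: if $a\in\mathcal{D}^0(V_1^{\ast}\setminus t_1^{\ast})$ and $b\in\mathcal{D}^0(V_2^{\ast}\setminus t_2^{\ast})$ with $a\cap b=\emptyset$, then by Claim~\ref{claim-6}(2),(3) and Lemma~\ref{subsurface distance}, either some $\pi_{X_1}$ is empty (impossible) or ${\rm diam}_{X_1}(\{\alpha_0\},\{\alpha_2\})\le {\rm diam}_{X_1}(\{\alpha_0\},\pi_{X_1}(a))+{\rm diam}_{X_1}(\pi_{X_1}(a),\pi_{X_1}(b))+{\rm diam}_{X_1}(\pi_{X_1}(b),\{\alpha_2\})\le 4+2+4=10$, again contradicting \eqref{eqn-0to2}. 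Hence the distance is exactly $2$.

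Next I would pin down the geodesics. Let $[\beta_0,\beta_1,\beta_2]$ be any geodesic realizing the distance, so $\beta_0\in\mathcal{D}^0(V_1^{\ast}\setminus t_1^{\ast})$, $\beta_2\in\mathcal{D}^0(V_2^{\ast}\setminus t_2^{\ast})$, and $\beta_0\cap\beta_1=\beta_1\cap\beta_2=\emptyset$. The key point is that $\beta_1$ must miss $X_1$, hence $\beta_1=\alpha_1$ by Remark~\ref{rmk_miss}: indeed if $\beta_1$ cuts $X_1$, then since $\beta_0,\beta_2$ also cut $X_1$ (they are at distance $\le 1$ from $\beta_1\neq\alpha_1$, and the only curve missing $X_1$ is $\alpha_1$, which $\beta_0,\beta_2$ are not equal to since they lie in the respective disk complexes and $\alpha_1$ is in neither by Claim~\ref{claim-6}(1) — more carefully, one argues directly that $\beta_0\neq\alpha_1$ and $\beta_2\neq\alpha_1$), Lemma~\ref{subsurface distance} combined with Claim~\ref{claim-6}(2),(3) gives ${\rm diam}_{X_1}(\{\alpha_0\},\{\alpha_2\})\le 4+2+4+2+\cdots$, contradicting \eqref{eqn-0to2}. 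So $\beta_1=\alpha_1$. Then by Claim~\ref{claim-6}(1), $\beta_0\cap\alpha_1=\emptyset$ forces $\beta_0=\alpha_0$ and $\beta_2\cap\alpha_1=\emptyset$ forces $\beta_2=\alpha_2$. Therefore $[\beta_0,\beta_1,\beta_2]=[\alpha_0,\alpha_1,\alpha_2]$, establishing that the splitting is strongly keen.

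I expect the main obstacle to be the bookkeeping around subsurface projections of $\beta_0$ and $\beta_2$ when they cut $X_1$ — specifically verifying that $\pi_{X_1}(\beta_0)$ and $\pi_{X_1}(\beta_2)$ are non-empty and applying the triangle inequality for diameters correctly so the constants add up to something $\le 12$, contradicting \eqref{eqn-0to2}. The non-emptiness follows from Claim~\ref{claim-6}(2),(3), which already assert $\pi_{X_1}(a)\neq\emptyset$ for every $a\in\mathcal{D}^0(V_i^{\ast}\setminus t_i^{\ast})$; this is exactly why those claims were stated in that form. The other delicate point is justifying that $\beta_0,\beta_2\neq\alpha_1$: this is needed to invoke Remark~\ref{rmk_miss} correctly, and it follows because $\alpha_1\notin\mathcal{D}^0(V_1^{\ast}\setminus t_1^{\ast})\cup\mathcal{D}^0(V_2^{\ast}\setminus t_2^{\ast})$ — indeed $\alpha_1=\partial D_i$ would be needed, but $\partial D_1=\alpha_0\neq\alpha_1$ and $\partial D_2=\alpha_2\neq\alpha_1$, and $\alpha_1$ intersects every other disk-complex element by Claim~\ref{claim-6}(1). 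Once these are in place, everything reduces to three applications of Lemma~\ref{subsurface distance} and the triangle inequality, so the argument is short.
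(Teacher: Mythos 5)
Your proof is correct and follows essentially the same route as the paper's: both rest on Claim~\ref{claim-6}, Lemma~\ref{subsurface distance}, Remark~\ref{rmk_miss} and the inequality (\ref{eqn-0to2}), the only difference being that you rule out distances $0$ and $1$ explicitly, whereas the paper absorbs these cases into its analysis of length-$2$ paths joining the two disk complexes. The one slip is cosmetic: the bound you write as ``$4+2+4+2+\cdots$'' should be $4+4+4=12$ (Claim~\ref{claim-6}(2), then Lemma~\ref{subsurface distance} applied to the length-$2$ path $[\beta_0,\beta_1,\beta_2]$ all of whose vertices cut $X_1$, then Claim~\ref{claim-6}(3)), which is exactly the paper's computation and still contradicts (\ref{eqn-0to2}).
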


\begin{proof}
We have $d_{F\setminus P}(\mathcal{D}^0(V_1^{\ast}\setminus t_1^{\ast}),\mathcal{D}^0(V_2^{\ast}\setminus t_2^{\ast}))\le 2$ since $\alpha_0\in\mathcal{D}^0(V_1^{\ast}\setminus t_1^{\ast})$ and $\alpha_2\in\mathcal{D}^0(V_2^{\ast}\setminus t_2^{\ast})$.

Let $[\beta_0,\beta_1,\beta_2]$ be a geodesic in $\mathcal{C}(F\setminus P)$ such that $\beta_0\in\mathcal{D}^0(V_1^{\ast}\setminus t_1^{\ast})$ and $\beta_2\in\mathcal{D}^0(V_2^{\ast}\setminus t_2^{\ast})$.
(Possibly, $\beta_1\in\mathcal{D}^0(V_1^{\ast}\setminus t_1^{\ast})$ or $\beta_1\in\mathcal{D}^0(V_2^{\ast}\setminus t_2^{\ast})$.)
By Claim \ref{claim-6} (1), both $\beta_0$ and $\beta_2$ cut $X_1$.
If $\beta_1$ also cuts $X_1$, then we have ${\rm diam}_{X_1}(\pi_{X_1}(\beta_0),\pi_{X_1}(\beta_2))\le 4$ by Lemma \ref{subsurface distance}, which together with Claim \ref{claim-6} (2) and (3) implies that
\begin{eqnarray*}
\begin{array}{rcl}
d_{X_1}(\alpha_0,\alpha_2)&\le&{\rm diam}_{X_1}(\{\alpha_0\},\pi_{X_1}(\beta_0))+{\rm diam}_{X_1}(\pi_{X_1}(\beta_0), \pi_{X_1}(\beta_2))\\
&&+{\rm diam}_{X_1}(\pi_{X_1}(\beta_2),\{\alpha_2\})\\
&\le&4+4+4=12.
\end{array}
\end{eqnarray*}
This contradicts the inequality (\ref{eqn-0to2}).
Hence, $\beta_1$ misses $X_1$, that is, $\beta_1=\alpha_1$.
By Claim \ref{claim-6} (1), we have $\beta_0=\alpha_0$ and $\beta_2=\alpha_2$, and we obtain the desired result.
\end{proof}

This completes the proof of Theorem \ref{thm-1} for the case when $n=2$ and $b\ge 2$.

\section{Proof of Theorem \ref{thm-1} when $n\ge 2$, $g\ge 2$ and $b=1$}\label{sec-proof-4}

In this section, we give a proof of Theorem \ref{thm-1} for the case when $n\ge 2$, $g\ge 2$ and $b=1$.

Let $F$ be a closed orientable surface of genus $g$ and let $P$ be the union of $2$ points on $F$.
For $i=1,2$, let $V_i^{\ast,0}$, $t_i^{\ast,0}$, $V_i$, $t_i$, $W_i$, $s_i$, 
$D_i$, 
$F_i$, 
$\Phi_i$ be as in Subsection~\ref{sec-g>1}.

\setcounter{case}{0}

\begin{case}\label{case1}
$n\ge 3$.
\end{case}

Let $[\alpha_0,\alpha_1,\dots,\alpha_{n-1}]$ be a geodesic in $\mathcal{C}(F\setminus P)$ constructed as in Proposition \ref{prop-unique-geodesic2}.
By Remark~\ref{rmk-unique-geodesic2}, we may assume that $\alpha_0$, $\alpha_1$ are non-separating and $\alpha_0\cup \alpha_1$ is separating in $S$.
By Remark~\ref{rem-sec3}, we may assume that 
${\rm diam}_{X_1}(\pi_{X_1}(\alpha_0),\pi_{X_1}(\alpha_{n-1}))>2n+6$
holds, where $X_1$ is the subsurface of $F\setminus P$ associated with $\alpha_1$.
Let $\alpha_n'$ be a non-separating simple closed curve in $F\setminus P$ such that $\alpha_n'\ne\alpha_{n-1}$ and that $\alpha_n'\cup \alpha_{n-1}$ is separating in $F$.
By Lemma~\ref{lem-2-3-1}, there exists a homeomorphism $h:F\setminus P\rightarrow F\setminus P$ such that $h(\alpha_{n-1})=\alpha_{n-1}$ and ${\rm diam}_{X_{n-1}}(\pi_{X_{n-1}}(\alpha_0),\pi_{X_{n-1}}(h(\alpha_{n}')))>2n+16$,
where $X_{n-1}$ is the subsurface of $F\setminus P$ associated with $\alpha_{n-1}$.
Let $\alpha_n:=h(\alpha_n')$.
Then $[\alpha_0, \alpha_1,\dots,\alpha_{n}]$ is the unique geodesic connecting $\alpha_0$ and $\alpha_n$ (see the proof of Proposition~\ref{prop-unique-geodesic}), and the following hold:
\begin{itemize}
\item $\alpha_0\cup \alpha_1$ is separating in $F$,
\item $\alpha_{n-1}\cup \alpha_n$ is separating in $F$,
\item ${\rm diam}_{X_1}(\pi_{X_1}(\alpha_0),\pi_{X_1}(\alpha_{n-1}))>2n+6$,
\item ${\rm diam}_{X_{n-1}}(\pi_{X_{n-1}}(\alpha_{0}),\pi_{X_{n-1}}(\alpha_{n}))>2n+16$.
\end{itemize}
%

Identify $(\partial_+W_1, s_1\cap\,\partial_+W_1)$ and $(\partial_+W_2, s_2\cap\,\partial_+W_2)$ with $(F,P)$ so that $\partial D_1=\alpha_0$ and $\partial D_2=\alpha_n$.
By Proposition~\ref{prop-g>1-hi}, there exist homeomorphisms $h_i:\partial V_i\setminus t_i\rightarrow \partial_- W_i\setminus s_i$ such that
\begin{equation*}\label{eqn-g-1-1}
d_{\partial_- W_1\setminus s_1}(\Phi_1(\alpha_1),h_1(\mathcal{D}(V_1\setminus t_1)))>2,
\end{equation*}
\begin{equation*}\label{eqn-g-1-2}
d_{\partial_- W_2\setminus s_2}(\Phi_2(\alpha_{n-1}),h_2(\mathcal{D}(V_2\setminus t_2)))>2.
\end{equation*}
Let $\overline{h}_i:(\partial V_i,\partial t_i)\rightarrow (\partial_- W_i,s_i\cap \partial_- W_i)$ be the homeomorphism of the pairs induced from $h_i$.
Let $(V_i^{\ast}, t_i^{\ast}):=(W_i,s_i)\cup_{\overline{h}_i}(V_i,t_i)$.
Then $(V_1^{\ast}, t_1^{\ast})\cup_{(F,P)} (V_2^{\ast}, t_2^{\ast})$ is a $(g,1)$-splitting of a knot.


\begin{claim}\label{claim-7-1}
$\alpha_1$ intersects every element of $\mathcal{D}^0(V_1^{\ast}\setminus t_1^{\ast})\setminus\{\alpha_0\}$.
\end{claim}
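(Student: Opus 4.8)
\textbf{Proof proposal for Claim~\ref{claim-7-1}.}

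The plan is to argue by contradiction exactly as in the proof of Claim~\ref{claim-1}, using the analogue of Proposition~\ref{prop-b>1} that holds in the $g\ge 2$ setting, namely Proposition~\ref{prop-g>1}. Suppose there were an element $\beta\in\mathcal{D}^0(V_1^{\ast}\setminus t_1^{\ast})\setminus\{\alpha_0\}$ with $\beta\cap\alpha_1=\emptyset$. The goal is then to contradict the inequality $d_{\partial_- W_1\setminus s_1}(\Phi_1(\alpha_1),h_1(\mathcal{D}(V_1\setminus t_1)))>2$ that was arranged above via Proposition~\ref{prop-g>1-hi}, by showing $d_{\partial_- W_1\setminus s_1}(\Phi_1(\alpha_1),h_1(\mathcal{D}^0(V_1\setminus t_1)))\le 1$.

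First I would check that $\alpha_1$ satisfies the hypotheses of Proposition~\ref{prop-g>1}. Since $\partial D_1=\alpha_0$ and $[\alpha_0,\alpha_1]$ is an edge of the geodesic, $\alpha_1$ can be taken disjoint from $\partial D_1=\alpha_0$; by the construction via Proposition~\ref{prop-unique-geodesic2} and Remark~\ref{rmk-unique-geodesic2}, $\alpha_1$ is non-separating in $F$ and $\alpha_0\cup\alpha_1$ is separating in $F$, so $\alpha_1\cup\partial D_1$ is separating in $\partial_+ W_1\setminus s_1$ (here $P$ has only $2$ points, so a separating pair of curves cuts off a planar piece with no punctures on one side, matching the requirement). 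Thus $\alpha_1$ is an admissible $\alpha$ for Proposition~\ref{prop-g>1}. Also $\alpha_1\ne\partial D_1$ since the geodesic has length $\ge 1$. Now with $\beta$ as the hypothesized essential disk boundary satisfying $\alpha_1\cap\beta=\emptyset$ and $\beta\ne\partial D_1$, Proposition~\ref{prop-g>1} gives $d_{\partial_- W_1\setminus s_1}(\Phi_1(\alpha_1),h_1(\mathcal{D}^0(V_1\setminus t_1)))\le 1$, contradicting the choice of $h_1$. Hence no such $\beta$ exists, which is precisely the assertion that $\alpha_1$ intersects every element of $\mathcal{D}^0(V_1^{\ast}\setminus t_1^{\ast})\setminus\{\alpha_0\}$, equivalently that $[\alpha_0,\alpha_1]$ is the unique geodesic realizing $d_{F\setminus P}(\mathcal{D}^0(V_1^{\ast}\setminus t_1^{\ast}),\{\alpha_1\})=1$.

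The main thing to be careful about—the potential obstacle—is verifying that $\alpha_1$ really meets all the separating/non-separating hypotheses of Proposition~\ref{prop-g>1} in the $b=1$ case, since that proposition was stated with those structural conditions built in (unlike Proposition~\ref{prop-b>1}, which needed no such condition). This is exactly why the geodesic was built using Proposition~\ref{prop-unique-geodesic2} together with Remark~\ref{rmk-unique-geodesic2}, which guarantees $\alpha_0\cup\alpha_1$ is separating; so the verification is routine once one unwinds the identification of $(\partial_+W_1,s_1\cap\partial_+W_1)$ with $(F,P)$ sending $\partial D_1$ to $\alpha_0$. Everything else is a verbatim transcription of the proof of Claim~\ref{claim-1}.
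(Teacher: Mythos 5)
Your proposal is correct and follows essentially the same route as the paper: assume a disjoint $\beta\in\mathcal{D}^0(V_1^{\ast}\setminus t_1^{\ast})\setminus\{\alpha_0\}$ exists, apply Proposition~\ref{prop-g>1} to get $d_{\partial_- W_1\setminus s_1}(\Phi_1(\alpha_1),h_1(\mathcal{D}^0(V_1\setminus t_1)))\le 1$, and contradict the inequality $>2$ arranged via Proposition~\ref{prop-g>1-hi}. Your extra verification that $\alpha_1$ satisfies the hypotheses of Proposition~\ref{prop-g>1} (non-separating, disjoint from $\partial D_1=\alpha_0$, with $\alpha_0\cup\alpha_1$ separating by Remark~\ref{rmk-unique-geodesic2}) is exactly what the paper's construction was set up to guarantee, and is left implicit there.
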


\begin{proof}
Assume on the contrary that there exists an element $\beta$ of $\mathcal{D}^0(V_1^{\ast}\setminus t_1^{\ast})\setminus \{\alpha_0\}$ such that $\beta\cap \alpha_1=\emptyset$.
Then, by Proposition~\ref{prop-g>1}, we have 
$$
d_{\partial_- W_1\setminus s_1}(\Phi_1(\alpha_1),h_1(\mathcal{D}^0(V_1\setminus t_1)))\le 1,
$$
a contradiction. 
\end{proof}

\begin{claim}\label{claim-7-2}
For any $a\in\mathcal{D}^0(V_1^{\ast}\setminus t_1^{\ast})$, we have $\pi_{X_1}(a)\ne\emptyset$ and ${\rm diam}_{X_1}(\{\alpha_0\},\pi_{X_1}(a))\le 4$.
\end{claim}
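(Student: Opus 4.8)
The plan is to mimic the proof of Claim~\ref{claim-1-2}, replacing the role of the disk $D_1$ (bounded by $\alpha_0$) and Proposition~\ref{prop-b>1} by the corresponding objects from Subsection~\ref{sec-g>1}, namely $D_1$ the co-core of the $1$-handle (so $\partial D_1 = \alpha_0 = \partial(D_1\times\{0\})$, after the identification) together with Propositions~\ref{prop-g>1} and \ref{prop-g>1-1}. First, non-emptiness of $\pi_{X_1}(a)$ follows from Claim~\ref{claim-7-1}: every element $a$ of $\mathcal{D}^0(V_1^\ast\setminus t_1^\ast)$ other than $\alpha_0$ intersects $\alpha_1$, hence cuts $X_1$; and $\alpha_0$ cuts $X_1$ by Remark~\ref{rmk_miss} since $\alpha_0\ne\alpha_1$ (as $\alpha_0\cup\alpha_1$ is separating while $\alpha_1$ is not). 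So $\pi_{X_1}(a)\ne\emptyset$ in all cases.

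Next I would split into the easy and the hard case exactly as in Claim~\ref{claim-1-2}. If $a=\alpha_0$ or $a\cap\alpha_0=\emptyset$, then $d_{F\setminus P}(\alpha_0,a)\le 1$, and Lemma~\ref{subsurface distance} gives ${\rm diam}_{X_1}(\{\alpha_0\},\pi_{X_1}(a))\le 2\le 4$. Now suppose $a\ne\alpha_0$ and $a\cap\alpha_0\ne\emptyset$. Take a disk $D_a$ in $V_1^\ast\setminus t_1^\ast$ bounded by $a$, with $|D_a\cap D_1|$ minimal; then $D_a\cap D_1$ has no loop components by an innermost-disk argument, and I take $\Delta$ to be the closure of an outermost component of $D_a\setminus D_1$ (or $\Delta:=D_a$ if $D_a\cap D_1=\emptyset$). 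Apply Proposition~\ref{prop-g>1-1}: either case (B) holds, so $\Delta$ is essential in $(W_1'\cup_{\overline h_1}V_1)\setminus t_1^\ast$, or case (A) holds, i.e.\ $D_a\cap D_1=\emptyset$ and $D_a$ is a band sum of two copies of $D_1$.

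In case (B), one repeats the argument of Claim~\ref{claim-1-2}: form the disk $\Delta'$ properly embedded in $(W_1'\cup_{\overline h_1}V_1)\setminus t_1^\ast$ with $\partial\Delta'\subset F_1$, parallel to the union of $\Delta$ with a piece of $D_1^0\cup D_1^1$, so that $\partial\Delta'\in\mathcal{D}^0(V_1^\ast\setminus t_1^\ast)$ and $\Delta'\cap\alpha_1=\Delta\cap\alpha_1$. An analogue of Claim~\ref{claim-1-1} — which must be proved first, by the same argument using Proposition~\ref{prop-g>1-1}(B1),(B2) in place of Proposition~\ref{prop-b>1}(1),(2) — shows $|\Delta\cap\alpha_1|\ge 2$ (non-separating here, so the parity jump is by $2$, not $4$; this still suffices), hence $\partial\Delta\cap F_1$ contains a subarc $\gamma$ properly embedded in $X_1$ with $\gamma\in\pi_{AC}(a)$ and $d_{\mathcal{AC}(X_1)}(\alpha_0,\gamma)\le 1$; then ${\rm diam}_{\mathcal{AC}(X_1)}(\{\alpha_0\},\pi_{AC}(a))\le 1+{\rm diam}_{\mathcal{AC}(X_1)}(\pi_{AC}(a))\le 2$, and Lemma~\ref{lem_mm} gives ${\rm diam}_{X_1}(\{\alpha_0\},\pi_{X_1}(a))\le 4$. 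The main obstacle is case (A): when $D_a$ is a band sum of two parallel copies of $D_1$, there is no outermost-disk surgery to exploit, so one must argue directly. I expect that in this case $a=\partial D_a$ is disjoint from $\alpha_0=\partial D_1$ (the band can be isotoped off $\alpha_0$, using that $a$ is obtained from two curves parallel to $\alpha_0$ by a band sum in the surface), so this case actually falls under the easy case $a\cap\alpha_0=\emptyset$ treated above and ${\rm diam}_{X_1}(\{\alpha_0\},\pi_{X_1}(a))\le 2$; the point to verify carefully is exactly that disjointness, i.e.\ that the band realizing the band sum can be taken in a neighborhood of $D_1^0\cup D_1^1$ together with an arc of $F_1$ disjoint from $\alpha_0$. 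Either way the bound $4$ is obtained, completing the proof.
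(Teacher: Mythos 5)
Your proposal follows essentially the same route as the paper's proof: non-emptiness via Claim~\ref{claim-7-1}, the easy case $d_{F\setminus P}(\alpha_0,a)\le 1$ via Lemma~\ref{subsurface distance}, and in the remaining case the outermost-disk surgery of $D_a$ along $D_1$, essentiality of $\Delta$ via Proposition~\ref{prop-g>1-1}, extraction of a subarc $\gamma\in\pi_{AC}(a)$ properly embedded in $X_1$ with $d_{\mathcal{AC}(X_1)}(\alpha_0,\gamma)=1$, and Lemma~\ref{lem_mm} to convert the bound $2$ in $\mathcal{AC}(X_1)$ into the bound $4$ in $\mathcal{C}(X_1)$. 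Two points where you deviate from the paper deserve comment.

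First, your concern about conclusion (A) of Proposition~\ref{prop-g>1-1} is unnecessary: (A) asserts in particular that $D_a\cap D_1=\emptyset$, and disjoint disks have disjoint boundaries, so (A) would give $a\cap\alpha_0=\emptyset$, which is impossible in the subcase $a\ne\alpha_0$, $a\cap\alpha_0\ne\emptyset$; this is exactly the paper's one-line remark, and no isotopy of the band off $\alpha_0$ has to be checked. Second, the paper does not prove (and does not need) an analogue of Claim~\ref{claim-1-1}: it argues directly on $\Delta$. If $\partial\Delta\cap\alpha_1=\emptyset$, Proposition~\ref{prop-g>1-1}\,(B1) applied with $\alpha=\alpha_1$ gives $d_{\partial_-W_1\setminus s_1}(\Phi_1(\alpha_1),h_1(\mathcal{D}^0(V_1\setminus t_1)))\le 1$, contradicting the defining inequality $>2$; if $\partial\Delta\cap\alpha_1\ne\emptyset$, it takes the subarc $\gamma$ of $\partial\Delta\cap\partial_+W_1'$ directly. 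Your intermediate claim that $|a\cap\alpha_1|\ge 2$ for every $a\in\mathcal{D}^0(V_1^{\ast}\setminus t_1^{\ast})\setminus\{\alpha_0\}$ is plausible (since $\alpha_0\cup\alpha_1$ separates $F$, the curve $\alpha_1$ becomes separating in $\partial_+W_1'$, so such intersection numbers are even), but your proposed proof of it is not the routine substitution you describe: Proposition~\ref{prop-g>1-1}\,(B2) concerns curves of the form $\gamma_1\cup\gamma_2$ (an essential arc of $F_1$ together with a subarc of $\partial(D_1\times\{0,1\})$) meeting $\Delta$ exactly once, not closed curves meeting $\beta$ in at most two points as in Proposition~\ref{prop-b>1}\,(2), so ruling out $|a\cap\alpha_1|=1$ would require a separate argument. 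Dropping that intermediate step and running the dichotomy on $\partial\Delta\cap\alpha_1$ as above turns your outline into the paper's proof.
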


\begin{proof}
Note that by Claim \ref{claim-7-1}, we have $\pi_{X_1}(a)\ne\emptyset$.

If $a=\alpha_0$ or $a\cap\alpha_0=\emptyset$, that is, $d_{F\setminus P}(\alpha_0,a)\le 1$, then we have ${\rm diam}_{X_1}(\{\alpha_0\},\pi_{X_1}(a))\le 2$ by Lemma \ref{subsurface distance}.

Next, we suppose that $a\ne \alpha_0$ and $a\cap \alpha_0\ne\emptyset$.
Let $D_a$ be a disk in $V_1^{\ast}\setminus t_1^{\ast}$ bounded by $a$. 
Recall that $\alpha_0$ bounds the disk $D_1$.
We may assume that $|D_a\cap D_1|$ is minimal (hence, each component of $D_a\cap D_1$ is an arc).
Let $\Delta$ be the closure of a component of $D_a\setminus N(D_1)$ that is outermost in $D_a$.
Then by Proposition~\ref{prop-g>1-1}, we see that $\Delta$ is an essential disk in $(W_1'\cup_{\overline{h}_1} V_1)\setminus t_1^{\ast}$.
(Note that since $a\cap a_0\ne\emptyset$, we cannot have conclusion (A) of Proposition~\ref{prop-g>1-1} for $D=D_a$.)

Suppose $\partial \Delta\cap \alpha_1=\emptyset$.
Then by Proposition~\ref{prop-g>1-1} (B1), we have 
$$
d_{\partial_- W_1\setminus s_1}(\Phi_1(\alpha_1),h_1(\mathcal{D}^0(V_1\setminus t_1)))\le 1,
$$
a contradiction.

Suppose $\partial \Delta\cap \alpha_1\ne\emptyset$.
In this case, there is a subarc $\gamma$ of $\partial \Delta\cap \partial_+ W_1'$ such that $\gamma\cap N(\alpha_1)=\partial \gamma$, 
hence $\gamma$ can be regarded as an arc properly embedded in $X_1$.
These imply that $d_{\mathcal{AC}(X_1)}(\alpha_0, \gamma)=1$.
Note that $\gamma\in \pi_{AC}(a)$, where $\pi_{AC}$ is the map from $\mathcal{C}^{0}(F\setminus P)$ to $\mathcal{P}(\mathcal{AC}^{0}(X_1))$ defined as in Subsection~\ref{sec-subsurface}. 
Hence, we have
\begin{eqnarray*}
\begin{array}{rcl}
{\rm diam}_{\mathcal{AC}(X_1)}(\{\alpha_0\}, \pi_{AC}(a))&\le& d_{\mathcal{AC}(X_1)}(\alpha_0, \gamma)+{\rm diam}_{\mathcal{AC}(X_1)}(\pi_{AC}(a))\\
&\le& 1+1=2.
\end{array}
\end{eqnarray*}
Hence, by Lemma \ref{lem_mm},
we have ${\rm diam}_{X_1}(\{\alpha_0\}, \pi_{X_1}(a))\le 4$.
\end{proof}

Then the arguments in the proof of Claim~\ref{claim-1-3} works to show:

\begin{claim}\label{claim-7-3}
$[\alpha_0, \alpha_1,\dots,\alpha_{n}]$ is the unique geodesic realizing the distance $d_{F\setminus P}(\mathcal{D}^0(V_1^{\ast}\setminus t_1^{\ast}), \{\alpha_n\})=n$.
\end{claim}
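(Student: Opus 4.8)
The plan is to mirror the proof of Claim~\ref{claim-1-3} almost verbatim, replacing every use of Proposition~\ref{prop-b>1} by Proposition~\ref{prop-g>1-1} and $X_1$-arguments based on Claims~\ref{claim-7-1} and \ref{claim-7-2}. First I would note that the geodesic $[\alpha_0,\alpha_1,\dots,\alpha_n]$ exists and connects a vertex of $\mathcal{D}^0(V_1^{\ast}\setminus t_1^{\ast})$ (namely $\alpha_0=\partial D_1$) to $\alpha_n$, so $d_{F\setminus P}(\mathcal{D}^0(V_1^{\ast}\setminus t_1^{\ast}),\{\alpha_n\})\le n$. Then I would take an arbitrary geodesic $[\beta_0,\beta_1,\dots,\beta_m]$ with $\beta_0\in\mathcal{D}^0(V_1^{\ast}\setminus t_1^{\ast})$, $\beta_m=\alpha_n$, $m\le n$, and aim to prove $m=n$ and $[\beta_0,\dots,\beta_m]=[\alpha_0,\dots,\alpha_n]$.

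The key step is to show some $\beta_i$ equals $\alpha_1$. Suppose not; then every $\beta_i$ cuts $X_1$, so Lemma~\ref{subsurface distance} gives ${\rm diam}_{X_1}(\pi_{X_1}(\beta_0),\pi_{X_1}(\beta_m))\le 2m\le 2n$; also ${\rm diam}_{X_1}(\pi_{X_1}(\alpha_n),\pi_{X_1}(\alpha_{n-1}))\le 2$ since $\alpha_n,\alpha_{n-1}$ cut $X_1$ (here $\alpha_{n-1}\ne\alpha_1$ because $n\ge 3$), and Claim~\ref{claim-7-2} gives ${\rm diam}_{X_1}(\{\alpha_0\},\pi_{X_1}(\beta_0))\le 4$. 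Concatenating along $\alpha_0,\beta_0,\beta_m=\alpha_n,\alpha_{n-1}$ yields ${\rm diam}_{X_1}(\pi_{X_1}(\alpha_0),\pi_{X_1}(\alpha_{n-1}))\le 4+2n+2=2n+6$, contradicting the inequality ${\rm diam}_{X_1}(\pi_{X_1}(\alpha_0),\pi_{X_1}(\alpha_{n-1}))>2n+6$ established just before the construction of $(V_1^{\ast},t_1^{\ast})$. Hence some $\beta_i=\alpha_1$. By Claim~\ref{claim-7-1}, $i\ne 0$ (since $\beta_0\in\mathcal{D}^0(V_1^{\ast}\setminus t_1^{\ast})$ and $\alpha_1$ meets every element of that set except $\alpha_0$, while $\alpha_1\ne\alpha_0$). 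If $i\ge 2$, then $n=d_{F\setminus P}(\alpha_0,\alpha_n)\le d_{F\setminus P}(\alpha_0,\alpha_1)+d_{F\setminus P}(\beta_i,\beta_m)\le 1+(m-i)\le 1+(n-2)$, a contradiction; so $i=1$, $\beta_1=\alpha_1$, and then Claim~\ref{claim-7-1} forces $\beta_0=\alpha_0$. Since $[\alpha_0,\dots,\alpha_n]$ is the unique geodesic connecting $\alpha_0$ and $\alpha_n$, $[\alpha_1,\dots,\alpha_n]$ is the unique geodesic connecting $\alpha_1$ and $\alpha_n$, hence $m=n$ and $[\beta_1,\dots,\beta_m]=[\alpha_1,\dots,\alpha_n]$, completing the argument.

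The main obstacle — and the reason Claim~\ref{claim-7-2} is invoked rather than Claim~\ref{claim-1-2} — is the possibility, absent in the $b\ge 2$ case, that an essential disk $D_a$ in $V_1^{\ast}\setminus t_1^{\ast}$ is a band sum of two copies of $D_i$ (alternative (A) of Proposition~\ref{prop-g>1-1}): in that degenerate situation the outermost-subdisk argument producing an essential disk in $(W_1'\cup_{\overline{h}_1}V_1)\setminus t_1^{\ast}$ would break down. I would handle this exactly as in the proof of Claim~\ref{claim-7-2}: when $a\ne\alpha_0$ and $a\cap\alpha_0\ne\emptyset$, alternative (A) cannot occur (a band sum of two copies of $D_1$ has boundary disjoint from $\alpha_0=\partial D_1$), so $\Delta$ is essential and Proposition~\ref{prop-g>1-1}(B1)/(B2) together with the inequality $d_{\partial_-W_1\setminus s_1}(\Phi_1(\alpha_1),h_1(\mathcal{D}(V_1\setminus t_1)))>2$ gives the needed bound ${\rm diam}_{X_1}(\{\alpha_0\},\pi_{X_1}(a))\le 4$. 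Since Claim~\ref{claim-7-2} is already established, the proof of Claim~\ref{claim-7-3} itself is then a routine transcription of the proof of Claim~\ref{claim-1-3}, and I would simply write ``the arguments in the proof of Claim~\ref{claim-1-3} work to show the claim,'' spelling out only the substitutions indicated above.
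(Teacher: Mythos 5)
Your proposal is correct and follows essentially the same route as the paper, which at this point simply states that the arguments in the proof of Claim~\ref{claim-1-3} carry over, with Claims~\ref{claim-7-1} and \ref{claim-7-2} playing the roles of Claims~\ref{claim-1} and \ref{claim-1-2} and with the inequality ${\rm diam}_{X_1}(\pi_{X_1}(\alpha_0),\pi_{X_1}(\alpha_{n-1}))>2n+6$ supplying the contradiction. Your extra remark about ruling out alternative (A) of Proposition~\ref{prop-g>1-1} is precisely the point already dealt with inside the paper's proof of Claim~\ref{claim-7-2}, so nothing further is needed for Claim~\ref{claim-7-3} itself.
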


The following two claims can be proved by arguments similar to those for Claims~\ref{claim-7-1} and \ref{claim-7-2}.

\begin{claim}\label{claim-7-4}
$\alpha_{n-1}$ intersects every element of $\mathcal{D}^0(V_2^{\ast}\setminus t_2^{\ast})\setminus\{\alpha_n\}$.
\end{claim}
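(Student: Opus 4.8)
The plan is to mirror the proof of Claim~\ref{claim-7-1} verbatim, with the two sides of the splitting interchanged, using Proposition~\ref{prop-g>1} on the $V_2^{\ast}$-side. Suppose, for a contradiction, that there is an element $\beta$ of $\mathcal{D}^0(V_2^{\ast}\setminus t_2^{\ast})\setminus\{\alpha_n\}$ with $\beta\cap\alpha_{n-1}=\emptyset$. The goal is to invoke Proposition~\ref{prop-g>1} (with $i=2$ and $\alpha=\alpha_{n-1}$) to obtain $d_{\partial_-W_2\setminus s_2}(\Phi_2(\alpha_{n-1}),h_2(\mathcal{D}^0(V_2\setminus t_2)))\le 1$, which contradicts the inequality $d_{\partial_-W_2\setminus s_2}(\Phi_2(\alpha_{n-1}),h_2(\mathcal{D}(V_2\setminus t_2)))>2$ that was arranged for $h_2$ via Proposition~\ref{prop-g>1-hi}.

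The one substantive point to check is that $\alpha_{n-1}$ satisfies the hypotheses imposed on $\alpha$ in Proposition~\ref{prop-g>1}. Recall that $(\partial_+W_2,s_2\cap\partial_+W_2)$ was identified with $(F,P)$ so that $\partial D_2=\alpha_n$. By the construction in Case~\ref{case1} (which uses Remark~\ref{rmk-unique-geodesic2} and Lemma~\ref{lem-2-3-1}), the curve $\alpha_{n-1}$ is non-separating in $F\setminus P$, and $\alpha_{n-1}\cup\alpha_n$ is separating in $F$; moreover $\alpha_{n-1}\cap\alpha_n=\emptyset$ because $[\dots,\alpha_{n-1},\alpha_n]$ is a path in $\mathcal{C}(F\setminus P)$, so $\alpha_{n-1}$ may be isotoped off $D_2$. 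Hence $\alpha_{n-1}\in\mathcal{C}^0(\partial_+W_2\setminus s_2)\setminus\{\partial D_2\}$ is non-separating, disjoint from $D_2$, and $\alpha_{n-1}\cup\partial D_2$ is separating in $\partial_+W_2\setminus s_2$, which are exactly the hypotheses on $\alpha$. The element $\beta$ then provides the remaining hypothesis: $\beta\in\mathcal{D}^0(V_2^{\ast}\setminus t_2^{\ast})$, $\beta\ne\partial D_2\,(=\alpha_n)$, and $\alpha_{n-1}\cap\beta=\emptyset$.

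With the hypotheses verified, Proposition~\ref{prop-g>1} yields $d_{\partial_-W_2\setminus s_2}(\Phi_2(\alpha_{n-1}),h_2(\mathcal{D}^0(V_2\setminus t_2)))\le 1$, a contradiction; hence no such $\beta$ exists, which proves the claim. I expect the only delicate point to be the bookkeeping that confirms $\alpha_{n-1}$ genuinely meets the hypotheses of Proposition~\ref{prop-g>1} --- in particular that $\alpha_{n-1}\cup\partial D_2$ is separating, which is precisely one of the four bulleted properties built into the geodesic in Case~\ref{case1}; everything else is a direct transcription of the proof of Claim~\ref{claim-7-1} with $1\leftrightarrow n-1$, $0\leftrightarrow n$, and $V_1^{\ast}\leftrightarrow V_2^{\ast}$.
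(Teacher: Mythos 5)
Your proposal is correct and is precisely what the paper intends: the paper states that Claim~\ref{claim-7-4} follows by arguments ``similar to those for Claims~\ref{claim-7-1} and \ref{claim-7-2}'', i.e.\ the mirror of the proof of Claim~\ref{claim-7-1} with Proposition~\ref{prop-g>1} applied on the $V_2^{\ast}$-side and the contradiction coming from the inequality arranged for $h_2$. Your verification that $\alpha_{n-1}$ satisfies the hypotheses of Proposition~\ref{prop-g>1} (non-separating, disjoint from $D_2$, and $\alpha_{n-1}\cup\partial D_2$ separating, by the bulleted properties of the construction) is exactly the bookkeeping the paper leaves implicit.
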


\begin{claim}\label{claim-7-5}
For any $b\in\mathcal{D}^0(V_2^{\ast}\setminus t_2^{\ast})$, we have $\pi_{X_{n-1}}(b)\ne\emptyset$ and ${\rm diam}_{X_{n-1}}(\{\alpha_n\},\pi_{X_{n-1}}(b))\le 4$.
\end{claim}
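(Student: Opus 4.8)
The plan is to imitate closely the proof of Claim~\ref{claim-7-2} (equivalently Claim~\ref{claim-1-2}), with the roles of $V_1^{\ast}$, $\alpha_0$, $\alpha_1$, $X_1$ replaced by $V_2^{\ast}$, $\alpha_n$, $\alpha_{n-1}$, $X_{n-1}$, and using Claim~\ref{claim-7-4} in place of Claim~\ref{claim-7-1}. First I would note that $\pi_{X_{n-1}}(b)\ne\emptyset$: by Claim~\ref{claim-7-4}, $b$ intersects $\alpha_{n-1}$ unless $b=\alpha_n=\partial D_2$, and in either case $b$ cuts the subsurface $X_{n-1}$ associated with $\alpha_{n-1}$ (recall $\partial D_2=\alpha_n\ne\alpha_{n-1}$, and $\alpha_n$ intersects $X_{n-1}$ since $\alpha_{n-1}\cup\alpha_n$ is separating while $\alpha_{n-1}$ is non-separating, so $\alpha_n\ne\alpha_{n-1}$ and $\alpha_n$ does not miss $X_{n-1}$ by Remark~\ref{rmk_miss}). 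Hence $\pi_{X_{n-1}}(b)$ makes sense and is nonempty.

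Next I would split into cases on the position of $b$ relative to $\alpha_n$. If $b=\alpha_n$ or $b\cap\alpha_n=\emptyset$, then $d_{F\setminus P}(\alpha_n,b)\le 1$, so by the second part of Lemma~\ref{subsurface distance} we get ${\rm diam}_{X_{n-1}}(\{\alpha_n\},\pi_{X_{n-1}}(b))\le 2\le 4$. For the remaining case $b\ne\alpha_n$ and $b\cap\alpha_n\ne\emptyset$, take a disk $D_b$ in $V_2^{\ast}\setminus t_2^{\ast}$ bounded by $b$ with $|D_b\cap D_2|$ minimal, so $D_b\cap D_2$ has no loop components, and let $\Delta$ be the closure of an outermost component of $D_b\setminus N(D_2)$. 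Since $b\cap\alpha_n\ne\emptyset$ we cannot be in conclusion (A) of Proposition~\ref{prop-g>1-1} (the band-sum description forces $D\cap D_i=\emptyset$, i.e. $b\cap\alpha_n=\emptyset$), so by Proposition~\ref{prop-g>1-1} we are in case (B) and $\Delta$ is essential in $(W_2'\cup_{\overline{h}_2}V_2)\setminus t_2^{\ast}$.

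Now, as in Claim~\ref{claim-7-2}, I would argue that $\partial\Delta$ must intersect $\alpha_{n-1}$: if $\partial\Delta\cap\alpha_{n-1}=\emptyset$ then Proposition~\ref{prop-g>1-1} (B1) gives $d_{\partial_- W_2\setminus s_2}(\Phi_2(\alpha_{n-1}),h_2(\mathcal{D}^0(V_2\setminus t_2)))\le 1$, contradicting the choice of $h_2$ (which was taken so that this distance exceeds $2$). Hence $\partial\Delta\cap\alpha_{n-1}\ne\emptyset$, and then there is a subarc $\gamma$ of $\partial\Delta\cap\partial_+W_2'$ with $\gamma\cap N(\alpha_{n-1})=\partial\gamma$, which we may regard as an essential arc properly embedded in $X_{n-1}$ and which lies in $\pi_{AC}(b)$. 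Since $\alpha_n$ also bounds a (sub)disk $D_2$ meeting $X_{n-1}$ appropriately, $d_{\mathcal{AC}(X_{n-1})}(\alpha_n,\gamma)=1$, so
\[
{\rm diam}_{\mathcal{AC}(X_{n-1})}(\{\alpha_n\},\pi_{AC}(b))\le d_{\mathcal{AC}(X_{n-1})}(\alpha_n,\gamma)+{\rm diam}_{\mathcal{AC}(X_{n-1})}(\pi_{AC}(b))\le 1+1=2,
\]
and Lemma~\ref{lem_mm} upgrades this to ${\rm diam}_{X_{n-1}}(\{\alpha_n\},\pi_{X_{n-1}}(b))\le 4$, as desired.

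The step I expect to require the most care is the verification that in the case $b\cap\alpha_n\ne\emptyset$ we genuinely land in alternative (B) of Proposition~\ref{prop-g>1-1} rather than (A), and—more subtly—that the outermost subarc $\gamma$ can indeed be taken essential in $X_{n-1}$ (one must check $\gamma$ does not cut off a disk with at most one puncture from $X_{n-1}$, using that $\Delta$ is essential and disjoint from $t_2^{\ast}$, exactly the parity argument appearing in the proof of Proposition~\ref{prop-g>1-1}). Everything else is a transcription of Claim~\ref{claim-7-2} with the indices reversed, so I would simply say ``by arguments similar to those for Claim~\ref{claim-7-2}'' and only spell out the points where the non-separating setting of Subsection~\ref{sec-g>1} differs, as the paper already does for Claim~\ref{claim-7-2} itself.
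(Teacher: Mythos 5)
Your proposal is correct and is exactly what the paper intends: the paper proves Claim~\ref{claim-7-5} only by the remark that it follows ``by arguments similar to those for Claims~\ref{claim-7-1} and \ref{claim-7-2}'', and your transcription (with $V_2^{\ast}$, $\alpha_n=\partial D_2$, $\alpha_{n-1}$, $X_{n-1}$, Claim~\ref{claim-7-4}, the inequality for $h_2$, the exclusion of alternative (A) of Proposition~\ref{prop-g>1-1}, and the outermost-arc/Lemma~\ref{lem_mm} step) is precisely that argument.
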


Further, we have the next claim.

\begin{claim}\label{claim-d-cplx}
${\rm diam}_{X_{n-1}}(\pi_{X_{n-1}}(\mathcal{D}^0(V_1^{\ast}\setminus t_1^{\ast})))\le 12$.
\end{claim}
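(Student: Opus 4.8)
The plan is to bound the $X_{n-1}$-projection of the whole disk complex $\mathcal{D}^0(V_1^{\ast}\setminus t_1^{\ast})$ by relating every such disk back to the curve $\alpha_0=\partial D_1$, whose projection is a single point of $\mathcal{C}(X_{n-1})$, and then using that $\alpha_{n-1}$ cuts off a ``small'' piece (here a one-holed torus rather than a twice-punctured disk, since $b=1$), so that $X_{n-1}$ is an essential subsurface missed only by $\alpha_{n-1}$ itself. Concretely, I would first observe that $\alpha_0$ cuts $X_{n-1}$: indeed $[\alpha_0,\alpha_1,\dots,\alpha_{n-1}]$ is a geodesic of length $n-1\ge 2$, so $\alpha_0\ne\alpha_{n-1}$ and, by Remark~\ref{rmk_miss} (applied to the subsurface of $F\setminus P$ associated with $\alpha_{n-1}$), the only essential curve missing $X_{n-1}$ is $\alpha_{n-1}$; hence $\pi_{X_{n-1}}(\alpha_0)\ne\emptyset$ and by Lemma~\ref{subsurface distance} ${\rm diam}_{X_{n-1}}(\pi_{X_{n-1}}(\alpha_0))\le 2$.

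Next I would argue that every $a\in\mathcal{D}^0(V_1^{\ast}\setminus t_1^{\ast})$ projects within $\mathcal{AC}$-distance $2$ of $\alpha_0$ in $X_{n-1}$, mirroring the mechanism of Claim~\ref{claim-7-2} but with respect to $X_{n-1}$ in place of $X_1$. If $a=\alpha_0$ or $a\cap\alpha_0=\emptyset$ we are done by Lemma~\ref{subsurface distance}. Otherwise, take a disk $D_a$ bounded by $a$ with $|D_a\cap D_1|$ minimal, let $\Delta$ be an outermost piece of $D_a\setminus N(D_1)$, and invoke Proposition~\ref{prop-g>1-1}: since $a\cap\alpha_0\ne\emptyset$, conclusion (A) is impossible, so $\Delta$ is essential in $(W_1'\cup_{\overline{h}_1}V_1)\setminus t_1^{\ast}$, and $\partial\Delta$ is an essential curve in $F=\partial_+W_1$ disjoint from $\alpha_0$ except for a controlled intersection pattern with $\partial D_1$. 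One of the outermost subarcs $\gamma$ of $\partial\Delta$ cut along $N(\alpha_0)$ is an essential arc properly embedded in $X_{n-1}$ with $\gamma\in\pi_{AC}(a)$, giving $d_{\mathcal{AC}(X_{n-1})}(\alpha_0,\gamma)\le 1$, whence ${\rm diam}_{\mathcal{AC}(X_{n-1})}(\{\alpha_0\},\pi_{AC}(a))\le 2$ and, by Lemma~\ref{lem_mm}, ${\rm diam}_{X_{n-1}}(\{\alpha_0\},\pi_{X_{n-1}}(a))\le 4$. (Here one needs $\partial\Delta$, equivalently $a$, to cut $X_{n-1}$; if $\partial\Delta$ misses $X_{n-1}$ then $\partial\Delta=\alpha_{n-1}$ by Remark~\ref{rmk_miss}, and one handles this degenerate case directly — $\alpha_{n-1}$ being disjoint from $a$ forces $d_{F\setminus P}(\alpha_0,\alpha_{n-1})\le 2$, contradicting that $[\alpha_0,\dots,\alpha_{n-1}]$ is a geodesic of length $\ge 2$, unless $n=3$, which is dealt with separately or absorbed by the same estimate.)

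Finally, the triangle inequality in $\mathcal{C}(X_{n-1})$ gives, for any $a,a'\in\mathcal{D}^0(V_1^{\ast}\setminus t_1^{\ast})$,
\begin{equation*}
{\rm diam}_{X_{n-1}}(\pi_{X_{n-1}}(a),\pi_{X_{n-1}}(a'))\le {\rm diam}_{X_{n-1}}(\{\alpha_0\},\pi_{X_{n-1}}(a))+{\rm diam}_{X_{n-1}}(\pi_{X_{n-1}}(\alpha_0))+{\rm diam}_{X_{n-1}}(\{\alpha_0\},\pi_{X_{n-1}}(a'))\le 4+2+4,
\end{equation*}
so ${\rm diam}_{X_{n-1}}(\pi_{X_{n-1}}(\mathcal{D}^0(V_1^{\ast}\setminus t_1^{\ast})))\le 12$ (in fact $\le 10$, but $12$ suffices and matches Claim~\ref{claim-4}). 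The main obstacle I anticipate is the verification that the outermost-disk/arc extraction of Proposition~\ref{prop-g>1-1} really delivers an arc in $\pi_{AC}(a)$ at $\mathcal{AC}$-distance $\le 1$ from $\alpha_0$ in the subsurface $X_{n-1}$ — in the $g\ge 2$, $b=1$ setting $X_{n-1}$ is obtained by removing $N(\alpha_{n-1})$ with $\alpha_{n-1}$ non-separating (rather than separating off a twice-punctured disk), so one must confirm that the essentiality of $\Delta$ transfers to essentiality of $\gamma$ in $X_{n-1}$ and that no curve-component contributions are being overlooked; once the arc $\gamma$ is in hand, the rest is the same $\mathcal{AC}$-to-$\mathcal{C}$ bookkeeping as in Claim~\ref{claim-7-2}.
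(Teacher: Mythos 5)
Your reduction to the estimate ${\rm diam}_{X_{n-1}}(\pi_{X_{n-1}}(\alpha_0),\pi_{X_{n-1}}(a))\le 4$ for every $a\in\mathcal{D}^0(V_1^{\ast}\setminus t_1^{\ast})$ is exactly where the argument breaks, and the obstacle you flag at the end is not bookkeeping but the heart of the matter. First, a formal problem: since $d_{F\setminus P}(\alpha_0,\alpha_{n-1})=n-1\ge 2$, the curve $\alpha_0$ cuts $X_{n-1}$, so $\alpha_0$ is not a vertex of $\mathcal{AC}(X_{n-1})$ and the inequality ``$d_{\mathcal{AC}(X_{n-1})}(\alpha_0,\gamma)\le 1$'' is not even defined (at best it must be replaced by distance to $\pi_{AC}(\alpha_0)$, which you do not establish). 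The mechanism of Claim~\ref{claim-7-2} rests on two facts that do not transfer from $X_1$ to $X_{n-1}$: there, $\alpha_0=\partial D_1$ is disjoint from $\alpha_1$, hence lies in $X_1$ as an honest vertex of $\mathcal{AC}(X_1)$, and the outermost arc of $\partial\Delta$ lying on $a$ has interior disjoint from $\alpha_0$ while being forced (via Claim~\ref{claim-7-1}, resp.\ Claim~\ref{claim-1-1}) to cross $\alpha_1$, producing an essential arc of $X_1$ adjacent to $\alpha_0$. For $X_{n-1}$ there is no analogue of those claims controlling how the outermost piece meets $\alpha_{n-1}$ (it may miss $\alpha_{n-1}$ altogether, or meet it only in ways producing inessential pieces), and the ``outermost subarcs of $\partial\Delta$ cut along $N(\alpha_0)$'' have endpoints on $\partial N(\alpha_0)$, not on $\partial X_{n-1}$, so they are not properly embedded arcs of $X_{n-1}$ at all.

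More fundamentally, no outermost-surgery argument relative to $D_1$ can give this bound in the $b=1$, $\alpha_{n-1}$ non-separating setting, because the statement has a genuine exceptional configuration: when $(V_1^{\ast},\alpha_{n-1})$ is a twisted $I$-bundle $\Omega\tilde{\times}I$ with $t_1^{\ast}$ an $I$-fiber and $\alpha_{n-1}$ the core of $\partial\Omega\tilde{\times}I$, the projection of the disk complex to $X_{n-1}$ need not be bounded. This is why the paper proves Claim~\ref{claim-d-cplx} by contradiction using Proposition~\ref{prop-image-d} of Appendix~\ref{appendix-disk-complex} (following Li's analysis of images of disk complexes \cite{Li}): if the diameter exceeded $12$, the twisted $I$-bundle case would hold, and then the vertical disk $E=\epsilon\tilde{\times}I$ together with two disjoint curves $\beta,\beta'$ yields two distinct geodesics $[\partial E,\beta,\alpha_{n-1}]$ and $[\partial E,\beta',\alpha_{n-1}]$ from $\mathcal{D}^0(V_1^{\ast}\setminus t_1^{\ast})$ to $\alpha_{n-1}$, contradicting the uniqueness in Claim~\ref{claim-7-3}. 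Your proposal never engages with, let alone excludes, the $I$-bundle case, so the key intermediate bound is unjustified; note also that this is precisely why the paper cannot simply cite \cite[Proposition 5.1]{IJK2} here as it does for Claim~\ref{claim-4}, where $\alpha_{n-1}$ cuts off a twice-punctured disk. The final triangle-inequality step of your write-up would be fine if the pointwise bound held, but as it stands the proof has a genuine gap.
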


\begin{proof}
Assume that ${\rm diam}_{X_{n-1}}(\pi_{X_{n-1}}(\mathcal{D}^0(V_1^{\ast}\setminus t_1^{\ast})))> 12$ on the contrary.
By Proposition~\ref{prop-image-d} in Appendix~\ref{appendix-disk-complex}, 
$(V_1^{\ast}, \alpha_{n-1})$ is homeomorphic to the twisted $I$-bundle $\Omega\tilde{\times} I$ over a non-orientable surface $\Omega$, where $t_1^{\ast}$ is an $I$-fiber and $\alpha_{n-1}$ is the core curve of the annulus $\partial\Omega\tilde{\times}I$.
Let $\epsilon$ be an essential arc on $\Omega$ such that $(\epsilon\tilde{\times}I)\cap t_1^{\ast}=\emptyset$, and let $E:=\epsilon\tilde{\times}I$.
Then $E$ is an essential disk in $V_1^{\ast}\setminus t_1^{\ast}$.
Note that $|E\cap \alpha_{n-1}|=2$. 
Let $\beta$ be an essential simple closed curve on $\partial V_1^{\ast}\setminus t_1^{\ast}$ disjoint from both $\partial E$ and $\alpha_{n-1}$.
There is another essential simple closed curve $\beta'$ on $\partial V_1^{\ast}\setminus t_1^{\ast}$ disjoint from both $\partial E$ and $\alpha_{n-1}$ (for example, since $t_1^{\ast}$ is an $I$-fiber, we can choose $\beta'$ so that $\beta\cup\beta'$ bounds a once-punctured annulus disjoint from $\partial E\cup\alpha_{n-1}$). 
Then $[\partial E,\beta,\alpha_{n-1}]$ and $[\partial E,\beta',\alpha_{n-1}]$ are distinct geodesics connecting $\mathcal{D}^0(V_1^{\ast}\setminus t_1^{\ast})$ and $\alpha_{n-1}$, which contradicts Claim~\ref{claim-7-3}.
Hence, we have the desired inequality.
\end{proof}

Then the above claims together with the arguments similar to those in the proof of Claim~\ref{claim-4-1}, we can see that the $(g,1)$-splitting is of distance $n$ and is strongly keen.



\begin{case}\label{case3}
$n=2$.
\end{case}

By using arguments similar to those in \cite[Section 5]{IJK3}, we have a geodesic $[\alpha_0,\alpha_1,\alpha_2]$ in $\mathcal{C}(F\setminus P)$ such that
\begin{itemize}
\item every $\alpha_i$ is non-separating in $F$,
\item $\alpha_0\cup \alpha_1$ is separating in $F$, 
\item $\alpha_{1}\cup \alpha_2$ is separating in $F$,
\item $d_{X_1}(\alpha_0,\alpha_{2})>12$,
where $X_1$ is the subsurface of $F\setminus P$ associated with $\alpha_1$.
\end{itemize}
Identify $(\partial_+W_1, s_1\cap\,\partial_+W_1)$ and $(\partial_+W_2, s_2\cap\,\partial_+W_2)$ with $(F,P)$ so that $\partial D_1=\alpha_0$ and $\partial D_2=\alpha_2$.
By Proposition~\ref{prop-g>1-hi}, there exist homeomorphisms $h_i:\partial V_i\setminus t_i\rightarrow \partial_- W_i\setminus s_i$ $(i=1,2)$ such that
\begin{equation*}\label{eqn-g-1-5}
d_{\partial_- W_1\setminus s_1}(\Phi_1(\alpha_1),h_1(\mathcal{D}^0(V_1\setminus t_1)))>2,
\end{equation*}
\begin{equation*}\label{eqn-g-1-5-2}
d_{\partial_- W_2\setminus s_2}(\Phi_2(\alpha_1),h_2(\mathcal{D}^0(V_2\setminus t_2)))>2.
\end{equation*}
Let $\overline{h}_i:(\partial V_i,\partial t_i)\rightarrow (\partial_- W_i,s_i\cap \partial_- W_i)$ be the homeomorphism induced from $h_i$.
Let $(V_i^{\ast}, t_i^{\ast}):=(W_i,s_i)\cup_{\overline{h}_i}(V_i,t_i)$.
Then $(V_1^{\ast}, t_1^{\ast})\cup_{(F,P)} (V_2^{\ast}, t_2^{\ast})$ is a $(g,1)$-splitting of a knot.
By arguments similar to those in Section \ref{sec-proof-2}, we can see that the $(g,1)$-splitting is of distance $2$ and is strongly keen.

This completes the proof of Theorem \ref{thm-1} for the case when $n\ge 2$, $g\ge 2$ and $b=1$.

\section{Proof of Theorem \ref{thm-1} when $n\ge 2$, $g=1$ and $b=1$}\label{sec-proof-5}


In this section, we give a proof of Theorem \ref{thm-1} for the case when $n\ge 2$ and $(g,b)=(1,1)$.
We note that the settings of Subsections~\ref{sec-b>1} and \ref{sec-g>1} are not applicable to this case.

For $i=1,2$, let $V_i$ be a solid torus, $t_i$ an arc properly embedded in $V_i$, and $D_i$ the essential disk in $V_i\setminus t_i$ that cuts $V_i$ into a solid torus and a component containing $t_i$, as in Figure \ref{fig-vi-ti}.
\begin{figure}[tb]
 \begin{center}
 \includegraphics[width=40mm]{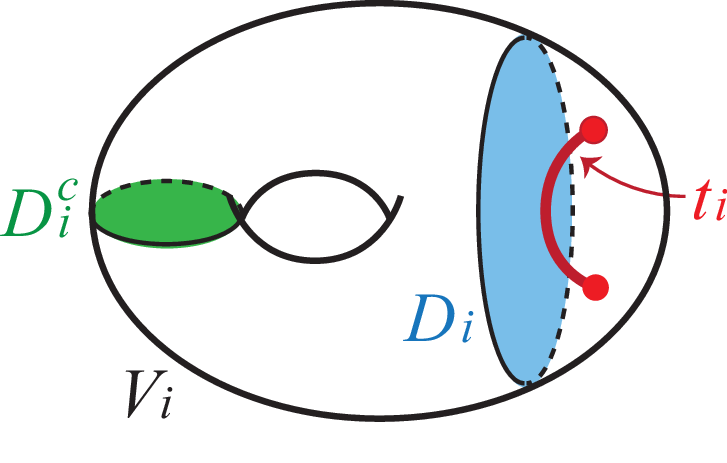}
 \end{center}
 \caption{$V_i$, $t_i$ and $D_i$.}
\label{fig-vi-ti}
\end{figure}
Recall that $\mathcal{D}(V_i\setminus t_i)$ is the disk complex of $V_i\setminus t_i$.
By \cite[Proposition 3.8]{Sai}, $\mathcal{D}(V_i\setminus t_i)$ is the join $\{\partial D_i^c\}\ast \mathcal{A}_i$, where $D_i^c$ is the unique non-separating disk in $V_i\setminus t_i$ and $\mathcal{A}_i$ consists of the countably infinite vertices corresponding to essential separating disks in $V_i\setminus t_i$.
In particular, 
\begin{equation}\label{eqn-diam}
{\rm diam}_{\partial V_i\setminus t_i} (\mathcal{D}^0(V_i\setminus t_i))=2,
\end{equation}
and there are no edges in $\mathcal{C}(\partial V_i\setminus t_i)$ connecting distinct elements of $\mathcal{A}_i$.

\setcounter{case}{0}

\begin{case}\label{case1-0}
$n\ge 3$.
\end{case}

Let $F$ be a torus and $P$ be the union of $2$ points on $F$.
Let $[\alpha_1,\dots,\alpha_{n-1}]$ be a geodesic in $\mathcal{C}(F\setminus P)$ such that
every $\alpha_i$ $(1\le i\le n-1)$ is non-separating in $F$ and that $[\alpha_1,\dots,\alpha_{n-1}]$ is the unique geodesic connecting $\alpha_1$ and $\alpha_{n-1}$.
(We have such a geodesic by Proposition \ref{prop-unique-geodesic2} when $n\ge 4$. When $n=3$, we may choose $\alpha_1$ and $\alpha_2$ to be non-separating simple closed curves in $F\setminus P$ that are mutually disjoint and non-isotopic. See Figure~\ref{fig-vi-ti2}.)
\begin{figure}[tb]
 \begin{center}
 \includegraphics[width=35mm]{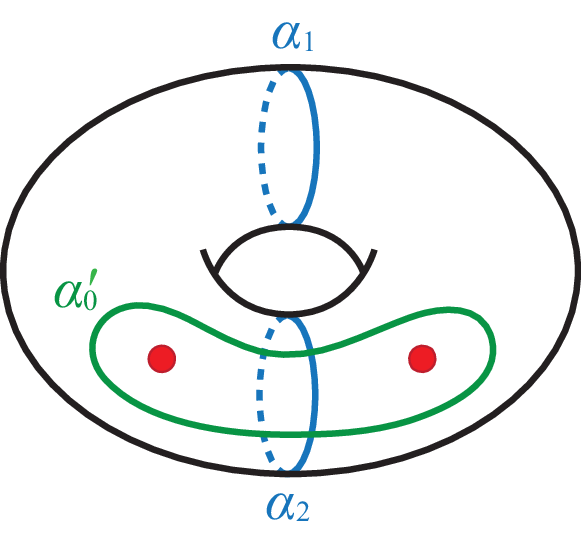}
 \end{center}
 \caption{$\alpha_1$, $\alpha_2$ and $\alpha_0'$ for the case when $n=3$.}
\label{fig-vi-ti2}
\end{figure}
Let $X_1$ and $X_{n-1}$ be the subsurfaces of $F\setminus P$ associated with $\alpha_1$ and $\alpha_{n-1}$, respectively.
Let $\alpha_0'$ be a simple closed curve in $F\setminus P$ which is disjoint from $\alpha_1$ and cuts off a twice-punctured disk from $F\setminus P$.
By Lemma \ref{lem-2-3-1}, there exists a homeomorphism $g_1:F\setminus P \rightarrow F\setminus P$ such that $g_1(\alpha_1)=\alpha_1$ and ${\rm diam}_{X_1}(\pi_{X_1}(g_1(\alpha_0')), \pi_{X_1}(\alpha_{n-1}))>2n+10$.
Let $\alpha_0:=g_1(\alpha_0')$. 
Let $\alpha_n'$ be a simple closed curve in $F\setminus P$ which is disjoint from $\alpha_{n-1}$ and cuts off a twice-punctured disk from $F\setminus P$.
By Lemma \ref{lem-2-3-1}, there exists a homeomorphism $g_{n-1}:F\setminus P\rightarrow F\setminus P$ such that $g_{n-1}(\alpha_{n-1})=\alpha_{n-1}$ and ${\rm diam}_{X_{n-1}}(\pi_{X_{n-1}}(g_{n-1}(\alpha_n')), \pi_{X_{n-1}}(\alpha_0))>2n+8$.
Let $\alpha_n:=g_{n-1}(\alpha_n')$. 
By applying Proposition~\ref{prop-extending-geodesic} for $[\alpha_0,\alpha_1,\dots,\alpha_{n-1},\alpha_n]$, we see that every geodesic connecting $\alpha_0$ and $\alpha_n$ passes through $\alpha_{n-1}$.
Then by applying Proposition~\ref{prop-extending-geodesic} for $[\alpha_0,\alpha_1,\dots,\alpha_{n-1}]$, we see that every geodesic connecting $\alpha_0$ and $\alpha_{n-1}$ passes through $\alpha_{1}$.
These facts together with the uniqueness of $[\alpha_1,\dots,\alpha_{n-1}]$ show that the geodesic $[\alpha_0,\alpha_1,\dots,\alpha_{n}]$ is the unique geodesic connecting $\alpha_0$ and $\alpha_n$.
We remark that each of $\alpha_0$ and $\alpha_n$ cuts off a twice-punctured disk from $F\setminus P$, each of $\alpha_1$ and $\alpha_{n-1}$ is non-separating in $F$, and the following inequalities hold:
\begin{equation}\label{eqn-g1-1}
{\rm diam}_{X_1}(\pi_{X_1}(\alpha_0), \pi_{X_1}(\alpha_{n-1}))>2n+10,
\end{equation}
\begin{equation}\label{eqn-g1-2}
{\rm diam}_{X_{n-1}}(\pi_{X_{n-1}}(\alpha_0), \pi_{X_{n-1}}(\alpha_n))>2n+8.
\end{equation}

Identify $(\partial V_1,\partial t_1)$ and $(\partial V_2,\partial t_2)$ with $(F,P)$ so that $\partial D_1=\alpha_0$, $\partial D_2=\alpha_n$.
Further, by Appendix~\ref{app-b}, we may suppose that 
\begin{equation}\label{eqn-8-case1}
d_{X_0}(\partial D_1^c, \alpha_1)>2\ \text{and}\ d_{X_n}(\partial D_2^c, \alpha_{n-1})>2, 
\end{equation}
where $X_0$ and $X_n$ are the subsurfaces of $F\setminus P$ associated with $\alpha_0$ and $\alpha_n$, respectively.
Then $(V_1,t_1)\cup_{(F,P)}(V_2,t_2)$ is a $(1,1)$-splitting of a knot.

\begin{claim}\label{claim-g1-1}
{\rm (1)} $\alpha_1$ intersects every element of $\mathcal{D}^0(V_1\setminus t_1)\setminus \{\alpha_0\,(=\partial D_1)\}$.

{\rm (2)} $\alpha_{n-1}$ intersects every element of $\mathcal{D}^0(V_2\setminus t_2)\setminus \{\alpha_n\,(=\partial D_2)\}$.
\end{claim}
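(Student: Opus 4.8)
The plan is to prove part (1) (part (2) being entirely symmetric, with the roles of $V_1$, $\alpha_0$, $\alpha_1$ replaced by $V_2$, $\alpha_n$, $\alpha_{n-1}$). Recall from \cite[Proposition 3.8]{Sai} that $\mathcal{D}^0(V_1\setminus t_1)=\{\partial D_1^c\}\cup \mathcal{A}_1$, where $\partial D_1=\alpha_0$ is a separating disk lying in $\mathcal{A}_1$, $D_1^c$ is the unique non-separating disk, and every other element of $\mathcal{A}_1$ is a separating disk that intersects $\alpha_0$ essentially. So we must show: (a) $\partial D_1^c$ intersects $\alpha_1$, and (b) every element $\beta\in\mathcal{A}_1\setminus\{\alpha_0\}$ intersects $\alpha_1$.

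For (a), suppose to the contrary that $\partial D_1^c\cap\alpha_1=\emptyset$. Since $\alpha_0$ bounds the separating disk $D_1$ and $\alpha_1$ cuts $X_0$ (the subsurface of $F\setminus P$ associated with $\alpha_0$, which here is the once-punctured torus complementary to the twice-punctured disk bounded by $\alpha_0$), and since $\partial D_1^c$ is disjoint from $\alpha_0$ and lies in $X_0$, a disjointness $\partial D_1^c\cap\alpha_1=\emptyset$ would give $d_{X_0}(\partial D_1^c,\alpha_1)\le 1$, contradicting the inequality $d_{X_0}(\partial D_1^c,\alpha_1)>2$ from (\ref{eqn-8-case1}). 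Thus $\partial D_1^c$ and $\alpha_1$ must intersect.

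For (b), suppose $\beta\in\mathcal{A}_1\setminus\{\alpha_0\}$ satisfies $\beta\cap\alpha_1=\emptyset$. The key structural fact is that $\beta$ is a separating essential disk in $V_1\setminus t_1$, hence $\partial\beta$ separates $F$ into a twice-punctured piece (containing $\partial t_1$) and a once-punctured genus-one piece; since $\beta\ne\alpha_0$, $\partial\beta$ intersects $\alpha_0$ essentially, and by the join structure $\beta$ is disjoint from $\partial D_1^c$. Now consider the once-punctured torus $X_0$ obtained by cutting off the twice-punctured disk bounded by $\alpha_0$: both $\partial D_1^c$ and $\beta$ (viewed after minimal-position isotopy) have nonempty projection to $X_0$, and $\partial D_1^c$ misses $\beta$, so $d_{X_0}(\partial D_1^c, \pi_{X_0}(\beta))$ is small (at most $2$ by Lemma~\ref{subsurface distance} together with the disjointness). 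On the other hand, $\alpha_1\cap\beta=\emptyset$ forces $d_{X_0}(\alpha_1,\pi_{X_0}(\beta))\le 2$ as well, hence $d_{X_0}(\partial D_1^c,\alpha_1)\le 4$ — we should instead argue more sharply: since $\beta\cap\alpha_1=\emptyset$ and $\beta$ is a simple closed curve cutting $X_0$, and $\partial D_1^c\cap\beta=\emptyset$, both $\alpha_1$ and $\partial D_1^c$ lie in the complement of $\beta$ in $X_0$, which (as $X_0$ is a once-punctured torus and $\beta$ is essential there) is an annulus or pair of pants, forcing $d_{X_0}(\partial D_1^c,\alpha_1)\le 2$, again contradicting (\ref{eqn-8-case1}). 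The main obstacle is getting the bookkeeping in case (b) exactly right: one must be careful that $\beta$, which a priori is a curve in $F\setminus P$ meeting $\alpha_0$, does have a well-behaved (nonempty, small-diameter) projection to $X_0$, and that the genus-one and puncture count of $X_0$ genuinely confine two disjoint essential curves to distance $\le 2$; this is where the hypothesis $p\ge\ $ (here $2b=2$ punctures, $g=1$) and the precise sporadic-surface conventions of Subsection~\ref{sec-cc} enter, and it is the step I expect to require the most care. Once (a) and (b) are established, every vertex of $\mathcal{D}^0(V_1\setminus t_1)$ other than $\alpha_0$ meets $\alpha_1$, proving (1); the inequality $d_{X_n}(\partial D_2^c,\alpha_{n-1})>2$ from (\ref{eqn-8-case1}) yields (2) by the identical argument.
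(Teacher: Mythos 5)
Your proof is correct in substance, but it takes a genuinely different route from the paper's own proof of Claim~\ref{claim-g1-1}. The paper does not split according to Saito's join structure at all: given any $a\in\mathcal{D}^0(V_1\setminus t_1)\setminus\{\alpha_0\}$ with $a\cap\alpha_1=\emptyset$, it takes a disk $D_a$ bounded by $a$, minimizes $|D_a\cap D_1|$, observes that an outermost piece $\Delta$ of $D_a$ cut along $D_1$ is a meridian disk of the solid torus $W_1^1$, and that banding $\Delta$ with part of $D_1$ produces a disk isotopic to $D_1^c$; this shows $\partial D_1^c\in\pi_{X_0}(a)$, and then Lemma~\ref{subsurface distance} applied to the edge $[a,\alpha_1]$ gives $d_{X_0}(\partial D_1^c,\alpha_1)\le 2$, contradicting (\ref{eqn-8-case1}) --- a single uniform argument for separating and non-separating disks alike. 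Your route instead invokes \cite[Proposition 3.8]{Sai} (which the paper does quote at the start of Section~\ref{sec-proof-5} and uses in later claims), treats $\partial D_1^c$ and $\mathcal{A}_1\setminus\{\alpha_0\}$ separately, and for the latter argues directly in the once-holed torus $X_0$; this is essentially the strategy the paper itself uses for the analogous Claim~\ref{claim-g1-1-n2} in the $n=2$ case, so it is a legitimate alternative ending at the same inequality (\ref{eqn-8-case1}). Two points in your case (b) need tightening, though neither is a real gap: $\beta$ is not a curve of $X_0$ (it crosses $\alpha_0$), so ``the complement of $\beta$ in $X_0$'' should be the complement of the arc system $\beta\cap X_0$; the clean statement is that any single essential arc of $\beta\cap X_0$ has annular complement in the once-holed torus $X_0$, so $\alpha_1$ and $\partial D_1^c$, both disjoint from it, are each isotopic to the core and hence to each other, giving $d_{X_0}(\partial D_1^c,\alpha_1)=0$ (your ``$\le 2$'' is more than enough). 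One should also fix simultaneous minimal position of $\alpha_0,\alpha_1,\partial D_1^c,\beta$ (e.g.\ geodesic representatives, as the paper does elsewhere) so that the various disjointness relations hold at once. In exchange for relying on the classification of $\mathcal{D}(V_1\setminus t_1)$, which is special to the $(1,1)$ setting, your argument is more elementary; the paper's outermost-disk argument is what generalizes to the other sections, and it yields the slightly stronger fact $\partial D_1^c\in\pi_{X_0}(a)$ for every essential disk $a$.
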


\begin{proof}
We give a proof for (1) only, since (2) can be proved similarly. 
Assume on the contrary that there exists an element $a$ of $\mathcal{D}^0(V_1\setminus t_1)\setminus \{\partial D_1\}$ such that $a\cap \alpha_1=\emptyset$.
Let $D_a$ be a disk in $V_1\setminus t_1$ bounded by $a$.
We may assume that $|D_a\cap D_1|$ is minimal (hence, no component of $D_a\cap D_1$ is a loop).
Let $\Delta$ be the closure of a component of $D_a\setminus D_1$ that is outermost in $D_a$.
Note that $D_1$ cuts $V_1$ into a solid torus $W_1^1$ and a $3$-ball $W_1^2$ containing $t_1$ that is parallel to $\partial W_1^2$.
By the minimality of $|D_a\cap D_1|$, we see that $\Delta$ must be a non-separating disk in $W_1^1$.
Let $\Delta'$ be a disk properly embedded in $W_1^1$, parallel to the union of $\Delta$ and one of the two components of $D_1\setminus\Delta$.
Since $W_1^1$ is a solid torus, $\Delta'$ is isotopic to the disk $D_1^c$, which implies $\partial D_1^c\in \pi_{X_0}(a)$. 
Recall $a\cap \alpha_1=\emptyset$ by the assumption.
This fact together with Lemma~\ref{subsurface distance} implies:
$$
d_{X_0}(\partial D_1^c,\alpha_1)\le {\rm diam}_{X_0}(\pi_{X_0}(a),\{\alpha_1\})\le 2\cdot d_{F\setminus P}(a,\alpha_1)=2,
$$
contradicting the inequality (\ref{eqn-8-case1}).
\end{proof}


\begin{claim}\label{claim-g1-2}
${\rm diam}_{X_1}(\pi_{X_1}(\mathcal{D}^0(V_1\setminus t_1)))\le 4$ and ${\rm diam}_{X_{n-1}}(\pi_{X_{n-1}}(\mathcal{D}^0(V_2\setminus t_2)))\le 4$.
\end{claim}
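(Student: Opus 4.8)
The statement to prove is Claim~\ref{claim-g1-2}: that ${\rm diam}_{X_1}(\pi_{X_1}(\mathcal{D}^0(V_1\setminus t_1)))\le 4$ and symmetrically for $X_{n-1}$ and $V_2$. I will prove only the first assertion; the second follows by the symmetry of the construction. The key structural input is \cite[Proposition 3.8]{Sai}, already recorded in the excerpt: the disk complex $\mathcal{D}(V_1\setminus t_1)$ is the join $\{\partial D_1^c\}\ast\mathcal{A}_1$, where $D_1^c$ is the unique non-separating disk and $\mathcal{A}_1$ consists of the vertices of essential separating disks. In particular every element $a\in\mathcal{D}^0(V_1\setminus t_1)$ is either $\partial D_1^c$ itself or a separating disk boundary, and in the latter case $a$ is disjoint from $\partial D_1^c$. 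So in all cases $d_{F\setminus P}(a,\partial D_1^c)\le 1$ for every $a\in\mathcal{D}^0(V_1\setminus t_1)$.

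\medskip

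First I would check that $\partial D_1^c$ cuts $X_1$, and that every $a\in\mathcal{D}^0(V_1\setminus t_1)$ cuts $X_1$. The latter is exactly Claim~\ref{claim-g1-1}(1) together with the observation that $\alpha_1$ misses $X_1$ and no other essential curve does (Remark~\ref{rmk_miss}): since $a\ne\alpha_1$ — indeed $\alpha_1\notin\mathcal{D}^0(V_1\setminus t_1)$ by Claim~\ref{claim-g1-1}(1), as $\alpha_1$ cannot intersect itself — every $a\in\mathcal{D}^0(V_1\setminus t_1)$ cuts $X_1$; in particular $\pi_{X_1}(a)\ne\emptyset$. Applying this to $a=\partial D_1^c$ shows $\partial D_1^c$ cuts $X_1$, so $\pi_{X_1}(\partial D_1^c)\ne\emptyset$ as well.

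\medskip

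Now the diameter bound. Fix any two elements $a,a'\in\mathcal{D}^0(V_1\setminus t_1)$. Since $d_{F\setminus P}(a,\partial D_1^c)\le 1$ and both $a$ and $\partial D_1^c$ cut $X_1$, the path $[a,\partial D_1^c]$ (or the single vertex $a=\partial D_1^c$) is a path in $\mathcal{C}(F\setminus P)$ of length at most $1$ all of whose vertices cut $X_1$, so Lemma~\ref{subsurface distance} gives ${\rm diam}_{X_1}(\pi_{X_1}(a),\pi_{X_1}(\partial D_1^c))\le 2$. The same bound holds for $a'$. Hence
\begin{eqnarray*}
{\rm diam}_{X_1}(\pi_{X_1}(a),\pi_{X_1}(a'))&\le&{\rm diam}_{X_1}(\pi_{X_1}(a),\pi_{X_1}(\partial D_1^c))\\
&&+{\rm diam}_{X_1}(\pi_{X_1}(\partial D_1^c))\\
&&+{\rm diam}_{X_1}(\pi_{X_1}(\partial D_1^c),\pi_{X_1}(a'))\\
&\le&2+2+2\ ?
\end{eqnarray*}
— this naive splitting gives $6$, not $4$, so I need to be slightly more careful: I should bound ${\rm diam}_{X_1}(\pi_{X_1}(a)\cup\pi_{X_1}(\partial D_1^c))\le 2$ directly (the path $[a,\partial D_1^c]$ has only two vertices, and by the ``furthermore'' clause of Lemma~\ref{subsurface distance} each single projection has diameter $\le 2$; more precisely, Lemma~\ref{lem_mm} applied to $d_{\mathcal{AC}(X_1)}(a,\partial D_1^c)\le 1$ gives ${\rm diam}_{X_1}(\pi_0(\{a\}),\pi_0(\{\partial D_1^c\}))\le 2$, and combined with ${\rm diam}_{X_1}(\pi_{X_1}(a))\le 2$, ${\rm diam}_{X_1}(\pi_{X_1}(\partial D_1^c))\le 2$ one sees the union has diameter $\le 2$). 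Then ${\rm diam}_{X_1}(\pi_{X_1}(a)\cup\pi_{X_1}(a'))\le{\rm diam}_{X_1}(\pi_{X_1}(a)\cup\pi_{X_1}(\partial D_1^c))+{\rm diam}_{X_1}(\pi_{X_1}(\partial D_1^c)\cup\pi_{X_1}(a'))\le 2+2=4$, as desired. Since $a,a'$ were arbitrary, ${\rm diam}_{X_1}(\pi_{X_1}(\mathcal{D}^0(V_1\setminus t_1)))\le 4$.

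\medskip

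\textbf{Main obstacle.} The only subtlety is the bookkeeping on constants: using Lemma~\ref{subsurface distance} for a length-$1$ path literally yields the bound $2$, but one must be careful to pass through the common vertex $\partial D_1^c$ rather than estimating $d_{F\setminus P}(a,a')$ directly, since two separating disk boundaries in $\mathcal{A}_1$ need not be disjoint (the excerpt explicitly says there are no edges among distinct elements of $\mathcal{A}_1$), so $d_{F\setminus P}(a,a')$ could be $2$ and a direct application of Lemma~\ref{subsurface distance} would only give $4$ — which happens to be fine, but relying on the join structure through $\partial D_1^c$ is cleaner and makes the bound transparent. The symmetric statement for $X_{n-1}$, $V_2$, $D_2^c$ is identical word for word.
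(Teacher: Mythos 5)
Your proof is correct and follows essentially the same route as the paper: the paper also combines Claim~\ref{claim-g1-1}~(1) (so every disk boundary cuts $X_1$) with the diameter-$2$ fact (\ref{eqn-diam}) coming from Saito's join structure $\{\partial D_1^c\}\ast\mathcal{A}_1$, and then applies Lemma~\ref{subsurface distance} to get the bound $4$. Your explicit passage through $\partial D_1^c$ is just the spelled-out version of why the length-$\le 2$ path has all vertices cutting $X_1$, which the paper leaves implicit.
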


\begin{proof}
By Claim \ref{claim-g1-1} (1), every element of $\mathcal{D}^0(V_1\setminus t_1)$ cuts $X_1$.
Since the diameter of $\mathcal{D}(V_1\setminus t_1)$ is $2$ as mentioned before, we have ${\rm diam}_{X_1}(\pi_{X_1}(\mathcal{D}^0(V_1\setminus t_1)))\le 4$ by Lemma \ref{subsurface distance}.
Similarly, we have ${\rm diam}_{X_{n-1}}(\pi_{X_{n-1}}(\mathcal{D}^0(V_2\setminus t_2)))\le 4$.
\end{proof}

\begin{claim}\label{claim-g1-3}
${\rm diam}_{X_1}(\pi_{X_{1}}(\mathcal{D}^0(V_2\setminus t_2)))\le 4$ and ${\rm diam}_{X_{n-1}}(\pi_{X_{n-1}}(\mathcal{D}^0(V_1\setminus t_1)))\le 4$.
\end{claim}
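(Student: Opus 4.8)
The plan is to follow the pattern of the proof of Claim~\ref{claim-g1-2}, reducing the bound $4$ to the single assertion that \emph{every element of $\mathcal{D}^0(V_2\setminus t_2)$ cuts $X_1$} (and, symmetrically, that every element of $\mathcal{D}^0(V_1\setminus t_1)$ cuts $X_{n-1}$). Granting this, recall that $\mathcal{D}(V_2\setminus t_2)=\{\partial D_2^c\}\ast\mathcal{A}_2$ by \cite[Proposition~3.8]{Sai}; hence any two of its vertices are joined by a path of length at most $2$ lying entirely in $\mathcal{D}^0(V_2\setminus t_2)$ (an edge if they are disjoint, and $[\,\cdot\,,\partial D_2^c,\,\cdot\,]$ otherwise). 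Since every vertex of such a path cuts $X_1$, Lemma~\ref{subsurface distance} yields ${\rm diam}_{X_1}(\pi_{X_1}(\mathcal{D}^0(V_2\setminus t_2)))\le 4$, and the same reasoning gives the other half of the claim.

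To prove that every element of $\mathcal{D}^0(V_2\setminus t_2)$ cuts $X_1$, note that by Remark~\ref{rmk_miss} the only essential simple closed curve in $F\setminus P$ that misses $X_1$ is $\alpha_1$ itself, so it suffices to check that $\alpha_1\notin\mathcal{D}^0(V_2\setminus t_2)$. Suppose on the contrary that $\alpha_1\in\mathcal{D}^0(V_2\setminus t_2)$. Since $\alpha_1$ is non-separating in $F$ while every vertex of $\mathcal{A}_2$ bounds a separating disk of $V_2\setminus t_2$, we must have $\alpha_1=\partial D_2^c$, the boundary of the unique non-separating disk of $V_2\setminus t_2$. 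As $\partial D_2\in\mathcal{A}_2$ and $\mathcal{D}(V_2\setminus t_2)$ is the join $\{\partial D_2^c\}\ast\mathcal{A}_2$, the curves $\partial D_2^c$ and $\partial D_2=\alpha_n$ are disjoint, so $d_{F\setminus P}(\alpha_1,\alpha_n)\le 1$. On the other hand, since $[\alpha_0,\alpha_1,\dots,\alpha_n]$ is a geodesic we have $d_{F\setminus P}(\alpha_0,\alpha_n)=n$ and $d_{F\setminus P}(\alpha_0,\alpha_1)=1$, so
\[
d_{F\setminus P}(\alpha_1,\alpha_n)\ \ge\ d_{F\setminus P}(\alpha_0,\alpha_n)-d_{F\setminus P}(\alpha_0,\alpha_1)\ =\ n-1\ \ge\ 2,
\]
a contradiction. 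Hence $\alpha_1\notin\mathcal{D}^0(V_2\setminus t_2)$, and every element of $\mathcal{D}^0(V_2\setminus t_2)$ cuts $X_1$. The assertion that every element of $\mathcal{D}^0(V_1\setminus t_1)$ cuts $X_{n-1}$ is proved identically, interchanging the indices $1$ and $2$ and the curves $\alpha_1$ and $\alpha_{n-1}$ (so that $\partial D_1^c$ plays the role of $\partial D_2^c$, and $\alpha_0=\partial D_1$ that of $\alpha_n=\partial D_2$).

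I do not expect a genuine obstacle here; the only point deserving care is the case $n=3$, where the cruder estimate ``$d_{F\setminus P}(\alpha_1,\alpha_n)\le{\rm diam}_{F\setminus P}(\mathcal{D}^0(V_2\setminus t_2))=2$'' (from the identity (\ref{eqn-diam})) is not by itself contradictory. There one genuinely needs the sharper observation that a non-separating element of $\mathcal{D}^0(V_2\setminus t_2)$ must equal $\partial D_2^c$, which is adjacent to $\alpha_n$ in $\mathcal{C}(F\setminus P)$, forcing $d_{F\setminus P}(\alpha_1,\alpha_n)\le 1$.
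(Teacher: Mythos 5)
Your proof is correct, and its overall skeleton coincides with the paper's: reduce everything to the assertion that every element of $\mathcal{D}^0(V_2\setminus t_2)$ cuts $X_1$ (and symmetrically that every element of $\mathcal{D}^0(V_1\setminus t_1)$ cuts $X_{n-1}$), then combine the join structure $\mathcal{D}(V_i\setminus t_i)=\{\partial D_i^c\}\ast\mathcal{A}_i$, which gives connecting paths of length at most $2$ lying inside the disk complex, with Lemma~\ref{subsurface distance} to get the bound $4$. Where you genuinely differ is in how the cutting assertion is verified. The paper checks separately that $\alpha_n$, $\partial D_2^c$, and each $a\in\mathcal{A}_2\setminus\{\alpha_n\}$ cut $X_1$; for the last family it supposes $a=\alpha_1$, computes $d_{F\setminus P}(a,\partial D_2)=2$ through the cone point, obtains a contradiction only when $n\ge 4$, and must handle $n=3$ by a separate argument invoking the uniqueness of the geodesic $[\alpha_1,\alpha_2,\alpha_3]$ together with Claim~\ref{claim-g1-1}~(2). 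You instead use the separating/non-separating dichotomy: $\alpha_1$ is non-separating by construction, while every element of $\mathcal{A}_2$ has separating boundary (it cuts off a twice-punctured disk, a fact the paper itself records in its treatment of the case $n=2$), so the only way $\alpha_1$ could lie in $\mathcal{D}^0(V_2\setminus t_2)$ is as $\partial D_2^c$, which is adjacent to $\partial D_2=\alpha_n$ and would force $d_{F\setminus P}(\alpha_1,\alpha_n)\le 1$, contradicting $d_{F\setminus P}(\alpha_1,\alpha_n)=n-1\ge 2$. This handles all $n\ge 3$ uniformly, eliminates the $n=3$ special case, and does not need the uniqueness of the geodesic or Claim~\ref{claim-g1-1} at this point; the trade-off is that it leans on the topological fact that separating disks have separating boundary curves, whereas the paper's route stays entirely within distance computations in the cone $\{\partial D_2^c\}\ast\mathcal{A}_2$. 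Both arguments are sound, and your final reduction step (paths inside $\mathcal{D}^0(V_2\setminus t_2)$ so that all vertices cut $X_1$) matches the paper's.
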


\begin{proof}
%
Note that $\alpha_n$ cuts $X_1$.
Also, $\partial D_2^c$ cuts $X_1$, since otherwise, we have $\partial D_2^c=\alpha_1$, which implies 
$$1=d_{F\setminus P}(\partial D_2^c,\partial D_2)=d_{F\setminus P}(\alpha_1,\alpha_n)=n-1,$$ 
and hence $n=2$, a contradiction.

Let $a$ be any element of $\mathcal{D}^0(V_2\setminus t_2)\setminus \{\alpha_n,\partial D_2^c\}$.
If $a$ misses $X_1$, that is, $a=\alpha_1$, then 
$$d_{F\setminus P}(a,\partial D_2)=d_{F\setminus P}(\alpha_1,\alpha_n)=n-1.$$ 
However, $d_{F\setminus P}(a,\partial D_2)=2$, since $\mathcal{D}(V_2\setminus t_2)=\{\partial D_2^c\}\ast \mathcal{A}_2$ as mentioned above.
These give $n-1=2$, which is a contradiction when $n\ge 4$.
Suppose $n=3$.
By the fact that $\mathcal{D}(V_2\setminus t_2)=\{\partial D_2^c\}\ast \mathcal{A}_2$ again, we see that $[a=\alpha_1,\partial D_2^c,\partial D_2=\alpha_3]$ is a geodesics connecting $\alpha_1$ and $\alpha_3$.
By the uniqueness of the geodesic $[\alpha_1,\alpha_2,\alpha_3]$, we have $\partial D_2^c=\alpha_2$, which contradicts Claim \ref{claim-g1-1} (2).
Hence, $a$ also cuts $X_1$.

Since the diameter of $\mathcal{D}(V_2\setminus t_2)$ is $2$ and every element of $\mathcal{D}^0(V_2\setminus t_2)$ cuts $X_1$ as shown above, we have ${\rm diam}_{X_1}(\pi_{X_1}(\mathcal{D}^0(V_2\setminus t_2)))\le 4$ by Lemma \ref{subsurface distance}.

Similarly, we have ${\rm diam}_{X_{n-1}}(\pi_{X_{n-1}}(\mathcal{D}^0(V_1\setminus t_1)))\le 4$.
\end{proof}


To prove that the $(1,1)$-splitting $(V_1,t_1)\cup_{(F,P)}(V_2,t_2)$ has distance $n$ and is strongly keen, 
let $[\beta_0,\beta_1,\dots,\beta_m]$ be a shortest geodesic in $\mathcal{C}(F\setminus P)$ such that $\beta_0\in\mathcal{D}^0(V_1\setminus t_1)$ and $\beta_m\in\mathcal{D}^0(V_2\setminus t_2)$.
Since $d_{F\setminus P}(\mathcal{D}^0(V_1\setminus t_1),\mathcal{D}^0(V_2\setminus t_2))\le d_{F\setminus P}(\alpha_0,\alpha_n)\le n$, we may assume $m\le n$.

\begin{claim}\label{claim-g1-5-0}
$\beta_i=\alpha_1$ and $\beta_{j}=\alpha_{n-1}$ for some $i,j\in\{0,1,\dots,m\}$.
\end{claim}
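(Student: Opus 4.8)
The plan is to show that $\beta_i = \alpha_1$ for some $i$ by a subsurface-projection estimate through $X_1$, and symmetrically that $\beta_j = \alpha_{n-1}$ for some $j$ through $X_{n-1}$. I will argue by contradiction: suppose $\beta_i \ne \alpha_1$ for every $i \in \{0,1,\dots,m\}$. By Remark~\ref{rmk_miss}, the only essential simple closed curve in $F\setminus P$ missing $X_1$ is $\alpha_1$ itself, so under this assumption every $\beta_i$ cuts $X_1$. Then Lemma~\ref{subsurface distance} gives ${\rm diam}_{X_1}(\pi_{X_1}(\beta_0),\pi_{X_1}(\beta_m)) \le 2m \le 2n$.

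Next I would control the distance in $\mathcal{C}(X_1)$ between $\pi_{X_1}(\beta_0)$ and $\alpha_0$, and between $\pi_{X_1}(\beta_m)$ and $\alpha_{n-1}$. Since $\beta_0 \in \mathcal{D}^0(V_1\setminus t_1)$ and $\alpha_0 = \partial D_1 \in \mathcal{D}^0(V_1\setminus t_1)$, Claim~\ref{claim-g1-2} gives ${\rm diam}_{X_1}(\pi_{X_1}(\alpha_0),\pi_{X_1}(\beta_0)) \le {\rm diam}_{X_1}(\pi_{X_1}(\mathcal{D}^0(V_1\setminus t_1))) \le 4$. Similarly, since $\beta_m \in \mathcal{D}^0(V_2\setminus t_2)$ and $\alpha_n \in \mathcal{D}^0(V_2\setminus t_2)$, Claim~\ref{claim-g1-3} gives ${\rm diam}_{X_1}(\pi_{X_1}(\beta_m),\pi_{X_1}(\mathcal{D}^0(V_2\setminus t_2))) \le 4$, and in particular ${\rm diam}_{X_1}(\pi_{X_1}(\beta_m),\pi_{X_1}(\alpha_n)) \le 4$. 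Finally, since $\alpha_n$ cuts $X_1$ (this was already noted in the proof of Claim~\ref{claim-g1-3}) and $[\alpha_{n-1},\alpha_n]$ is an edge, Lemma~\ref{subsurface distance} gives ${\rm diam}_{X_1}(\pi_{X_1}(\alpha_{n-1}),\pi_{X_1}(\alpha_n)) \le 2$. Combining these by the triangle inequality for diameters,
\begin{eqnarray*}
{\rm diam}_{X_1}(\pi_{X_1}(\alpha_0),\pi_{X_1}(\alpha_{n-1}))
&\le& {\rm diam}_{X_1}(\pi_{X_1}(\alpha_0),\pi_{X_1}(\beta_0)) + {\rm diam}_{X_1}(\pi_{X_1}(\beta_0),\pi_{X_1}(\beta_m)) \\
&& + {\rm diam}_{X_1}(\pi_{X_1}(\beta_m),\pi_{X_1}(\alpha_n)) + {\rm diam}_{X_1}(\pi_{X_1}(\alpha_n),\pi_{X_1}(\alpha_{n-1})) \\
&\le& 4 + 2n + 4 + 2 = 2n + 10,
\end{eqnarray*}
which contradicts the inequality (\ref{eqn-g1-1}). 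Hence $\beta_i = \alpha_1$ for some $i$.

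The argument that $\beta_j = \alpha_{n-1}$ for some $j$ is symmetric, running the same estimate inside $X_{n-1}$: assuming no $\beta_j$ equals $\alpha_{n-1}$, every $\beta_j$ cuts $X_{n-1}$, so ${\rm diam}_{X_{n-1}}(\pi_{X_{n-1}}(\beta_0),\pi_{X_{n-1}}(\beta_m)) \le 2n$, then Claim~\ref{claim-g1-3} bounds ${\rm diam}_{X_{n-1}}(\pi_{X_{n-1}}(\mathcal{D}^0(V_1\setminus t_1)))\le 4$ (so $\pi_{X_{n-1}}(\beta_0)$ is within $4$ of $\pi_{X_{n-1}}(\alpha_0)$), Claim~\ref{claim-g1-2} bounds the $V_2$-side by $4$ (so $\pi_{X_{n-1}}(\beta_m)$ is within $4$ of $\pi_{X_{n-1}}(\alpha_n)$), and the edge $[\alpha_0,\alpha_1]$ together with the fact that $\alpha_0$ cuts $X_{n-1}$ contributes a further $2$; this yields ${\rm diam}_{X_{n-1}}(\pi_{X_{n-1}}(\alpha_0),\pi_{X_{n-1}}(\alpha_n)) \le 2n+10 < 2n+8$... wait, I should instead route through $\alpha_1$ on the left so that the bound reads $4 + 2n + 4 + 2 = 2n+10$ compared against (\ref{eqn-g1-2}); since (\ref{eqn-g1-2}) only gives $> 2n+8$ I need to be slightly more careful and use that one endpoint is literally $\alpha_1$ (distance $0$ in $X_{n-1}$ bookkeeping is not available, so instead I replace the ``$+4$ then edge'' on the $V_1$ side by noting $\beta_0 \in \mathcal{D}^0(V_1\setminus t_1)$ so $\pi_{X_{n-1}}(\beta_0)$ is within $4$ of $\pi_{X_{n-1}}(\alpha_0)$ directly, and the edge $[\alpha_0,\alpha_1]$ is not needed), giving $4 + 2n + 4 = 2n + 8$, still not strictly less. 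The cleanest fix, and the one I expect to be the main obstacle, is to track the constants exactly as in Claims~\ref{claim-g1-2} and \ref{claim-g1-3} and exploit that $\beta_0$ and $\alpha_0$ both bound disks in $V_1\setminus t_1$ with $\alpha_0 = \partial D_1$ a vertex of that disk complex, so the relevant diameter is genuinely $\le 4$ and no extra edge term appears; then the total is $\le 2n + 8$, and I re-examine whether the strict inequality in (\ref{eqn-g1-2}) — which was chosen as $> 2n+8$ precisely for this purpose — already does the job, concluding that such a $\beta_j$ must exist. Once both $\beta_i = \alpha_1$ and $\beta_j = \alpha_{n-1}$ are established, the claim is proved.
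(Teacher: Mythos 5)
Your argument is correct and is essentially the paper's own proof: the $X_1$ estimate $4+2n+4+2=2n+10$ contradicting (\ref{eqn-g1-1}), and the $X_{n-1}$ estimate $4+2n+4=2n+8$ contradicting (\ref{eqn-g1-2}), using Claims~\ref{claim-g1-2} and \ref{claim-g1-3} exactly as the paper does. Your closing hesitation is unnecessary: an upper bound of $2n+8$ already contradicts the strict inequality ${\rm diam}_{X_{n-1}}(\pi_{X_{n-1}}(\alpha_0),\pi_{X_{n-1}}(\alpha_n))>2n+8$, so no bound of the form ``strictly less than $2n+8$'' is needed, and no extra edge term through $\alpha_1$ should appear since (\ref{eqn-g1-2}) is stated directly for $\alpha_0$ and $\alpha_n$.
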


\begin{proof}
Assume that $\beta_i\ne \alpha_1$ for every $i\in\{0,1,\dots,m\}$.
Namely, every $\beta_i$ cuts $X_1$.
By Lemma \ref{subsurface distance}, we have ${\rm diam}_{X_1}(\pi_{X_1}(\beta_0),\pi_{X_1}(\beta_m))\le 2m\le 2n$.
Similarly, we have ${\rm diam}_{X_1}(\pi_{X_1}(\alpha_{n-1}),\pi_{X_1}(\alpha_n))\le 2$, since each of $\alpha_{n-1}$ and $\alpha_n$ cuts $X_1$. 
By using the above inequalities together with Claims \ref{claim-g1-2} and \ref{claim-g1-3}, we have
\begin{eqnarray*}
\begin{array}{rcl}
{\rm diam}_{X_1}(\pi_{X_1}(\alpha_0),\pi_{X_1}(\alpha_{n-1}))&\le& 
{\rm diam}_{X_1}(\pi_{X_1}(\alpha_0),\pi_{X_1}(\beta_0))\\
&&+{\rm diam}_{X_1}(\pi_{X_1}(\beta_0),\pi_{X_1}(\beta_m))\\
&&+{\rm diam}_{X_1}(\pi_{X_1}(\beta_m),\pi_{X_1}(\alpha_n))\\
&&+{\rm diam}_{X_1}(\pi_{X_1}(\alpha_n),\pi_{X_1}(\alpha_{n-1}))\\
&\le&4+2n+4+2\\&=&2n+10,
\end{array}
\end{eqnarray*}
which contradicts the inequality (\ref{eqn-g1-1}).
Hence, we have $\beta_i=\alpha_1$ for some $i\in\{0,1,\dots,m\}$.
Similarly, we have $\beta_j=\alpha_{n-1}$ for some $j\in\{0,1,\dots,m\}$, since otherwise
\begin{eqnarray*}
\begin{array}{rcl}
{\rm diam}_{X_{n-1}}(\pi_{X_{n-1}}(\alpha_0),\pi_{X_{n-1}}(\alpha_{n}))&\le& 
{\rm diam}_{X_{n-1}}(\pi_{X_{n-1}}(\alpha_0),\pi_{X_{n-1}}(\beta_0))\\
&&+{\rm diam}_{X_{n-1}}(\pi_{X_{n-1}}(\beta_0),\pi_{X_{n-1}}(\beta_m))\\
&&+{\rm diam}_{X_{n-1}}(\pi_{X_{n-1}}(\beta_m),\pi_{X_{n-1}}(\alpha_{n}))\\
&\le&4+2n+4\\&=&2n+8,
\end{array}
\end{eqnarray*}
which contradicts the inequality (\ref{eqn-g1-2}).
%
\end{proof}

Let $i, j$ be as in Claim~\ref{claim-g1-5-0}.
Since $[\beta_0,\beta_1,\dots,\beta_m]$ is a shortest geodesic, we see by Claim~\ref{claim-g1-1} that $i=1$ and $j=m-1$.
By the uniqueness of the geodesic, we have $[\beta_1,\dots,\beta_{m-1}]=[\alpha_1,\dots,\alpha_{n-1}]$.
This implies that $m=n$, and $\beta_{n-1}=\alpha_{n-1}$.
By Claim~\ref{claim-g1-1}, we have $\beta_0=\alpha_0$ and $\beta_m=\alpha_n$, and this shows that the $(1,1)$-splitting has distance $n$ and  is strongly keen.

\begin{case}\label{case2-0}
$n=2$.
\end{case}

Let $F$ be a torus and $P$ be the union of $2$ points on $F$.
Let $\alpha_1$ be a non-separating simple closed curve in $F\setminus P$, 
and let $\alpha_0$ and $\alpha_2$ be simple closed curves each of which is disjoint from $\alpha_1$ and cuts off a twice-punctured disk from $F\setminus P$.
For $i=0,1,2$, let $X_i$ be the subsurface of $F\setminus P$ associated with $\alpha_i$.
By Appendix~\ref{app-b}, we may suppose that
\begin{equation}\label{eqn-g1-n2-0}
{\rm diam}_{X_1}(\pi_{X_1}(\alpha_0), \pi_{X_1}(\alpha_2))>12.
\end{equation}
Note that $\alpha_0\cap\alpha_2\ne\emptyset$, and hence, $[\alpha_0,\alpha_1,\alpha_2]$ is a geodesic in $\mathcal{C}(F\setminus P)$.
%

Identify $(\partial V_1,\partial t_1)$ and $(\partial V_2,\partial t_2)$ with $(F,P)$ so that $\partial D_1=\alpha_0$, $\partial D_2=\alpha_2$ 
and the following two inequalities hold.
\begin{equation}\label{eqn-g1-n2-1}
{d}_{X_0}(\partial D_1^c, \alpha_1)\ge 2,
\end{equation}
\begin{equation}\label{eqn-g1-n2-2}
{d}_{X_2}(\partial D_2^c, \alpha_1)\ge 2.
\end{equation}
%
Then $(V_1,t_1)\cup_{(F,P)}(V_2,t_2)$ is a $(1,1)$-splitting of a knot.

\begin{claim}\label{claim-g1-1-n2}
{\rm (1)} $\alpha_1$ intersects every element of $\mathcal{D}^0(V_1\setminus t_1)\setminus \{\alpha_0\,(=\partial D_1)\}$.

{\rm (2)} $\alpha_1$ intersects every element of $\mathcal{D}^0(V_2\setminus t_2)\setminus \{\alpha_2\,(=\partial D_2)\}$.
\end{claim}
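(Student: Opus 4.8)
The plan is to transcribe the proof of Claim~\ref{claim-g1-1} almost verbatim, with the inequality (\ref{eqn-g1-n2-1}) (resp.\ (\ref{eqn-g1-n2-2})) playing the role that (\ref{eqn-8-case1}) played there. I would prove (1) in detail and then remark that (2) follows by the symmetric argument, interchanging $(V_1,t_1,D_1,\alpha_0,X_0)$ with $(V_2,t_2,D_2,\alpha_2,X_2)$ and using (\ref{eqn-g1-n2-2}).

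For (1), suppose for a contradiction that some $a\in\mathcal{D}^0(V_1\setminus t_1)\setminus\{\alpha_0\}$ satisfies $a\cap\alpha_1=\emptyset$. Choose a disk $D_a$ in $V_1\setminus t_1$ bounded by $a$ with $|D_a\cap D_1|$ minimal; an innermost-disk argument removes all loop components of $D_a\cap D_1$. Let $\Delta$ be the closure of a component of $D_a\setminus D_1$ that is outermost in $D_a$. Recall $D_1$ cuts $(V_1,t_1)$ into a solid torus $W_1^1$ and a $3$-ball $W_1^2$ in which $t_1$ is parallel to $\partial W_1^2$; since $(W_1^2,t_1)$ carries no essential disk, the minimality of $|D_a\cap D_1|$ forces $\Delta$ to be a non-separating disk in $W_1^1$. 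Banding $\Delta$ with one of the two components of $D_1\setminus\Delta$ produces a disk $\Delta'$ properly embedded in the solid torus $W_1^1$, hence isotopic to $D_1^c$; exactly as in the proof of Claim~\ref{claim-g1-1}, this yields $\partial D_1^c\in\pi_{X_0}(a)$.

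Now both $a$ (since $a\ne\alpha_0$) and $\alpha_1$ (since $\alpha_1$ is non-separating whereas $\alpha_0$ bounds a twice-punctured disk, so $\alpha_1\ne\alpha_0$; cf.\ Remark~\ref{rmk_miss}) cut $X_0$, and $a\cap\alpha_1=\emptyset$ gives $d_{F\setminus P}(a,\alpha_1)\le 1$. Hence Lemma~\ref{subsurface distance} yields
$$
d_{X_0}(\partial D_1^c,\alpha_1)\le{\rm diam}_{X_0}(\pi_{X_0}(a),\{\alpha_1\})\le 2\,d_{F\setminus P}(a,\alpha_1)\le 2,
$$
contradicting (\ref{eqn-g1-n2-1}). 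This proves (1), and (2) is identical with (\ref{eqn-g1-n2-2}) in place of (\ref{eqn-g1-n2-1}).

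Since the whole argument is a transcription of an already established claim, there is essentially no obstacle; the only point worth checking is the applicability of Lemma~\ref{subsurface distance}, i.e.\ that $\alpha_1$ and the relevant disk curves all cut $X_0$ (resp.\ $X_2$), which is immediate from $\alpha_1$ being non-separating and the disk curves being distinct from $\alpha_0$ (resp.\ $\alpha_2$), together with Remark~\ref{rmk_miss}. One should also keep in mind that the inequalities (\ref{eqn-g1-n2-1}) and (\ref{eqn-g1-n2-2}), arranged via Appendix~\ref{app-b} when fixing the identifications $\partial D_1=\alpha_0$ and $\partial D_2=\alpha_2$, are precisely what is fed into Lemma~\ref{subsurface distance} at the final step.
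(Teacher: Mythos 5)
There is a genuine gap at the very last step, and it comes precisely from the difference between the two settings that you glossed over. In Case 1 of Section~\ref{sec-proof-5} the identification is arranged so that the \emph{strict} inequality (\ref{eqn-8-case1}), $d_{X_0}(\partial D_1^c,\alpha_1)>2$, holds, and the proof of Claim~\ref{claim-g1-1} ends by producing the bound $d_{X_0}(\partial D_1^c,\alpha_1)\le 2$, which is indeed a contradiction. In Case 2, however, the paper only arranges the non-strict inequalities (\ref{eqn-g1-n2-1}) and (\ref{eqn-g1-n2-2}), namely $d_{X_0}(\partial D_1^c,\alpha_1)\ge 2$ and $d_{X_2}(\partial D_2^c,\alpha_1)\ge 2$. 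Your transcription ends with
$d_{X_0}(\partial D_1^c,\alpha_1)\le {\rm diam}_{X_0}(\pi_{X_0}(a),\{\alpha_1\})\le 2\,d_{F\setminus P}(a,\alpha_1)\le 2$,
and \lq\lq $\le 2$\rq\rq\ does not contradict \lq\lq $\ge 2$\rq\rq; both can hold with equality. So the verbatim transfer of the Claim~\ref{claim-g1-1} argument does not close. The argument could be repaired: since $\alpha_1$ is disjoint from both $a$ and $\alpha_0=\partial X_0$, it can be isotoped off a regular neighborhood of $({\rm arc\ of\ }a\cap X_1^{(0)})\cup\partial X_0$, hence off $\partial\Delta'$, which gives $\alpha_1\cap\partial D_1^c=\emptyset$ up to isotopy; in the one-holed torus $X_0$ disjoint essential curves are isotopic, so $d_{X_0}(\partial D_1^c,\alpha_1)=0<2$, which does contradict (\ref{eqn-g1-n2-1}). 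But this sharper intersection analysis (or, alternatively, strengthening the arranged inequality to a strict one) is exactly the missing ingredient, and it is not in your write-up.

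For comparison, the paper's own proof of Claim~\ref{claim-g1-1-n2} takes a different and more elementary route that avoids outermost-disk surgery altogether: it invokes Saito's join structure $\mathcal{D}^0(V_i\setminus t_i)=\{\partial D_i^c\}\ast\mathcal{A}_i$, notes that every element of $\mathcal{A}_1$ bounds a twice-punctured disk in $F\setminus P$, and observes that if some $\gamma\in\mathcal{A}_1\setminus\{\alpha_0\}$ were disjoint from $\alpha_1$, then the unique non-simply-connected component of $\partial V_1\setminus(\alpha_0\cup\gamma)$ is an open annulus containing both $\alpha_1$ and $\partial D_1^c$, forcing $\alpha_1=\partial D_1^c$ and hence contradicting (\ref{eqn-g1-n2-1}); the inequality is used only to separate $\alpha_1$ from $\partial D_1^c$. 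You should either adopt that argument or supply the sharpened disjointness estimate described above; as written, your proof does not establish the claim.
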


\begin{proof}
Recall that $\mathcal{D}^0(V_i\setminus t_i)=\{\partial D_i^c\}\ast \mathcal{A}_i$, and note that (the boundary of) every element of $\mathcal{A}_i$ cuts off a twice-punctured disk from $F\setminus P$.
By the inequality~(\ref{eqn-g1-n2-1}), $\alpha_1$ intersects $\partial D_1^c$.
Hence, for the proof of the conclusion~(1) of the claim, it is enough to show that $\alpha_1$ intersects every element of
$(\mathcal{D}^0(V_1\setminus t_1)\setminus \{\alpha_0\})\setminus \{\partial D_1^c\}=\mathcal{A}_1\setminus \{\alpha_0\}$.
Assume on the contrary that there exists $\gamma\in\mathcal{A}_1\setminus\{\alpha_0\}$ such that $\gamma\cap \alpha_1=\emptyset$.
It is easy to see that at most one component of $\partial V_1\setminus(\alpha_0\cup \gamma)$ is not simply connected,
and the non-simply connected component is an open annulus.
Here we note that $\alpha_1$ and $\partial D_1^c$ must be contained in the non-simply connected component, and this shows that $\alpha_1=\partial D_1^c$.
However, this contradicts the inequality (\ref{eqn-g1-n2-1}).

The conclusion (2) of the claim can be proved similarly.
\end{proof}

\begin{claim}\label{claim-g1-4-n2}
The distance of $(V_1,t_1)\cup_{(F,P)}(V_2,t_2)$ is not $1$.
\end{claim}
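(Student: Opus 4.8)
The plan is to push the entire disk complexes of $V_1\setminus t_1$ and $V_2\setminus t_2$ into the subsurface $X_1$ of $F\setminus P$ associated with $\alpha_1$, and to contradict the inequality~(\ref{eqn-g1-n2-0}) if the distance were $1$ (in fact, if it were $\le 1$). So suppose for contradiction that there are $a\in\mathcal{D}^0(V_1\setminus t_1)$ and $b\in\mathcal{D}^0(V_2\setminus t_2)$ with $d_{F\setminus P}(a,b)\le 1$.

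The first step is the bookkeeping that everything cuts $X_1$. I would note that $\alpha_1\notin\mathcal{D}^0(V_1\setminus t_1)$ and $\alpha_1\notin\mathcal{D}^0(V_2\setminus t_2)$: this is immediate from Claim~\ref{claim-g1-1-n2}, since a simple closed curve does not intersect itself, and $\alpha_1\ne\alpha_0,\alpha_2$. Since by Remark~\ref{rmk_miss} the unique essential simple closed curve on $F\setminus P$ missing $X_1$ is $\alpha_1$, it follows that \emph{every} element of $\mathcal{D}^0(V_1\setminus t_1)$ and of $\mathcal{D}^0(V_2\setminus t_2)$ cuts $X_1$; in particular $\alpha_0=\partial D_1$ and $\alpha_2=\partial D_2$ cut $X_1$, so all the subsurface projections appearing below are nonempty.

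The second step is to bound the projections of the disk complexes. Using ${\rm diam}_{\partial V_i\setminus t_i}(\mathcal{D}^0(V_i\setminus t_i))=2$ from~(\ref{eqn-diam}) together with Lemma~\ref{subsurface distance} (applied to a path in $\mathcal{D}(V_i\setminus t_i)$, all of whose vertices cut $X_1$), I would deduce ${\rm diam}_{X_1}(\pi_{X_1}(\mathcal{D}^0(V_1\setminus t_1)))\le 4$ and ${\rm diam}_{X_1}(\pi_{X_1}(\mathcal{D}^0(V_2\setminus t_2)))\le 4$; in particular ${\rm diam}_{X_1}(\pi_{X_1}(\alpha_0),\pi_{X_1}(a))\le 4$ and ${\rm diam}_{X_1}(\pi_{X_1}(\alpha_2),\pi_{X_1}(b))\le 4$. (This is the analogue, in the present setting, of Claims~\ref{claim-g1-2} and~\ref{claim-g1-3}.)

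Finally, since $a$ and $b$ both cut $X_1$ and $d_{F\setminus P}(a,b)\le 1$, Lemma~\ref{subsurface distance} applied to the path of length at most $1$ joining them gives ${\rm diam}_{X_1}(\pi_{X_1}(a),\pi_{X_1}(b))\le 2$, and then
\begin{align*}
{\rm diam}_{X_1}(\pi_{X_1}(\alpha_0),\pi_{X_1}(\alpha_2))
&\le {\rm diam}_{X_1}(\pi_{X_1}(\alpha_0),\pi_{X_1}(a))+{\rm diam}_{X_1}(\pi_{X_1}(a),\pi_{X_1}(b))\\
&\qquad +{\rm diam}_{X_1}(\pi_{X_1}(b),\pi_{X_1}(\alpha_2))\\
&\le 4+2+4=10,
\end{align*}
contradicting~(\ref{eqn-g1-n2-0}); note that the same computation (with the middle term $0$) also rules out distance $0$. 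The only step requiring real care is the first one, namely verifying that every disk boundary in each solid torus cuts $X_1$ so that the projections are nonempty and Lemma~\ref{subsurface distance} applies; the rest is a routine diameter estimate.
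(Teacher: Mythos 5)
Your proof is correct and takes essentially the same route as the paper's: the paper likewise uses Claim~\ref{claim-g1-1-n2} to see that the relevant disk boundaries cut $X_1$, bounds the two end terms by $4$ via the equality~(\ref{eqn-diam}) together with Lemma~\ref{subsurface distance}, bounds the middle term by $2$, and contradicts the inequality~(\ref{eqn-g1-n2-0}) through the identical estimate $4+2+4=10$. The additional bookkeeping you include (that $\alpha_1\notin\mathcal{D}^0(V_i\setminus t_i)$, hence every disk boundary cuts $X_1$, and the aside about distance $0$) is a harmless elaboration of the same argument.
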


\begin{proof}
Assume on the contrary that the distance of $(V_1,t_1)\cup_{(F,P)}(V_2,t_2)$ is $1$.
Then there exist $\beta_0\in\mathcal{D}^0(V_1\setminus t_1)$ and $\beta_1\in\mathcal{D}^0(V_2\setminus t_2)$ such that $\beta_0\cap \beta_1=\emptyset$.
By Claim~\ref{claim-g1-1-n2}, we see that each of $\beta_0$ and $\beta_1$ cuts $X_1$.
This together with the facts ${\rm diam}_{\partial V_i\setminus t_i} (\mathcal{D}^0(V_i\setminus t_i))=2$ $(i=1,2)$ (the equality (\ref{eqn-diam})) and Lemma~\ref{subsurface distance} shows:
\begin{eqnarray*}
\begin{array}{rcl}
{\rm diam}_{X_1}(\pi_{X_1}(\alpha_0),\pi_{X_1}(\alpha_2))&\le&{\rm diam}_{X_1}(\pi_{X_1}(\alpha_0),\pi_{X_1}(\beta_0))\\
&&+{\rm diam}_{X_1}(\pi_{X_1}(\beta_0),\pi_{X_1}(\beta_1))\\&&+{\rm diam}_{X_1}(\pi_{X_1}(\beta_1),\pi_{X_1}(\alpha_2))\\
&\le&4+2+4=10,
\end{array}
\end{eqnarray*}
which contradicts the inequality (\ref{eqn-g1-n2-0}).
\end{proof}

By Claim~\ref{claim-g1-4-n2}, the distance of $(V_1,t_1)\cup_{(F,P)}(V_2,t_2)$ is $2$.
Let $[\beta_0,\beta_1,\beta_2]$ be a geodesic realizing the distance.

\begin{claim}\label{claim-g1-5}
$\beta_1=\alpha_1$.
\end{claim}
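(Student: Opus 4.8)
The plan is to show that any geodesic $[\beta_0,\beta_1,\beta_2]$ with $\beta_0\in\mathcal{D}^0(V_1\setminus t_1)$ and $\beta_2\in\mathcal{D}^0(V_2\setminus t_2)$ must have its middle vertex equal to $\alpha_1$, by a subsurface-projection argument in $X_1$ identical in spirit to the one used in Claim~\ref{claim-g1-4-n2}. First I would argue by contradiction: suppose $\beta_1\neq\alpha_1$. Then, since $\alpha_1$ is the unique essential simple closed curve in $F\setminus P$ missing $X_1$ (Remark~\ref{rmk_miss}), $\beta_1$ cuts $X_1$. By Claim~\ref{claim-g1-1-n2}, every element of $\mathcal{D}^0(V_1\setminus t_1)\setminus\{\alpha_0\}$ and every element of $\mathcal{D}^0(V_2\setminus t_2)\setminus\{\alpha_2\}$ intersects $\alpha_1$, hence cuts $X_1$; in particular $\beta_0$ and $\beta_2$ cut $X_1$ (this also covers the boundary cases $\beta_0=\alpha_0$, $\beta_2=\alpha_2$, since $\alpha_0$ and $\alpha_2$ cut $X_1$ as noted just before the statement). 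So all three vertices $\beta_0,\beta_1,\beta_2$ cut $X_1$.

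Next I would invoke Lemma~\ref{subsurface distance} along the path $[\beta_0,\beta_1,\beta_2]$ to get ${\rm diam}_{X_1}(\pi_{X_1}(\beta_0),\pi_{X_1}(\beta_2))\le 4$. Combining this with the bound ${\rm diam}_{\partial V_i\setminus t_i}(\mathcal{D}^0(V_i\setminus t_i))=2$ from~(\ref{eqn-diam}) and Lemma~\ref{subsurface distance} applied to $\mathcal{D}^0(V_1\setminus t_1)$ and to $\mathcal{D}^0(V_2\setminus t_2)$ — exactly as in the displayed computation in the proof of Claim~\ref{claim-g1-1}, or more directly as in Claim~\ref{claim-g1-2}/\ref{claim-g1-3} — yields ${\rm diam}_{X_1}(\pi_{X_1}(\mathcal{D}^0(V_1\setminus t_1)))\le 4$ and ${\rm diam}_{X_1}(\pi_{X_1}(\mathcal{D}^0(V_2\setminus t_2)))\le 4$. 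Since $\alpha_0\in\mathcal{D}^0(V_1\setminus t_1)$ and $\alpha_2\in\mathcal{D}^0(V_2\setminus t_2)$, the triangle inequality for ${\rm diam}_{X_1}$ through $\pi_{X_1}(\beta_0)$ and $\pi_{X_1}(\beta_2)$ gives
\[
{\rm diam}_{X_1}(\pi_{X_1}(\alpha_0),\pi_{X_1}(\alpha_2))\le 4+4+4=12,
\]
contradicting the inequality~(\ref{eqn-g1-n2-0}). Therefore $\beta_1=\alpha_1$, as claimed.

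The only genuine subtlety — the "hard part," though it is minor here — is checking that $\beta_0$ and $\beta_2$ really do cut $X_1$ even when they coincide with $\partial D_1=\alpha_0$ or $\partial D_2=\alpha_2$; this is handled by the observation recorded just above the statement of the claim that $\alpha_0$ and $\alpha_2$ both intersect (hence cut) $X_1$ because $\alpha_1$ is disjoint from both and non-isotopic to either, so neither of them can be the unique curve missing $X_1$. Everything else is a routine assembly of Lemma~\ref{subsurface distance}, the diameter bound~(\ref{eqn-diam}), Claim~\ref{claim-g1-1-n2}, and the hypothesis~(\ref{eqn-g1-n2-0}); I would write it up compactly, reusing the estimate format already established for Claim~\ref{claim-g1-4-n2}.
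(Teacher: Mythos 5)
Your proposal is correct and follows essentially the same route as the paper: assume $\beta_1\ne\alpha_1$, observe that all of $\beta_0,\beta_1,\beta_2$ cut $X_1$ (using Claim~\ref{claim-g1-1-n2} and Remark~\ref{rmk_miss}), and then combine Lemma~\ref{subsurface distance} with the diameter bound~(\ref{eqn-diam}) to get ${\rm diam}_{X_1}(\pi_{X_1}(\alpha_0),\pi_{X_1}(\alpha_2))\le 4+4+4=12$, contradicting~(\ref{eqn-g1-n2-0}). The paper's proof is exactly this estimate, so no further comment is needed.
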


\begin{proof}
Assume on the contrary that $\beta_1\ne \alpha_1$.
Since $\beta_0\in \mathcal{D}^0(V_1\setminus t_1)$ (resp. $\beta_2\in \mathcal{D}^0(V_2\setminus t_2)$), $\beta_0\ne \alpha_1$ (resp. $\beta_2\ne \alpha_1$).
Then each of $\beta_0$, $\beta_1$ and $\beta_2$ cuts $X_1$.
This together with the facts ${\rm diam}_{\partial V_i\setminus t_i} (\mathcal{D}^0(V_i\setminus t_i))=2$ $(i=1,2)$ (the equality~(\ref{eqn-diam})) and Lemma~\ref{subsurface distance} shows:
\begin{eqnarray*}
\begin{array}{rcl}
{\rm diam}_{X_1}(\pi_{X_1}(\alpha_0),\pi_{X_1}(\alpha_2))&\le&{\rm diam}_{X_1}(\pi_{X_1}(\alpha_0),\pi_{X_1}(\beta_0))\\
&&+{\rm diam}_{X_1}(\pi_{X_1}(\beta_0),\pi_{X_1}(\beta_2))\\&&+{\rm diam}_{X_1}(\pi_{X_1}(\beta_2),\pi_{X_1}(\alpha_2))\\
&\le&4+4+4=12,
\end{array}
\end{eqnarray*}
which contradicts the inequality (\ref{eqn-g1-n2-0}).
\end{proof}

By Claims~\ref{claim-g1-1-n2} and \ref{claim-g1-5}, we have $[\beta_0,\beta_1,\beta_2]=[\alpha_0,\alpha_1,\alpha_2]$, i.e., $(V_1,t_1)\cup_{(F,P)}(V_2,t_2)$ is strongly keen.

This completes the proof of Theorem \ref{thm-1} for the case when $n\ge 2$, $g=1$ and $b=1$.

\part{Proof of Theorem \ref{thm-1} when $n=1$}

\section{Proof of Theorem \ref{thm-1} when $n=1$ and $g\ge 2$}\label{sec-proof-6}

In this section, we give a proof of Theorem \ref{thm-1} for the case when $n=1$ and $g\ge 2$.
We remark that the idea of the key part of the proof in this and the next sections is due to \cite{E}.

Let $F$ be a closed orientable surface of genus $g$ and let $P$ be the union of $2b$ points on $F$, where $b\ge 1$.
Let $\alpha_0$ and $\alpha_1$ be non-separating simple closed curves on $F\setminus P$ such that $\alpha_0\cap\alpha_1=\emptyset$ and that $\alpha_0\cup \alpha_1$ separates $F\setminus P$ into two components, one of which is an annulus with two punctures.
For $i=1,2$, let $V_i^{\ast, 0}$, $t_i^{\ast,0}$, $V_i$, $t_i$, $W_i'$, $\partial_+W_i'$, $W_i$, $\partial_- W_i$, $s_i$, $D_i$, 
$D_i\times \{\varepsilon\}$ ($\varepsilon=0,1$), $F_i$, $\Phi_i$ be as in Subsection~\ref{sec-g>1}.
Identify $(\partial_+W_1, s_1\cap\,\partial_+W_1)$ and $(\partial_+W_2, s_2\cap\,\partial_+W_2)$ with $(F,P)$ so that $\partial D_1=\alpha_0$ and $\partial D_2=\alpha_1$. 
We adopt notations $N_{W_i}(D_i)=D_i\times I$, $D_i^{\varepsilon}:=D_i\times \{\varepsilon\}$ ($i=1,2$, $\varepsilon=0,1$), where
$\partial D_1^1\cup \partial D_2^1$ bounds an annulus with two punctures (in $F\setminus P$) disjoint from $\partial D_1^0\cup \partial D_2^0$. 
Let $F_A$ be the annulus with two punctures, and let $F_B$ be the genus-$(g-1)$ subsurface of $F\setminus P$ bounded by $D_1^0\cup D_2^0$. 
See Figure~\ref{fig-n1-1}.
\begin{figure}[tb]
 \begin{center}
 \includegraphics[width=50mm]{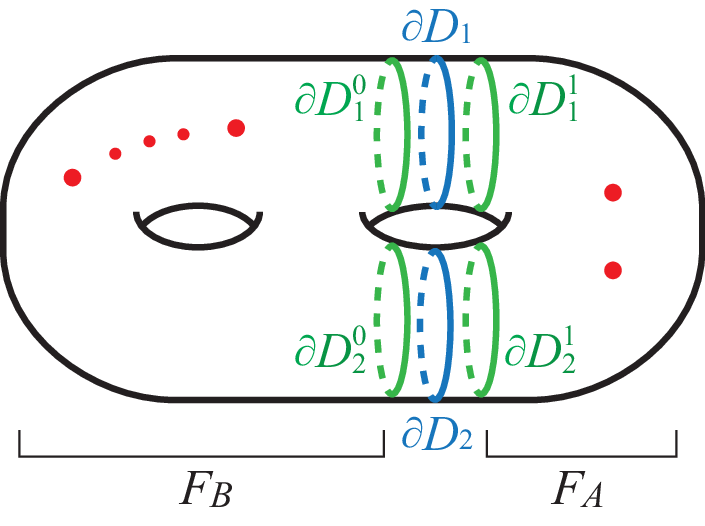}
 \end{center}
 \caption{$D_i^0$, $D_i^1$, $F_A$ and $F_B$.}
\label{fig-n1-1}
\end{figure}
Note that $F_B$ has positive genus.

Recall that $\mathcal{D}(V_i\setminus t_i)$ is the disk complex of $V_i\setminus t_i$.
By Proposition~\ref{prop-g>1-hi}, there exist homeomorphisms $h_i:\partial V_i\setminus t_i\rightarrow \partial_- W_i\setminus s_i$ such that
\begin{equation}\label{eqn-n1-1}
d_{\partial_- W_1\setminus s_1}(\Phi_1(\alpha_1),h_1(\mathcal{D}^0(V_1\setminus t_1)))>3,
\end{equation}
\begin{equation}\label{eqn-n1-2}
d_{\partial_- W_2\setminus s_2}(\Phi_2(\alpha_0),h_2(\mathcal{D}^0(V_2\setminus t_2)))>3.
\end{equation}
Let $\overline{h}_i:(\partial V_i,\partial t_i)\rightarrow (\partial_- W_i,s_i\cap \partial_- W_i)$ be the homeomorphism of the pairs induced from $h_i$.
Let $(V_i^{\ast}, t_i^{\ast}):=(W_i,s_i)\cup_{\overline{h}_i}(V_i,t_i)$.
Then $(V_1^{\ast}, t_1^{\ast})\cup_{(F,P)} (V_2^{\ast}, t_2^{\ast})$ is a $(g,b)$-splitting of a link.
Let $\mathcal{D}_i$ be the set of essential disks in $V_i^{\ast}\setminus t_i^{\ast}$ for $i=1,2$. 
To show that $(V_1^{\ast}, t_1^{\ast})\cup_{(F,P)} (V_2^{\ast}, t_2^{\ast})$ has distance $1$ and is strongly keen, we prove the following.

\begin{assertion}\label{e1e2}
$E_1\cap E_2\ne \emptyset$ for any $E_1\in \mathcal{D}_1$ and $E_2\in \mathcal{D}_2$ with $(E_1,E_2)\ne (D_1,D_2)$.
\end{assertion}

To prove the above assertion, we divide $\mathcal{D}_i$ $(i=1,2)$ into four sets $\mathcal{D}_i^1$, $\mathcal{D}_i^2$, $\mathcal{D}_i^3$, $\mathcal{D}_i^4$, where
\begin{itemize}
\item $\mathcal{D}_i^1$ consists of the single disk $D_i$,
\item $\mathcal{D}_i^2$ consists of disks which are disjoint from $D_i$, not isotopic to $D_i$ and inessential in $(W_i'\cup_{\overline{h}_i} V_i)\setminus t_i^{\ast}$,
\item $\mathcal{D}_i^3$ consists of disks which are disjoint from $D_i$, not isotopic to $D_i$ and essential in $(W_i'\cup_{\overline{h}_i} V_i)\setminus t_i^{\ast}$,
\item $\mathcal{D}_i^4$ consists of disks which are not isotoped to be disjoint from $D_i$,
\end{itemize}

By the proof of Proposition~\ref{prop-g>1-1}, we have the following two claims.
\begin{claim}\label{lema}
Any disk $E_i\in\mathcal{D}_i^2$ can be obtained by a band-sum of $D_i^0$ and $D_i^1$ along an arc (on $F_i$) ($i=1,2$).
\end{claim}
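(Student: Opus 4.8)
The plan is to read Claim~\ref{lema} off the proof of Proposition~\ref{prop-g>1-1}. Fix $i\in\{1,2\}$ and let $E_i\in\mathcal{D}_i^2$. By the definition of $\mathcal{D}_i^2$ in this section, the disk $E_i$ is essential in $V_i^{\ast}\setminus t_i^{\ast}$, is not isotopic to $D_i$, can be isotoped to be disjoint from $D_i$, and is inessential in $(W_i'\cup_{\overline{h}_i}V_i)\setminus t_i^{\ast}$. Hence $E_i$ satisfies the hypotheses of Proposition~\ref{prop-g>1-1} applied with $D=E_i$: we may take $|E_i\cap D_i|=0$, which is certainly minimal, and then the auxiliary disk $\Delta$ of that proposition is $E_i$ itself. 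Of the two alternatives in the conclusion of Proposition~\ref{prop-g>1-1}, alternative (B) asserts that $\Delta$ is essential in $(W_i'\cup_{\overline{h}_i}V_i)\setminus t_i^{\ast}$, which is false since $\Delta=E_i$ is inessential there; therefore alternative (A) holds, that is, $E_i\cap D_i=\emptyset$ and $E_i$ is a band-sum of two copies of $D_i$. This is already the content of Claim~\ref{lema} except for the identification of the two copies with $D_i^0$ and $D_i^1$ and of the band with an arc lying on $F_i$.

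For that identification I would unwind the final paragraphs of the proof of Proposition~\ref{prop-g>1-1}. There it is shown, via the parity argument on $s_i\cap F_i$ in the proof of Claim~\ref{claim_D_D_i} and the lines immediately after it, that $\partial E_i$ bounds a disk $D^{\ast}$ in $\partial_+W_i'\setminus s_i=(F_i\setminus s_i)\cup D_i^0\cup D_i^1$ containing no point of $s_i\cap F_i$, and that $D^{\ast}$ must contain both $D_i^0$ and $D_i^1$: if it contained neither, $E_i$ would bound the disk $D^{\ast}\subset F_i\setminus s_i\subset\partial_+W_i$ and hence be inessential in $V_i^{\ast}\setminus t_i^{\ast}$; if it contained exactly one of them, $E_i$ would be isotopic to $D_i$; either possibility contradicts $E_i\in\mathcal{D}_i^2$. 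Writing $P={\rm cl}(D^{\ast}\setminus(D_i^0\cup D_i^1))$, a planar subsurface of $F_i$ whose boundary consists of $\partial D_i^0$, $\partial D_i^1$ and $\partial E_i$, I would choose an arc $c$ in $P$ joining $\partial D_i^0$ to $\partial D_i^1$ such that ${\rm cl}(P\setminus N_{F_i}(c))$ is an annulus having $\partial E_i$ as one boundary component. Then $E_i$ is isotopic to $D^{\ast}$ pushed slightly into $V_i^{\ast}$, which in turn is isotopic to the band-sum of $D_i^0$ and $D_i^1$ along the band $N_{F_i}(c)$; since $c\subset F_i$, this is exactly the required form.

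The step that requires the most care is this last identification: one must check that $D^{\ast}$ genuinely reduces to a band-sum of $D_i^0$ and $D_i^1$ along a single arc of $F_i$ rather than to something more complicated, and the relevant inputs are precisely that $D^{\ast}$ is a single disk, that it carries no puncture of $s_i$, and that $E_i\ne D_i$. Once these are in hand, Claim~\ref{lema} is a formal unpacking of conclusion (A) of Proposition~\ref{prop-g>1-1}. The cases $i=1$ and $i=2$ are entirely symmetric, so a single argument covers both.
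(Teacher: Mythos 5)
Your proof is correct and follows the paper's own route: the paper obtains Claim~\ref{lema} precisely ``by the proof of Proposition~\ref{prop-g>1-1}'', i.e.\ by noting that for $E_i\in\mathcal{D}_i^2$ the outermost disk $\Delta$ equals $E_i$ and is inessential in $(W_i'\cup_{\overline{h}_i}V_i)\setminus t_i^{\ast}$, so alternative (B) fails and the parity/containment argument for alternative (A) shows $\partial E_i$ bounds a puncture-free disk $D^{\ast}$ containing both $D_i^0$ and $D_i^1$, whence $E_i$ is a band-sum of $D_i^0$ and $D_i^1$ along an arc on $F_i$. Your extra unwinding of the pair-of-pants $P$ and the choice of the band arc $c$ is a legitimate (and slightly more explicit) rendering of the same final step.
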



\begin{claim}\label{lemc}
Let $E_i$ be a disk in $\mathcal{D}_i^4$ such that $|E_i\cap D_i|$ is minimal.
Let $\Delta$ be the closure of a component of $E_i\setminus N_{W_i}(D_i)$ that is outermost in $E_i$. 
Then $\Delta$ is an essential disk in $(W_i'\cup_{\overline{h}_i} V_i)\setminus t_i^{\ast}$.
\end{claim}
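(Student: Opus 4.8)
The plan is to deduce Claim~\ref{lemc} directly from Proposition~\ref{prop-g>1-1}, which was set up precisely for this purpose. First I would check that the hypotheses of that proposition are satisfied with $D:=E_i$: since $E_i\in\mathcal{D}_i$ it is an essential disk in $V_i^{\ast}\setminus t_i^{\ast}$; since $\mathcal{D}_i^4$ is disjoint from $\mathcal{D}_i^1=\{D_i\}$ we have $E_i\ne D_i$; and $|E_i\cap D_i|$ is assumed minimal, so by the usual innermost-loop argument no component of $E_i\cap D_i$ is a loop. Moreover the disk $\Delta$ appearing in the statement of Claim~\ref{lemc}, namely the closure of an outermost component of $E_i\setminus N_{W_i}(D_i)$, is exactly the disk $\Delta$ produced in Proposition~\ref{prop-g>1-1}.

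Next I would invoke the dichotomy furnished by Proposition~\ref{prop-g>1-1}: either conclusion (A) holds, i.e.\ $E_i\cap D_i=\emptyset$ and $E_i$ is a band sum of two copies of $D_i$, or conclusion (B) holds, i.e.\ $\Delta$ is essential in $(W_i'\cup_{\overline{h}_i}V_i)\setminus t_i^{\ast}$. By the definition of $\mathcal{D}_i^4$, the disk $E_i$ cannot be isotoped to be disjoint from $D_i$; hence for the chosen representative with $|E_i\cap D_i|$ minimal we have $|E_i\cap D_i|\ge 1$. This excludes alternative (A), so (B) must hold, and (B) is precisely the assertion of Claim~\ref{lemc}. (The companion statement Claim~\ref{lema} is handled the same way: a disk in $\mathcal{D}_i^2$ is disjoint from $D_i$ and, being inessential in $(W_i'\cup_{\overline{h}_i}V_i)\setminus t_i^{\ast}$, falls into case (A) of Proposition~\ref{prop-g>1-1}.)

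The only delicate point is the one already absorbed into the proof of Proposition~\ref{prop-g>1-1}, namely that inessentiality of an outermost $\Delta$ forces $E_i\cap D_i=\emptyset$ together with the band-sum structure; this rests on the parity argument counting points of $s_i\cap F_i$ on the two sides of the arc $\partial\Delta\cap F_i$, combined with minimality of $|E_i\cap D_i|$. Since we use Proposition~\ref{prop-g>1-1} as a black box, no new obstacle arises. If instead one wanted a self-contained proof of Claim~\ref{lemc}, one would reproduce exactly that parity-and-minimality argument to rule out an inessential outermost $\Delta$ under the standing assumption $|E_i\cap D_i|\ge 1$; this step, rather than anything about the ambient handlebody structure, is where the real content lies.
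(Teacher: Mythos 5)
Your argument is correct and takes essentially the same route as the paper: the paper deduces Claim~\ref{lemc} (and Claim~\ref{lema}) directly from Proposition~\ref{prop-g>1-1}, just as you do. Your only (harmless) variation is that you use the dichotomy in the statement of Proposition~\ref{prop-g>1-1} as a black box—noting that a disk in $\mathcal{D}_i^4$ still meets $D_i$ after minimization, so alternative (A) is excluded and (B) yields essentiality of $\Delta$—whereas the paper cites the proof of that proposition.
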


We also prove the following.

\begin{claim}\label{lemb}
For any $E_1\in\mathcal{D}_1^3$ such that $E_1\cap D_2\ne\emptyset$, there exist a component $\gamma_1$ of $\partial E_1\cap F_B$ and a subarc $\gamma_2$ of $\partial D_2^0$ such that  $\gamma_1\cup\gamma_2$ is an essential simple closed curve in $\partial_+ W_2'\setminus s_2$, which implies $\Phi_2(\gamma_1\cup\gamma_2)\ne\emptyset$.
\end{claim}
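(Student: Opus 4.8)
The plan is to argue entirely on the surfaces $\partial_{+}W_{1}'$ and $\partial_{+}W_{2}'$, using that each of these is obtained from $F_{B}$ by capping one of the curves $\partial D_{1}^{0},\partial D_{2}^{0}$ with a disk and the other with a twice-punctured disk (namely $F_{A}$), with the two roles interchanged. \textbf{Set-up.} Since $E_{1}\in\mathcal{D}_{1}^{3}$ is disjoint from $D_{1}$ and essential in $(W_{1}'\cup_{\overline h_{1}}V_{1})\setminus t_{1}^{\ast}$, after an isotopy we regard $\partial E_{1}$ as an essential simple closed curve in $\partial_{+}W_{1}'\setminus s_{1}$ disjoint from $\partial D_{1}^{0}\cup\partial D_{1}^{1}$, hence (isotoping off the annular part of $N_{W_{1}}(D_{1})$) from $\alpha_{0}=\partial D_{1}$. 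Isotope $E_{1}$ so that $|E_{1}\cap D_{2}|$ is minimal; then $\partial E_{1}$ meets $\alpha_{1}=\partial D_{2}$ efficiently. As $\alpha_{0}\cup\alpha_{1}$ cuts $F\setminus P$ into $F_{A}$ (a twice-punctured annulus) and $F_{B}$, and $\partial E_{1}$ misses $\alpha_{0}$ but (since $E_{1}\cap D_{2}\neq\emptyset$) meets $\alpha_{1}$, the set $\partial E_{1}\cap F_{B}$ is a nonempty family of disjoint arcs, each with both endpoints on $\partial D_{2}^{0}$, and $\partial E_{1}\cap F_{A}\neq\emptyset$ as well.

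\textbf{Reduction.} It suffices to find one component $\gamma_{1}$ of $\partial E_{1}\cap F_{B}$ which, together with \emph{no} subarc of $\partial D_{2}^{0}$, cobounds a disk with at most one puncture in $F_{B}$ on the side not containing $\partial D_{1}^{0}$. Indeed, the two curves obtained by taking $\gamma_{2}$ to be either subarc of $\partial D_{2}^{0}$ with endpoints $\partial\gamma_{1}$ are isotopic in $\partial_{+}W_{2}'$ (there $\partial D_{2}^{0}$ bounds the capping disk), so $[\gamma_{1}\cup\gamma_{2}]$ is well defined; it bounds neither a disk nor a once-punctured disk in $\partial_{+}W_{2}'$ (such a region would lie in $F_{B}$ together with the capping disk, contradicting the choice of $\gamma_{1}$; here $g\ge 2$ forces the complementary side to carry positive genus and so not be small), hence it is essential in $\partial_{+}W_{2}'\setminus s_{2}$. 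Finally, a curve essential in $\partial_{+}W_{2}'$ lying in $F_{2}\supset F_{B}$ is either essential in $F_{2}$ or isotopic in $F_{2}$ to $\partial D_{1}^{0}$, which is itself essential in $F_{2}$; hence $\pi_{F_{2}\setminus s_{2}}(\gamma_{1}\cup\gamma_{2})\neq\emptyset$, and since passing to $P_{2}$ only discards images peripheral to $\partial D_{2}^{0}$ or $\partial D_{2}^{1}$ in $F_{2}$, essentiality of $\gamma_{1}\cup\gamma_{2}$ in $\partial_{+}W_{2}'$ yields $\Phi_{2}(\gamma_{1}\cup\gamma_{2})\neq\emptyset$, the final assertion of the claim.

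\textbf{Core argument.} Suppose for contradiction that every component of $\partial E_{1}\cap F_{B}$ cobounds a disk with at most one puncture with a subarc of $\partial D_{2}^{0}$. By minimality of $|E_{1}\cap D_{2}|$ (no bigon between $\partial E_{1}$ and $\alpha_{1}$) no such disk is unpunctured, so each component of $\partial E_{1}\cap F_{B}$ cuts off exactly a once-punctured disk from $F_{B}$. Now analyze $\partial E_{1}\cap F_{A}$: by the same minimality each such arc is either inessential in $\partial_{+}W_{1}'$ — in which case it cuts off a twice-punctured disk from $F_{A}$, and combining it with an adjacent once-punctured disk cut off from $F_{B}$ exhibits $\partial E_{1}$ as the boundary of a once-punctured disk in $\partial_{+}W_{1}'$, contradicting $E_{1}\in\mathcal{D}_{1}^{3}$ — or essential in the twice-punctured disk $F_{A}\cup(\text{cap})$, in which case all these arcs are mutually parallel and, when nested, only the two extreme complementary regions are punctured. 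Feeding this description of $\partial E_{1}\cap F_{A}$ together with the once-punctured disks cut off from $F_{B}$ into the surface $\partial_{+}W_{1}'=F_{B}\cup(\text{disk})\cup(\text{twice-punctured disk})$, I would trace the complementary regions of $\partial E_{1}$ and show one of them is a disk with at most one puncture, contradicting essentiality of $\partial E_{1}$ in $\partial_{+}W_{1}'\setminus s_{1}$ (equivalently, of $E_{1}$ in $(W_{1}'\cup_{\overline h_{1}}V_{1})\setminus t_{1}^{\ast}$). This contradiction proves the claim.

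I expect the last step — converting ``every $F_{B}$-arc once-punctured-trivial'' plus ``all $F_{A}$-arcs parallel'' into ``$\partial E_{1}$ bounds a small-punctured disk in $\partial_{+}W_{1}'$'' — to be the main obstacle, since one must carefully bookkeep which subarc of $\partial D_{2}^{0}$ is used for each disk and handle several intersection points, where the once-punctured disks and the parallel $F_{A}$-arcs may be variously nested.
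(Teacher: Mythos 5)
There is a genuine gap, and it is located exactly where you flag the ``main obstacle'': the implication ``every component of $\partial E_1\cap F_B$ is trivial (cuts off a disk with at most one puncture, etc.) $\Rightarrow$ $\partial E_1$ bounds a disk with at most one puncture in $\partial_+W_1'\setminus s_1$'' is false, and no purely surface-topological argument can close it, because Claim~\ref{lemb} is not a surface-topological statement about $\partial E_1$ at all. It depends on the choice of the gluing homeomorphism $h_1$ through the inequality (\ref{eqn-n1-1}), which your proposal never uses. Concretely, consider a simple closed curve $c$ in $F_1$, disjoint from $\alpha_0$, which crosses $\alpha_1$ twice and bounds a twice-punctured disk straddling $\alpha_1$: one puncture lies in $F_B$ and one in $F_A$. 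Then $c$ is essential in $\partial_+W_1'\setminus s_1$ (a twice-punctured disk is neither a disk nor a once-punctured disk), $c\cap\alpha_1\ne\emptyset$, and its single arc in $F_B$ cuts off a once-punctured disk, so $c$ satisfies all the hypotheses of your ``core argument'' configuration while the conclusion you want (a complementary region with at most one puncture) fails: the other complementary region has genus $g-1\ge 1$. Whether such a $c$ actually bounds an essential disk of $(W_1'\cup_{\overline h_1}V_1)\setminus t_1^{\ast}$ (i.e.\ gives an $E_1\in\mathcal{D}_1^3$ with $E_1\cap D_2\ne\emptyset$ violating the claim) is governed by $h_1$; for a product-like gluing in which the two enclosed punctures are the endpoints of a single boundary-parallel arc of $t_1^{\ast}$, it does. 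So the claim would simply be false without the hypothesis on $h_1$, and any proof that argues only with essentiality of $\partial E_1$ in $\partial_+W_1'\setminus s_1$ must break down.

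The paper's proof goes the other way: assuming no component of $\partial E_1\cap F_B$ works, every such component cuts off an annulus, a once-punctured annulus, or a once-punctured disk from $F_B$; since $F_B$ has positive genus there is then an essential simple closed curve $\gamma\subset F_B$ disjoint from $\partial E_1$. Applying Proposition~\ref{prop-g>1-1} (B1) with $\alpha=\gamma$ and $\Delta=E_1$ (legitimate since $E_1\in\mathcal{D}_1^3$ is essential in $(W_1'\cup_{\overline h_1}V_1)\setminus t_1^{\ast}$) gives $d_{\partial_-W_1\setminus s_1}(\Phi_1(\gamma),h_1(\mathcal{D}^0(V_1\setminus t_1)))\le 1$, and since $\gamma$ and $\alpha_1$ are disjoint curves in $F_1$, $d_{\partial_-W_1\setminus s_1}(\Phi_1(\alpha_1),h_1(\mathcal{D}^0(V_1\setminus t_1)))\le 2$, contradicting (\ref{eqn-n1-1}). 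If you want to repair your write-up, you must replace the final step of your ``core argument'' by exactly this projection-distance argument (or some other use of (\ref{eqn-n1-1})); the bookkeeping of nested arcs in $F_A$ and $F_B$ cannot substitute for it. Your ``Reduction'' and the derivation of $\Phi_2(\gamma_1\cup\gamma_2)\ne\emptyset$ from essentiality in $\partial_+W_2'\setminus s_2$ are reasonable, but they are not where the difficulty lies.
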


\begin{proof}
Assume on the contrary that there does not exist a component of $\partial E_1\cap F_B$ which together with a subarc of $\partial D_2^0$ forms an essential simple closed curve in $\partial_+ W_2'\setminus s_2$.
Then every component of $\partial E_1\cap F_B$ cuts off an annulus, a once-punctured annulus, or a once-punctured disk from $F_B$.
\begin{figure}[tb]
 \begin{center}
 \includegraphics[width=50mm]{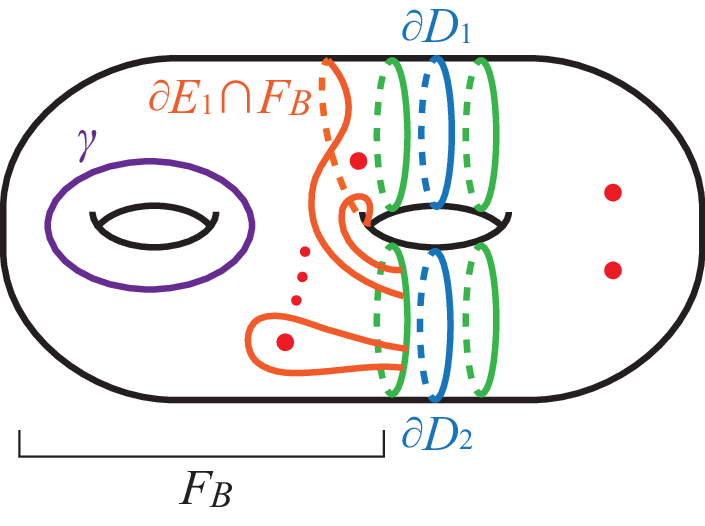}
 \end{center}
 \caption{$\partial E_1\cap F_B$ and $\gamma$.}
\label{fig-n1-2}
\end{figure}
Since the genus of $F_B$ is positive, there exists an essential simple closed curve $\gamma$ on $F_B$ disjoint from $\partial E_1\cap F_B$, and hence disjoint from $\partial E_1$ (see Figure~\ref{fig-n1-2}).
By Proposition~\ref{prop-g>1-1}, we have
$$
 d_{\partial_- W_1\setminus s_1}(\Phi_1(\gamma),h_1(\mathcal{D}^0(V_1\setminus t_1)))\le  1.
$$
Note also that $${\rm diam}_{\partial_- W_1\setminus s_1}(\Phi_1(\alpha_1),\Phi_1(\gamma))=d_{\partial_- W_1\setminus s_1}(\Phi_1(\alpha_1),\Phi_1(\gamma))=1$$ since $\alpha_1=\partial D_2$ and $\gamma$ are mutually disjoint simple closed curves on $F_1$. 
These imply
\begin{eqnarray*}
\begin{array}{rcl}
d_{\partial_- W_1\setminus s_1}(\Phi_1(\alpha_1),h_1(\mathcal{D}^0(V_1\setminus t_1)))&\le& {d}_{\partial_- W_1\setminus s_1}(\Phi_1(\alpha_1),\Phi_1(\gamma))\\&&+ d_{\partial_- W_1\setminus s_1}(\Phi_1(\gamma),h_1(\mathcal{D}^0(V_1\setminus t_1)))\\
&\le &1+1=2,
\end{array}
\end{eqnarray*}
a contradiction to the inequality (\ref{eqn-n1-1}).
\end{proof}

\begin{proof}[Proof of Assertion~\ref{e1e2}]
Suppose on the contrary that there exist $E_1\in \mathcal{D}_1$ and $E_2\in \mathcal{D}_2$ such that $(E_1,E_2)\ne (D_1,D_2)$ and $E_1\cap E_2= \emptyset$.
We may assume that $E_1\in \mathcal{D}_1^i$ and $E_2\in \mathcal{D}_2^j$ for some $i$ and $j$ such that $i\le j$ and $j\ne 1$, since the remaining cases can be treated similarly.
Assume that $|E_1\cap D_1|$ and $|E_2\cap D_2|$ are minimal.

\setcounter{case}{0}

\begin{case}\label{case8-1}
$E_1\in \mathcal{D}_1^1$, that is, $E_1=D_1$.
\end{case}

Then we divide Case~\ref{case8-1} into the following subcases.

\begin{subccase} 
$E_2\in \mathcal{D}_2^2$.
\end{subccase}

By Claim~\ref{lema}, $E_2$ is a band-sum of $D_2^0$ and $D_2^1$ along an arc on $F_2$.
Since $\partial D_1\cup \partial D_2$ is separating in $F$, the arc intersects $\partial D_1$.
Then we have $E_1\cap E_2=D_1\cap E_2\ne \emptyset$, a contradiction to the hypothesis.

\begin{subccase} 
$E_2\in \mathcal{D}_2^3$.
\end{subccase}

By Proposition~\ref{prop-g>1}, we have
$$
d_{\partial_- W_2\setminus s_2}(\Phi_2(\alpha_0),h_2(\mathcal{D}^0(V_2\setminus t_2))) \le 1,
$$
a contradiction to the inequality (\ref{eqn-n1-2}).

\begin{subccase} 
$E_2\in \mathcal{D}_2^4$.
\end{subccase}

Let $\Delta$ be the closure of a component of $E_2\setminus N_{W_2}(D_2)$ that is outermost in $E_2$.
We have $\Delta\cap \alpha_0=\emptyset$ since $E_2\cap \alpha_0=\emptyset$.
Also, $\Delta$ is essential in $(W_2'\cup_{\overline{h}_2}V_2)\setminus t_2^{\ast}$ by Claim~\ref{lemc}.
Then the union of $\Delta$ and one of the components of $D_2^0\setminus \Delta$ or $D_2^1\setminus \Delta$ is a disk which belongs to $\mathcal{D}_2^3$ and is disjoint from $D_1$.
This cannot occur as we have seen in the previous case.

\begin{case}\label{case8-2}
$E_1\in \mathcal{D}_1^2$.
\end{case}

By Claim~\ref{lema}, $E_1$ can be obtained by a band-sum of $D_1^0$ and $D_1^1$ along an arc $c$ on $F_1$.
Let $T_{E_1}$ be the 1-holed torus $(\partial D_1\times I)\cup N_{F_1}(c)$ bounded by $\partial E_1$ (see Figure~\ref{fig-n1-3}).
\begin{figure}[tb]
 \begin{center}
 \includegraphics[width=50mm]{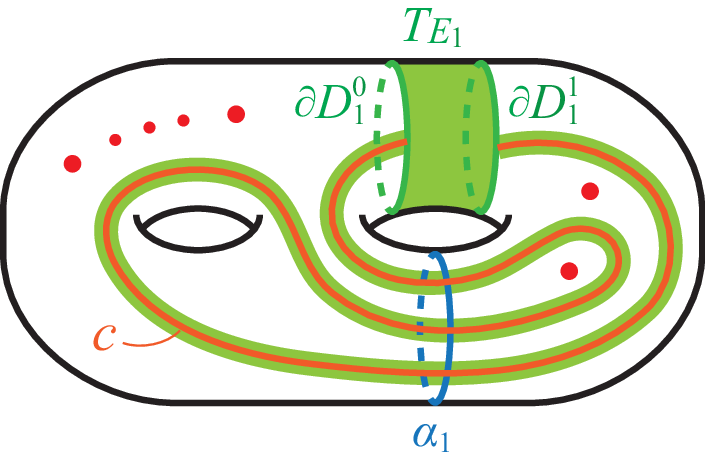}
 \end{center}
 \caption{$c$ and $T_{E_1}$.}
\label{fig-n1-3}
\end{figure}
Since $D_1\cap \alpha_1=\emptyset$ and $\alpha_0\cup \alpha_1$ is separating in $F$, we have $T_{E_1}\cap \alpha_1=N_{F_1}(c)\cap \alpha_1$.
Hence, ${\rm cl}(T_{E_1}\setminus N_{F}(\alpha_1))$ consists of a component corresponding to $\partial D_1\times I$ and possibly some disk components.

Since $E_1\cap E_2=\emptyset$ by the hypothesis, either $\partial E_2\subset T_{E_1}$ or $\partial E_2\subset F\setminus T_{E_1}$ holds.
If $\partial E_2\subset F\setminus T_{E_1}$, then $E_2\cap D_1=\emptyset$, which is impossible by Case~\ref{case8-1}.
Hence, $\partial E_2\subset T_{E_1}$.


Then we divide Case~\ref{case8-2} into the following subcases.

\begin{subccase2} 
$E_2\in \mathcal{D}_2^2\cup \mathcal{D}_2^3$.
\end{subccase2}

In this case, $\partial E_2\cap \alpha_1=\emptyset$.
Since $\partial E_2\subset T_{E_1}$ by the argument in the previous paragraph, $\partial E_2$ is an essential simple closed curve on ${\rm cl}(T_{E_1}\setminus N_F(\alpha_1))$.
Recall that ${\rm cl}(T_{E_1}\setminus N_F(\alpha_1))$ consists of a component homeomorphic to $\partial D_1\times I$ and possibly disk components.
Hence, $\partial E_2$ is isotopic to $\partial D_1$, which is impossible by Case~\ref{case8-1}.

\begin{subccase2} 
$E_2\in \mathcal{D}_2^4$, that is $E_2\cap D_2\ne \emptyset$.
\end{subccase2}

Let $\Delta$ be the closure of a component of $E_2\setminus N_{W_2}(D_2)$ that is outermost in $E_2$.
We may assume that $\Delta\cap D_2^0\ne \emptyset$ and $\Delta\cap D_2^1=\emptyset$.
(The reader will see that the case when $\Delta\cap D_2^0=\emptyset$ and $\Delta\cap D_2^1\ne\emptyset$ can be treated in the same manner as below.)
Let $\overline{\Delta}$ be the union of $\Delta$ and one of the components of $D_2^0\setminus \Delta$.
Then $\partial \overline{\Delta}$ is the union of a subarc $\gamma_1$ of $\partial E_2$ and a subarc $\gamma_2$ of $\partial D_2^0$ (see Figure~\ref{fig-n1-4}).
\begin{figure}[tb]
 \begin{center}
 \includegraphics[width=105mm]{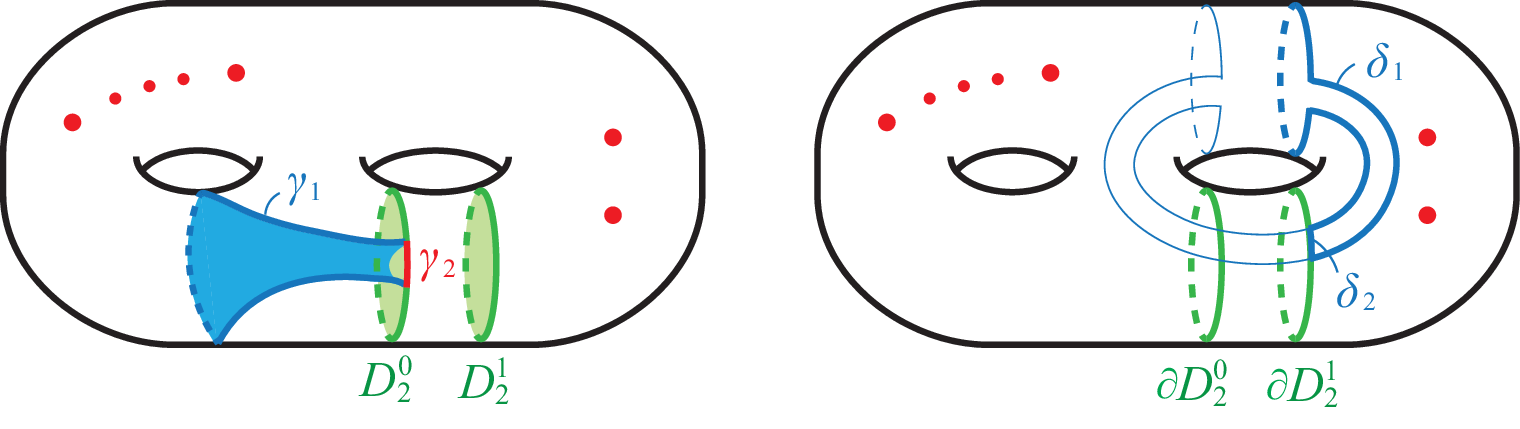}
 \end{center}
 \caption{$\gamma_1$, $\gamma_2$, $\delta_1$ and $\delta_2$.}
\label{fig-n1-4}
\end{figure}
Note that $\overline{\Delta}\in \mathcal{D}_2^3$ by Claim~\ref{lemc}.
%
Recall that $E_1$ is a band-sum of $D_1^0$ and $D_1^1$.
Let $\delta_1$ be the closure of the component of $\partial E_1\setminus \partial D_2^1$ that contains a subarc of $\partial D_1^1$.
Then $\delta_1$ together with a subarc $\delta_2$ of $\partial D_2^1$ forms a simple closed curve isotopic to $\partial D_1$.
We have (i) $\gamma_1\cap \delta_1=\emptyset$ since $\gamma_1\subset\partial E_2$, $\delta_1\subset \partial E_1$ and $E_1\cap E_2=\emptyset$, 
(ii) $\gamma_1\cap \delta_2=\emptyset$ since $\gamma_1\cap D_2^1=\emptyset$ and $\delta_2\subset \partial D_2^1$, 
(iii) $\gamma_2\cap \delta_1=\emptyset$ since $\gamma_2\subset \partial D_2^0$ and $\delta_1\cap \partial D_2^0=\emptyset$, and 
(iv) $\gamma_2\cap \delta_2=\emptyset$ since $\gamma_2\subset \partial D_2^0$, $\delta_2\subset \partial D_2^1$ and $\partial D_2^0\cap \partial D_2^1=\emptyset$.
These imply that $\partial \overline{\Delta}\cap \partial D_1=\emptyset$, which is impossible by Case~\ref{case8-1}.


\begin{case}\label{case8-3}
$E_1\in \mathcal{D}_1^3$.
\end{case}

We may assume that $E_1\cap D_2\ne\emptyset$ since, otherwise, we may lead to a contradiction as in Case 1.2.
By Claim~\ref{lemb}, there exist a component $\gamma_1$ of $\partial E_1\cap F_B$ and a subarc $\gamma_2$ of $\partial D_2^0$ such that $\gamma_1\cup \gamma_2$ is an essential simple closed curve in $\partial_+ W_2'\setminus s_2$, which implies $\Phi_2(\gamma_1\cup \gamma_2)\ne\emptyset$.
Note that $\gamma_1\cup \gamma_2$ and $\alpha_0$ are simple closed curves on $F_2$, which are essential in $\partial_+ W_2\setminus s_2$. (See Figure~\ref{fig-n1-Phi}.)
Note also that $\alpha_0\cap \gamma_1\subset \partial D_1\cap \partial E_1=\emptyset$ and $\alpha_0\cap \gamma_2\subset \partial D_1\cap \partial D_2^0=\emptyset$.
Hence, $\Phi_2(\alpha_0)\cap \Phi_2(\gamma_1\cup\gamma_2)=\emptyset$ as seen in Figure~\ref{fig-n1-Phi}, and  we have
\begin{equation}\label{eq-case8-3-1}
d_{\partial_- W_2\setminus s_2}(\Phi_2(\alpha_0),\Phi_2(\gamma_1\cup\gamma_2))\le 1.
\end{equation}
\begin{figure}[tb]
 \begin{center}
 \includegraphics[width=110mm]{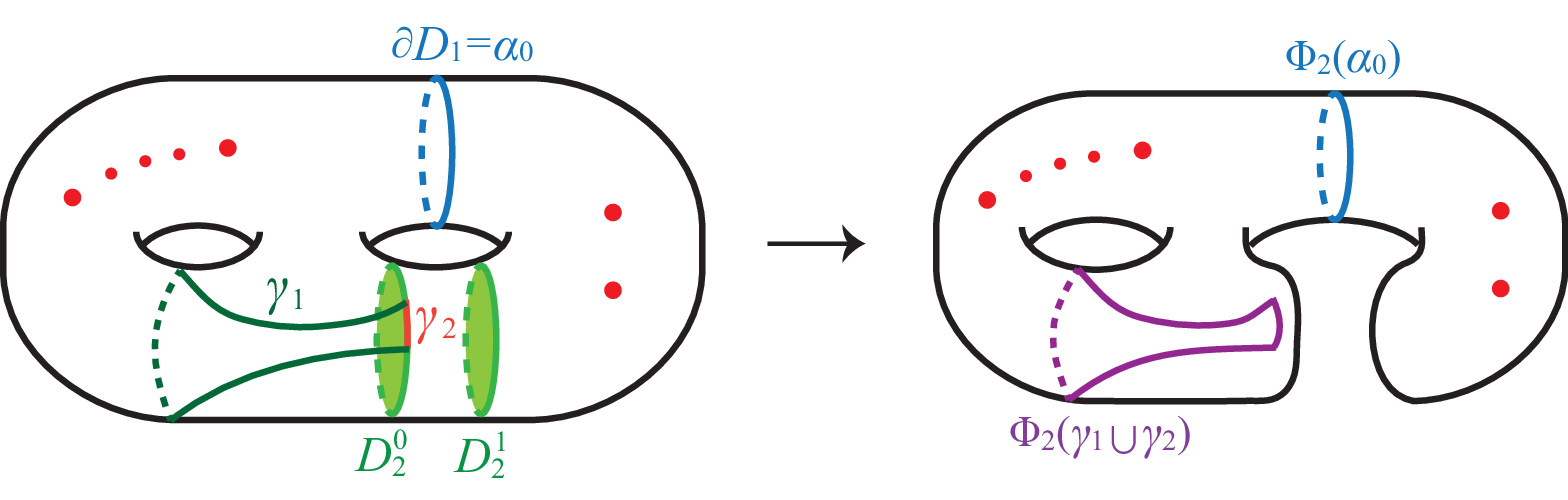}
 \end{center}
 \caption{$\Phi_2(\alpha_1)$ and $\Phi_2(\gamma_1\cup\gamma_2)$.}
\label{fig-n1-Phi}
\end{figure}

Then we divide Case~\ref{case8-3} into the following subcases.

\begin{subccase3}
$E_2\in \mathcal{D}_2^3$.
\end{subccase3}

Note that $\gamma_1\cap E_2\subset \partial E_1\cap E_2=\emptyset$ and $\gamma_2\cap E_2\subset \partial D_2^0\cap E_2=\emptyset$. Hence $(\gamma_1\cup \gamma_2)\cap E_2=\emptyset$.
By Proposition~\ref{prop-g>1-1}, we have
$$d_{\partial_- W_2\setminus s_2}(\Phi_2(\gamma_1\cup\gamma_2),h_2(\mathcal{D}^0(V_2\setminus t_2)))\le 1.$$
This together with the inequality (\ref{eq-case8-3-1}) implies that 
\begin{eqnarray*}
\begin{array}{rcl}
d_{\partial_- W_2\setminus s_2}(\Phi_2(\alpha_0), h_2(\mathcal{D}^0(V_2\setminus t_2)))&\le&d_{\partial_- W_2\setminus s_2}(\Phi_2(\alpha_0),\Phi_2(\gamma_1\cup\gamma_2))\\&&+ d_{\partial_- W_2\setminus s_2}(\Phi_2(\gamma_1\cup\gamma_2),h_2(\mathcal{D}^0(V_2\setminus t_2)))\\
&\le &1+1=2,
\end{array}
\end{eqnarray*}
a contradiction to the inequality (\ref{eqn-n1-2}).

\begin{subccase3} 
$E_2\in \mathcal{D}_2^4$.
\end{subccase3}

Let $\Delta$ be the closure of a component of $E_2\setminus N_{W_2}(D_2)$ that is outermost in $E_2$.
Then $\Delta$ is essential in $(W_2'\cup_{\overline{h}_2}V_2)\setminus t_2^{\ast}$ by Claim~\ref{lemc}.

If $\Delta\cap D_2^0=\emptyset$, then $(\gamma_1\cup\gamma_2)\cap \Delta\subset (\gamma_1\cap\Delta)\cup(\gamma_2\cap\Delta)\subset (\partial E_1\cap E_2)\cup (\partial D_2^0\cap \Delta)=\emptyset$ (see Figure~\ref{fig-n1-5}).
Hence, by the inequality (\ref{eq-case8-3-1}) and Proposition~\ref{prop-g>1-1} (B1), we have
\begin{eqnarray*}
\begin{array}{rcl}
d_{\partial_- W_2\setminus s_2}(\Phi_2(\alpha_0),h_2(\mathcal{D}^0(V_2\setminus t_2)))&\le&d_{\partial_- W_2\setminus s_2}(\Phi_2(\alpha_0),\Phi_2(\gamma_1\cup\gamma_2))\\&&+ d_{\partial_- W_2\setminus s_2}(\Phi_2(\gamma_1\cup\gamma_2),h_2(\mathcal{D}^0(V_2\setminus t_2)))\\
&\le &1+1=2,
\end{array}
\end{eqnarray*}
a contradiction to the inequality (\ref{eqn-n1-2}).
\begin{figure}[tb]
 \begin{center}
 \includegraphics[width=50mm]{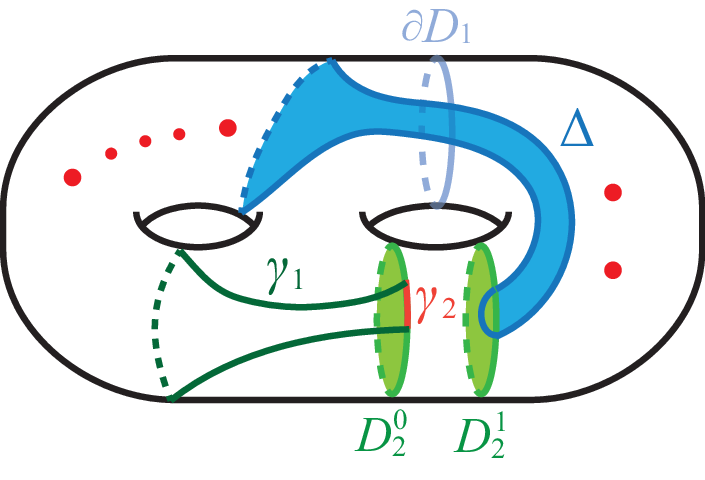}
 \end{center}
 \caption{$\gamma_1$, $\gamma_2$ and $\Delta$.}
\label{fig-n1-5}
\end{figure}

If $\Delta\cap D_2^0\ne\emptyset$, then $|(\gamma_1\cup\gamma_2)\cap \Delta|=|\gamma_2\cap \Delta|$ since $\gamma_1\subset \partial E_1$, $\Delta\subset E_2$ and $E_1\cap E_2=\emptyset$.
Also we may suppose $|\gamma_2\cap \Delta|\le 1$, by replacing the subarc $\gamma_2$ of $\partial D_2^0$ with the closure of $\partial D_2^0\setminus \gamma_2$ if necessary, since $\Delta\cap D_2^0$ is an arc properly embedded in the disk $D_2^0$ (see Figure~\ref{fig-n1-5-2}).
\begin{figure}[tb]
 \begin{center}
 \includegraphics[width=105mm]{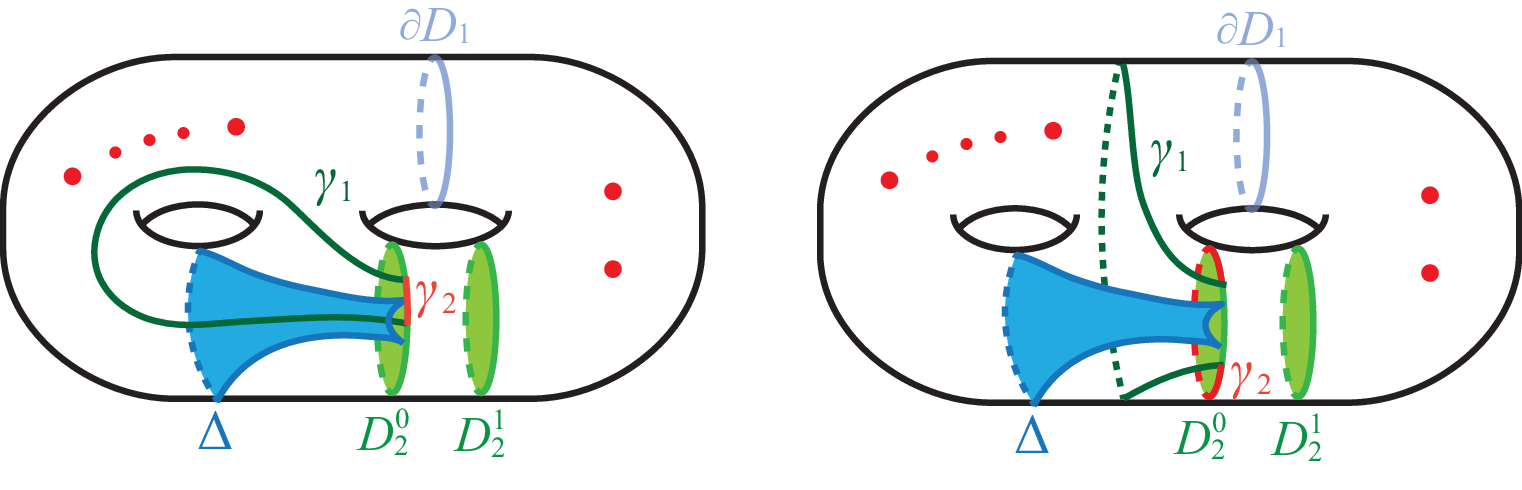}
 \end{center}
 \caption{$\gamma_1$, $\gamma_2$ and $\Delta$.}
\label{fig-n1-5-2}
\end{figure}
Hence, by the inequality (\ref{eq-case8-3-1}) and Proposition~\ref{prop-g>1-1} (B), we have
\begin{eqnarray*}
\begin{array}{rcl}
d_{\partial_- W_2\setminus s_2}(\Phi_2(\alpha_0),h_2(\mathcal{D}^0(V_2\setminus t_2)))&\le&d_{\partial_- W_2\setminus s_2}(\Phi_2(\alpha_0),\Phi_2(\gamma_1\cup\gamma_2))\\&&+ d_{\partial_- W_2\setminus s_2}(\Phi_2(\gamma_1\cup\gamma_2),h_2(\mathcal{D}^0(V_2\setminus t_2)))\\
&\le &1+2=3,
\end{array}
\end{eqnarray*}
a contradiction to the inequality (\ref{eqn-n1-2}).

\begin{case}\label{case8-4}
$E_1\in \mathcal{D}_1^4$.
\end{case}

In this case, $E_2\in \mathcal{D}_2^4$.

Let $\Delta_1$ be the closure of a component of $E_1\setminus D_1$ that is outermost in $E_1$, and let $\overline{\Delta}_1$ be the union of $\Delta_1$ and one of the component of $D_1\setminus \Delta_1$.
Then $\overline{\Delta}_1\in \mathcal{D}_1^3$ by Claim~\ref{lemc}.
If $\overline{\Delta}_1\cap D_2=\emptyset$, then we may lead to a contradiction as in Case 1.2.
Hence, we may assume that $\overline{\Delta}_1\cap D_2\ne\emptyset$.
By Claim~\ref{lemb}, there exist a component $\gamma_1$ of $\partial \overline{\Delta}_1\cap F_B$ and a subarc $\gamma_2$ of $\partial D_2^0$ such that $\gamma_1\cup \gamma_2$ is an essential simple closed curve in $\partial_+ W_2'\setminus s_2$, which implies $\Phi_2(\gamma_1\cup \gamma_2)\ne\emptyset$ (see Figure~\ref{fig-n1-6}).
\begin{figure}[tb]
 \begin{center}
 \includegraphics[width=105mm]{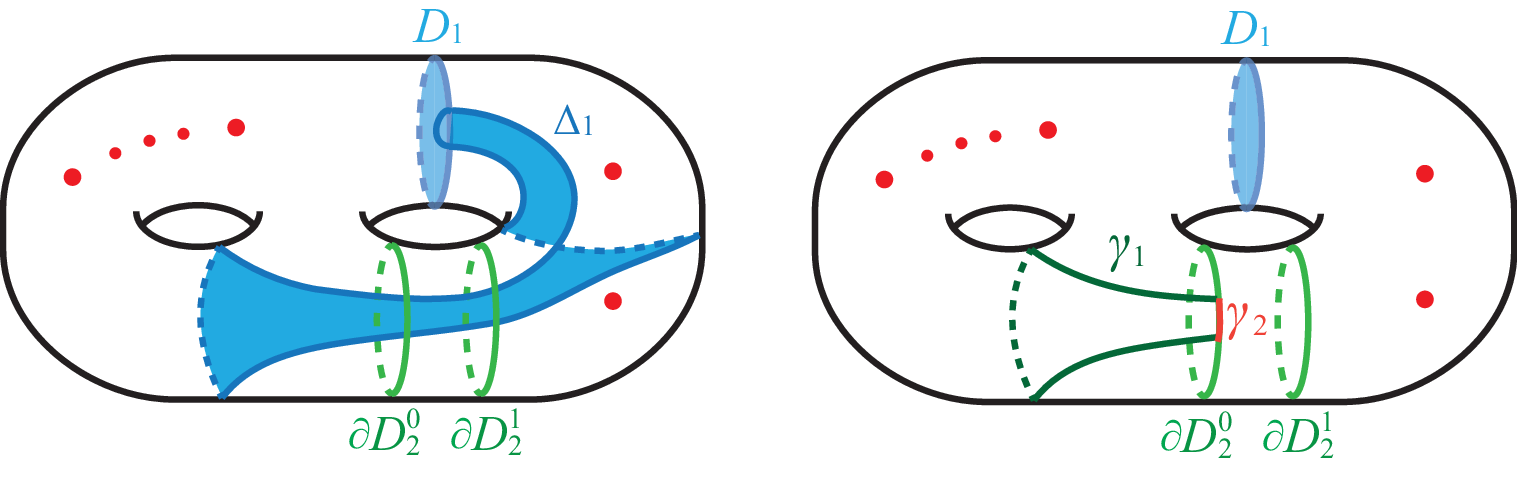}
 \end{center}
 \caption{$\Delta_1$, $\gamma_1$ and $\gamma_2$.}
\label{fig-n1-6}
\end{figure}
Note that $\gamma_1\cup \gamma_2$ and $\alpha_0$ are simple closed curves on $F_2$.
Note also that $\alpha_0\cap \gamma_1\subset \partial D_1\cap \partial F_B=\emptyset$ and $\alpha_0\cap \gamma_2\subset \partial D_1\cap \partial D_2^0=\emptyset$.
Hence, we have
\begin{equation}\label{eq-case8-4-1}
d_{\partial_- W_2\setminus s_2}(\Phi_2(\alpha_0),\Phi_2(\gamma_1\cup\gamma_2))\le 1.
\end{equation}
Let $\Delta_2$ be the closure of a component of $E_2\setminus N_{W_2}(D_2)$ that is outermost in $E_2$.
Then $\Delta_2$ is essential in $(W_2'\cup_{\overline{h}_2}V_2)\setminus t_2^{\ast}$ by Claim~\ref{lemc}.

If $\Delta_2\cap D_2^0=\emptyset$, then $(\gamma_1\cup\gamma_2)\cap \Delta_2\subset (\partial E_1\cap E_2)\cup (\partial D_2^0\cap \Delta_2)=\emptyset$.
Then, by the inequality (\ref{eq-case8-4-1}) and Proposition~\ref{prop-g>1-1} (B1), we have
\begin{eqnarray*}
\begin{array}{rcl}
d_{\partial_- W_2\setminus s_2}(\Phi_2(\alpha_0),h_2(\mathcal{D}^0(V_2\setminus t_2)))&\le&d_{\partial_- W_2\setminus s_2}(\Phi_2(\alpha_0),\Phi_2(\gamma_1\cup\gamma_2))\\&&+ d_{\partial_- W_2\setminus s_2}(\Phi_2(\gamma_1\cup\gamma_2),h_2(\mathcal{D}^0(V_2\setminus t_2)))\\
&\le &1+1=2,
\end{array}
\end{eqnarray*}
a contradiction to the inequality (\ref{eqn-n1-2}).

If $\Delta_2\cap D_2^0\ne\emptyset$, then $|(\gamma_1\cup\gamma_2)\cap \Delta_2|=|\gamma_2\cap \Delta_2|$ since $\gamma_1\subset \partial E_1$, $\Delta_2\subset E_2$ and $E_1\cap E_2=\emptyset$.
Also we may suppose $|\gamma_2\cap \Delta_2|\le 1$, by replacing the subarc $\gamma_2$ of $\partial D_2^0$ with the closure of $\partial D_2^0\setminus \gamma_2$ if necessary, since $\Delta_2\cap D_2^0$ is an arc properly embedded in the disk $D_2^0$.
Then, by the inequality (\ref{eq-case8-4-1}) and Proposition~\ref{prop-g>1-1} (B), we have
\begin{eqnarray*}
\begin{array}{rcl}
d_{\partial_- W_2\setminus s_2}(\Phi_2(\alpha_0),h_2(\mathcal{D}^0(V_2\setminus t_2)))&\le&d_{\partial_- W_2\setminus s_2}(\Phi_2(\alpha_0),\Phi_2(\gamma_1\cup\gamma_2))\\&&+ d_{\partial_- W_2\setminus s_2}(\Phi_2(\gamma_1\cup\gamma_2),h_2(\mathcal{D}^0(V_2\setminus t_2)))\\
&\le &1+2=3,
\end{array}
\end{eqnarray*}
a contradiction to the inequality (\ref{eqn-n1-2}).

This completes the proof of Assertion~\ref{e1e2}.

\end{proof}

\section{Proof of Theorem \ref{thm-1} when $n=1$ and $g=1$}\label{sec-proof-7}

We first show the next proposition, whose proof is due to Saito \cite{Sai}.

\begin{proposition}\label{prop-111}
Let $(V_1,t_1)\cup_{(F,P)}(V_2,t_2)$ be a $(1,1)$-splitting of a knot.
If the distance of $(V_1,t_1)\cup_{(F,P)}(V_2,t_2)$ is $1$, then it must be strongly keen.
\end{proposition}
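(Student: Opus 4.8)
The plan is to reduce strong keenness to keenness and then to show that, under the distance‑one hypothesis, the \emph{only} pair realizing the distance is $(\partial D_1^c,\partial D_2^c)$, where $D_i^c$ is the unique non‑separating disk in $V_i\setminus t_i$ supplied by Saito's decomposition \cite[Proposition 3.8]{Sai}. Since between a fixed pair of vertices there is exactly one geodesic of length $1$, namely the edge joining them, uniqueness of the realizing pair yields strong keenness at once; so it is enough to prove keenness, i.e. uniqueness of the realizing pair.

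First I would record two elementary facts about $F\setminus P$ (which is non‑sporadic here since $(g,b)=(1,1)$, so it is a twice‑punctured torus). \textbf{(a)} The only essential separating simple closed curves in $F\setminus P$ are those cutting off a twice‑punctured disk, and any two \emph{distinct} such curves intersect: if $\beta'$ were disjoint from and not isotopic to $\beta$, then $\beta'$ would have to lie in the complementary one‑holed torus $T_\beta$ (the twice‑punctured disk side carries no essential curve), be non‑separating in $T_\beta$, hence non‑separating in $F\setminus P$, contradicting that $\beta'$ bounds a twice‑punctured disk. \textbf{(b)} If $\gamma$ is a twice‑punctured‑disk curve in $F\setminus P$ disjoint from the meridian $\partial D_i^c$ of the solid torus $V_i$, then $\gamma$ actually bounds an essential disk in $V_i\setminus t_i$: cutting $V_i$ along $D_i^c$ gives a ball $B_i$ whose boundary is an annulus $A$ (the torus $F$ cut along $\partial D_i^c$) capped off by two copies of $D_i^c$, with $P\subset A$; being essential, $\gamma\subset A\setminus P$ must separate $P$ from the two copies of $D_i^c$, so after isotoping the trivial arc $t_i$ onto the $P$‑side, $\gamma$ bounds the complementary disk of $\partial B_i$, pushed slightly into $B_i\setminus t_i$.

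With these in hand the argument is short. Recall $\mathcal{D}^0(V_i\setminus t_i)=\{\partial D_i^c\}\cup\mathcal{A}_i$, with every element of $\mathcal{A}_i$ a twice‑punctured‑disk curve (by (a), an essential separating disk boundary is of this type). Since the distance is $1$, the sets $\mathcal{D}^0(V_1\setminus t_1)$ and $\mathcal{D}^0(V_2\setminus t_2)$ are disjoint. Let $(a,b)$ realize the distance, so $a\cap b=\emptyset$ and $a\ne b$. If $a\in\mathcal{A}_1$: in the case $b=\partial D_2^c$, fact (b) gives $a\in\mathcal{D}^0(V_2\setminus t_2)$, contradicting disjointness of the two disk sets; in the case $b\in\mathcal{A}_2$, then $a,b$ are distinct twice‑punctured‑disk curves that are disjoint, contradicting (a). Hence $a=\partial D_1^c$, and symmetrically $b=\partial D_2^c$. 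Thus the realizing pair is the single pair $(\partial D_1^c,\partial D_2^c)$, so $(V_1,t_1)\cup_{(F,P)}(V_2,t_2)$ is keen, and since the realizing geodesic is the unique edge $[\partial D_1^c,\partial D_2^c]$ it is strongly keen.

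The step requiring the most care — and the real content — is fact (b): that a twice‑punctured‑disk curve on the Heegaard surface disjoint from the meridian of $V_i$ must already bound an essential disk in $V_i\setminus t_i$. This is where the special structure of a $(1,1)$‑tangle (a trivial arc in a solid torus) is used; everything else is formal once Saito's join description of $\mathcal{D}(V_i\setminus t_i)$ and the surface topology of the twice‑punctured torus are in place.
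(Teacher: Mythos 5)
Your overall strategy is sound and genuinely different from the paper's: the paper disposes of this proposition by citing Saito directly (the proof of \cite[Theorem 2.3]{Sai} shows that any disjoint pair realizing distance $1$ must consist of the boundaries of the non-separating disks $D_1^c$, $D_2^c$, which are unique by \cite[Lemma 3.4]{Sai}), whereas you reconstruct this from the join description $\mathcal{D}(V_i\setminus t_i)=\{\partial D_i^c\}\ast\mathcal{A}_i$ together with your facts (a) and (b). Fact (a), the reduction to keenness, and the final case analysis are all correct. But fact (b) — which, as you yourself say, carries the real content, and is exactly what Saito's argument supplies — has a gap as you justify it. The problematic phrase is \lq\lq after isotoping the trivial arc $t_i$ onto the $P$-side\rq\rq. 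The triviality of $t_i$ is a statement about $V_i$: there is a cancelling disk $C$ with $\partial C=t_i\cup\delta$, $\delta\subset\partial V_i$. Nothing you have said guarantees that $C$ (or any cancelling disk) is disjoint from $D_i^c$, so after cutting along $D_i^c$ it is not immediate that $t_i$ is boundary-parallel in the ball $B_i$, let alone parallel into the twice-punctured disk $D_\gamma$; if it were not, the pushed-in complementary disk could not be assumed disjoint from $t_i$, and the construction of the disk bounded by $\gamma$ in $V_i\setminus t_i$ — the whole point of (b) — collapses. Since (b) is precisely the step that rules out a realizing pair of the form $(a,\partial D_2^c)$ with $a\in\mathcal{A}_1$, this is a genuine gap, not a routine detail.

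The statement (b) is nevertheless true, and the gap can be closed using the uniqueness you already invoke. Given a cancelling disk $C$, set $V':={\rm cl}(V_i\setminus N(C))$; this is a solid torus disjoint from $t_i$, and a meridian disk $D'$ of $V'$ can be chosen with $\partial D'\subset\partial V_i\setminus P$. Then $D'$ is a non-separating essential disk in $V_i\setminus t_i$ disjoint from $C$, so by the uniqueness in \cite[Proposition 3.8]{Sai} it is isotopic to $D_i^c$ in $V_i\setminus t_i$; replacing $D_i^c$ by this representative (and isotoping $\gamma$ accordingly, which is legitimate since the hypothesis $\gamma\cap\partial D_i^c=\emptyset$ only involves isotopy classes), the arc $t_i$ is parallel into the annulus $A$ via $C$, and since any two embedded arcs in the $2$-sphere $\partial B_i$ with the same endpoints are isotopic rel endpoints, $t_i$ is parallel into $D_\gamma$; only then does your push-in of the complementary disk go through. (Alternatively, once $t_i$ is known to be boundary-parallel in $B_i$, one has $\pi_1(B_i\setminus t_i)\cong\mathbb{Z}$, the class of $\gamma$ vanishes in homology because both endpoints of $t_i$ lie on the $D_\gamma$-side, and Dehn's lemma produces the embedded disk.) With this supplement your proof is complete; as written, the key step is asserted rather than proved.
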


\begin{proof}
Assume that the distance of $(V_1,t_1)\cup_{(F,P)}(V_2,t_2)$ is $1$, 
and let $x$ and $y$ be mutually disjoint essential simple closed curves in $F\setminus P$ which bound disks in $V_1\setminus t_1$ and $V_2\setminus t_2$, respectively.
By \cite[Proof of Theorem 2.3]{Sai}, $x$ and $y$ must bound so-called $\varepsilon_0$-disks (in fact, these disks are denoted by $D_i^c$ in Section~\ref{sec-proof-5}) in $(V_1,t_1)$ and $(V_2,t_2)$, respectively, which are unique up to isotopy by \cite[Lemma 3.4]{Sai}.
Hence, $(V_1,t_1)\cup_{(F,P)}(V_2,t_2)$ is strongly keen.
\end{proof}

In fact, it is shown that the distance of $(V_1,t_1)\cup_{(F,P)}(V_2,t_2)$ is $1$ if and only if the ambient manifold is $S^2\times S^1$ and the knot is a core knot (see \cite[Theorem 2.3]{Sai}).

In the remainder of this section, we give a proof of Theorem \ref{thm-1} for the case when $n=1$, $g=1$ and $b\ge 2$.

Let $F$ be a torus and let $P$ be the union of $2b$ points on $F$.
Let $\alpha_0$ and $\alpha_1$ be simple closed curves on $F\setminus P$ such that $\alpha_0\cap\alpha_1=\emptyset$ and that $\alpha_0\cup\alpha_1$ cuts off two twice-punctured disks from $F\setminus P$ which are disjoint to each other.
For $i=1,2$, let $V_i^{\ast, 0}$, $t_i^{\ast,0}$, $V_i$, $t_i$, $W_i$,  $W_i^1$, $\partial_- W_i$, $s_i$, $D_i$, $F_i$, $\Phi_i$ be as in Subsection~\ref{sec-b>1}.
Identify $(\partial_+W_1, s_1\cap\,\partial_+W_1)$ and $(\partial_+W_2, s_2\cap\,\partial_+W_2)$ with $(F,P)$ so that $\partial D_1=\alpha_0$ and $\partial D_2=\alpha_1$. 

%
By Proposition~\ref{prop-b>1-hi}, there exist homeomorphisms $h_i:\partial V_i\setminus t_i\rightarrow \partial_- W_i\setminus s_i$ such that
\begin{equation}\label{eqn-n1-3}
d_{\partial_- W_1\setminus s_1}(\Phi_1(\alpha_1),h_1(\mathcal{D}^0(V_1\setminus t_1)))>3,
\end{equation}
\begin{equation}\label{eqn-n1-4}
d_{\partial_- W_2\setminus s_2}(\Phi_2(\alpha_0),h_2(\mathcal{D}^0(V_2\setminus t_2)))>3.
\end{equation}
Let $\overline{h}_i:(\partial V_i,\partial t_i)\rightarrow (\partial_- W_i,s_i\cap \partial_- W_i)$ be the homeomorphism of the pairs induced from $h_i$, and let $(V_i^{\ast}, t_i^{\ast}):=(W_i,s_i)\cup_{\overline{h}_i}(V_i,t_i)$.
Then $(V_1^{\ast}, t_1^{\ast})\cup_{(F,P)} (V_2^{\ast}, t_2^{\ast})$ is a $(g,b)$-splitting of a link.
Let $\mathcal{D}_i$ be the set of essential disks in $V_i^{\ast}\setminus t_i^{\ast}$ for $i=1,2$. 
To show that $(V_1^{\ast}, t_1^{\ast})\cup_{(F,P)} (V_2^{\ast}, t_2^{\ast})$ has distance $1$ and is strongly keen, we prove the following.

\begin{assertion}\label{e1e2-2}
$E_1\cap E_2\ne \emptyset$ for any $E_1\in \mathcal{D}_1$ and $E_2\in \mathcal{D}_2$ with $(E_1,E_2)\ne (D_1,D_2)$.
\end{assertion}

To prove the above assertion, we divide $\mathcal{D}_i$ $(i=1,2)$ into three sets $\mathcal{D}_i^1$, $\mathcal{D}_i^2$, $\mathcal{D}_i^3$, where
\begin{itemize}
\item $\mathcal{D}_i^1$ consists of the single disk $D_i$,
\item $\mathcal{D}_i^2$ consists of disks which are disjoint from $D_i$, not isotopic to $D_i$,
\item $\mathcal{D}_i^3$ consists of disks which are not isotoped to be disjoint from $D_i$.
\end{itemize}
Since $D_i$ is separating in $V_i^{\ast}\setminus t_i^{\ast}$, we have the following claim.
\begin{claim}\label{claim_10_3}
{\rm (1)} Every $E\in \mathcal{D}_i^2$ is essential in $(W_i^1\cup_{\overline{h}_i} V_i)\setminus t_i^{\ast}$.

{\rm (2)} For any $E\in \mathcal{D}_i^3$, the closure of every component of $E\setminus D_i$ that is outermost in $E$ is an essential disk in $(W_i^1\cup_{\overline{h}_i} V_i)\setminus t_i^{\ast}$, provided $|E\cap D_i|$ is minimal.
\end{claim}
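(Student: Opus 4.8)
The plan is to deduce both parts from the separating-disk analysis already carried out in the proof of Proposition~\ref{prop-b>1}. Recall from Subsection~\ref{sec-b>1} that $D_i$ splits $(V_i^{\ast},t_i^{\ast})$ into $(W_i^1,s_i^1)\cup_{\overline{h}_i}(V_i,t_i)$ and $(W_i^2,s_i^2)$, where $W_i^2$ is a $3$-ball and $s_i^2$ is an arc parallel to $\partial W_i^2$; hence $(W_i^1\cup_{\overline{h}_i}V_i)\setminus N(t_i^{\ast})$ is a handlebody with trivial arcs removed, so it is irreducible, its boundary is the closed surface $F_i\cup D_i$, the arcs of $t_i^{\ast}$ lying in $W_i^1\cup_{\overline{h}_i}V_i$ meet that boundary only in $F_i$, and $D_i$ carries no punctures. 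Two elementary observations do the work: since every disk we consider (or an outermost subdisk of one) is disjoint from $t_i^{\ast}\supset s_i^1\cup s_i^2$, (a) if such a disk lies in $W_i^2$ then, being disjoint from the $\partial$-parallel arc $s_i^2$, it cuts off a sub-ball missing $s_i^2$ and is therefore $\partial$-parallel in $W_i^2$; and (b) any simple closed curve it bounds on $F_i\cup D_i$ cuts that surface into pieces each meeting $s_i^1$ in an even number of points, so such a curve cannot bound a once-punctured disk in $(F_i\cup D_i)\setminus s_i^1$.

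For part~(1), let $E\in\mathcal{D}_i^2$, so $E$ is an essential disk of $V_i^{\ast}\setminus t_i^{\ast}$ that is disjoint from $D_i$ and not isotopic to $D_i$. Being disjoint from $D_i$, $E$ lies in one of the two pieces. If $E\subset W_i^2$, then by~(a) $\partial E$ bounds in the twice-punctured disk $\partial_+W_i^2$ a disk with at most two punctures; the unpunctured and once-punctured cases make $\partial E$ inessential in $F\setminus P$, contradicting $E\in\mathcal{D}_i$, while the twice-punctured case makes $\partial E$ isotopic to $\partial D_i$ and hence, by irreducibility, $E$ isotopic to $D_i$, contradicting $E\in\mathcal{D}_i^2$. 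So $E\subset(W_i^1\cup_{\overline{h}_i}V_i)\setminus t_i^{\ast}$. If $E$ were inessential there, $\partial E$ would bound a disk or once-punctured disk $R$ in $(F_i\cup D_i)\setminus s_i^1$; by~(b) $R$ is unpunctured. If $R\subset F_i$, then $\partial E$ bounds a disk in $F\setminus P$, again contradicting $E\in\mathcal{D}_i$; otherwise $D_i\subset R$, so $R\setminus\mathrm{int}\,D_i$ is an annulus in $F_i$ cobounded by $\partial E$ and $\partial D_i$, whence $\partial E$ is isotopic to $\partial D_i$ in $F\setminus P$ and $E$ is isotopic to $D_i$ by irreducibility, a contradiction. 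Therefore $E$ is essential in $(W_i^1\cup_{\overline{h}_i}V_i)\setminus t_i^{\ast}$.

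For part~(2), let $E\in\mathcal{D}_i^3$ with $|E\cap D_i|$ minimal; then irreducibility together with minimality forces $E\cap D_i$ to be a nonempty family of arcs, and if $\Delta$ is the closure of any outermost component of $E\setminus D_i$ then $\Delta$ is a disk with $\mathrm{int}\,\Delta\cap D_i=\emptyset$ lying in one of the two pieces. If $\Delta\subset W_i^2$, then by~(a) $\Delta$ is $\partial$-parallel in $W_i^2$, so $\partial\Delta\cap D_i$ cuts off from $D_i$ a subdisk across which $E$ can be isotoped to reduce $|E\cap D_i|$, contradicting minimality; thus $\Delta\subset(W_i^1\cup_{\overline{h}_i}V_i)\setminus t_i^{\ast}$. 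If $\Delta$ were inessential there, then, as in part~(1) and again by~(b), $\partial\Delta$ would bound an unpunctured disk $R$ in $(F_i\cup D_i)\setminus s_i^1$; since $\partial\Delta$ meets $D_i$ only in the single arc $\partial\Delta\cap D_i$, an innermost/outermost analysis inside $D_i$ yields a subdisk of $D_i$ across which $E$ can once more be isotoped to reduce $|E\cap D_i|$, contradicting minimality. Hence $\Delta$ is essential, as claimed. I expect the bookkeeping in part~(2) to be the main obstacle: one must check that whenever $\Delta$ lands in $W_i^2$ or is boundary-parallel in $W_i^1\cup_{\overline{h}_i}V_i$ the number $|E\cap D_i|$ genuinely decreases under an ambient isotopy of $E$ --- this is exactly where the minimality hypothesis and the explicit product/ball structure of $(W_i^1,s_i^1)$ and $(W_i^2,s_i^2)$ are used --- while the parity observation~(b), elementary though it is, is indispensable for discarding the once-punctured-disk alternative in the notion of inessentiality.
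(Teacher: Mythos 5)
Your argument is correct and follows essentially the same route as the paper: the paper disposes of this claim in one line by noting that $D_i$ is separating and appealing to the analysis in the proof of Proposition~\ref{prop-b>1}, namely that $D_i$ splits $(V_i^{\ast},t_i^{\ast})$ into $(W_i^1,s_i^1)\cup_{\overline{h}_i}(V_i,t_i)$ and the trivial ball--arc pair $(W_i^2,s_i^2)$, which admits no essential disks, so that disjointness (resp.\ minimality of $|E\cap D_i|$) forces $E$ (resp.\ each outermost piece $\Delta$) to be essential on the $(W_i^1\cup_{\overline{h}_i}V_i)$-side. Your write-up merely fills in the standard details (the parity observation, which the paper itself uses in the proof of Claim~\ref{claim_D_D_i}, and the outermost-arc reduction), so there is no substantive difference in approach.
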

Also, the next claim can be obtained by arguments similar to those for Claim~\ref{lemb} since the genus of $F_2$ is $1\,(> 0)$.

\begin{claim}\label{lemb-2}
For any $E_1\in\mathcal{D}_1^2$ such that $E_1\cap D_2\ne\emptyset$, there exist a component $\gamma_1$ of $\partial E_1\cap F_2$ and a subarc $\gamma_2$ of $\partial D_2$ such that  $\gamma_1\cup\gamma_2$ is an essential simple closed curve in $(\partial W_2^1\setminus \partial_-W_2)\setminus s_2$, which implies $\Phi_2(\gamma_1\cup\gamma_2)\ne\emptyset$.
\end{claim}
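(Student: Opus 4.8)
The plan is to adapt, essentially verbatim, the proof of Claim~\ref{lemb} from Section~\ref{sec-proof-6}, with the surface $F_2$ here playing the role of the genus-$(g-1)$ surface $F_B$ there, and using that the genus of $F_2$ equals $1>0$. In the present set-up $F=\partial_+W_2$ is a torus with $2b$ punctures, $\partial D_2=\alpha_1$ cuts off a twice-punctured disk, and $F_2=\partial_+W_2\cap W_2^1$ is the complementary subsurface, so $F_2$ is a genus-$1$ surface with one boundary component $\partial D_2$ and $2b-2$ punctures; consequently $(\partial W_2^1\setminus\partial_-W_2)\setminus s_2=(F_2\cup D_2)\setminus s_2$ is a closed genus-$1$ surface with $2b-2$ punctures (the boundary $\partial D_2$ being capped by $D_2$), which carries essential curves since $b\ge 2$.

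First I would record that, since $E_1\in\mathcal{D}_1^2$, the disk $E_1$ is disjoint from $D_1$ and is essential in $(W_1^1\cup_{\overline h_1}V_1)\setminus t_1^{\ast}$ by Claim~\ref{claim_10_3}(1); hence $\partial E_1$ is an essential simple closed curve on $F$ disjoint from $\alpha_0=\partial D_1$, so $\partial E_1$ lies in $F_1$ and avoids the twice-punctured disk bounded by $\alpha_0$. Because $E_1\cap D_2\ne\emptyset$, the set $\partial E_1\cap F_2$ is a nonempty collection of arcs properly embedded in $F_2$ with endpoints on $\partial D_2=\partial F_2$. Arguing by contradiction, I would assume that no component $\gamma_1$ of $\partial E_1\cap F_2$, together with a subarc $\gamma_2$ of $\partial D_2$ joining its endpoints, is an essential simple closed curve in $(F_2\cup D_2)\setminus s_2$; as in the proof of Claim~\ref{lemb}, this forces each component of $\partial E_1\cap F_2$ to be inessential in $F_2$ or to cut off a once-punctured ``bigon'' from $F_2$, so that none of these arcs ``touches the genus'' of $F_2$. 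Since the genus of $F_2$ is $1>0$, there is then an essential simple closed curve $\gamma$ in the interior of $F_2$ disjoint from $\partial E_1\cap F_2$, hence disjoint from $\partial E_1$; moreover $\gamma$ can be chosen inside the positive-genus part of $F_2$ (the region cut off by $\alpha_0\cup\alpha_1$, inside which $F_2$ is ``simple'' apart from its genus), and so is also disjoint from $\alpha_0$ and from $\alpha_1=\partial D_2$, with $\Phi_1(\gamma)\ne\emptyset$.

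With $\gamma$ in hand I would apply Proposition~\ref{prop-b>1-1}(1) with $D=E_1$ (so $\Delta=E_1$, since $E_1\cap D_1=\emptyset$) and $\alpha=\gamma$: the hypotheses $\gamma\cap\partial D_1=\emptyset$, $\Phi_1(\gamma)\ne\emptyset$, $\gamma\cap\Delta=\emptyset$ all hold, giving $d_{\partial_-W_1\setminus s_1}(\Phi_1(\gamma),h_1(\mathcal{D}^0(V_1\setminus t_1)))\le 1$. Since $\alpha_1=\partial D_2$ and $\gamma$ are disjoint simple closed curves on $F_1$, their $\Phi_1$-images are nonempty and disjoint, so $d_{\partial_-W_1\setminus s_1}(\Phi_1(\alpha_1),\Phi_1(\gamma))\le 1$; the triangle inequality then yields $d_{\partial_-W_1\setminus s_1}(\Phi_1(\alpha_1),h_1(\mathcal{D}^0(V_1\setminus t_1)))\le 2$, contradicting the inequality (\ref{eqn-n1-3}). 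This contradiction produces the required $\gamma_1\cup\gamma_2$, and the final assertion $\Phi_2(\gamma_1\cup\gamma_2)\ne\emptyset$ follows because an essential curve in $(F_2\cup D_2)\setminus s_2$, pushed off $\partial D_2$ into $F_2$, remains essential in $F_2\setminus s_2$ and hence survives under $\pi_{F_2\setminus s_2}$ and the product-structure projection $P_2$.

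The step I expect to be the main obstacle is the combinatorial bookkeeping in the middle paragraph: converting ``no component of $\partial E_1\cap F_2$ yields an essential curve'' into ``there is an essential curve $\gamma$ in $F_2$ disjoint from $\partial E_1$'', i.e.\ classifying the arcs of $\partial E_1\cap F_2$ and checking they can all be isotoped into a planar, puncture-carrying subsurface leaving the handle of $F_2$ free. This is precisely where $\mathrm{genus}(F_2)=1>0$ enters, exactly as $\mathrm{genus}(F_B)>0$ was used in Claim~\ref{lemb}; the only additional care needed over that argument is to keep $\gamma$ simultaneously disjoint from $\partial E_1$, from $\partial D_1=\alpha_0$, and from $\partial D_2=\alpha_1$, which is possible because in $F_2$ the curve $\alpha_0$ bounds a twice-punctured disk and $\alpha_1$ is the boundary, so the genus of $F_2$ is supported inside the region of $F$ bounded by $\alpha_0\cup\alpha_1$.
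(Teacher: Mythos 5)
Your proposal is correct and follows essentially the same route as the paper, which simply states that the claim "can be obtained by arguments similar to those for Claim lemb since the genus of $F_2$ is $1>0$": assume no such $\gamma_1\cup\gamma_2$ exists, conclude every arc of $\partial E_1\cap F_2$ cuts off a planar piece with at most one puncture, use the positive genus of $F_2$ to find an essential curve $\gamma$ disjoint from $\partial E_1$ (and from $\alpha_0$, which is automatic to arrange since the twice-punctured disk bounded by $\alpha_0$ cannot lie in any cut-off piece), and contradict the inequality $d_{\partial_-W_1\setminus s_1}(\Phi_1(\alpha_1),h_1(\mathcal{D}^0(V_1\setminus t_1)))>3$ via Proposition 4.4 (prop-b>1-1)(1) and the triangle inequality. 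Your extra care about keeping $\gamma$ disjoint from $\alpha_0$ and verifying $\Phi_1(\gamma)\ne\emptyset$ is exactly the (routine) adaptation the paper leaves implicit.
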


\begin{proof}[Proof of Assertion~\ref{e1e2-2}]
Suppose on the contrary that there exist $E_1\in \mathcal{D}_1$ and $E_2\in \mathcal{D}_2$ such that $(E_1,E_2)\ne (D_1,D_2)$ and $E_1\cap E_2= \emptyset$.
We may assume that $E_1\in \mathcal{D}_1^i$ and $E_2\in \mathcal{D}_2^j$ for some $i$ and $j$ such that $i\le j$ and $j\ne 1$, since the remaining cases can be treated similarly.
Assume that $|E_1\cap D_1|$ and $|E_2\cap D_2|$ are minimal.

\setcounter{case}{0}

\begin{case}\label{case9-1}
$E_1\in \mathcal{D}_1^1$, that is, $E_1=D_1$.
\end{case}

In this case, $E_2\in \mathcal{D}_2^2\cup \mathcal{D}_2^3$.
If $E_2\in \mathcal{D}_2^2$, then let $\Delta:=E_2$.
If $E_2\in \mathcal{D}_2^3$, then let $\Delta$ be the closure of a component of $E_2\setminus D_2$ that is outermost in $E_2$.
Note that $\Delta$ is an essential disk in $(W_i^1\cup_{\overline{h}_i} V_i)\setminus t_i^{\ast}$ by Claim~\ref{claim_10_3}.
We have $\Delta\cap \alpha_0=\emptyset$ since $E_2\cap \alpha_0=\emptyset$.
Then, by Proposition~\ref{prop-b>1-1} (1), we have
$$
d_{\partial_- W_2\setminus s_2}(\Phi_2(\alpha_0),h_2(\mathcal{D}^0(V_2\setminus t_2))) \le 1,
$$
a contradiction to the inequality (\ref{eqn-n1-4}).

\begin{case}\label{case9-2}
$E_1\in \mathcal{D}_1^2$.
\end{case}

We may assume that $E_1\cap D_2\ne\emptyset$ since, otherwise, the fact leads to a contradiction as in Case 1.
By Claim~\ref{lemb-2}, there exist a component $\gamma_1$ of $\partial E_1\cap F_2$ and a subarc $\gamma_2$ of $\partial D_2$ such that $\gamma_1\cup \gamma_2$ is an essential simple closed curve in $(\partial W_2^1\setminus \partial_-W_2)\setminus s_2$, which implies $\Phi_2(\gamma_1\cup \gamma_2)\ne\emptyset$ (see Figure~\ref{fig-n1-7}).
\begin{figure}[tb]
 \begin{center}
 \includegraphics[width=45mm]{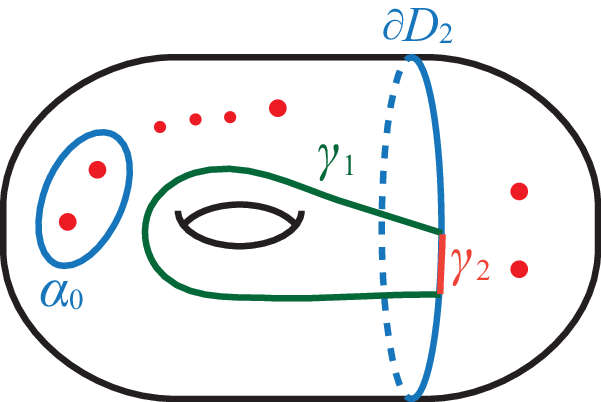}
 \end{center}
 \caption{$\gamma_1$, $\gamma_2$ and $\alpha_0(=\partial D_1)$.}
\label{fig-n1-7}
\end{figure}
Note that $\gamma_1\cup \gamma_2$ and $\alpha_0(=\partial D_1)$ are essential simple closed curves on $F_2$.
Note also that $\alpha_0\cap (\gamma_1\cup\gamma_2)\subset (\alpha_0\cap \gamma_1)\cup(\alpha_0\cap \gamma_2)\subset (\alpha_0\cap \partial E_1)\cup(\alpha_0\cap \partial D_2)=\emptyset$.
Hence, we have
\begin{equation}\label{eq-case9-2-1}
d_{\partial_- W_2\setminus s_2}(\Phi_2(\alpha_0),\Phi_2(\gamma_1\cup\gamma_2))\le 1.
\end{equation}

Then we divide Case~\ref{case9-2} into the following subcases.

\begin{subcccase2} 
$E_2\in \mathcal{D}_2^2$.
\end{subcccase2}

Note that $(\gamma_1\cup \gamma_2)\cap E_2\subset (\partial E_1\cap E_2)\cup (\partial D_2\cap E_2)=\emptyset$.
Hence, by the inequality (\ref{eq-case9-2-1}) and Proposition~\ref{prop-b>1} (with regarding $\gamma_1\cup\gamma_2$ as $\alpha$, and $\partial E_2$ as $\beta$), we have
\begin{eqnarray*}
\begin{array}{rcl}
d_{\partial_- W_2\setminus s_2}(\Phi_2(\alpha_0),h_2(\mathcal{D}^0(V_2\setminus t_2)))&\le&d_{\partial_- W_2\setminus s_2}(\Phi_2(\alpha_0),\Phi_2(\gamma_1\cup\gamma_2))\\&&+ d_{\partial_- W_2\setminus s_2}(\Phi_2(\gamma_1\cup\gamma_2),h_2(\mathcal{D}^0(V_2\setminus t_2)))\\
&\le &1+1=2,
\end{array}
\end{eqnarray*}
a contradiction to the inequality (\ref{eqn-n1-4}).

\begin{subcccase2} 
$E_2\in \mathcal{D}_2^3$.
\end{subcccase2}

Let $\Delta$ be the closure of a component of $E_2\setminus D_2$ that is outermost in $E_2$.
Note that $|(\gamma_1\cup\gamma_2)\cap \Delta|=|\gamma_2\cap \Delta|$ since $\gamma_1\cap \Delta\subset \partial E_1\cap E_2=\emptyset$.
We may suppose that $|\gamma_2\cap \Delta|\le 1$, by replacing the subarc $\gamma_2$ if necessary, since $\Delta\cap D_2$ is an arc properly embedded in the disk $D_2$ (cf. Figure~\ref{fig-n1-5-2}).
Hence, by the inequality (\ref{eq-case9-2-1}) and Proposition~\ref{prop-b>1-1} (1) or (2) (with regarding $\gamma_1\cup\gamma_2$ as $\alpha$), we have
\begin{eqnarray*}
\begin{array}{rcl}
d_{\partial_- W_2\setminus s_2}(\Phi_2(\alpha_0),h_2(\mathcal{D}^0(V_2\setminus t_2)))&\le&d_{\partial_- W_2\setminus s_2}(\Phi_2(\alpha_0),\Phi_2(\gamma_1\cup\gamma_2))\\&&+ d_{\partial_- W_2\setminus s_2}(\Phi_2(\gamma_1\cup\gamma_2),h_2(\mathcal{D}^0(V_2\setminus t_2)))\\
&\le &1+2=3,
\end{array}
\end{eqnarray*}
a contradiction to the inequality (\ref{eqn-n1-4}).

\begin{case}\label{case9-3}
$E_1\in \mathcal{D}_1^3$.
\end{case}

In this case, $E_2\in \mathcal{D}_2^3$.

Let $\Delta_1$ be the closure of a component of $E_1\setminus D_1$ that is outermost in $E_1$, and let $\overline{\Delta}_1$ be the union of $\Delta_1$ and one of the component of $D_1\setminus \Delta_1$.
By the minimality of $|E_1\cap D_1|$, $\Delta_1$ and hence $\overline{\Delta}_1$ are essential disks in $(W_1^1\cup_{\overline{h}_1} V_1)\setminus t_1^{\ast}$ by Claim~\ref{claim_10_3}, and hence,
$\overline{\Delta}_1\in \mathcal{D}_1^2$.
If $\overline{\Delta}_1\cap D_2=\emptyset$, then we may lead to a contradiction as in Case 1. 
Hence, we may assume that $\overline{\Delta}_1\cap D_2\ne\emptyset$.
By Claim~\ref{lemb-2}, we see that there exist a component $\gamma_1$ of $\partial \overline{\Delta}_1\cap F_2$ and a subarc $\gamma_2$ of $\partial D_2$ such that $\gamma_1\cup \gamma_2$ is an essential simple closed curve in $(\partial W_2^1\setminus \partial_-W_2)\setminus s_2$, which implies $\Phi_2(\gamma_1\cup \gamma_2)\ne\emptyset$.
Note that $\gamma_1\cup \gamma_2$ and $\alpha_0$ are essential simple closed curves on $F_2$.
Note also that $\alpha_0\cap (\gamma_1\cup\gamma_2)=(\alpha_0\cap\gamma_1)\cup(\alpha_0\cap\gamma_2)\subset (\alpha_0\cap {\rm int}(\Delta_1\cap F_1))\cup (\alpha_0\cap \alpha_1)=\emptyset$.
Hence, we have
\begin{equation}\label{eq-case9-3-1}
d_{\partial_- W_2\setminus s_2}(\Phi_2(\alpha_0),\Phi_2(\gamma_1\cup\gamma_2))\le 1.
\end{equation}

Let $\Delta_2$ be the closure of a component of $E_2\setminus N(D_2)$ that is outermost in $E_2$.
Then by the minimality of $|E_2\cap D_2|$, $\Delta_2$ is essential in $(W_2^1\cup_{\overline{h}_2}V_2)\setminus t_2^{\ast}$ by Claim~\ref{claim_10_3}.
Note that $|(\gamma_1\cup\gamma_2)\cap \Delta_2|=|\gamma_2\cap \Delta_2|\le 1$.
Hence, by the inequality (\ref{eq-case9-3-1}) and Proposition~\ref{prop-b>1-1}, we have
\begin{eqnarray*}
\begin{array}{rcl}
d_{\partial_- W_2\setminus s_2}(\Phi_2(\alpha_0),h_2(\mathcal{D}^0(V_2\setminus t_2)))&\le&d_{\partial_- W_2\setminus s_2}(\Phi_2(\alpha_0),\Phi_2(\gamma_1\cup\gamma_2))\\&&+ d_{\partial_- W_2\setminus s_2}(\Phi_2(\gamma_1\cup\gamma_2),h_2(\mathcal{D}^0(V_2\setminus t_2)))\\
&\le &1+2=3,
\end{array}
\end{eqnarray*}
a contradiction to the inequality (\ref{eqn-n1-4}).

This completes the proof of Assertion~\ref{e1e2-2}.
\end{proof}

\section{Proof of Theorem \ref{thm-1} when $n=1$ and $g=0$}\label{sec-proof-8}

In this section, we give a proof of Theorem \ref{thm-1} for the case when $n=1$ and $g=0$.
Note that $b\ge 4$.

Let $F$ be a $2$-sphere and let $P$ be the union of $2b$ points on $F$.
Let $\alpha_0$ and $\alpha_1$ be simple closed curves on $F\setminus P$ such that $\alpha_0\cap\alpha_1=\emptyset$ and that $\alpha_0\cup\alpha_1$ cuts off two twice-punctured disks from $F\setminus P$ which are disjoint to each other.
For $i=1,2$, let $V_i^{\ast, 0}$, $t_i^{\ast,0}$, $V_i$, $t_i$, $W_i$, $W_i^1$, $s_i$, $D_i$, $F_i$, $\Phi_i$ be as in Subsection~\ref{sec-b>1}.
Identify $(\partial_+W_1, s_1\cap\,\partial_+W_1)$ and $(\partial_+W_2, s_2\cap\,\partial_+W_2)$ with $(F,P)$ so that $\partial D_1=\alpha_0$ and $\partial D_2=\alpha_1$. 

%
By Proposition~\ref{prop-b>1-hi}, there exist homeomorphisms $h_i:\partial V_i\setminus t_i\rightarrow \partial_- W_i\setminus s_i$ such that
\begin{equation}\label{eqn-n1-5}
d_{\partial_- W_1\setminus s_1}(\Phi_1(\alpha_1),h_1(\mathcal{D}^0(V_1\setminus t_1)))>3,
\end{equation}
\begin{equation}\label{eqn-n1-6}
d_{\partial_- W_2\setminus s_2}(\Phi_2(\alpha_0),h_2(\mathcal{D}^0(V_2\setminus t_2)))>3.
\end{equation}
Let $\overline{h}_i:(\partial V_i,\partial t_i)\rightarrow (\partial_- W_i,s_i\cap \partial_- W_i)$ be the homeomorphism of the pairs induced from $h_i$.
Let $(V_i^{\ast}, t_i^{\ast}):=(W_i,s_i)\cup_{\overline{h}_i}(V_i,t_i)$.
Then $(V_1^{\ast}, t_1^{\ast})\cup_{(F,P)} (V_2^{\ast}, t_2^{\ast})$ is a $(0,b)$-splitting of a link.
Let $\mathcal{D}_i$ be the set of essential disks in $V_i^{\ast}\setminus t_i^{\ast}$ for $i=1,2$. 
To show that $(V_1^{\ast}, t_1^{\ast})\cup_{(F,P)} (V_2^{\ast}, t_2^{\ast})$ has distance $1$ and is strongly keen, we prove the following.

\begin{assertion}\label{e1e2-3}
$E_1\cap E_2\ne \emptyset$ for any $E_1\in \mathcal{D}_1$ and $E_2\in \mathcal{D}_2$ with $(E_1,E_2)\ne (D_1,D_2)$.
\end{assertion}

To prove the above assertion, we divide $\mathcal{D}_i$ $(i=1,2)$ into three sets $\mathcal{D}_i^1$, $\mathcal{D}_i^2$, $\mathcal{D}_i^3$, where
\begin{itemize}
\item $\mathcal{D}_i^1$ consists of the single disk $D_i$,
\item $\mathcal{D}_i^2$ consists of disks which are disjoint from $D_i$, and not isotopic to $D_i$,
\item $\mathcal{D}_i^3$ consists of disks which are not isotoped to be disjoint from $D_i$.
\end{itemize}
Suppose $E\in\mathcal{D}_i^2\cup\mathcal{D}_i^3$. 
Since $g=0$, $D_i$ and $E$ are separating in $V_i^{\ast}\setminus t_i^{\ast}$.
It is easy to see that this implies the following.
\begin{claim}\label{claim-di2di3}
{\rm (1)} Every $E\in \mathcal{D}_i^2$ is essential in $(W_i^1\cup_{\overline{h}_i} V_i)\setminus t_i^{\ast}$.

{\rm (2)} For any $E\in \mathcal{D}_i^3$, the closure of every component of $E\setminus D_i$ that is outermost in $E$ is an essential disk in $(W_i^1\cup_{\overline{h}_i} V_i)\setminus t_i^{\ast}$,
provided $|E\cap D_i|$ is minimal.
\end{claim}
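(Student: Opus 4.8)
The statement to prove is Claim~\ref{claim-di2di3}, which asserts that when $g=0$, every disk $E\in\mathcal{D}_i^2$ is essential in $(W_i^1\cup_{\overline{h}_i}V_i)\setminus t_i^{\ast}$, and for $E\in\mathcal{D}_i^3$ with $|E\cap D_i|$ minimal, each outermost piece of $E\setminus D_i$ is an essential disk in $(W_i^1\cup_{\overline{h}_i}V_i)\setminus t_i^{\ast}$.

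Let me think about this. We have $V_i^{\ast,0}$ a genus-$0$ handlebody (a $3$-ball), $t_i^{\ast,0}$ is $b$ trivial arcs, and from Subsection~\ref{sec-b>1}: $V_i$ is a $3$-ball with $b-1$ trivial arcs $t_i$, $W_i = {\rm cl}(V_i^{\ast,0}\setminus V_i)\cong \Sigma\times I$ (here $\Sigma$ a genus-$0$ surface, i.e., $\Sigma$ is $S^2$... wait, but then $\Sigma\times I$ is $S^2\times I$, not a handlebody—hmm, actually $W_i$ needs $\partial_-W_i$ and $\partial_+W_i$; for $g=0$, $\Sigma = S^2$, so $\Sigma\times I = S^2\times I$). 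Then $D_i$ is a disk in $W_i$, cutting $W_i$ into $W_i^1\cong\Sigma\times I$ and $W_i^2$ a $3$-ball; $(V_i^{\ast},t_i^{\ast}) = (W_i,s_i)\cup_{\overline{h}_i}(V_i,t_i)$.

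So $D_i$ cuts $(V_i^{\ast},t_i^{\ast})$ into $(W_i^1\cup_{\overline{h}_i}V_i)\setminus t_i^{\ast}$ — which itself is a $3$-ball-type thing with $b-1$ arcs plus the $I$-bundle stuff — and $(W_i^2,s_i^2)$, a $3$-ball with a single boundary-parallel arc.

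Now the key point: when $g=0$ (so $F=S^2$, punctured appropriately), $D_i$ is separating, and any essential disk $E$ in $V_i^{\ast}\setminus t_i^{\ast}$ is also separating (its boundary is a separating curve on a punctured sphere). Let me think about whether $E$ (for $E\in\mathcal{D}_i^2$, disjoint from $D_i$ but not isotopic to it) must be essential in the piece containing it. Since $E$ is disjoint from $D_i$ and not equal to $D_i$, $E$ lies in one of the two pieces $(W_i^1\cup V_i)\setminus t_i^{\ast}$ or $(W_i^2,s_i^2)$. The piece $(W_i^2,s_i^2)$ is a $3$-ball with one boundary-parallel arc, which has no essential disk in the complement of the arc. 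So $E$ must lie in $(W_i^1\cup_{\overline{h}_i}V_i)\setminus t_i^{\ast}$; I need $E$ essential there. If $\partial E$ bounded a disk (or once-punctured disk) in $F_i$... but $E$ is essential in $V_i^{\ast}\setminus t_i^{\ast}$, and the boundary of $(W_i^1\cup V_i)\setminus t_i^{\ast}$ relevant to $E$ sits inside $\partial_+ W_i\setminus s_i$, so $\partial E$ being inessential there would make $\partial E$ inessential in $\partial V_i^{\ast}\setminus t_i^{\ast}$, contradicting $E\in\mathcal{D}_i$. The only subtlety: could $\partial E$ bound a once-punctured disk in $(W_i^1\cup V_i)$ while bounding a $2$-punctured or more complicated region in $\partial V_i^{\ast}$? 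No—$\partial E$ lies in a subsurface $F_i$ of $\partial_+W_i$, and essentiality is about the curve on the full surface; if it's inessential (bounds disk or once-punctured disk) in the $V_i^{\ast}$-boundary, $E$ isn't essential. So I'd argue: essential in $V_i^{\ast}\setminus t_i^{\ast}$ $\Rightarrow$ $\partial E$ essential in $\partial_+W_i\setminus s_i$ $\Rightarrow$ essential in the sub-handlebody $(W_i^1\cup V_i)\setminus t_i^{\ast}$ in which it's a properly embedded disk, because an inessential disk in that manifold would have boundary bounding a disk or once-punctured disk pushed into $\partial_+W_i$. This is essentially the "$g=0$ implies everything separating" observation, combined with the fact that $W_i^2$ contributes nothing.

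The plan for part (2) is: take $E\in\mathcal{D}_i^3$ with $|E\cap D_i|$ minimal. By innermost-disk arguments $E\cap D_i$ has no loops (same as in the proof of Proposition~\ref{prop-b>1}), so it consists of arcs. Let $\Delta$ be an outermost piece of $E\setminus D_i$; $\Delta$ is a properly embedded disk in one of $(W_i^1\cup V_i)\setminus t_i^{\ast}$ or $(W_i^2,s_i^2)$, and by minimality of $|E\cap D_i|$ it must be essential in whichever piece it lies (else we could reduce the intersection by an isotopy swapping $\Delta$ across $D_i$). Since $(W_i^2,s_i^2)$ has no essential disk in the arc-complement, $\Delta$ lies in $(W_i^1\cup_{\overline{h}_i}V_i)\setminus t_i^{\ast}$ and is essential there. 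This is exactly the argument already run in the proof of Proposition~\ref{prop-b>1}, so I would simply invoke it, noting that when $g=0$ no additional case (such as the non-separating complications appearing in Proposition~\ref{prop-g>1-1}) can arise because all disks are separating. I expect the main obstacle—really a bookkeeping point rather than a genuine difficulty—to be making precise why "$\partial E$ inessential in the sub-handlebody" forces "$\partial E$ inessential in $\partial V_i^{\ast}\setminus t_i^{\ast}$": one must check that a once-punctured-disk region for $\partial E$ in $(F_i\setminus s_i)\cup D_i$ actually yields a once-punctured disk in $\partial_+ W_i\setminus s_i$, using that $D_i$'s two sides carry equal numbers of punctures of $s_i$, but this is immediate from the structure in Figure~\ref{fig-wi-si}. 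So the whole claim reduces to: $g=0$ $\Rightarrow$ all disks separating $\Rightarrow$ outermost/disjoint pieces fall into the "good" piece and are essential there, verbatim as in the proof of Proposition~\ref{prop-b>1}.
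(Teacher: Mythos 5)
Your overall route coincides with the paper's: the paper records no argument for this claim beyond the remark that, since $g=0$, $D_i$ and $E$ are separating, relying implicitly on the outermost-disk argument already given in the proof of Proposition~\ref{prop-b>1} (there is no essential disk in $W_i^2\setminus s_i^2$, and minimality of $|E\cap D_i|$ locates the outermost piece on the $W_i^1\cup_{\overline{h}_i}V_i$ side and makes it essential there). So your decomposition of the problem is the intended one.

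However, your treatment of the one genuinely delicate point is wrong as stated. If the compressing region $Q$ for $\partial E$ (or for $\partial\Delta$) in $(F_i\setminus s_i)\cup D_i$ contains $D_i$, then passing to $\partial_+W_i\setminus s_i$ replaces $D_i$ by the twice-punctured disk $\partial W_i^2\cap\partial_+W_i$, so $Q$ gains two punctures: a once-punctured region would become a thrice-punctured one, not a once-punctured one, and the fact you invoke (``$D_i$'s two sides carry equal numbers of punctures of $s_i$'') is false --- the two sides of $\partial D_i$ in $\partial_+W_i\setminus s_i$ contain $2$ and $2b-2$ punctures. What the separating observation actually buys is a parity argument, which you gesture at but never use: when $g=0$ the disks $E$ and $\Delta$ separate, they are disjoint from $t_i^{\ast}$, and every arc of $t_i^{\ast}\cap(W_i^1\cup_{\overline{h}_i}V_i)$ has both endpoints on $F_i$; hence each complementary region of $\partial E$ (resp.\ $\partial\Delta$) in $(F_i\setminus s_i)\cup D_i$, and likewise in $\partial_+W_i\setminus s_i$, contains an even number of punctures, so a once-punctured compressing region cannot occur at all. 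For a zero-punctured region $Q$: if $Q\cap D_i=\emptyset$ then $Q\subset F_i\setminus s_i$ and essentiality of $E$ in $V_i^{\ast}\setminus t_i^{\ast}$ is contradicted; if $D_i\subset Q$ then $\partial E$ is isotopic to $\partial D_i$ in $\partial_+W_i\setminus s_i$, so $E$ is isotopic to $D_i$ (the tangle exterior is an irreducible handlebody), which is excluded by the definition of $\mathcal{D}_i^2$ --- this non-isotopy hypothesis, which your argument never actually uses, is also what rules out $E\subset W_i^2$ in part (1); and in part (2) a zero-punctured region bounded by $\partial\Delta$ allows an isotopy of $E$ reducing $|E\cap D_i|$, contradicting minimality. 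Note in particular that your phrase ``else we could reduce the intersection'' does not dispose of the once-punctured case (such an isotopy would push $E$ through an arc of $t_i^{\ast}$); that case must be excluded by the parity count. With these corrections your proof is complete and agrees with the paper's intent.
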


\begin{proof}[Proof of Assertion~\ref{e1e2-3}]
Suppose on the contrary that there exist $E_1\in \mathcal{D}_1$ and $E_2\in \mathcal{D}_2$ such that $(E_1,E_2)\ne (D_1,D_2)$ and $E_1\cap E_2= \emptyset$.
We may assume that $E_1\in \mathcal{D}_1^i$ and $E_2\in \mathcal{D}_2^j$ for some $i$ and $j$ such that $i\le j$ and $j\ne 1$, since the remaining cases can be treated similarly.
Assume that $|E_1\cap D_1|$, $|E_2\cap D_2|$, $|E_1\cap D_2|$ and $|E_2\cap D_1|$ are minimal
(note that this configuration is realized by taking a complete hyperbolic structure with finite area on $F\setminus P$, and realizing $\partial D_1$, $\partial D_2$, $\partial E_1$, $\partial E_2$ as geodesics with respect to the hyperbolic metric).

\setcounter{case}{0}

\begin{case}\label{case10-1}
$E_1\in \mathcal{D}_1^1$, that is, $E_1=D_1$.
\end{case}

\begin{subcase}
$E_2\in \mathcal{D}_2^2$.
\end{subcase}

By Claim~\ref{claim-di2di3} (1) and Proposition~\ref{prop-b>1} (1), we have
$$
d_{\partial_- W_2\setminus s_2}(\Phi_2(\alpha_0),h_2(\mathcal{D}^0(V_2\setminus t_2))) \le 1,
$$
a contradiction to the inequality (\ref{eqn-n1-6}).

\begin{subcase}
$E_2\in \mathcal{D}_2^3$.
\end{subcase}

Let $\Delta$ be the closure of a component of $E_2\setminus D_2$ that is outermost in $E_2$.
By Claim~\ref{claim-di2di3} (2), $\Delta$ is an essential disk in $(W_2^1\cup_{\overline{h}_2}V_2)\setminus t_2^{\ast}$.
We have $\Delta\cap \alpha_0=\emptyset$ since $E_2\cap \alpha_0(=E_2\cap \partial D_1)=\emptyset$.
Then the union of $\Delta$ and one of the components of $D_2\setminus \Delta$ is a disk which belongs to $\mathcal{D}_2^2$ and is disjoint from $D_1$.
This cannot occur as we have seen in the previous case.

\vspace{3mm}
In the rest of this section, let $A\,(\subset F\setminus P)$ be the punctured annulus bounded by $\alpha_0\cup \alpha_1$.

\begin{case}\label{case10-2}
$E_1\in \mathcal{D}_1^2$.
\end{case}

We may assume that $E_1\cap \alpha_1\ne \emptyset$, since otherwise, we can apply the arguments in Case 1.1 to derive a contradiction.

\begin{subcase2}
$E_2\in \mathcal{D}_2^2$.
\end{subcase2}

We may assume that $E_2\cap \alpha_0\ne\emptyset$ by the arguments in Case 1.1.
Since $\partial E_i\cap \alpha_{i-1}=\emptyset$ and $\partial E_i\cap \alpha_{2-i}\ne \emptyset$ $(i=1,2)$, we see that each component of $\partial E_i\cap A$ is an arc whose endpoints are contained in $\alpha_{2-i}$ $(i=1,2)$ (see Figure~\ref{fig-n1-g0-1}).
\begin{figure}[tb]
 \begin{center}
 \includegraphics[width=50mm]{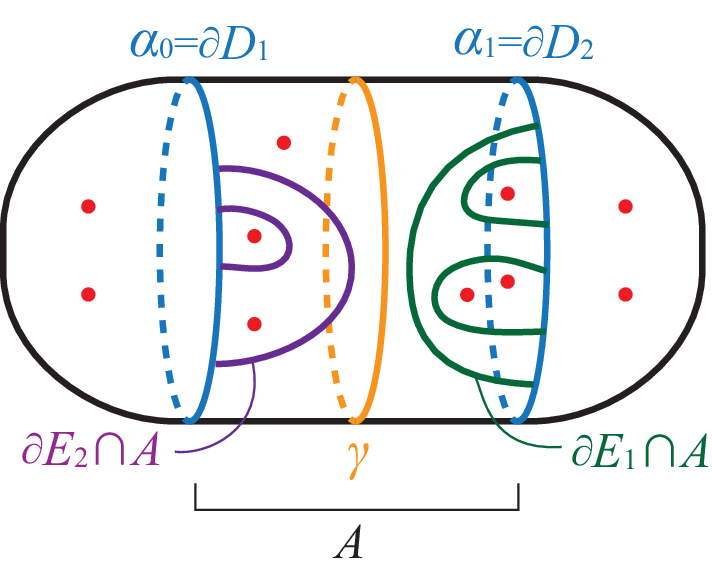}
 \end{center}
 \caption{$\partial E_1\cap A$, $\partial E_2\cap A$ and $\gamma$.}
\label{fig-n1-g0-1}
\end{figure}
Hence, there is a simple closed cure $\gamma$ in $A$ such that $\gamma\cap (\partial E_1\cup \partial E_2)=\emptyset$ and that $\gamma$ separates $\alpha_0$ and $\alpha_1$.
We note that $A$ contains $(2b-4)$ punctures.
Since $b\ge 4$, either of the two components of $A\setminus \gamma$ contains at least two punctures.
Since the arguments are symmetric, we may assume without loss of generality that the component of $A\setminus \gamma$ adjacent to $\alpha_0$ contains at least two punctures.
Note that this fact implies $\Phi_1(\gamma)\ne\emptyset$.
%
Then, by Proposition~\ref{prop-b>1} (1) with regarding $\alpha=\gamma$ and $\beta=\partial E_1$, we have $d_{\partial_- W_1\setminus s_1}(\Phi_1(\gamma),h_1(\mathcal{D}^0(V_1\setminus t_1)))\le 1$.
Hence,
\begin{eqnarray*}
\begin{array}{rcl}
d_{\partial_- W_1\setminus s_1}(\Phi_1(\alpha_1),h_1(\mathcal{D}^0(V_1\setminus t_1)))&\le&d_{\partial_- W_1\setminus s_1}(\Phi_1(\alpha_1),\Phi_1(\gamma))\\&&+ d_{\partial_- W_1\setminus s_1}(\Phi_1(\gamma),h_1(\mathcal{D}^0(V_1\setminus t_1)))\\
&\le &1+1=2,
\end{array}
\end{eqnarray*}
a contradiction to the inequality (\ref{eqn-n1-5}).

\begin{subcase2}
$E_2\in \mathcal{D}_2^3$.
\end{subcase2}

Let $\Delta\,(\subset (W_2^1\cup_{\overline{h}_2}V_2)\setminus t_2^{\ast})$ be the closure of a component of $E_2\setminus D_2$ that is outermost in $E_2$.
Then we claim that $\Delta\cap \alpha_0\ne\emptyset$.
In fact, if $\Delta\cap \alpha_0=\emptyset$, then by Claim~\ref{claim-di2di3} (2) and Proposition~\ref{prop-b>1-1}, we have
$$
d_{\partial_- W_2\setminus s_2}(\Phi_2(\alpha_0),h_2(\mathcal{D}^0(V_2\setminus t_2)))\le 1,
$$
contradicting the inequality (\ref{eqn-n1-6}).

Since $\Delta$ is outermost in $E_2\setminus D_2$ and $\Delta\cap \alpha_0\ne\emptyset$, we see that $\Delta\cap A$ contains exactly two arcs joining $\alpha_0$ and $\alpha_1$, and other components of $\Delta\cap A$ are disjoint from $\alpha_1$.
This shows that there are exactly two components of $A\setminus \Delta$ that are adjacent to $\alpha_1$.
Let $G^1$ and $G^2$ be the closures of the components (see Figure~\ref{fig-n1-g0-2}).
\begin{figure}[tb]
 \begin{center}
 \includegraphics[width=50mm]{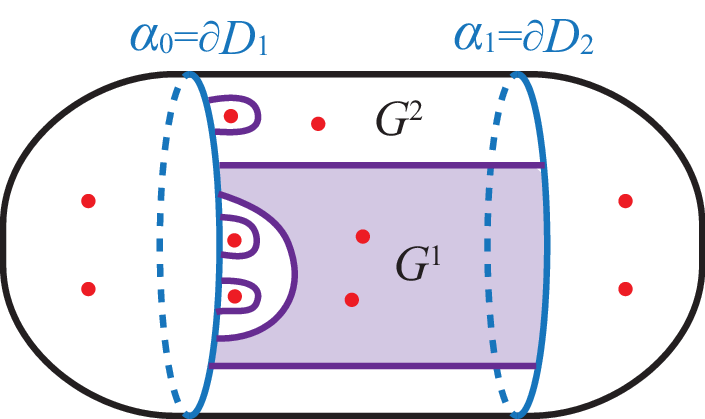}
 \end{center}
 \caption{$\partial E_2\cap A$ and $G^1$, $G^2$.}
\label{fig-n1-g0-2}
\end{figure}

\begin{claim}\label{claim-puncture}
$G^i$ contains at most one puncture $(i=1,2)$.
\end{claim}

\begin{proof}
Suppose on the contrary that $G^1$ or $G^2$, say $G^1$, contains more than one punctures.
Let $\gamma$ be a simple closed curve in $G^1$ that bounds a disk containing the punctures (see Figure~\ref{fig-n1-g0-3}).
\begin{figure}[tb]
 \begin{center}
 \includegraphics[width=50mm]{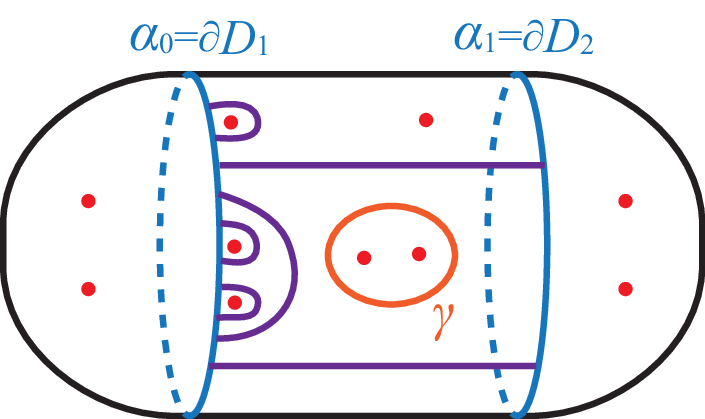}
 \end{center}
 \caption{$\gamma$.}
\label{fig-n1-g0-3}
\end{figure}
Note that this implies $\Phi_2(\gamma)\ne\emptyset$.
By Proposition~\ref{prop-b>1-1} (1), we have 
\begin{eqnarray*}
\begin{array}{rcl}
d_{\partial_- W_2\setminus s_2}(\Phi_2(\alpha_0),h_2(\mathcal{D}^0(V_2\setminus t_2)))&\le&d_{\partial_- W_2\setminus s_2}(\Phi_2(\alpha_0),\Phi_2(\gamma))\\&&+ d_{\partial_- W_2\setminus s_2}(\Phi_2(\gamma),h_2(\mathcal{D}^0(V_2\setminus t_2)))\\
&\le &1+1=2.
\end{array}
\end{eqnarray*}
a contradiction to the inequality (\ref{eqn-n1-6}).
\end{proof}

Recall that $E_1\cap \alpha_0=\emptyset$ and $E_1\cap \alpha_1\ne\emptyset$.
Hence, each component of $E_1\cap A$ is an arc whose endpoints are contained in $\alpha_1$.
This and Claim~\ref{claim-puncture} together with the minimality of $|E_1\cap D_2|$ show that each component of $E_1\cap A$ together with a subarc of $\alpha_1$ bounds a once-punctured disk in $G^i$ $(i=1,2)$.
Hence, $E_1\cap A$ has at most two parallel classes in the punctured annulus $A$ (see Figure~\ref{fig-n1-g0-4}).
\begin{figure}[tb]
 \begin{center}
 \includegraphics[width=50mm]{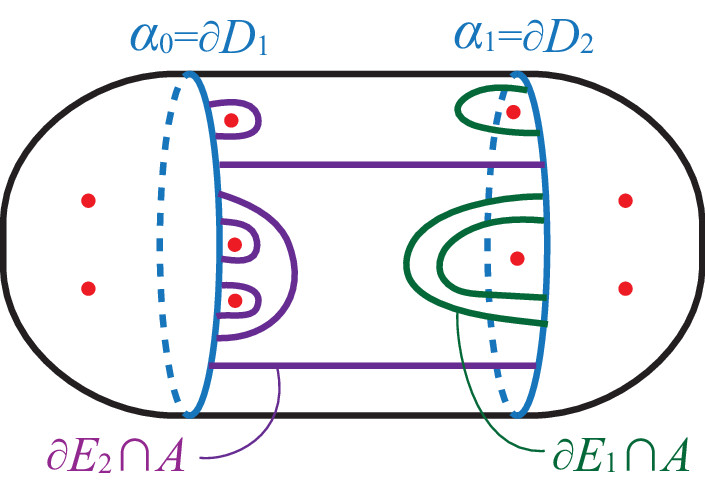}
 \end{center}
 \caption{$\partial E_1\cap A$.}
\label{fig-n1-g0-4}
\end{figure}
Since $A$ contains $(2b-4)$ punctures and $2b-4\ge 4$ by the assumption, there is a simple closed curve $\gamma$ in $A$ such that $\gamma\cap \partial E_1=\emptyset$ and that $\gamma$ bounds a twice-punctured disk in $A$, and hence $\Phi_1(\gamma)\ne\emptyset$ (see Figure~\ref{fig-n1-g0-5}).
\begin{figure}[tb]
 \begin{center}
 \includegraphics[width=50mm]{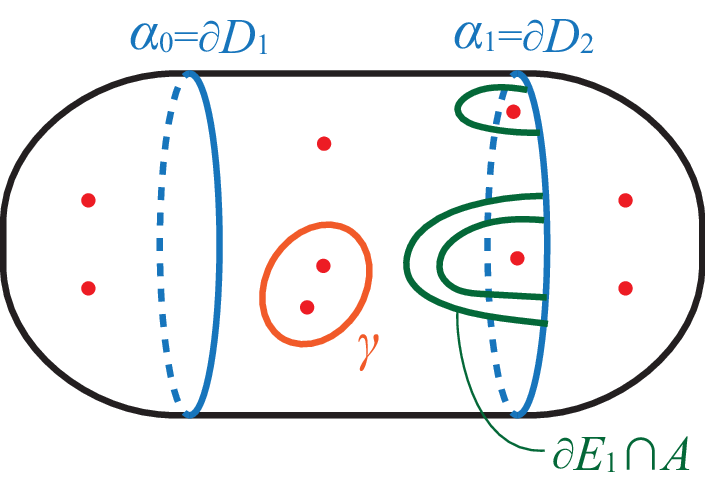}
 \end{center}
 \caption{$\gamma$.}
\label{fig-n1-g0-5}
\end{figure}
Then, by Proposition~\ref{prop-b>1} (1) with regarding $\alpha=\gamma$ and $\beta=\partial E_1$, we have $d_{\partial_- W_1\setminus s_1}(\Phi_1(\gamma),h_1(\mathcal{D}^0(V_1\setminus t_1)))\le 1$.
Hence,
\begin{eqnarray*}
\begin{array}{rcl}
d_{\partial_- W_1\setminus s_1}(\Phi_1(\alpha_1),h_1(\mathcal{D}^0(V_1\setminus t_1)))&\le&d_{\partial_- W_1\setminus s_1}(\Phi_1(\alpha_1),\Phi_1(\gamma))\\&&+ d_{\partial_- W_1\setminus s_1}(\Phi_1(\gamma),h_1(\mathcal{D}^0(V_1\setminus t_1)))\\
&\le &1+1=2.
\end{array}
\end{eqnarray*}
a contradiction to the inequality (\ref{eqn-n1-5}).

\begin{case}\label{case10-3}
$E_1\in \mathcal{D}_1^3$.
\end{case}

In this case, $E_2\in \mathcal{D}_2^3$.
Let $\Delta_i\,(\subset (W_i^1\cup_{\overline{h}_i}V_i)\setminus t_i^{\ast})$ be the closure of a component of $E_i\setminus D_i$ that is outermost in $E_i$ ($i=1,2$).
Then we may assume that $\Delta_i\cap \alpha_{2-i}\ne\emptyset$ for each $i=1,2$, since otherwise, by Proposition~\ref{prop-b>1-1}, we have
$$
d_{\partial_- W_j\setminus s_j}(\Phi_i(\alpha_{2-j}),h_j(\mathcal{D}^0(V_j\setminus t_j)))\le 1
$$
for $j=1$ or $2$, contradicting the inequality (\ref{eqn-n1-5}) or (\ref{eqn-n1-6}).

Since $\Delta_i$ is outermost in $E_i\setminus D_i$ and $\Delta_i\cap \alpha_{2-i}\ne\emptyset$, we see that $\Delta_i\cap A$ contains exactly two arcs $\psi_i^1$, $\psi_i^2$ joining $\alpha_0$ and $\alpha_1$, and the other components are disjoint from $\alpha_{i-1}$.
This shows that there are exactly two components of $A\setminus \Delta_i$ that are adjacent to $\alpha_{i-1}$ (see Figure~\ref{fig-n1-g0-6}).
\begin{figure}[tb]
 \begin{center}
 \includegraphics[width=110mm]{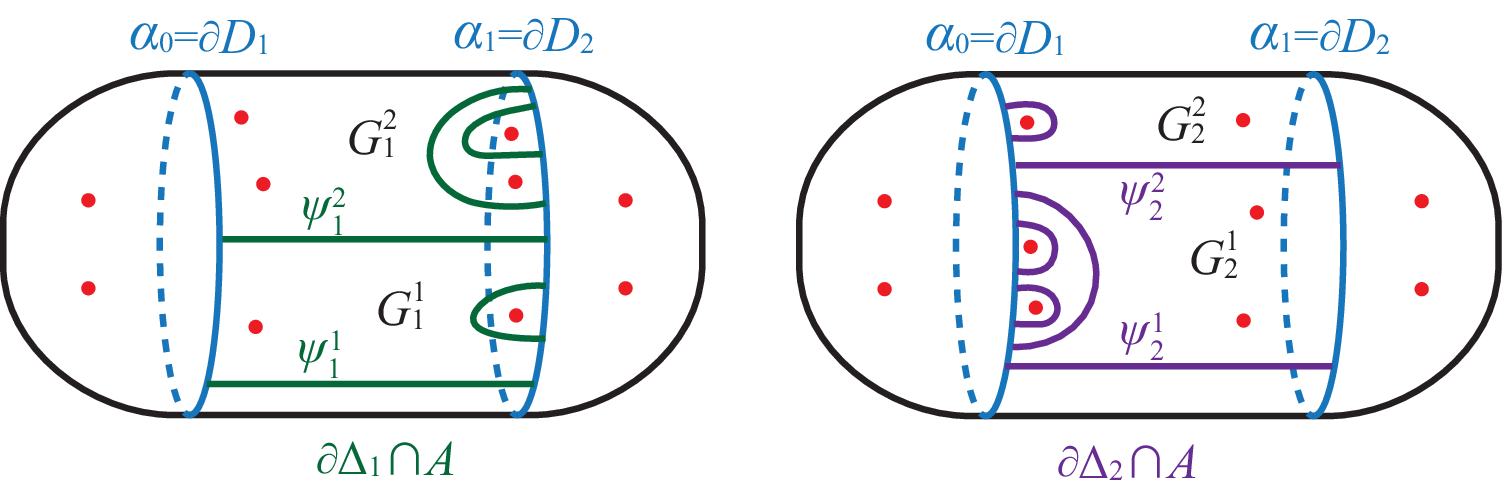}
 \end{center}
 \caption{$\psi_i^1$, $\psi_i^2$, $G_i^1$ and $G_i^2$.}
\label{fig-n1-g0-6}
\end{figure}
Let $G_i^1$ and $G_i^2$ be the closures of the components.
Then the next claim can be proved by using the same arguments in the proof of Claim~\ref{claim-puncture}

\begin{claim}\label{claim-puncture2}
$G_i^j$ contains at most one puncture $(i,j\in\{1,2\})$.
\end{claim}

Recall that $A$ contains $2b-4\,(\ge 4)$ punctures.
Since $(\Delta_1\cap A)\cap (\Delta_2\cap A)\subset \partial E_1\cap \partial E_2=\emptyset$, this fact together with Claim~\ref{claim-puncture2} implies that each $G_i^j$ contains exactly one puncture (see Figure~\ref{fig-n1-g0-7}), and $b$ must be $4$.
%
\begin{figure}[tb]
 \begin{center}
 \includegraphics[width=110mm]{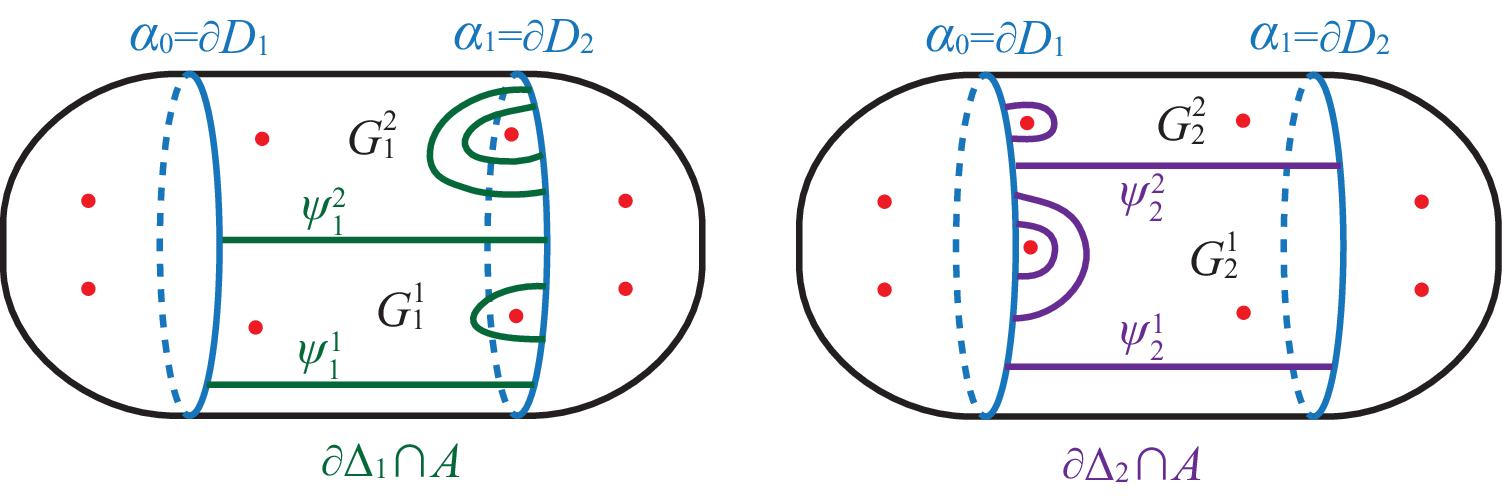}
 \end{center}
 \caption{$\partial \Delta_1\cap A$ and $\partial \Delta_2\cap A$.}
\label{fig-n1-g0-7}
\end{figure}
%
%
Then there exists a simple closed curve $\gamma$ in $A$ (and hence in $F_1$) that bounds a twice-punctured disk, say $D_{\gamma}$, in $A(\subset F_1)$, that intersects $\partial \Delta_1$ twice, and is disjoint from $\alpha_1$ (see Figure~\ref{fig-n1-g0-8}).
\begin{figure}[tb]
 \begin{center}
 \includegraphics[width=110mm]{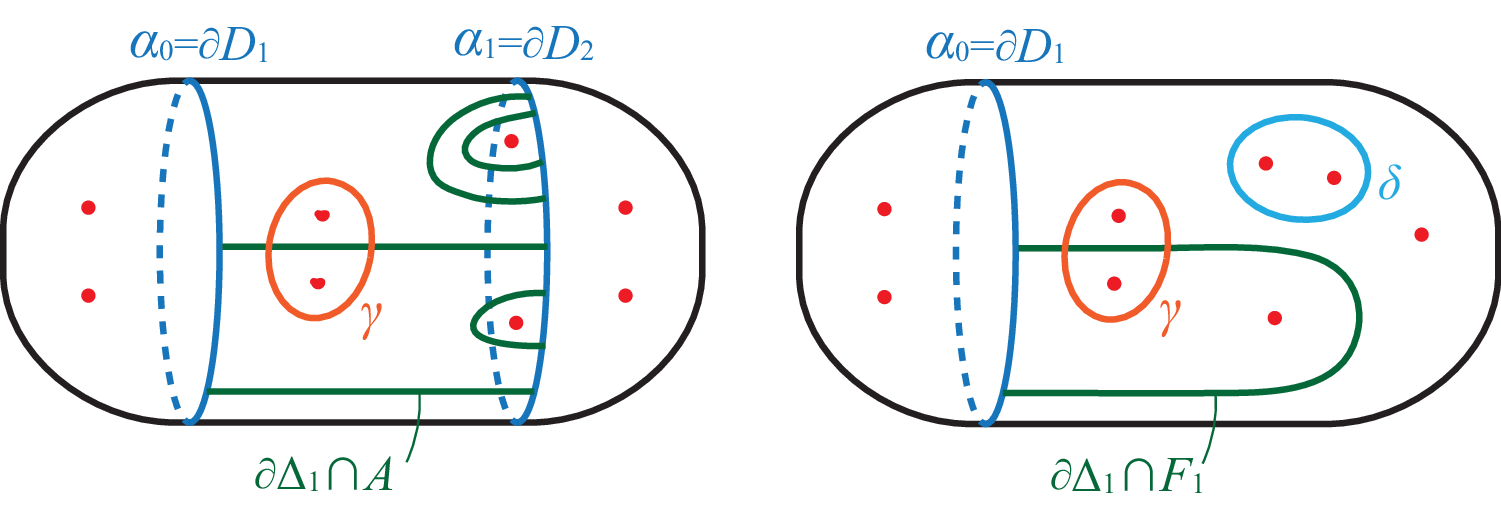}
 \end{center}
 \caption{$\gamma$ and $\delta$.}
\label{fig-n1-g0-8}
\end{figure}
Note that $F_1$ contains 6 punctures, and hence $F_1\setminus D_{\gamma}$ contains 4 punctures.
Since $F_1\setminus (\Delta_1\cup D_{\gamma})$ consists of two components, either of the components must contain at least 2 punctures.
Then there exists a simple closed curve $\delta$ that bounds a twice-punctured disk in (the interior of) the component.
Note that $\alpha_1\cap \gamma=\emptyset$, $\gamma\cap \delta=\emptyset$, $\delta\cap\Delta_1=\emptyset$, and $\Phi_1(\alpha_1)\ne \emptyset$, $\Phi_1(\gamma)\ne \emptyset$, $\Phi_1(\delta)\ne \emptyset$.
Further, by Proposition~\ref{prop-b>1-1} (1) with regarding $\alpha=\delta$ and $\Delta=\Delta_1$, we have $d_{\partial_- W_1\setminus s_1}(\Phi_1(\delta),h_1(\mathcal{D}^0(V_1\setminus t_1)))\le 1$.
Hence,
\begin{eqnarray*}
\begin{array}{rcl}
d_{\partial_- W_1\setminus s_1}(\Phi_1(\alpha_1),h_1(\mathcal{D}^0(V_1\setminus t_1)))&\le&d_{\partial_- W_1\setminus s_1}(\Phi_1(\alpha_1),\Phi_1(\gamma))\\
&&+d_{\partial_- W_1\setminus s_1}(\Phi_1(\gamma),\Phi_1(\delta))\\
&&+ d_{\partial_- W_1\setminus s_1}(\Phi_1(\delta),h_1(\mathcal{D}^0(V_1\setminus t_1)))\\
&\le &1+1+1=3,
\end{array}
\end{eqnarray*}
a contradiction to the inequality (\ref{eqn-n1-5}).

This completes the proof of Assertion~\ref{e1e2-3}.
\end{proof}

\part{Proof of Theorems \ref{thm-3} and \ref{thm-2}}


\section{Strongly keen $(0,2)$-splittings}\label{app-2}

Let $F$ be a 2-sphere and $P$ the union of 4 points in $F$.
Note that each essential simple closed curve in $F\setminus P$ separates $F\setminus P$ into two twice-punctured disks.
Recall from Subsection~\ref{sec-cc} that in the curve complex $\mathcal{C}(F\setminus P)$, two vertices $\alpha$ and $\beta$ are joined by a 1-simplex if and only if $\alpha$ and $\beta$ intersect in two points.

We show that, for any positive integer $n$, there exist strongly keen $(0,2)$-splittings of links with distance $n$.
This follows from well-known facts on the structure of the Farey graph and a result in \cite{BKKS} on geodesics in the Farey graph.
The facts that are needed in the proof of Theorems~\ref{thm-1} and \ref{thm-3} are summarized in Appendix~\ref{app-b}.

We call a pair $(B^3,t)$ of the 3-ball $B^3$ and the union of arcs $t$ properly embedded in $B^3$ a {\it tangle}.
A tangle $(B^3,t)$ is said to be {\it trivial} if $t$ is parallel to $\partial B^3$.
A {\it rational tangle} is a trivial tangle with two arcs, where its boundary fixed. 
A well-known fact is that rational tangles correspond to extended rational numbers, called the {\it slopes} of the tangles.
For example, the rational tangle of slope $\frac{p}{q}$ can be illustrated as in Figure~\ref{fig-rtangle}, where $\frac{p}{q}$ admits a continued fraction expansion $[a_1, a_2, \dots, a_n]$ (see Appendix~\ref{app-b}).
\begin{figure}[tb]
 \begin{center}
 \includegraphics[width=100mm]{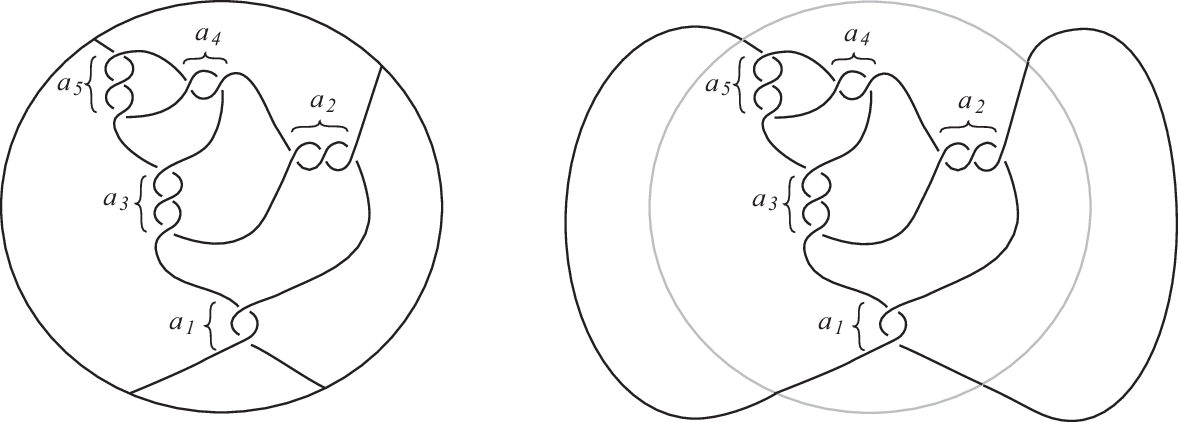}
 \end{center}
 \caption{Rational tangle of slope $\frac{79}{182}=[2,3,3,2,3]$ and $2$-bridge knot $S(182, 79)$.}
\label{fig-rtangle}
\end{figure}
In the figure, the numbers $a_i$ denote the numbers of right-hand half twists.
Note that $(0,2)$-splitting is a decomposition of a pair of the 3-sphere and a link in the 3-sphere into two rational tangles. 
Here, we may suppose that the slope of one of the rational tangles is $\frac{1}{0}$ and the slope of the other is $\frac{p}{q}$.
Conversely, for any extended rational number $\frac{p}{q}\in \mathbb{Q}\cup\{\frac{1}{0}\}$, there exists a $(0,2)$-splitting of a link corresponding to the decomposition into rational tangles with slopes $\frac{1}{0}$ and $\frac{p}{q}$.
Such a link is denoted by $S(q,p)$ and called a {\it $2$-bridge link} when $q=0$ or $q\ge 2$ (and $p$, $q$ are coprime).

\begin{proof}[Proof of Theorem \ref{thm-3}]
When $n\ge 2$, take $\frac{p}{q}$ with a continued fraction expansion $[a_1, a_2, \dots, a_{n-1}]$, where $a_i\ge 3$ for every $i\in\{1,2,\dots,n-1\}$.
Then, by Theorem~\ref{thm-b} in Appendix~\ref{app-b}, the spine of the ladder $\mathcal{L}\left(\frac{1}{0},\frac{p}{q}\right)$ is the unique geodesic connecting $\frac{1}{0}$ and $\frac{p}{q}$ in $\mathcal{F}$ of length $n$.
(For the definition of ladder, see Appendix~\ref{app-b}.)
Let $(V_1,t_1)\cup_{(F,P)}(V_2,t_2)$ be the $(0,2)$-splitting of the link $S(q,p)$ corresponding to the decomposition into rational tangles with slopes $\frac{1}{0}$ and $\frac{p}{q}$.
Then the distance of $(V_1,t_1)\cup_{(F,P)}(V_2,t_2)$ is the length of the simplicial geodesic in $\mathcal{F}$ connecting $\frac{1}{0}$ and $\frac{p}{q}$.
The above facts imply that $(V_1,t_1)\cup_{(F,P)}(V_2,t_2)$ is a strongly keen $(0,2)$-splitting with distance $n$.

In case when $n=1$, it can be easily seen that the $(0,2)$-splitting of the unknot has distance $n(=1)$ and is strongly keen. 
\end{proof}

\section{Bridge splittings of distance $0$}\label{app-1}

In this section, we give a characterization of the bridge splittings of distance 0. 
Let $L$ be a link in a 3-manifold $M$, and let $E(L):={\rm cl}(M\setminus N(L))$.
Let $(V_1, t_1) \cup_{(F, P)} (V_2, t_2)$ 
be a  $(g, b)$-splitting of $(M, L)$(, where $b \geq 1$). 
We say that 
$(V_1, t_1) \cup_{(F, P)} (V_2, t_2)$ 
is {\it stabilized} 
if there is a pair $(D_1, D_2)$ of 
essential disks $D_1\subset V_1 \setminus t_1$ and 
$D_2\subset V_2 \setminus t_2$ such that 
$\partial D_1$ and $\partial D_2$ intersect 
transversely in one point (cf. \cite[p.303]{HS}).

\begin{theorem}\label{thm-n0}
Let $(V_1, t_1) \cup_{(F, P)} (V_2, t_2)$ be as above. 
Then the distance of the bridge splitting $(V_1, t_1) \cup_{(F, P)} (V_2, t_2)$ is $0$ 
if and only if 
either one of the following holds. 
\begin{enumerate}
\item[{\rm (1)}] $E(L)$ is reducible.
\item[{\rm (2)}] $(V_1, t_1) \cup_{(F, P)} (V_2, t_2)$ is stabilized. 
\end{enumerate}
\end{theorem}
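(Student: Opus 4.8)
The plan is to prove the two implications separately, using standard outermost/innermost disk techniques for bridge splittings together with the fact (noted in Section~\ref{sec-11}) that each disk complex $\mathcal{D}(V_i\setminus t_i)$ is connected. First I would handle the ``if'' direction. Suppose (2) holds, so there are essential disks $D_1\subset V_1\setminus t_1$ and $D_2\subset V_2\setminus t_2$ with $|\partial D_1\cap\partial D_2|=1$. A single transverse intersection point cannot be removed by isotopy, but it does mean that a regular neighborhood $N(\partial D_1\cup\partial D_2)$ in $F\setminus P$ is a once-punctured torus, whose boundary $c$ is disjoint from both $\partial D_1$ and $\partial D_2$; provided $c$ is essential in $F\setminus P$, this exhibits $d_{F\setminus P}(\mathcal{D}^0(V_1\setminus t_1),\mathcal{D}^0(V_2\setminus t_2))=0$. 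The case where $c$ is inessential (e.g.\ $(g,b)$ small) must be treated by hand — there the splitting surface itself is very simple and one checks directly that $\mathcal{D}^0(V_1\setminus t_1)$ and $\mathcal{D}^0(V_2\setminus t_2)$ share a vertex, or else that $E(L)$ is reducible and case (1) applies. Now suppose (1) holds: $E(L)$ is reducible, i.e.\ there is an essential $2$-sphere $S$ in $E(L)$. Isotope $S$ to meet $F\setminus N(L)$ minimally; an innermost-disk argument on $S\cap F$ shows that either $S\cap F=\emptyset$, in which case $S$ lies in one of the $V_i\setminus t_i$ and, being essential in $E(L)$, forces a reducing disk making the splitting distance $0$ via connectedness of the disk complexes, or $S\cap F$ is a single essential curve bounding disks $S\cap V_1$ and $S\cap V_2$ in $V_1\setminus t_1$ and $V_2\setminus t_2$ respectively — giving a common vertex of the two disk complexes, hence distance $0$.

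For the ``only if'' direction, assume the distance is $0$: there is an essential simple closed curve $c$ in $F\setminus P$ bounding an essential disk $E_1$ in $V_1\setminus t_1$ and an essential disk $E_2$ in $V_2\setminus t_2$. Then $E_1\cup E_2$ is a $2$-sphere $S$ in $M$ disjoint from $L$, hence lying in $E(L)$. I would now split into cases according to whether $S$ is essential in $E(L)$. If $S$ does not bound a ball in $E(L)$, or is non-separating, then (1) holds and we are done. If $S$ bounds a ball $B$ in $E(L)$, the goal is to show (2) holds — i.e.\ the splitting is stabilized. The idea: since $c=\partial E_1=\partial E_2$ is essential in $F\setminus P$, compressing $F$ along $c$ on each side produces simpler splitting data, and the ball $B$ cobounded by $E_1\cup E_2$ lets one push the rest of the configuration across $B$ to find a complementary pair of disks meeting in one point. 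More concretely, one uses $B$ to isotope the pieces of $(V_i,t_i)$ lying on the $B$-side so that the original splitting is seen as a stabilization of a lower-complexity splitting; the stabilizing pair $(D_1,D_2)$ then arises from the standard ``$\partial$-parallel'' disk structure near the compressing curve $c$ together with a dual curve on $F$.

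The main obstacle I anticipate is this last step: showing that an inessential reducing sphere $S=E_1\cup E_2$ (one bounding a ball disjoint from $L$) forces the splitting to be stabilized rather than merely ``reducible'' in some weaker sense. One must carefully track the arcs $t_1,t_2$ and the punctures $P$ through the isotopy across the ball $B$ — the delicate point being that $B\subset E(L)$ so $B$ is disjoint from $L$, yet the $3$-ball sides of $(V_1,t_1)$ and $(V_2,t_2)$ cut off by $E_1$ and $E_2$ may contain parts of $L$. I would isolate this as a lemma: if a $(g,b)$-splitting admits a pair of essential disks with the same boundary cobounding a ball in the link exterior, then the splitting is stabilized. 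Its proof is a careful innermost/outermost analysis, and I expect that is where essentially all the work lies; the rest of the theorem is bookkeeping with the connectedness of $\mathcal{D}(V_i\setminus t_i)$ and the elementary small-complexity special cases.
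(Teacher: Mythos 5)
Your case structure matches the paper's (distance $0$ gives a sphere $S=E_1\cup E_2$; split according to whether $S$ is essential in $E(L)$; conversely treat (1) and (2) separately), and your argument for ``(2) $\Rightarrow$ distance $0$'' is essentially the paper's: the boundary $\gamma$ of the once-holed torus $F\cap N(D_1\cup D_2)$ bounds a disk of $\partial N(D_1\cup D_2)$ on each side, and since the complement of that once-holed torus in $F$ contains all $2b\ge 2$ punctures, $\gamma$ is automatically essential in $F\setminus P$, so the ``small $(g,b)$'' case you propose to treat by hand never occurs. The two places you yourself flag as ``where the work lies'' are, however, genuine gaps, and in both the missing ingredient is a known nontrivial theorem rather than an innermost/outermost disk analysis. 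For ``(1) $\Rightarrow$ distance $0$'': the statement that an essential sphere in $E(L)$ can be isotoped to meet $\hat{F}=F\cap E(L)$ in a single circle is Haken's lemma in the compression-body form of Casson--Gordon (\cite[Lemma~1.1]{CG}), which is exactly what the paper cites. An innermost-disk argument only produces an innermost circle of $S\cap\hat{F}$ bounding a subdisk of $S$ in one of $\hat{V}_1,\hat{V}_2$; if that subdisk is essential you get a vertex of one disk complex with no control over the other side, and there is no elementary way to drive $|S\cap\hat{F}|$ down to one. Your subcase ``$S\cap F=\emptyset$'' is also confused: a sphere contained in the handlebody $\hat{V}_i$ bounds a ball there disjoint from $L$, hence is never essential in $E(L)$; it does not ``force a reducing disk via connectedness of the disk complexes.''

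The second gap is the lemma you isolate for the ``only if'' direction: same-boundary essential disks $E_1,E_2$ whose union bounds a ball $B\subset E(L)$ force the splitting to be stabilized. This is not accessible to a ``careful innermost/outermost analysis.'' Gluing $E_1$ to $E_2$ turns $B$ into $S^3$ and turns $(F\cap B)\cup E_1$ into a closed Heegaard surface of genus $h\ge 1$ which is a priori arbitrary; the assertion that it is stabilized is precisely Waldhausen's theorem \cite{Wa}. The paper's proof is exactly this reduction: form $\bar{B^3}\cong S^3$, observe the image of $(F\cap B^3)\cup(D_1\cup D_2)$ is a positive-genus Heegaard surface, invoke Waldhausen, and check that a stabilizing pair there yields one for the bridge splitting. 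Without invoking Waldhausen's theorem (or an equivalent, e.g.\ a thin-position proof of it), your plan of ``pushing the configuration across $B$'' cannot close, because the part of $F$ inside $B$ can be an arbitrarily complicated Heegaard surface; so this step, as proposed, would fail.
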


\begin{proof}[Proof of \lq\lq only if\rq\rq\ part of Theorem~\ref{thm-n0}] 
Suppose that 
the distance of the bridge splitting $(V_1, t_1) \cup_{(F, P)} (V_2, t_2)$ is $0$. 
Then there is a pair $(D_1, D_2)$ of 
essential disks $D_1\subset V_1 \setminus t_1$ and 
$D_2\subset V_2 \setminus t_2$
such that 
$\partial D_1 = \partial D_2$. 
Let $S = D_1 \cup D_2$. 
Note that $S$ is a 2-sphere in $M$ 
such that 
$S \cap L = \emptyset$. 
Then we have the following cases. 

\medskip\noindent
Case 1. $S$ is non-separating. 

\medskip
In this case,  $E(L)$ is reducible. 

\medskip\noindent
Case 2. $S$ is separating. 

\medskip
In this case, we have the following two subcases. 

\medskip\noindent
Case 2-1. $S$ is essential in $E(L)$. 

\medskip
In this case,  $E(L)$ is reducible. 

\medskip\noindent
Case 2-2. $S$ is inessential in $E(L)$. 

\medskip
In this case, $S$ bounds a 3-ball $B^3$ in $E(L)$. 
Let $\bar{B^3}$ be the 3-manifold obtained from $B^3$ by identifying $D_1$ and $D_2$ by a homeomorphism 
extending the natural identification $\partial D_1 = \partial D_2$. 
It is easy to see that  $\bar{B^3}$ is homeomorphic to the 3-sphere, and 
the image of $(F \cap B^3) \cup (D_1 \cup D_2)$ is a genus-$h$ Heegaard splitting of  $\bar{B^3}$ ($h \ge 1$). 
Then by the uniqueness of Heegaard splittings of the 3-sphere \cite{Wa}, 
we see that the Heegaard splitting is stabilized. 
It is easy to see that this fact implies the bridge splitting 
$(V_1, t_1) \cup_{(F, P)} (V_2, t_2)$ is stabilized. 
(We note that  several authors gave alternative proofs for
the uniqueness of Heegaard splittings of the 3-sphere. 
See, for example, Schleimer's exposition \cite{Sch} and its references.)
\end{proof}

For the proof of \lq\lq if\rq\rq\ part of Theorem~\ref{thm-n0}, we prepare some 
notations. 

Let $\hat{F} = F \cap E(L)$. 
($\hat{F}$ is a genus-$g$ surface with $2b$ boundary components, 
which is properly embedded in $E(L)$.)
Let 
$\hat{V}_i$ be the closure of the component of $E(L)\setminus \hat{F}$ that is contained in $V_i$ ($i=1, 2$). 
Further let $B_i$ be the closure of the union 
of the components of $\partial E(L) \setminus N(\hat{F})$ that are contained in 
$\hat{V}_i$. 
Note that $B_i$ consists of $b$ annuli, and that 
$\hat{V}_i$ can be regarded as a compression body in the terminology of Casson-Gordon \cite{CG} with 
$\partial_-\hat{V}_i = B_i$, 
$\partial_+\hat{V}_i = \hat{F}$. 
This shows that 
$\hat{V}_1 \cup_{\hat{F}} \hat{V}_2$ is 
a Heegaard splitting of the 3-manifold triad $(E(L); B_1, B_2)$. 

We can define the distance of the 
Heegaard splitting 
$\hat{V}_1 \cup_{\hat{F}} \hat{V}_2$, 
by tracing the definition of the distance of Heegaard splitting given by Hempel \cite{He}, 
and it is a direct consequence of the definition 
that the distance coincides with the distance of the bridge splitting 
$(V_1, t_1) \cup_{(F, P)} (V_2, t_2)$. 


\begin{proof}[Proof of \lq\lq if\rq\rq\ part of Theorem~\ref{thm-n0}] 
Suppose that $(V_1, t_1) \cup_{(F, P)} (V_2, t_2)$ is stabilized, 
i.e., 
there is a pair $(D_1, D_2)$ of 
essential disks $D_1\subset V_1 \setminus t_1$ and 
$D_2\subset V_2 \setminus t_2$
such that 
$\partial D_1$ and $\partial D_2$ intersect 
transversely in one point. 
\begin{figure}[tbp]
 \begin{center}
 \includegraphics[width=30mm]{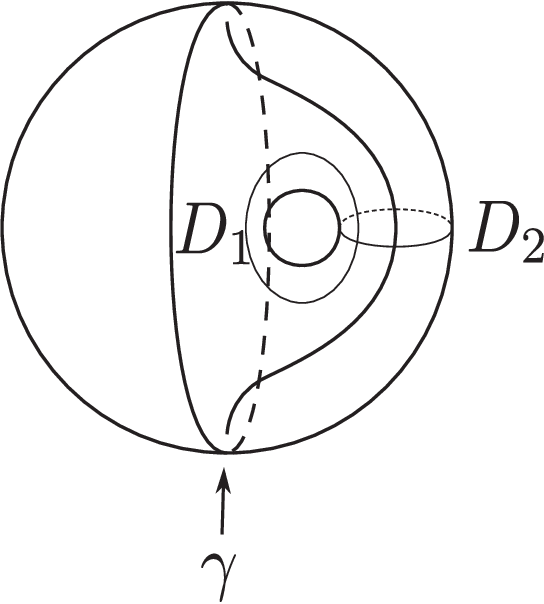}
 \end{center}
 \caption{}
\label{fig-stabilized}
\end{figure}
It is directly observed (see Figure~\ref{fig-stabilized}) that 
$N(D_1 \cup D_2)$ is a 3-ball, and 
$F \cap N(D_1 \cup D_2)$ is a torus with one boundary component denoted by $\gamma$. 
Then it is easy to see that $\partial N(D_1 \cup D_2) \setminus \gamma$ 
consists of two components and that the closure of one of the components is an essential disk in $V_1 \setminus t_1$, and the closure of the other component is an essential disk in $V_2 \setminus t_2$. 
This shows that the distance of $(V_1, t_1) \cup_{(F, P)} (V_2, t_2)$ is $0$. 

Suppose that $E(L)$ is reducible, i.e., there is an essential 2-sphere $\hat{S}$ in $E(L)$. 
Then by \cite[Lemma~1.1]{CG}, we may suppose that $\hat{S} \cap \hat{F}$ 
consists of a single circle. 
Let $\hat{D}_i = \hat{S} \cap \hat{V}_i$.   
Then the pair $( \hat{D}_1 , \hat{D}_2 )$ shows that 
the distance of the Heegaard splitting $\hat{V}_1 \cup \hat{V}_2$ is $0$, 
which implies the distance of $(V_1, t_1) \cup_{(F, P)} (V_2, t_2)$ is $0$. 
\end{proof}

\section{$(0,3)$-splittings with distance $1$}\label{app-3}

\begin{proof}[Proof of Theorem \ref{thm-2}]
We first note that the ambient manifold of the link $L$ is the 3-sphere $S^3$, 
since $L$ admits a (0, 3)-splitting. 

Suppose that $L$ admits a $(0, 3)$-splitting $(B_1^3, t_1) \cup_{(S, P)} (B_2^3, t_2)$ 
with a pair of essential disks $(D_1, D_2)$ in
$B_1^3\setminus  t_1$, $B_2^3\setminus  t_2$ 
respectively such that $D_1 \cap D_2 = \emptyset$, 
and 
$\partial D_1$ and $\partial D_2$ are not isotopic in $S \setminus P$. 
Since $t_i$ ($\subset B_i^3$) consists of three arcs, $D_i$ cuts off a 3-ball with one trivial arc from $B_i^3$.  
This shows that  $\partial D_i$ cuts off a disk with two punctures, 
denoted $D_i^S$, from $S$. 
These show that 
$\partial D_1\cup \partial D_2$ bounds an annulus with two punctures, denoted $A^S$, in $S$. 

\begin{figure}[tbp]
 \begin{center}
 \includegraphics[width=65mm]{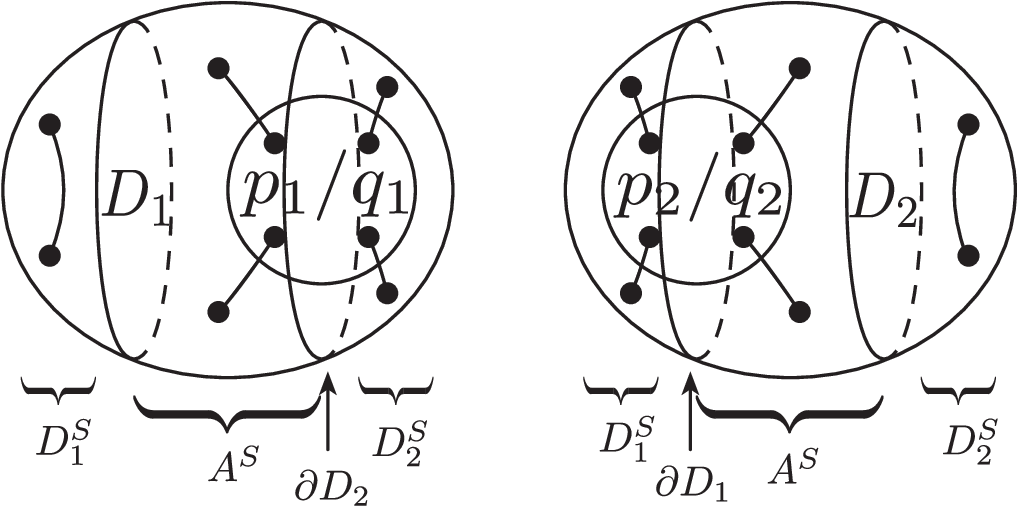}
 \end{center}
 \caption{}
\label{fig-3-bridge_dist1}
\end{figure}

It is directly observed from Figure~\ref{fig-3-bridge_dist1} that the 2-sphere $D_1 \cup A^S \cup D_2$ 
gives the connected sum of two links 
$S(q_1,p_1)$ and $S(q_2,p_2)$
and hence
$L = S(q_1,p_1) \sharp S(q_2,p_2)$. 
(Note that $S(q_j,p_j)$ is possibly a trivial knot ($j=1$ or $2$).)

We note that since $S$ is a genus-$0$ surface, the bridge splitting  
$(B_1^3, t_1) \cup_{(S, P)} (B_2^3, t_2)$ 
is not stabilized. 
Hence by Theorem~\ref{thm-n0} the distance of $(B_1^3,t_1)\cup_{(S,P)} (B_2^3,t_2)$ is $0$ if and only if $E(L)$ is reducible. 
Note that  $E(S(q_i,p_i))$ is reducible 
if and only if 
$S(q_i,p_i)$
is the 2-component trivial link. 
Further it is easy to see that $E(L)$ is irreducible if and only if both 
$E(S(q_1,p_1))$ and 
$E(S(q_2,p_2))$ 
are irreducible. 
These together with Theorem~\ref{thm-n0} show that 
the distance of $(B_1^3,t_1)\cup_{(S,P)} (B_2^3,t_2)$ is $0$  
if and only if 
either 
$S(q_1,p_1)$ or 
$S(q_2,p_2)$ 
is the 2-component trivial link. 
This fact implies: 
the distance of $(B_1^3,t_1)\cup_{(S,P)} (B_2^3,t_2)$ is $1$ if and only if one of the following holds. 

\begin{enumerate}
\item
Both 
$S(q_1,p_1)$ and 
$S(q_2,p_2)$ 
are trivial knots, 
i.e., 
$L$ is a trivial knot. 

\item 
Either one of 
$S(q_1,p_1)$ or 
$S(q_2,p_2)$ 
is a trivial knot, and 
the other is a 2-bridge link which is not a 2-component trivial link. 

\item 
For $i = 1,2$, 
$S(q_i,p_i)$ 
is a 2-bridge link which is not a 2-component trivial link. 
\end{enumerate}

This proves the first half of Theorem~\ref{thm-2}.

To prove the last half of Theorem~\ref{thm-2}, 
we suppose that the distance of the $(0,3)$-splitting $(B_1^3,t_1)\cup_{(S,P)} (B_2^3,t_2)$ is $1$.
Then let $D_1$, $D_2$ and $A_S$ be as above.
Let $\gamma$ be an essential simple closed curve on $A_S$ that separates the $2$ punctures, and let $S_1$ and $S_2$ be the two subdisks of $S$ bounded by $\gamma$ such that $\partial D_i\subset S_i$ ($i=1,2$).
Let $B_1'$ be the closure of the component of $B_1^3\setminus D_1$ containing two components of $t_1$.
Then $(B_1',t_1\cap B_1')$ is a rational tangle (which corresponds to the component of $B_1^3\setminus D_1$ containing the rational tangle with slope $p_1/q_1$ in Figure~\ref{fig-3-bridge_dist1}).
Moreover, one of the two disks of $\partial B_1'\setminus \gamma$ contains $D_1$ and one of the four points $t_1\cap \partial B_1'$, and the other disk contains the rest three points (see Figure~\ref{fig-e1}).
Hence, there exists an essential disk $E_1$ in $B_1'\setminus t_1$ such that $\partial E_1$ is contained in the interior of $S_2$.
Note that $E_1$ is also an essential disk in $B_1^3\setminus t_1$ and that $E_1$ is not isotopic to $D_1$.
Similarly, there exists an essential disk $E_2$ in $B_2^3\setminus t_2$ such that $\partial E_2$ is contained in the interior of $S_1$ and is not isotopic to $D_2$.
These imply that $E_1$ and $E_2$ are another pair of disks realizing the distance $1$.
Therefore, $(B_1^3,t_1)\cup_{(S,P)}(B_2^3,t_2)$ is not keen.
\end{proof}
\begin{figure}[tb]
 \begin{center}
 \includegraphics[width=35mm]{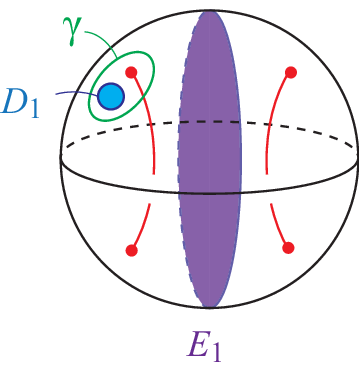}
 \end{center}
 \caption{$(B_1', t_1\cap B_1')$ and $E_1$.}
\label{fig-e1}
\end{figure}

\begin{figure}[tbp]
 \begin{center}
 \includegraphics[width=65mm]{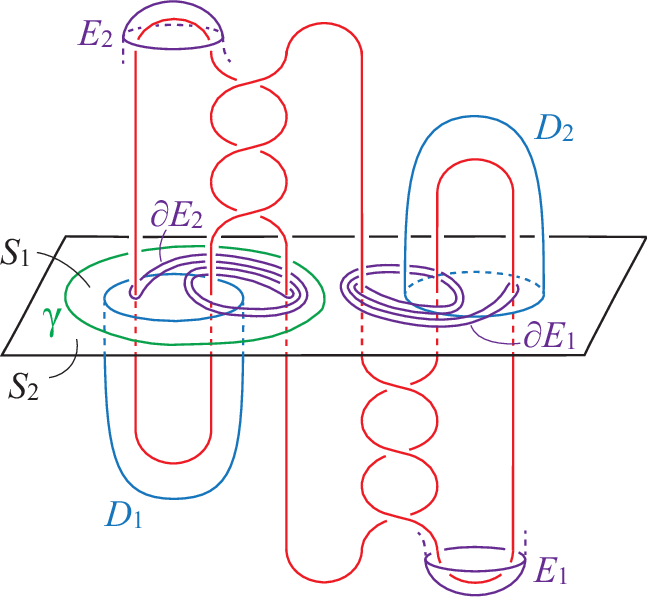}
 \end{center}
 \caption{$(0,3)$-splitting of with distance $1$.}
\label{fig-3-bridge}
\end{figure}
Figure~\ref{fig-3-bridge} shows an example of a $(0,3)$-splitting of a link with distance $1$.



\appendix

\part{Appendix}

\section{Image of disk complex}\label{appendix-disk-complex}

In this section, we prove the following proposition, which is used in the proof of Claim~\ref{claim-d-cplx}.

\begin{proposition}\label{prop-image-d}
Let $V$ be a genus-$g$ handlebody with $g\ge 2$ and $t$ be the union of $b$ arcs properly embedded in $V$ which is parallel to $\partial V$.
Let $F:=\partial V\setminus t$.
Let $l\,(\subset F)$ be a simple closed curve which is non-separating in $F$, and let $X$ be the subsurface ${\rm cl}(F\setminus N_{F}(l))$ of $F$.
Suppose that $l$ intersects every element of $\mathcal{D}^0(V\setminus t)$.
Then either one of the following holds.
\begin{itemize}
\item[{\rm (1)}] $(V,l)$ is homeomorphic to the twisted $I$-bundle $\Omega\tilde{\times} I$ over a non-orientable surface $\Omega$, where each component of $t$ is an $I$-fiber, and $l$ is the core curve of the annulus $\partial\Omega\tilde{\times}I$.
\item[{\rm (2)}] ${\rm diam}_{X}(\pi_X(\mathcal{D}^0(V\setminus t)))\le 12$.
\end{itemize}
\end{proposition}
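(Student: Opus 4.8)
The plan is to assume that conclusion~(2) fails and to deduce the $I$-bundle structure~(1). First I would reduce the assertion about the image of the whole disk complex to a uniform bound on pairs of disks: by the final part of Lemma~\ref{subsurface distance}, $\mathrm{diam}_X(\pi_X(\alpha))\le 2$ for every vertex $\alpha$ cutting $X$, so it is enough to bound $d_X(\pi_X(\partial D),\pi_X(\partial D'))$ independently of the essential disks $D,D'\subset V\setminus t$. Before that I would record that every essential disk of $V\setminus t$ cuts $X$: by Remark~\ref{rmk_miss} the only essential simple closed curve of $F$ that misses $X$ is $l$ itself, while the hypothesis that $l$ meets every element of $\mathcal D^0(V\setminus t)$ forces $l\notin\mathcal D^0(V\setminus t)$; hence $\partial D\neq l$ and $\pi_X(\partial D)\neq\emptyset$ for every essential disk $D$.

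The heart of the argument is the ``holes in the disk complex'' analysis of Masur--Minsky and Li; I would adapt the proof in \cite{Li} to the present relative setting, in which the handlebody $V$ carries the trivial tangle $t$. One takes two essential disks whose projections to $X$ are far apart, puts their boundaries in minimal position, and performs outermost-disk band-sum surgeries. A single surgery changes the projection to $X$ by a bounded amount, and combining this with the connectedness of $\mathcal D(V\setminus t)$ (the outermost-disk connectedness recalled in Section~\ref{sec-11}) and the disjoint-disk estimate from Lemma~\ref{subsurface distance}, one sees that the diameter of the image can exceed $12$ only if the surgeries never genuinely separate the two disks -- equivalently, $X$ is a hole for $\mathcal D(V\setminus t)$. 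In that case Li's construction assembles a suitable sequence of (parts of) disks into an essential properly embedded surface that exhibits an $I$-bundle structure on $V$ whose horizontal boundary is carried by $\mathrm{cl}(F\setminus N(l))$.

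It then remains to identify the $I$-bundle precisely and to exclude the other types of hole in Li's classification. The latter do not arise here because $X$ is the complement of a single non-separating curve: in particular $X$ has two boundary circles and contains all of the punctures $P$, so it is not $\partial V$ with one disk removed. For the $I$-bundle case, write $V\cong\Omega\tilde{\times}I$ (a priori possibly an untwisted product). Its vertical boundary $\partial_v V$ is, up to isotopy, disjoint from $X$, so the core of each of its annuli is an essential curve of $F$ missing $X$; by Remark~\ref{rmk_miss} there is exactly one such curve, namely $l$. Hence $\partial_v V$ is a single annulus with core $l$, $\partial\Omega$ is a single circle, and $\partial_h V=\mathrm{cl}(F\setminus N(l))$; since $\partial_h V$ is connected, the associated double cover $\partial_h V\to\Omega$ is nontrivial, so $\Omega$ is non-orientable and the bundle is twisted. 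Finally, each component of $t$ is a properly embedded arc with endpoints on $\partial_h V$ that is disjoint from all the vertical rectangles over essential arcs of $\Omega$; using once more that $l$ is disk-busting (so that no non-vertical essential disk of $V\setminus t$ disjoint from $l$ can occur), one concludes that each component of $t$ is isotopic to an $I$-fiber, which is conclusion~(1).

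The step I expect to be the main obstacle is the passage from ``the image of $\mathcal D(V\setminus t)$ in $\mathcal C(X)$ has diameter $>12$'' to ``$V$ genuinely carries an $I$-bundle structure with $l$ the vertical core'' -- that is, faithfully transplanting the holes-are-$I$-bundles part of \cite{Li} to a handlebody equipped with a trivial tangle, and checking that the arcs of $t$ create no new types of hole. The secondary delicate point is the bookkeeping that yields the precise constant $12$, which comes from chaining the distance-$\le 2$ estimates of Lemmas~\ref{lem_mm} and~\ref{subsurface distance} a bounded number of times.
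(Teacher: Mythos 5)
Your proposal takes essentially the same route as the paper: both adapt Li's disk-surgery (``holes are $I$-bundles'') analysis \cite{Li} to the pair $(V,t)$, use the hypothesis that $l$ is disk-busting to kill the degenerate surgeries (in the paper, the bigon case $M=2$), and then identify the resulting $I$-bundle as the twisted bundle over a non-orientable surface with $l$ the core of the vertical annulus and $t$ vertical. The only notable organizational difference is that the paper constructs a maximal essential $I$-bundle region $J$ for the pair $(V,t)$ with respect to $X$ (following \cite{IJK2}), so that $t\cap J$ consists of $I$-fibers by construction and the case $J\ne V$ still yields ${\rm diam}_X(\pi_X(\mathcal{D}^0(V\setminus t)))\le 12$ via a frontier curve of $\partial_h J$, whereas you would first produce an $I$-bundle structure on $V$ and verify the verticality of $t$ afterwards---exactly the step you flag as the main obstacle, and the one the paper's formulation makes precise.
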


The above proposition can be proved by arguments in \cite{Li}.
We give an outline of the proof in the remainder of this section.

Let $N:=N_{F}(l)$ (hence, $X={\rm cl}(F\setminus N)$).
Let $D$ be an essential disk in $F$.
We may view $D$ as a $2n$-gon with its vertices being the points in $\partial D\cap \partial X\,(=\partial D\cap \partial N)$.
We call each component of $\partial D\cap X$ an {\it $\alpha$-edge} of $\partial D$, and each component of $\partial D\cap N$ an {\it $\beta$-edge} of $\partial D$.
We say that an arc $\gamma$ properly embedded in $D$ is {\it edge-parallel} if the both endpoints of $\gamma$ lie in the same edge of $\partial D$.

For any pair of essential disks, say $D_1$ and $D_2$, in $V\setminus t$ which appear in the rest of this proof, we may assume that $|D_1\cap D_2|$, $|\partial D_1\cap \partial X|$ and $|\partial D_2\cap \partial X|$ are minimal in their isotopy classes at the same time (see \cite[Lemma 3.1 and Remark 3.2]{Li}).
This implies that no component of $D_1\cap D_2$ is a simple closed curve.

Let $M$ be the minimal value of $|\partial D\cap \partial X|$ among all the essential disk $D$ in $V\setminus t$.
Let $D$ be an essential disk with $|\partial D\cap \partial X|=M$.
Note that $M$ is an even number.
Let $\pi_{AC}: \mathcal{C}^0(F)\rightarrow \mathcal{P}(\mathcal{AC}^0(X))$ be the map introduced in Subsection~\ref{sec-subsurface}.

\begin{assertion}\label{ass-appendix}
One of the following holds.
\begin{itemize}
\item[{\rm (i)}] $\pi_{AC}(\mathcal{D}^0(V\setminus t))$ lies in a ball of radius 3 in $\mathcal{AC}^0(X)$.
\item[{\rm (ii)}] $M=2$ or $4$.
\end{itemize}
\end{assertion}

\begin{proof}
This can be proved by arguments in the proof of \cite[Lemma 3.4]{Li}.
We give only an outline here.
We note that if ${\rm diam}_{\mathcal{AC}(X)}(\pi_{AC}(\partial E),\pi_{AC}(\partial D))\le 3$ for any essential disk $E$ in $V\setminus t$, then $\pi_{AC}(\mathcal{D}^0(V\setminus t))$ lies in a ball of radius 3 centered at an element of $\pi_{AC}(\partial D)$, which gives the conclusion (i) of Assertion.
Hence, in the rest of the proof, we suppose that there is an essential disk $E$ in $V\setminus t$ such that ${\rm diam}_{\mathcal{AC}(X)}(\pi_{AC}(\partial E),\pi_{AC}(\partial D))> 3$.
Then we have
\begin{itemize}
\item[($\ast$)] for any $\alpha$-edge $\alpha_D$ of $\partial D$ and any $\alpha$-edge $\alpha_E$ of $\partial E$, we have $\alpha_D\cap \alpha_E\ne \emptyset$,
\end{itemize}
since if there are mutually disjoint $\alpha$-edges $\alpha_D$ and $\alpha_E$ of $\partial D$ and $\partial E$, respectively, then 
\begin{eqnarray*}
\begin{array}{rcl}
{\rm diam}_{\mathcal{AC}(X)}(\pi_{AC}(\partial D),\pi_{AC}(\partial E))&\le&{\rm diam}_{\mathcal{AC}(X)}(\pi_{AC}(\partial D))+d_{\mathcal{AC}(X)}(\alpha_D,\alpha_E)\\&&+{\rm diam}_{\mathcal{AC}(X)}(\pi_{AC}(\partial E))\\
&\le &1+1+1=3,
\end{array}
\end{eqnarray*}
a contradiction.
Let $\Delta$ be the closure of a component of $E\setminus D$ that is outermost in $E$.
By the minimality of $|\partial D\cap \partial E|$, we can show that the outermost arc $\delta$ adjacent to $\Delta$ is not edge-parallel in $E$ (see \cite[Lemma 3.3]{Li}).
Hence, by ($\ast$), $\Delta$ is either a triangle or quadrilateral.
Let $D_1$ and $D_2$ be the closures of the components of $D\setminus \delta$.
Then by applying the arguments for Cases (i) and (ii) in the proof of \cite[Lemma 3.4]{Li} we can show that $|\partial (D_i\cup \Delta)\cap \partial X|<|\partial D\cap \partial X|$ for each $i=1,2$.
On the other hand, at least one of the disks $D_1\cup \Delta$ and $D_2\cup\Delta$ is essential in $V\setminus t$. 
These contradict the minimality of $|\partial D\cap \partial X|$.
Thus, we have $M\le 4$, that is, $M=2$ or $4$.
\end{proof}

If the conclusion (i) of Assertion~\ref{ass-appendix} holds, then we have ${\rm diam}_{X}(\pi_X(\mathcal{D}^0(V\setminus t)))\le 12$, by \cite[Lemma 2.2]{MM2}.
Thus, in the remainder of the proof, we assume that $M=2$ or $4$.

\setcounter{case}{0}

\begin{case}\label{case-a-1}
M=2, that is, $D$ is a bigon.
\end{case}

In this case, $D\cap N$ consists of an arc.
Let $G={\rm cl}(\partial N_{V\setminus t}(N\cup D)\setminus F)$.
Note that $G$ is a disk properly embedded in $V\setminus t$ such that $\partial G\subset X$.
Further, $G$ cuts off a solid torus from $V$, and hence $\partial G$ is essential in $X$ (recall that $g\ge 2$).
This contradicts the assumption that $l$ intersects every element of $\mathcal{D}^0(V\setminus t)$.

\begin{case}\label{case-a-2}
M=4, that is, $D$ is quadrilateral.
\end{case}

Note that a quadrilateral in $V\setminus t$ possesses a product structure $I\times I$ with $I\times \partial I$ a pair of essential arcs in $X$ and $\partial I\times I$ a pair of essential arcs in $N$.
Then as explained in \cite[5.2 Case 2]{IJK2} there is a maximal essential $I$-bundle region for $(V,t)$ with respect to $X$ containing the $I$-bundle structure of the quadrilateral $D$.
More precisely, there exists a compact submanifold $J$ of $V$ such that
\begin{itemize}
\item[1.] $J$ is an $I$-bundle over a compact surface with nonempty boundary such that $t\cap J$ is a union of (possibly empty) $I$-fibers,
\item[2.] the vertical boundary $\partial_v J$(: the total space of the $I$-bundle over the boundary of the base space of $J$) has nonempty intersection with $N$, and $\partial_v J\cap N$ is either an annulus or a rectangular disk $I\times I$, where $I\times \partial I\subset \partial N$ and $\partial I\times I$ is a pair of properly embedded essential arcs in $N$,
\item[3.] the horizontal boundary $\partial_h J\,(:={\rm cl}(\partial J\setminus \partial_v J))$ is a subsurface of $X$, and $J\cap X=\partial_h J$,
\item[4.] each component of the frontier of $\partial_h J$ in $X$ is an essential simple closed curve, or an essential arc in $X$, and
\item[5.] If $J'$ is another submanifold of $V$ satisfying the above conditions 1$\sim$4, then $J'$ is ambient isotopic to $J$ by an isotopy preserving $X$.
\end{itemize}
If $J=V$, then we have the conclusion (1) of Proposition~\ref{prop-image-d}.
Suppose that $J\ne V$.
Let $\gamma$ be a component of the frontier of $\partial_h J\cap X$.
Then we have the following.

\begin{assertion}\label{ass-appendix-2}
For any essential disk $E$ in $V\setminus t$, we have ${\rm diam}_{\mathcal{AC}(X)}(\{\gamma\},\pi_{AC}(\partial E))\le 3$.
\end{assertion}

\begin{proof}
Assume on the contrary that there is an essential disk $E$ in $V\setminus t$ such that ${\rm diam}_{\mathcal{AC}(X)}(\{\gamma\},\pi_{AC}(\partial E))> 3$.
Let $D^{\ast}$ be a quadrilateral essential disk in $V\setminus t$ such that $|D^{\ast}\cap E|$ is minimal among all the quadrilateral essential disks.
If there are $\alpha$-edges $\alpha_{D^{\ast}}$ and $\alpha_E$ of $\partial D^{\ast}$ and $\partial E$, respectively, such that $\alpha_{D^{\ast}}\cap \alpha_E=\emptyset$, then 
\begin{eqnarray*}
\begin{array}{rcl}
{\rm diam}_{\mathcal{AC}(X)}(\{\gamma\},\pi_{AC}(\partial E))&\le&{\rm diam}_{\mathcal{AC}(X)}(\{\gamma\},\pi_{AC}(\partial D^{\ast}))+d_{\mathcal{AC}(X)}(\alpha_{D^{\ast}},\alpha_E)\\&&+{\rm diam}_{\mathcal{AC}(X)}(\pi_{AC}(\partial E))\\
&\le &1+1+1=3,
\end{array}
\end{eqnarray*}
a contradiction.
Hence, each $\alpha$-edge of $\partial D^{\ast}$ and each $\alpha$-edge of $\partial E$ intersect.
Thus, by using arguments in the proof of Assertion~\ref{ass-appendix}, we can find an outermost disk $E$ which is either a triangle or a quadrilateral.
By applying cut-and-paste arguments on $D^{\ast}$ with using the outermost disk, we can obtain a new quadrilateral, say $D^{\ast\ast}$, which is essential in $V\setminus t$ such that $|D^{\ast\ast}\cap E|<|D^{\ast}\cap E|$, a contradiction.
\end{proof}

This completes the proof of Proposition~\ref{prop-image-d}.

\section{Geodesics in Farey graph}\label{app-b}

Let $\mathcal{F}$ be the {\it Farey graph}, that is, a simplicial graph where each vertex is an extended rational number denoted by $\frac{p}{q}$, and a pair of vertices is joined by an edge if and only if these two vertices represent $\frac{p}{q}$ and $\frac{r}{s}$ satisfying $|ps-qr|=1$.
It is well-known that the $1$-skeleton of the curve complex of the 4-punctured sphere or a torus with at most one hole is (isomorphic to) the Farey graph by the correspondence sending $l\in\mathcal{C}^0(\ast)$ to the slope of $l(\in \overline{\mathbb{Q}})$.
It is also well-known that the Farey graph $\mathcal{F}$ can be naturally embedded into a compactification of the hyperbolic plane $\overline{\mathbb{H}}=\mathbb{H}^2\cup\partial \mathbb{H}^2$, where the vertices of $\mathcal{F}$ corresponds with {\it extended rational points} $\overline{\mathbb{Q}}=\mathbb{Q}\cup \{\frac{1}{0}\}\subset\overline{\mathbb{R}}=\mathbb{R}\cup \{\frac{1}{0}\}=\partial \mathbb{H}^2$, and the edges are represented by hyperbolic geodesics.
Then $\mathbb{H}^2$ is completely partitioned by the ideal triangles, called {\it Farey triangles}, whose sides are the edges of the Farey graph.
In this paper, we regard the Farey graph as the embedded graph in $\overline{\mathbb{H}}$.
We note that both hyperbolic geodesics and simplicial geodesics are considered.

For any $x,y\in\overline{\mathbb{Q}}\, (\subset \partial \mathbb{H}^2)$, the {\it ladder} associated with $x,y$, denoted by $\mathcal{L}(x,y)$, is the union of all Farey triangles whose interior intersects with the oriented hyperbolic geodesic joining $x$ and $y$. 
Then a ladder is a union of Farey triangles $\{\Delta_i\}$ such that $\Delta_i\cap \Delta_{i+1}$ is a single edge of $\mathcal{F}$ and $\Delta_i\cap \Delta_j$ is either an empty set or a single point, which is called a {\it pivot point}, if $|i-j|\ge 2$.
(See Figure~\ref{fig-ladder}.)
For a ladder $\mathcal{L}=\mathcal{L}(x,y)$ containing at least three Farey triangles, the {\it spine} $K$ of $\mathcal{L}$ is the simplicial path in $\mathcal{L}$ with the following properties:
\begin{itemize}
\item the endpoints of $K$ are $x$ and $y$,
\item all the vertices of $K$ except for the endpoints are exactly all the pivot points in $\mathcal{L}$,
\item all the edges of $K$ except for the initial and final one are edges in $\mathcal{L}$ whose interior intersects with the geodesic connecting $x$ and $y$.
\end{itemize} 
It is known that the spine is uniquely determined for a ladder (see \cite{BKKS}).

Let $\mathcal{L}(x,y)$ be the ladder associated with $x,y\in\overline{\mathbb{Q}}$, and let $\gamma$ be the hyperbolic geodesic from $x$ to $y$.
Note that each Farey triangle in $\mathcal{L}(x,y)$ contains a pivot point on the left or right of the oriented geodesic $\gamma$.
Label the triangles with $L$ or $R$ accordingly.
We say $\mathcal{L}(x,y)$ is of {\it type} $(a_1,a_2,\dots,a_n)$ if the ladder has $a_1,a_2,\dots,a_n$ consecutive Farey triangles with same labels read off in the orientation given to the geodesic.
\begin{figure}[tb]
 \begin{center}
 \includegraphics[width=90mm]{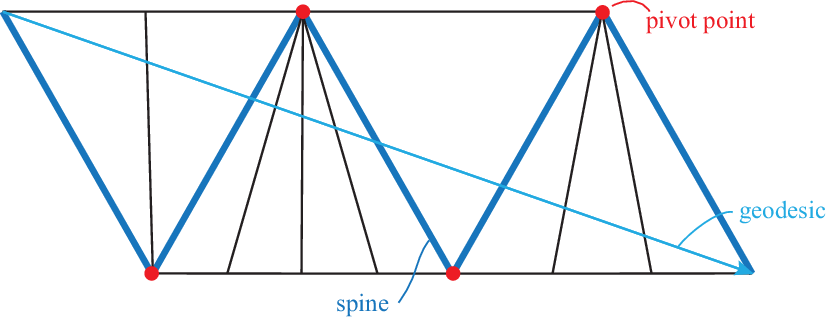}
 \end{center}
 \caption{Ladder of type $(2,4,1,3)$, pivot points and spine.}
\label{fig-ladder}
\end{figure}
By \cite[Proposition 5]{BKKS} and \cite[Proposition 2.2]{Ser}, the ladder $\mathcal{L}\left(\frac{1}{0},\frac{p}{q}\right)$ is of type $(a_1,a_2,\dots,a_n)$ for any rational number $\frac{p}{q}$ with $0<\frac{p}{q}<1$ if and only if 
\begin{eqnarray*}
\begin{array}{rrl}
\displaystyle\frac{p}{q}&=&[a_1, a_2, \dots, a_n]\\
&:=&\displaystyle\frac{1}{a_1+\displaystyle\frac{1}{a_2+\displaystyle\frac{1}{\cdots+\displaystyle\frac{1}{a_n}}}}.
\end{array}
\end{eqnarray*}
Also, for a path $\mathcal{P}$ in $\mathcal{F}$ connecting $\frac{1}{0}$ and $\frac{p}{q}$, it is known by \cite[Corollary 8]{BKKS} that $\mathcal{P}$ is a simplicial geodesic in $\mathcal{F}$ if and only if  $\mathcal{P}$ is a simplicial geodesic in the ladder  $\mathcal{L}\left(\frac{1}{0},\frac{p}{q}\right)$.
As a consequence of these facts, we immediately have:

\begin{theorem}\label{thm-b}
If $\frac{p}{q}=[a_1,a_2,\dots,a_{n-1}]$ with $a_i\ge 3$ for every $i\in\{1,2,\dots,n-1\}$, then the spine of the ladder $\mathcal{L}\left(\frac{1}{0},\frac{p}{q}\right)$ is the unique geodesic connecting $\frac{1}{0}$ and $\frac{p}{q}$ in $\mathcal{F}$ of length $n$.
In particular, the diameter of the curve complex of the $4$-punctured sphere (resp. a torus with at most one hole) is infinite.
\end{theorem}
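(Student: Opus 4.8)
The plan is to deduce the theorem directly from the three facts recalled just above: the correspondence between the type of the ladder $\mathcal{L}\!\left(\frac{1}{0},\frac{p}{q}\right)$ and a continued fraction expansion of $\frac{p}{q}$ (\cite[Proposition 5]{BKKS} together with \cite[Proposition 2.2]{Ser}); the identification of geodesics in $\mathcal{F}$ joining $\frac{1}{0}$ and $\frac{p}{q}$ with geodesics in $\mathcal{L}\!\left(\frac{1}{0},\frac{p}{q}\right)$ (\cite[Corollary 8]{BKKS}); and the combinatorics of ladders and their spines developed in \cite{BKKS}. We assume $n\ge 2$; for $n=1$ the statement reduces to the single edge $\left[\frac{1}{0},\frac{p}{q}\right]$ and is trivial.

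First I would normalise. Since each $a_i\ge 3\ge 1$, a one-line induction on the length of the continued fraction gives $0<\frac{p}{q}\le\frac{1}{3}$; in particular $0<\frac{p}{q}<1$, so \cite[Proposition 5]{BKKS} and \cite[Proposition 2.2]{Ser} apply and show that $\mathcal{L}:=\mathcal{L}\!\left(\frac{1}{0},\frac{p}{q}\right)$ is of type $(a_1,\dots,a_{n-1})$. Because each $a_i\ge 2$, the ladder has no ``zigzag'' block consisting of a single triangle, so it is a concatenation of $n-1$ blocks of equally labelled Farey triangles each contributing exactly one pivot point; hence $\mathcal{L}$ has $n-1$ pivot points, and its spine $K$ — whose interior vertices are precisely these pivot points — is a path with $n+1$ vertices, i.e.\ of length $n$, joining $\frac{1}{0}$ to $\frac{p}{q}$. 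Every edge of $K$ is an edge of $\mathcal{L}\subseteq\mathcal{F}$, and $K$ is a geodesic of $\mathcal{L}$ by \cite{BKKS}; hence by \cite[Corollary 8]{BKKS} it is a geodesic of $\mathcal{F}$, and $d_{\mathcal{F}}\!\left(\frac{1}{0},\frac{p}{q}\right)=n$.

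For uniqueness I would take an arbitrary geodesic $\gamma$ in $\mathcal{F}$ from $\frac{1}{0}$ to $\frac{p}{q}$; by \cite[Corollary 8]{BKKS} it is a geodesic of $\mathcal{L}$, of length $n$. The key point — and the only place the hypothesis $a_i\ge 3$ is genuinely used — is that the $j$-th block of $\mathcal{L}$, a fan of $a_j$ Farey triangles sharing a pivot vertex $p_j$, can be traversed from one end to the other either in two edges through $p_j$ or in $a_j$ edges around $p_j$; as $a_j\ge 3$ the second route is strictly longer, so a geodesic of $\mathcal{L}$ is forced through every pivot $p_1,\dots,p_{n-1}$. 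Since consecutive pivots, and also $\frac{1}{0}$ with $p_1$ and $p_{n-1}$ with $\frac{p}{q}$, are joined by single edges of $\mathcal{F}$, it follows that $\gamma=\left[\frac{1}{0},p_1,\dots,p_{n-1},\frac{p}{q}\right]=K$, which is the desired uniqueness. (If some $a_j=2$ the corresponding block admits two length-two traversals and the geodesic is not unique, which is exactly why the hypothesis forbids $a_j=2$.) Finally, the ``in particular'' clause is immediate: taking $a_1=\dots=a_{n-1}=3$ exhibits, for every $n\ge 2$, two vertices of $\mathcal{F}$ at distance $n$, so $\mathcal{F}$ — and hence the $1$-skeleton of the curve complex of the $4$-punctured sphere (resp.\ of a torus with at most one hole), which is isomorphic to $\mathcal{F}$ — has infinite diameter. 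The only point that needs care is the ladder bookkeeping invoked above (the pivot count and the ``two routes through a block'' dichotomy), but these are standard consequences of the structure theory of ladders in \cite{BKKS}, which I would cite rather than reprove; this is the step I expect to be the main, though minor, obstacle in writing out a fully detailed proof.
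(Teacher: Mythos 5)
Your proposal is correct and follows essentially the same route as the paper: the paper deduces Theorem~\ref{thm-b} directly from the type--continued-fraction correspondence (\cite[Proposition 5]{BKKS}, \cite[Proposition 2.2]{Ser}) and the fact that geodesics in $\mathcal{F}$ between $\frac{1}{0}$ and $\frac{p}{q}$ coincide with geodesics in the ladder $\mathcal{L}\left(\frac{1}{0},\frac{p}{q}\right)$ (\cite[Corollary 8]{BKKS}), exactly the facts you invoke. The only difference is that you spell out the ladder bookkeeping (the pivot count and the observation that $a_i\ge 3$ forces every geodesic of the ladder through each pivot, hence onto the spine), which the paper leaves implicit as an ``immediate consequence'' of \cite{BKKS}; that filled-in step is sound.
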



\begin{thebibliography}{99}



\bibitem{BKKS}
H. Baik, C. Kim, S. Kwak and H. Shin, 
{\it On translation lengths of Anosov maps on the curve graph of the torus}, 
Geom. Dedicata, 214 (2021), 399--426.




\bibitem{CG}
A. J. Casson and C. McA. Gordon 
{\it Reducing Heegaard splittings}, 
Topology, 27 (1987), 275--283.


\bibitem{E}
Q. E, 
{\it On keen weakly reducible Heegaard splittings}, 
Topology Appl., 231 (2017), 128--135.





\bibitem{HS}
C. Hayashi and K. Shimokawa,
{\it Thin position of a pair $($$3$-manifold, $1$-submanifold$)$}, 
Pacific J. Math., 197 (2001), 301--324.

\bibitem{He}
J. Hempel, 
{\it 3-manifolds as viewed from the curve complex}, 
Topology, 40 (2001), 631--657.


\bibitem{IS}
K. Ichihara and T. Saito,
{\it Knots with arbitrary high distance bridge decompositions}, 
Bull. Korean Math. Soc., 50 (2013), 1989--2000.

\bibitem{IJK1}
A. Ido, Y. Jang and T. Kobayashi,
{\it Heegaard splittings of distance exactly $n$}, 
Algebr. Geom. Topol., 14 (2014), 1395--1411.

\bibitem{IJK2}
A. Ido, Y. Jang and T. Kobayashi,
{\it Bridge splittings of links with distance exactly $n$}, 
Topology Appl., 196 (2015), 608--617.

\bibitem{IJK3}
A. Ido, Y. Jang and T. Kobayashi,
{\it On keen Heegaard splittings}, 
Adv. Stud. Pure Math., 78 (2018), 293--311.

\bibitem{IK}
D. Iguchi and Y. Koda,
{\it Twisted book decompositions and the Goeritz groups}, 
Topology Appl., 272 (2020), 107064.



\bibitem{Kra}
I. Kra,
{\it On the Nielsen-Thurston-Bers type of some self-maps of Riemann surfaces}, 
Acta Math., 147 (1981), 231--270.

\bibitem{Li}
T. Li,
{\it Images of the disk complex}, 
Geom. Dedicata., 158 (2012), 121--136.



\bibitem{MM2}
H. Masur and Y. Minsky,
{\it Geometry of the complex of curves. II. Hierarchical structure}, 
Geom. Funct. Anal., 10 (2000), 902--974.



\bibitem{Sai}
T. Saito,
{\it Genus one $1$-bridge knots as viewed from the curve complex}, 
Osaka J. Math., 41 (2004), 427--454.


\bibitem{Sch}
S. Schleimer,
{\it Waldhausen's theorem}, 
Workshop on Heegaard Splittings, 299--317, Geom. Topol. Monogr., 12, Geom. Topol. Publ., Coventry, 2007.

\bibitem{Ser}
C. Series,
{\it Continued fractions and hyperbolic geometry}, 
http://homepages.warwick.ac.uk/$\sim$masbb/HypGeomandCntdFractions-2.pdf.


\bibitem{Wa}
F. Waldhausen,
{\it Heegaard-Zerlegungen der 3-Sph\"{a}re}, 
Topology, 7 (1968), 195--203.

\end{thebibliography}
\end{document}